\documentclass[11pt,a4paper]{article}
\usepackage[margin=1in]{geometry}
\usepackage[authoryear,round]{natbib}
\usepackage[T1]{fontenc}
\usepackage{amsmath,amssymb,amsthm,mathtools,bm}
\usepackage{graphicx,booktabs,tabularx,array,microtype}
\usepackage{needspace,aliascnt,placeins,float}
\usepackage{hyperref,url}
\usepackage[capitalise,nameinlink,noabbrev]{cleveref}
\newtheorem{theorem}{Theorem}[section]
\newaliascnt{lemma}{theorem}
\newtheorem{lemma}[lemma]{Lemma}
\aliascntresetthe{lemma}
\newaliascnt{proposition}{theorem}
\newtheorem{proposition}[proposition]{Proposition}
\aliascntresetthe{proposition}
\newaliascnt{corollary}{theorem}
\newtheorem{corollary}[corollary]{Corollary}
\aliascntresetthe{corollary}
\theoremstyle{definition}
\newaliascnt{definition}{theorem}

\aliascntresetthe{definition}
\newaliascnt{external}{theorem}

\aliascntresetthe{external}
\theoremstyle{remark}
\newaliascnt{remark}{theorem}
\newtheorem{remark}[remark]{Remark}
\aliascntresetthe{remark}
\crefname{external}{External theorem}{External theorems}
\Crefname{external}{External theorem}{External theorems}
\newcommand{\R}{\mathbb R}
\newcommand{\N}{\mathbb N}
\newcommand{\E}{\mathbb E}
\newcommand{\Prob}{\mathbb P}
\newcommand{\eps}{\varepsilon}
\newcommand{\Cstar}{C_\star}
\newcommand{\Tval}{\mathsf T^{\mathrm{val}}}

\DeclareMathOperator{\tr}{tr}

\DeclareMathOperator{\diag}{diag}
\DeclareMathOperator{\Var}{Var}
\DeclareMathOperator{\Sym}{Sym}
\title{Lower Bounds For Gradient-Free\\
Convex Optimization And Convex-Concave\\
Saddle-Point Problems}
\author{\small \textbf{Yuriy Dorn}\textsuperscript{1}, \textbf{Darina Dvinskikh}\textsuperscript{2}, \textbf{Timofei Loginov}\textsuperscript{3},\\
\small \textbf{Osman Osmanov}\textsuperscript{4}, \textbf{Aleksandr Shestakov}\textsuperscript{5},\\
\small \textbf{Nazarii Tupitsa}\textsuperscript{5}, \textbf{Alexander Gasnikov}\textsuperscript{4}\\[3pt]
\normalfont\small\textsuperscript{1} AI Institute MSU (msu.ru)\\
\normalfont\small\textsuperscript{2} HSE University (hse.ru)\\
\normalfont\small\textsuperscript{3} Moscow Independent Research Institute of Artificial Intelligence (miriai.org)\\
\normalfont\small\textsuperscript{4} Innopolis University (innopolis.university)\\
\normalfont\small\textsuperscript{5} Mohamed bin Zayed University of Artificial Intelligence (mbzuai.ac.ae)}

\hypersetup{hidelinks,pdfauthor={Yuriy Dorn, Darina Dvinskikh, Timofei Loginov, Osman Osmanov, Aleksandr Shestakov, Nazarii Tupitsa, Alexander Gasnikov}}
\hypersetup{pdftitle={Lower Bounds For Gradient-Free Convex Optimization And Convex-Concave Saddle-Point Problems},
 pdfsubject={Convex optimization and convex-concave saddle-point problems}}
\date{}
\begin{document}
\addtocontents{toc}{\protect\setcounter{tocdepth}{-1}}
\maketitle
\begin{abstract}
We study the query complexity of optimization with exact scalar-value information. For globally $L$-smooth convex functions on $\mathbb R^d$ with a minimizer in a Euclidean ball of radius $R$, we prove the lower bound $\Omega(d\min\{d,\sqrt{LR^2/\varepsilon}\})$ for adaptive randomized algorithms in the stated accuracy range. Queries are unrestricted, and the bound contains no logarithmic factor. It holds for quadratics and matches the minimum of two elementary upper bounds on that class. Prescribed strong convexity introduces the additional cap $\sqrt{L/\mu}$. The proof compares the mean of a matrix with the mean of its inverse on arbitrary affine slices and bounds posterior moments after adaptive measurements. We extend these information bounds to saddle-point problems with independent block dimensions. For compact bilinear games, the coupling alone yields $\Omega(r\min\{r,\sqrt{CR_xR_y/\varepsilon}\})$ queries, where $r=\min\lbrace d_x,d_y\rbrace$. A strongly convex--strongly concave construction yields a condition-dependent lower bound even with known isotropic diagonal Hessians. Separate reductions establish scalar query lower bounds under mixed value and partial-gradient access, including for outer objective accuracy. A Euclidean-image radius yields smooth and nonsmooth extensions to independently specified norms and known penalties.
\end{abstract}

\section{Introduction}
Gradient-free optimization minimizes an objective using function values.
Query complexity counts the scalar evaluations required for a prescribed
objective error. For convex problems, we seek its joint dependence on
dimension, accuracy, and geometry. Finite
differences and randomized directional estimates connect value queries to
first-order methods by approximating gradients from nearby evaluations
\citep{NesterovSpokoiny2017}.

Non-Euclidean proximal methods can improve the dimension dependence of
these upper bounds. Under Euclidean regularity, the accelerated method of
\citet{GorbunovDvurechenskyGasnikov2022} has a $\sqrt d$ dimension factor
with an $\ell_1$ setup, while the nonsmooth two-point method of
\citet{Shamir2017} has only logarithmic dimension dependence. Establishing
their optimality requires lower bounds covering all adaptive value-query
methods under the same regularity and localization assumptions.

Earlier lower bounds address complementary information models.
\citet{Yao1975} studied deterministic exact quadratic recovery, and
\citet{Protasov1996} studied approximate quadratic minimization from values.
Stochastic and noisy observations yield norm-dependent or noise-dependent
lower bounds \citep{DuchiJordanWainwrightWibisono2015,YuEtAl2023,YuEtAl2024}.
For exact values of a fixed objective,
\citet{ZhangZhangQiLin2026,Kerger2026} establish nonsmooth convex lower bounds.
Here we study smooth and nonsmooth exact-value complexity with independent
regularity and localization geometries. We allow arbitrary finite queries,
free computation and precision, and adaptive randomized algorithms with a
fixed worst-case budget and success probability at least $2/3$ on every
objective.

\paragraph{Contributions.}
\emph{Dimension and accuracy.} We prove
$d\min\{d,\sqrt{LR^2/\eps}\}/4096$, matching quadratic complexity and
full-class complexity at moderate accuracy
(\cref{thm:main,cor:quadratic,cor:general-moderate}). A distinct multiscale
construction gives a dimension-times-logarithm obstruction and matching
full-class bounds uniform for $\mu/L\in[1/2,1]$
(\cref{thm:near-quadratic,ms:thm-depth}).

\emph{Tight non-Euclidean gradient-free lower bounds.} Let $1\le p\le2$,
$q=p/(p-1)$, with $q=\infty$ at $p=1$, and let $R_p$ be the $\ell_p$
radius. For globally convex objectives, Euclidean $L_2$-smoothness with a
minimizer in this ball and Euclidean $M_2$-Lipschitz continuity with
minimization on it give, respectively,
\[
 \underbrace{\Omega\!\left(d^{1/2+1/q}\sqrt{\frac{L_2R_p^2}{\eps}}\right)}_{\text{smooth}},
 \qquad
 \underbrace{\widetilde\Omega\!\left(d^{2/q}\frac{M_2^2R_p^2}{\eps^2}\right)}_{\text{nonsmooth}}.
\]
To our knowledge, these are the first lower bounds for unrestricted
randomized exact-value algorithms matching both rates for $1\le p<2$,
up to logarithmic factors in the windows of
\cref{match:cor-smooth,match:cor-nonsmooth}. The nonsmooth result transfers
the Euclidean information bound of \citet{ZhangZhangQiLin2026}.

\emph{Arbitrary norms and known regularizers.} The Euclidean-image
profile $\mathcal R_m(B,K)$ gives lower bounds with independent
regularity and localization geometries
(\cref{gp:thm-norm,gp:thm-nonsmooth}). Actual regularizer sublevels can
improve the lower-bound formula by an unbounded factor
(\cref{gp:thm-composite,gp:cor-spectral}).

\emph{Saddle points and information.} We isolate coupling and mixed-oracle
costs for unequal block dimensions (\cref{sp:sec}). The smooth information
argument uses an affine-slice inverse comparison and an adaptive posterior
trace identity (\cref{thm:affine,sec:mechanism}).
\Cref{tab:bounds,sp:tab-main} summarize the minimization and saddle-point
bounds; \cref{app:matching,sp:app-comparison} give the detailed comparisons.

\begin{table}[t]
\centering
\caption{Comparison of upper and lower bounds for different problem setups.
See \cref{app:matching} for more details.}
\label{tab:bounds}
\begingroup
\setlength{\tabcolsep}{2pt}
\renewcommand{\arraystretch}{1.1}
\footnotesize
\begin{tabularx}{\linewidth}{@{}>{\raggedright\arraybackslash}p{.13\linewidth}
 >{\centering\arraybackslash}p{.235\linewidth}
 >{\raggedright\arraybackslash}p{.17\linewidth}
 >{\centering\arraybackslash}p{.085\linewidth}
 >{\centering\arraybackslash}p{.235\linewidth}
 >{\raggedright\arraybackslash}X@{}}
\toprule
&\multicolumn{2}{c}{Upper bound}&\multicolumn{2}{c}{Lower bound}&\\
\cmidrule(lr){2-3}\cmidrule(lr){4-5}
Class&Bound&Source&Known&Ours&Matching / results\\
\midrule
Smooth quadratic&$O(d\min\{d,S\})$&\citet{Bubeck2015}; App.~\ref{app:corollaries}.1&$\Omega(d^2)^\dagger$&$\Omega(d\min\{d,S\})$&All stated $\eps$; \ref{cor:quadratic}\\
Smooth convex&$O(dS)$&\citet{NesterovSpokoiny2017}&App.~\ref{app:wu}&$\Omega(d\min\{d,S\})$&$S\le d$; \ref{cor:general-moderate}\\
Strongly convex, $\kappa\ge1$&$O(d\sqrt\kappa\,V)$&\citet{NesterovSpokoiny2017}&---&$\Omega\!\left(d\min\{d,\sqrt\kappa,S\}\right)$&App.~\ref{match:sec-euclidean}; \ref{cor:strong}\\
Strongly convex, $\kappa\le2$&$O(dH_\kappa)$&This work, Thm.~\ref{thm:near-quadratic}&---&$\Omega(dH_\kappa)$&Uniform; \ref{thm:near-quadratic}\\
\midrule
$L_2$-smooth, $\ell_p$ localization&$\displaystyle\widetilde O\!\left(d^{1/2+1/q}\sqrt{\frac{L_2R_p^2}{\eps}}\right)$&\citet{GorbunovDvurechenskyGasnikov2022}&---&$\displaystyle\Omega\!\left(d^{1/2+1/q}\sqrt{\frac{L_2R_p^2}{\eps}}\right)$&Up to logs;\par\eqref{match:eq-main-smooth-window}, \ref{match:cor-smooth}\\
$M_2$-Lipschitz, $\ell_p$ feasible ball&$\displaystyle\widetilde O\!\left(d^{2/q}\frac{M_2^2R_p^2}{\eps^2}\right)$&\citet{Shamir2017}&$p=2$: [Z]&$\displaystyle\widetilde\Omega\!\left(d^{2/q}\frac{M_2^2R_p^2}{\eps^2}\right)$&Up to logs;\par\eqref{match:eq-main-nonsmooth-window}, \ref{match:cor-nonsmooth}\\
\midrule
Arbitrary norm, smooth&$\displaystyle O\!\left(d\mathcal D_2\sqrt{\frac L\eps}\right)$&\citet{GorbunovDvurechenskyGasnikov2022}; App.~\ref{match:sec-arbitrary}&---&\eqref{gp:eq-norm}&General gap; \ref{gp:thm-norm}\\
Arbitrary norm, nonsmooth&$\displaystyle O\!\left(\frac{dM^2\mathcal D_2^2}{\eps^2}\right)$&\citet{Shamir2017}; App.~\ref{match:sec-arbitrary}&Input [Z]&\eqref{gp:eq-nonsmooth}, $\lambda=0$&General gap; \ref{gp:thm-nonsmooth}\\
Known regularizer&---&---&---&\eqref{gp:eq-composite}, \eqref{gp:eq-nonsmooth}&Sublevels; \ref{gp:cor-spectral}\\
\bottomrule
\end{tabularx}
\vspace{4pt}
\begin{minipage}{\linewidth}
\scriptsize
$S=\sqrt{LR^2/\eps}$, $V=\log(LR^2/\eps)$, $\kappa=L/\mu$, and
$H_\kappa=1+V/\log((L+\mu)/(L-\mu))$, with $H_\kappa=1$ at $\mu=L$. The general strongly convex row uses $0<\eps\le LR^2/32$.
The $\ell_p$ rows use $1\le p\le2$, $q=p/(p-1)$ ($q=\infty$ at $p=1$),
\emph{Euclidean regularity}, and the indicated accuracy windows; the
nonsmooth row requires $d\ge m_Z^\circ$. Tildes hide logarithmic factors;
the smooth $p=2$ match is up to constants.
$^\dagger$\citet{Yao1975}: deterministic exact recovery ($\eps=0$).
[Z]: \citet{ZhangZhangQiLin2026}, the external Euclidean nonsmooth input.
The quadratic upper combines acceleration with coefficient recovery
(App.~\ref{app:corollaries}.1).
For a regularity unit ball $B$ and localization/feasible set $K$, put
$a_B=\max_{\|u\|_2=1}\|u\|$ and
$\mathcal D_2=a_B\min_{z\in K}\max_{x\in K}\|x-z\|_2$.
The general-norm rows apply the cited algorithms after norm conversion;
\cref{match:sec-arbitrary} gives the small-error regimes and proximal
alternatives. Dashes mark entries without a cited bound.
\end{minipage}
\endgroup
\end{table}

\section{Problem setting and the Euclidean lower bound}\label{sec:results}
Throughout, $\|\cdot\|_2$ denotes the Euclidean norm,
$\mathbb B_2^d(R)=\{x\in\R^d:\|x\|_2\le R\}$ the closed Euclidean ball, and
$\operatorname*{argmin}f$ the set of global minimizers of $f$. Let
$\mathcal C_{d,L,R}$ be the class of globally $L$-smooth convex functions with
a global minimizer in the ball of radius $R$, that is, of convex
differentiable functions $f:\R^d\to\R$ satisfying
\[
 \|\nabla f(x)-\nabla f(y)\|_2\le L\|x-y\|_2
 \quad\text{for all }x,y,\qquad
 \operatorname*{argmin} f\cap \mathbb B_2^d(R)\ne\varnothing .
\]
Regularity and localization are measured in the same Euclidean norm here;
\cref{geom:sec} allows the two norms to differ. The oracle
returns the exact value $f(x)$ at any queried point $x\in\R^d$. An algorithm
is equipped with an independent random seed on a standard Borel probability
space, and its query and output maps are total Borel functions of that seed
and of the finite history of real-valued answers. Queries may be adaptive,
their norms and precision are unrestricted, and internal computation is free.
A deterministic budget $T$ bounds the number of queries on every history;
a run that stops early is padded with queries at zero. An algorithm is
\emph{successful} on a class if
\[
 f(\widehat x)-\min f\le\eps\quad\text{with probability at least }2/3
\]
for each objective in the class; the output $\widehat x$ need not have been
queried and need not lie in $\mathbb B_2^d(R)$. All logarithms are natural,
and $\Cstar=4096$ throughout the paper.

\begin{theorem}[Dimension--accuracy lower bound]\label{thm:main}
For every integer $d\ge1$, $L,R>0$, and $0<\eps\le LR^2/32$, any algorithm
that is successful on $\mathcal C_{d,L,R}$ in the above sense must use
\begin{equation}\label{eq:main}
 T\ge\max\left\{\frac{5d}{16},\;
 \frac{d}{\Cstar}\min\left\{d,\sqrt{\frac{LR^2}{\eps}}\right\}\right\}.
\end{equation}
\end{theorem}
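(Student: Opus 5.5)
We will prove both terms of \eqref{eq:main} with Yao's minimax principle. The hard instances are random convex quadratics $f_A(x)=\tfrac12 x^\top A x-b^\top x$, where $A$ is a random positive definite matrix with spectrum in $[\mu,L]$ and $b$ is chosen so that $x^\star=A^{-1}b$ lies in $\mathbb B_2^d(R)$. Every $f_A$ belongs to $\mathcal C_{d,L,R}$. By Yao's principle it then suffices to show that every deterministic adaptive algorithm with $T$ value queries fails with probability above $1/3$ under this prior. Since a run is a finite history of exact real answers, conditioning on the history means conditioning the unknown coefficients on the affine slice determined by the $T$ answers. Each answer $f(x_t)$ is a single linear equation in the entries of $(A,b)$.

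For the term $5d/16$ we will use a simple dimension count. Take $A=LI$ and $b$ uniform on a sphere of radius of order $LR$, so $x^\star=b/L$ is uniform on a sphere of radius $R/2$. Each answer $\tfrac L2\|x_t\|^2-b^\top x_t$ reveals one linear functional of $b$. After $T<5d/16$ queries, at least $11d/16$ directions of $b$ remain unobserved. The posterior of $x^\star$ restricted to that subspace is rotationally symmetric, so any output $\widehat x$ has $\|\widehat x-x^\star\|^2\gtrsim R^2$ with probability above $1/3$. This gives error $\tfrac L2\|\widehat x-x^\star\|^2>\eps$ when $\eps\le LR^2/32$. Tracking the constants $5/16$ and $1/32$ will take some care, but the argument is elementary.

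For the main term we set $k=\min\{d,\sqrt{LR^2/\eps}\}$ and aim to show that fewer than $dk/\Cstar$ queries fail. We will take $A$ random with an unknown eigenbasis and spectrum spread over $[L/k^2,L]$, for instance $A=L\,U\Lambda U^\top$ with Haar $U$ and $\Lambda$ a fixed profile. We will also take $b$ random with $\E[x^\star x^{\star\top}]$ chosen so that $\E\tfrac12 x^{\star\top}Ax^\star\approx LR^2/k^2\cdot k\gtrsim \eps\cdot$const. The error of any output is $\tfrac12(\widehat x-x^\star)^\top A(\widehat x-x^\star)$. Its posterior expectation is at least $\tfrac12\tr\bigl(\E[A]\,\mathrm{Cov}(x^\star\mid\text{history})\bigr)$ up to coupling terms. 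Since $x^\star=A^{-1}b$, the key comparison is therefore between the posterior mean of $A$ and the posterior mean of $A^{-1}$ on the affine slice.

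This is where we will invoke the affine-slice inverse comparison (\cref{thm:affine}). On any affine slice of codimension $m$ in the space of symmetric matrices, it should give $\E[A^{-1}]\succeq$ something like $(\E A)^{-1}$ plus a remainder that stays large in $d-\,$(number of constrained directions). We will combine it with the adaptive posterior trace identity (\cref{sec:mechanism}). This identity says the expected total "information" $\sum_t$ (posterior trace reduction) after $T$ adaptive linear measurements is at most of order $T/d$ in normalized units, because each scalar measurement of a $d\times d$ random matrix reduces the normalized trace of its posterior covariance by at most $O(1/d^2)$ times the trace. With $T<dk/\Cstar$, the posterior still has of order $d$ low-curvature directions, each with eigenvalue $\lesssim L/k^2$, in which $x^\star$ retains variance of order $R^2/d$. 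Summing over these directions gives error $\gtrsim LR^2/k^2\ge\eps$ with constant probability. To pass from an expectation bound to a probability bound we will use a second-moment (Paley--Zygmund) estimate.

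The main obstacle is controlling the posterior moments of $A^{-1}b$ after adaptive nonlinear-in-$x$ but linear-in-coefficients measurements, uniformly over arbitrary affine slices. Conditioning a Wishart/Haar prior on affine slices destroys its invariance. The first thing to try is to use a Gaussian prior on a perturbation $A=\bar A+\sigma G$, with truncation to keep $A$ inside $[\mu,L]$. Gaussians stay Gaussian on affine slices, and the trace identity then becomes an exact computation.
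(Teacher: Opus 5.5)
Your dimension baseline follows the paper's argument in outline: residual sphere, antipodal pairing, and Markov's inequality on the observed squared norm. Place $x^\star$ on the sphere of radius $R$, not $R/2$. With radius $R/2$ the success radius $\sqrt{2\eps/L}\le R/4$ is half the sphere radius, and the argument yields only $T\ge d(1-\frac14)/3=d/4$; radius $R$ gives $T\ge d(1-\frac1{16})/3=5d/16$.

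The product term, however, has a genuine gap. First, you use \cref{thm:affine} in the wrong direction. It is the \emph{upper} bound $\E W^{-1}\preceq(1+\frac{d+1}{2\zeta})(\E W)^{-1}$. The lower bound $\E W^{-1}\succeq(\E W)^{-1}$ is just operator convexity of inversion and carries no information, and no ``remainder that stays large'' is available. Its hypotheses also fail for your prior. $LU\Lambda U^\top$ with a fixed spectrum is supported on an isospectral orbit, so it has no density of the form $\det(W)^\zeta e^{-\langle C,W\rangle}$ on affine slices. Moreover, with $b$ random each value is linear in $(A,b)$ jointly rather than in the matrix alone. Second, the posterior error $\E[(\widehat x-x^\star)^\top A(\widehat x-x^\star)\mid\mathcal H]$ cannot be reduced to $\frac12\tr(\E[A\mid\mathcal H]\,\mathrm{Cov}(x^\star\mid\mathcal H))$, because $x^\star=A^{-1}b$ depends strongly on $A$; the ``coupling terms'' are the whole difficulty. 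Third, and decisively, the claim that fewer than $dk/\Cstar$ measurements leave order $d$ low-curvature directions in which $x^\star$ keeps variance $R^2/d$ is the content of the theorem. Your only justification is that a small fraction of the covariance of $A$ has been resolved, and this is also not what the paper's trace identity says. That justification holds equally after $2C_1dk$ values when $C_1k\ll d$. By then, finite-difference acceleration with $C_1k$ exact gradients already reaches gap $2LR^2/(C_1k)^2$, which is below $LR^2/k^2$ for large $C_1$. The Gaussian fallback does not rescue this. Without truncation, $\E A^{-1}$ is infinite and the instances leave the class. With truncation to $\{\mu I\preceq A\preceq LI\}$, each slice conditional is a Gaussian restricted to a convex set, so integration-by-parts identities acquire boundary terms and are no longer exact computations.

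What is missing is a quantitative link between the number of scalar measurements and the posterior \emph{inverse}; the paper obtains it from first moments alone. It takes a Wishart prior $W=X^\top X$ with $X\in\R^{n\times d}$ Gaussian, $n=d+m$, $k=d/m$ (condition number of order $k^2$, as you intended). It also uses a \emph{fixed known} linear term $u$. Each value is then exactly one rank-one measurement $\langle xx^\top,W\rangle$, and the posterior on every realized fiber is the Wishart density restricted to that affine slice (\cref{r12:disintegration}). \Cref{thm:affine} with $\zeta=(m-1)/2$ gives $J\preceq nM^{-1}$ for $J=(m-1)\E[W^{-1}\mid\mathcal H]$ and $M=\E[W\mid\mathcal H]$. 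Integration by parts against the Wishart score gives the exact identity $\tr(MJ)=nd-2r$ (\cref{r12:fiber}). Together with $\E M=nI$ and $\E J=I$, these yield $\E\|J\|_{\mathrm{op}}\le1+v+\sqrt{v^2+2v}$ with $v=2q/n$ (\cref{r12:budget}). Success is then converted not into an error lower bound via Paley--Zygmund, which would require posterior second moments you cannot control. Instead it becomes a correlation $\E\langle\widehat h,W^{-1}u\rangle\ge\frac23a^2$ for a history-measurable $\widehat h$ of norm at most $a=(\E\|W^{-1}u\|_2^2)^{1/2}$. Cauchy--Schwarz and $(m-1)^2a^2\ge k$ then force $q\gtrsim nk$. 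Finally, the Wishart prior is never truncated. The success guarantee is used only on a validity event of probability at least $49/50$ on which $f_W\in\mathcal C_{d,L,R}$ (\cref{r12:lem-prior}). Three independently validated runs keep failed runs and invalid instances inside the expectation (\cref{r12:success}). This is how the paper reconciles exact slice conditionals with a prior supported on the class, which your proposal leaves unresolved.
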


Positive-definite quadratics in the class already force this bound. The
guarantee is required for every objective, the algorithm may be randomized and
may use dense, unbounded, arbitrarily precise queries, and the bound contains
no logarithmic factor. \Cref{sec:mechanism} outlines the proof;
\cref{r12:sec-affine,r12:sec-rounding} give it in full. In terms of the
normalized accuracy $\delta=\eps/(LR^2)$, the product term has two branches:
$d/\sqrt\delta$ for $d^{-2}\le\delta\le1/32$, and $d^2$ for
$\delta\le d^{-2}$. The lower bound saturates at $d^2$ in the second branch. The next
corollary identifies the matching quadratic complexity.

\begin{corollary}[Quadratic minimax complexity]\label{cor:quadratic}
Let $\mathcal Q_{d,L,R}$ be the quadratic subclass of $\mathcal C_{d,L,R}$.
Its fixed-budget randomized exact-value minimax complexity satisfies
\begin{equation}\label{eq:quadratic}
 \Tval_\eps(\mathcal Q_{d,L,R})
 =\Theta\!\left(d\min\{d,\sqrt{LR^2/\eps}\}\right),
 \qquad 0<\eps\le LR^2/32.
\end{equation}
\end{corollary}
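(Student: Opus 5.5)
The lower bound half of \eqref{eq:quadratic} comes directly from \cref{thm:main}. The remark after that theorem states that positive-definite quadratics already force \eqref{eq:main}, so any algorithm successful on $\mathcal Q_{d,L,R}$ needs $T\ge \frac{d}{\Cstar}\min\{d,\sqrt{LR^2/\eps}\}$. Strictly, \cref{thm:main} is stated for $\mathcal C_{d,L,R}$. I would therefore note that its hard instances are quadratics in $\mathcal Q_{d,L,R}$, so an algorithm successful only on the subclass is still successful on every instance used in the argument. This gives $\Tval_\eps(\mathcal Q_{d,L,R})=\Omega(d\min\{d,\sqrt{LR^2/\eps}\})$ on the whole range $0<\eps\le LR^2/32$.

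For the upper bound I would exhibit two deterministic algorithms and run whichever has the smaller budget.

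\emph{Branch (a), exact recovery with $O(d^2)$ queries.} A quadratic $f(x)=\tfrac12x^\top Ax-b^\top x+c$ has $\binom{d+2}{2}=O(d^2)$ unknown coefficients. Evaluating $f$ at $0$, at $e_i$, at $-e_i$ and at $e_i+e_j$ determines them by finite differences, with no error because the oracle is exact. The method then outputs a minimizer of the recovered quadratic, for instance the minimum-norm solution of $Ax=b$. This point lies in $\operatorname*{argmin}f$, which is nonempty by the class definition, so the error is $0\le\eps$.

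\emph{Branch (b), $O(d\sqrt{LR^2/\eps})$ queries.} I would use accelerated gradient descent started at $x_0=0$. Since $\|x_0-x^\star\|_2\le R$ and $f$ is $L$-smooth, it reaches accuracy $\eps$ after $O(\sqrt{LR^2/\eps})$ iterations. For a quadratic the gradient is recovered exactly from $d+1$ values: $\partial_i f(x)=(f(x+e_i)-f(x-e_i))/2$, using $2d$ evaluations, or fewer with careful bookkeeping. The iterates are therefore exactly those of the first-order method, and the total is $O(d\sqrt{LR^2/\eps})$ queries.

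Taking the smaller of the two budgets gives $O(d\min\{d,\sqrt{LR^2/\eps}\})$, with success probability $1\ge 2/3$. Together with the lower bound this yields the $\Theta$ statement.

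The only step needing care is the correspondence between the subclass and the hard family of \cref{thm:main}. I must confirm that the theorem's construction, including the $5d/16$ term, uses only quadratics with a minimizer in $\mathbb B_2^d(R)$ that are globally $L$-smooth. Once that is checked, both directions are routine. No $\log$ factor appears anywhere, which matches the $\Theta$ in \eqref{eq:quadratic}.
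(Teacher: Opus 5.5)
Your proposal is correct and is essentially the paper's proof. The lower bound is \cref{thm:main}, whose two hard families (the conditioned Wishart quadratics and the spherical family $L\|x-R\theta\|_2^2/2$) lie in $\mathcal Q_{d,L,R}$. The upper bound takes the cheaper of two procedures: $(d+1)(d+2)/2$-value coefficient interpolation followed by a minimum-norm minimizer, or accelerated gradient descent driven by exact $2d$-value central differences.

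The only loose remark is that $d+1$ values recover the gradient exactly: a forward difference carries the bias $A_{ii}/2$, so this needs the diagonal curvatures learned once beforehand, but your query count never relies on it.
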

\begin{proof}
The lower bound is \cref{thm:main}. For the upper bound, central differences
recover the exact gradient of a quadratic from $2d$ values, so accelerated
gradient descent needs $O(d\sqrt{LR^2/\eps})$ values
\citep[Theorem~3.19]{Bubeck2015}; alternatively, $(d+1)(d+2)/2$ values identify
all coefficients, after which the minimum-norm minimizer is computed exactly.
The known parameters decide which of the two procedures is cheaper.
\Cref{app:corollaries} supplies the interpolation and measurability arguments,
including the case of singular Hessians.
\end{proof}

\begin{corollary}[Moderate accuracy on the full smooth class]\label{cor:general-moderate}
Whenever $LR^2/d^2\le\eps\le LR^2/32$,
\[
 \Tval_\eps(\mathcal C_{d,L,R})
 =\Theta\!\left(d\sqrt{LR^2/\eps}\right).
\]
\end{corollary}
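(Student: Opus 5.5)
The proof splits into a lower bound and an upper bound.

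For the lower bound, note that the hypothesis $\eps\ge LR^2/d^2$ is equivalent to $\sqrt{LR^2/\eps}\le d$. The minimum in \eqref{eq:main} is therefore $\sqrt{LR^2/\eps}$. Since $\eps\le LR^2/32$ lies in the admissible range of \cref{thm:main}, every successful algorithm on $\mathcal C_{d,L,R}$ satisfies $T\ge (d/\Cstar)\sqrt{LR^2/\eps}$, which gives $\Tval_\eps(\mathcal C_{d,L,R})=\Omega(d\sqrt{LR^2/\eps})$. Nothing beyond \cref{thm:main} is needed here; the quadratic hard instances already belong to the full class.

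For the upper bound I will invoke the smooth zeroth-order accelerated method of \citet{NesterovSpokoiny2017}, or equivalently accelerated gradient descent driven by finite-difference gradients. Exact central differences are unavailable for nonquadratic $f$, so I will use forward differences with step $h$. These give an approximate gradient $g$ with $\|g-\nabla f(x)\|_2\le \sqrt d\,Lh/2$, at a cost of $d+1$ values per iteration. Because queries have unbounded precision and $h$ can be taken arbitrarily small, this is an inexact first-order oracle with additive error $\delta_g$ that can be made as small as we like.

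I then apply the known robustness of accelerated methods to inexact gradients (the Devolder--Glineur--Nesterov inexact-oracle framework). Their error accumulation is $O(N\delta)$ for an oracle of accuracy $\delta$ over $N$ steps. With $N=O(\sqrt{LR^2/\eps})$ iterations, the optimization error is $\eps/2$. Choosing $h$ so that the accumulated error is also at most $\eps/2$ yields total cost $(d+1)N=O(d\sqrt{LR^2/\eps})$. Alternatively, one can simply cite the $O(d\sqrt{LR^2/\eps})$ bound for the randomized two-point accelerated scheme in expectation, and convert it to probability $2/3$ by Markov's inequality at a constant-factor cost.

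The main obstacle is technical: the inexact gradient error must be controlled along the iterates. This requires a bound on $\|x_k-x^*\|$, which holds because accelerated methods remain in a ball of radius $O(R)$ around $x^*$. The output must also be shown to be a Borel function of the history, which is routine for this deterministic method. The starting point $x_0=0$ satisfies $\|x_0-x^*\|_2\le R$ by the localization assumption. Combining the two bounds gives the $\Theta$ statement.
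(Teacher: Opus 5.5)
Your proposal is correct and follows the paper's proof: the lower bound is read off from \cref{thm:main} exactly as you do, and the upper bound likewise feeds finite-difference gradients into the Devolder--Glineur--Nesterov fast gradient method, with a step $h$ small enough that the accumulated $O(N\delta)$ error is at most $\eps/2$. The one real difference is that the paper runs the \emph{projected} method on $Q=\mathbb B_2^d(R)$, which contains a minimizer, so $|\langle g_h(x)-\nabla f(x),y-x\rangle|\le 2R\Delta$ for all $x,y\in Q$ gives a $(4R\Delta,L)$-oracle directly. This removes the iterate-boundedness step you flag. That step is genuinely needed in your unconstrained version, since on $\R^d$ a biased gradient does not yield a global lower linear model, and the DGN inexact-gradient oracle requires a bounded feasible set.
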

\begin{proof}
In this range the minimum in \cref{thm:main} selects the accuracy term. Central
differences combined with inexact projected acceleration give the matching
upper bound on the full class; \cref{app:general-upper} specifies the finite
step size and the query budget.
\end{proof}

\begin{corollary}[Prescribed strong convexity]\label{cor:strong}
Let $0<\mu\le L$, $\kappa=L/\mu$, and $0<\eps\le LR^2/32$.
On the globally $\mu$-strongly convex subclass of $\mathcal C_{d,L,R}$,
\begin{equation}\label{eq:strong}
 T\ge\max\left\{\frac{5d}{16},\;
 \frac{d}{\Cstar}\min\left\{d,\sqrt\kappa,\sqrt{\frac{LR^2}{\eps}}\right\}\right\}.
\end{equation}
The bound again holds on the quadratic subclass. In particular, $\eps\le\mu R^2$
yields $\Omega(d\min\{d,\sqrt\kappa\})$ within the stated accuracy range.
\end{corollary}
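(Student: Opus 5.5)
The plan is to run the proof of \cref{thm:main} again on a hard family of quadratics whose Hessian spectrum sits inside $[\mu,L]$. The argument behind \cref{thm:main} (see \cref{sec:mechanism}) uses random positive-definite quadratics $f(x)=\tfrac12 x^\top A x-b^\top x$. The Hessian $A$ has spectrum in $[\lambda_{\min},L]$, and the minimizer $A^{-1}b$ lies in $\mathbb B_2^d(R)$ with high probability. The bound then comes from comparing the posterior means of $A$ and $A^{-1}$ on the affine slice cut out by the observed values, together with the adaptive posterior trace identity. The term $\min\{d,\sqrt{LR^2/\eps}\}$ should come from a spectral scale parameter, call it $k$, counting the effectively hard eigen-directions: roughly $k\approx\min\{d,\sqrt{LR^2/\eps}\}$, with the smallest relevant curvature about $L/k^2$. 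So the first step is to locate the point in that construction where the lower spectral edge is fixed. I would then add the constraint that the smallest eigenvalue is at least $\mu$, which is equivalent to $k\lesssim\sqrt\kappa$.

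Concretely, I would replace the scale parameter by $k'=\min\{k,\lfloor\sqrt\kappa\rfloor\}$ and apply the same construction with $k'$ in place of $k$. There are two ways to guarantee $\mu$-strong convexity, and I would try the first and fall back on the second.
\begin{enumerate}
\item If the construction truncates the spectrum from below at order $L/k'^2$, then choosing $k'\le\sqrt\kappa/c$ puts every eigenvalue at or above $\mu$ after adjusting constants.
\item Otherwise, add $\tfrac{\mu}{2}\|x\|_2^2$ to the objective. This shifts $A$ to $A+\mu I$ and changes $L$ only by a factor of at most $2$, which can be absorbed by rescaling $L$. It is harmless for information purposes: the algorithm knows $\mu$, so value queries to the shifted function are equivalent to value queries to the original one.
\end{enumerate}
In the second option the minimizer moves to $(A+\mu I)^{-1}b$, whose norm is no larger than that of $A^{-1}b$ because $A\succeq0$, so localization in $\mathbb B_2^d(R)$ is preserved. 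However, the suboptimality gap changes. I would check that for directions with curvature at least of order $\mu$ the gap changes only by a constant factor, which is exactly the regime $k'\le\sqrt\kappa$. The accuracy condition $\eps\le LR^2/32$ is unchanged, so all the probabilistic estimates of \cref{r12:sec-affine,r12:sec-rounding} go through with $k'$. This gives $T\ge \frac{d}{\Cstar}\min\{d,\sqrt\kappa,\sqrt{LR^2/\eps}\}$, with the constant unchanged if the truncation is built in from the start.

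The term $5d/16$ should come from the part of the proof of \cref{thm:main} that does not depend on accuracy. That part needs only about $d$ unknown directions, so it can be run with curvatures all equal to $L$, which are trivially $\mu$-strongly convex; it therefore carries over directly.

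For the final claim, assume $\eps\le\mu R^2$. Then $LR^2/\eps\ge\kappa$, so $\sqrt{LR^2/\eps}\ge\sqrt\kappa$ and the minimum becomes $\min\{d,\sqrt\kappa\}$. Since every hard instance is a quadratic, the bound also holds on the quadratic subclass.

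The main obstacle is confirming that the parameter choice in the proof of \cref{thm:main} really does let the spectral floor be capped at $\mu$ independently of $\eps$, without weakening the slice comparison between the means of $A$ and $A^{-1}$. That comparison degrades as the condition number of the posterior family grows. Capping the family's condition number at $\kappa$ should only help, but the constant $\Cstar$ has to survive the change.
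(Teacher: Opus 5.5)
Your first option is exactly the paper's proof: on the validity event the hard quadratics satisfy $\nabla^2 f_W\succeq L/[50(k+1)^2]\,I$. The paper therefore adds the cutoff $\sqrt\kappa/8$ to $\eta=\min\{d/64,\sqrt{LR^2/\eps}/100\}$, which gives $\nabla^2 f_W\succeq L/(64k^2)\,I\succeq\mu I$ while keeping $\eta\ge\min\{d,\sqrt\kappa,\sqrt{LR^2/\eps}\}/100$, so the calibration and $\Cstar$ are unchanged, and the baseline $5d/16$ covers the case where this minimum is at most $1280$. The shift fallback and your worry about the slice comparison are unnecessary, since the Wishart prior and the slice-uniform inverse comparison are untouched---only the choice of $m$ changes.
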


The hard quadratic behind the product term satisfies
$LI/[50(k+1)^2]\preceq\nabla^2f\preceq LI$, where $k$ is the free parameter of the
construction; imposing $k\le\sqrt\kappa/8$ in addition to the existing
constraints therefore proves the corollary with the same $\Cstar$
(\cref{app:corollaries}). \Cref{match:sec-euclidean} compares this bound with accelerated upper bounds.
A separate multiscale family gives the logarithmic accuracy dependence
in the following theorem.

\begin{theorem}[Matching complexity near the quadratic endpoint]\label{thm:near-quadratic}
Let $\mathcal C_{d,L,R}^{\mu}$ denote the globally $\mu$-strongly convex
subclass of $\mathcal C_{d,L,R}$. If $\mu/L\in[1/2,1]$ and
$0<\eps\le LR^2/128^2$, then
\begin{equation}\label{eq:near-quadratic}
\frac{3d}{4000}\left[1+
 \frac{\log(LR^2/\eps)}{\log((L+\mu)/(L-\mu))}\right]
\le \Tval_\eps(\mathcal C_{d,L,R}^{\mu})
\le 2d\left[1+
 \frac{\log(LR^2/\eps)}{\log((L+\mu)/(L-\mu))}\right].
\end{equation}
The fractions are interpreted as zero when $\mu=L$.
\end{theorem}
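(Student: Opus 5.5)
We prove the two inequalities in \eqref{eq:near-quadratic} separately. Write $\rho=(L-\mu)/(L+\mu)\in[0,1/3]$ and $N=\lceil\log(LR^2/\eps)/\log(1/\rho)\rceil$. The bracket is comparable to $1+N$.

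\emph{Upper bound.} Fix the step $h=2/(L+\mu)$. On $\mathcal C^{\mu}_{d,L,R}$, exact gradient descent with this step contracts by $\rho$ per iteration:
\[
\|x_{k+1}-x^\star\|_2\le\rho\|x_k-x^\star\|_2 .
\]
Starting from $x_0=0$, we have $\|x_0-x^\star\|_2\le R$. By smoothness, $f(x_N)-\min f\le \tfrac L2\rho^{2N}R^2\le\eps$ once $N\ge \tfrac12\log(LR^2/(2\eps))/\log(1/\rho)$.

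Exact gradients are not available from finitely many values of a non-quadratic $f$. We therefore use forward differences with a tiny step $\tau_k$, which costs $d+1$ values per iteration. Since $L$-smoothness gives a coordinatewise error of at most $L\tau_k/2$, the gradient error is at most $L\tau_k\sqrt d/2$. Choosing $\tau_k$ geometrically small keeps the perturbed recursion within a factor $2$ of the exact one. This gives at most $2d$ times the bracket, using $d+1\le 2d$; the case $d=1$ needs a little bookkeeping with the additive $1$. When $\mu=L$, so $\rho=0$, a single step from one difference gradient gives $x^\star$ up to an arbitrarily small error, which yields the $2d$ bound.

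\emph{Lower bound.} The term $\Omega(d)$ already follows from the $5d/16$ in \cref{thm:main}, because that theorem's hard instances can be taken $\mu$-strongly convex: the construction behind the $5d/16$ branch is isotropic enough to allow this. The real content is the additional $d\,\log(LR^2/\eps)/\log(1/\rho)$. We build a multiscale family
\[
f_\theta(x)=\tfrac12 x^\top A_\theta x-b_\theta^\top x,
\]
whose Hessian spectrum lies in $[\mu,L]$, that hides about $N$ independent scales. At level $j$, a random $d$-dimensional parameter perturbs the minimizer at magnitude $R\rho^{j}$. The family is designed so that locating the minimizer to accuracy $\sqrt{\eps/L}$ requires resolving every level.

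The key mechanism mirrors \cref{sec:mechanism}, applied level by level. With Hessian eigenvalue ratio $L/\mu\le 2$, a single value query reveals information about only one linear functional per level at the relevant precision. By the posterior-trace identity, after $t$ adaptive queries the expected remaining posterior variance at level $j$ stays of order $(d-t_j)R^2\rho^{2j}$, where $t_j$ counts the queries effectively "spent" on level $j$ and $\sum_j t_j\le t$. Success requires $t_j\ge c\,d$ for all $j\le N$. This gives $T\ge c\,dN$, and we then fix the constant $3/4000$.

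\emph{Main obstacle.} The hardest step is the level-decoupling: proving that one scalar value cannot simultaneously convey $\Omega(1)$ useful dimensions at several scales. Values are real numbers of unlimited precision, so a naive information count fails. We plan to handle this by making the levels interact through the Hessian (hence the rate $\rho$) rather than through additive shifts. For this we would use nested random orthogonal frames with eigenvalue gaps tied to $\rho$, so that a query's dependence on level-$j$ parameters is a polynomial whose sensitivity to deeper levels is suppressed by a factor $\rho^{2}$ per level. We would then apply the affine-slice inverse comparison (\cref{thm:affine}) conditionally on the coarser levels. The logarithmic denominator $\log(1/\rho)$ arises exactly from this per-level suppression.
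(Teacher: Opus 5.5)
Your upper bound is the paper's argument in other notation: the step $x\mapsto x-\tfrac{2}{L+\mu}\nabla f(x)$ is the midpoint-curvature contraction $x\mapsto-\nabla g(x)/a$, with $g=f-\tfrac a2\|x\|_2^2$ and $a=(L+\mu)/2$, run with $d+1$ forward-difference values per step. The $5d/16$ baseline is also fine, because $L\|x-R\theta\|_2^2/2$ has Hessian $LI$.

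\textbf{The gap is the multiscale lower bound: it cannot be realized by any quadratic family.} Every quadratic on $\R^d$ is identified by $(d+1)(d+2)/2$ exact values (see the proof of \cref{cor:quadratic}). The minimizer $A_\theta^{-1}b_\theta$ is then computed exactly. So no prior on quadratics can force more than $(d+1)(d+2)/2$ queries at any accuracy. The bound you need, $\frac{3d}{4000}\log(LR^2/\eps)/\log((L+\mu)/(L-\mu))$, is unbounded as $\eps\downarrow0$ at fixed $d$. It exceeds that count once $\log(LR^2/\eps)$ is a large constant times $d\log((L+\mu)/(L-\mu))$.

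Hence your per-level claim must be false for any such family. That claim is posterior variance of order $(d-t_j)R^2\rho^{2j}$ at every level, with $\sum_jt_j\le t$. But a value $\tfrac12x^\top A_\theta x-b_\theta^\top x$ is one linear functional of all coefficients at once. The trace identity $\tr(MJ)=nd-2r$ counts measured dimensions in the coefficient space, with no decomposition by scale. So the ``level-decoupling'' you flag as the main obstacle cannot be overcome with nested frames plus \cref{thm:affine}. The obstruction is finite parameterization, not unlimited precision.

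The missing idea is to make deeper levels \emph{exactly invisible} rather than merely small. The paper uses the non-quadratic family of \cref{ms:thm-construction}:
$\tfrac a2\|x\|_2^2+\sum_j a\alpha r_j\langle\xi_{j+1},x-c_j\rangle\phi(\|x-c_j\|/r_j)$,
with smooth compactly supported cutoffs $\phi$, $\alpha=(L-\mu)/(32(L+\mu))$, radii $r_j=r_0(\alpha/4)^j$, and independent uniform directions $\xi_j$.

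\begin{itemize}
\item Its Hessian stays in $[\mu,L]$ because at most one transition annulus is active at any point.
\item Every perturbation deeper than level $j$ vanishes outside the selected child ball $B(c_j,r_j)$, and success forces the output into the last ball.
\item A query therefore reveals only the linear measurement $\langle\xi_j,x-c_{j-1}\rangle$ at the current level until it enters the child.
\item Entering requires recovering $\xi_j$ to squared error $1/16$. By the antipodal-pairing argument this costs $\Omega(d)$ adaptive linear measurements (\cref{ms:lem-recovery,ms:lem-hit}).
\item A count that lets one query cascade through several levels gives $T\ge3dm/160$ (\cref{ms:thm-depth}).
\end{itemize}

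The hypothesis $\mu/L\ge1/2$ enters only when calibrating the depth. The scale ratio is $\alpha/4=(L-\mu)/(128(L+\mu))$ rather than $(L-\mu)/(L+\mu)$. Since $\log((L+\mu)/(L-\mu))\ge\log3$, it absorbs the extra $\log128$, giving admissible depth $m\ge\frac1{25}\bigl(1+\log(LR^2/\eps)/\log((L+\mu)/(L-\mu))\bigr)$.
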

\begin{proof}[Proof outline]
The nested spherical construction gives $T\ge3dm/160$, where $m$ is its
admissible depth (\cref{ms:thm-depth}). A finite-difference implementation of
the midpoint-curvature contraction gives the matching upper bound with a
budget fixed in advance (\cref{ms:thm-upper}). The complete calibration,
including $\mu=L$, is in \cref{ms:proof-matching}.
\end{proof}
For every fixed $0\le\mu/L<1$, the same lower construction gives
$\Omega_{\mu/L}(d\log(LR^2/\eps))$ at sufficiently small relative accuracy.
Thus the full-class complexity grows with accuracy even in fixed
dimension. The estimate above is uniform near $\mu=L$; the joint rate
for growing condition numbers remains open.

\section{Non-Euclidean smooth and nonsmooth lower bounds}\label{geom:sec}
Let $B$ be the unit ball of a norm $\|\cdot\|$ with dual norm $\|\cdot\|_*$,
and let $K\subset\R^d$ be nonempty, compact, and convex. The norm controlling
regularity and the set localizing a solution are independent. For a known
continuous convex penalty $\psi:K\to\R$, normalize $\min_K\psi=0$ and write
$F=g+\lambda\psi$, $\lambda\ge0$. Values and proximal computations involving
$\psi$ are free. We charge only values of the unknown $g$ and use
$\mu$ for a strong-convexity modulus.

\paragraph{A common geometric radius.}
For a nonempty compact convex set $C\subset\R^d$ and $1\le m\le d$, let
\begin{equation}\label{gp:eq-radius}
 \mathcal R_m(B,C)=\max_{\|Q\|_{B\to2}\le1}\;
 \max\{r\ge0:\ z+r\mathbb B_2^m\subseteq Q(C)\text{ for some }z\in\R^m\},
\end{equation}
where $Q$ ranges over linear maps $\R^d\to\R^m$ and
$\|Q\|_{B\to2}=\max_{x\in B}\|Qx\|_2$.
The maximum is attained, and $z=0$ is optimal when
$C=-C$ (\cref{gp:lem-attainment}). Write
$K_\tau=\{x\in K:\psi(x)\le\tau\}$, $r_m=\mathcal R_m(B,K)$,
$r_m(\tau)=\mathcal R_m(B,K_\tau)$, and
\begin{equation}\label{g:eq-profile}
 \mathcal P(m,\sigma)=\frac{m\sigma}{\log(e\sigma)}\qquad(m\ge1,\ \sigma\ge1).
\end{equation}
Each target value is simulated by at most one source value.

In the following three theorems, the lower bounds apply to algorithms
required to satisfy the success guarantee of \cref{thm:main} uniformly over
every objective in the stated class.

\begin{theorem}[Smooth transfer]\label{gp:thm-norm}
Let $f$ be globally convex and differentiable with
$\|\nabla f(x)-\nabla f(y)\|_*\le L\|x-y\|$ and
$\operatorname*{argmin}f\cap K\ne\varnothing$, under the algorithm convention of
\cref{thm:main}, including unrestricted queries and output. Then, for every
$1\le m\le d$ with $r_m>0$ and every $0<\eps\le Lr_m^2/32$,
\begin{equation}\label{gp:eq-norm}
 T\ge\max\left\{\frac{5m}{16},\;
 \frac m{\Cstar}\min\left\{m,\sqrt{\frac{Lr_m^2}{\eps}}\right\}\right\}.
\end{equation}
\end{theorem}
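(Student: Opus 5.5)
We reduce \cref{gp:thm-norm} to \cref{thm:main} in dimension $m$ by a lifting argument. Fix $1\le m\le d$ with $r_m>0$, and choose a maximizer $Q:\R^d\to\R^m$ in \eqref{gp:eq-radius}, which exists by \cref{gp:lem-attainment}. Then $\|Q\|_{B\to2}\le1$ and $z+r_m\mathbb B_2^m\subseteq Q(K)$ for some $z$. For every $h\in\mathcal C_{m,L,r_m}$ we define the target objective
\[
f(x)=h(Qx-z).
\]
It is convex and differentiable, with $\nabla f(x)=Q^\top\nabla h(Qx-z)$.

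\emph{Step 1 (regularity).} For $u\in B$ we have $\langle Q^\top v,u\rangle=\langle v,Qu\rangle\le\|v\|_2$, hence $\|Q^\top v\|_*\le\|v\|_2$. Therefore
\[
\|\nabla f(x)-\nabla f(y)\|_*\le L\|Q(x-y)\|_2\le L\|x-y\|.
\]
\emph{Step 2 (localization).} Let $y^\star$ be a minimizer of $h$ with $\|y^\star\|_2\le r_m$. Since $z+y^\star\in Q(K)$, there is $x^\star\in K$ with $Qx^\star-z=y^\star$. This $x^\star$ minimizes $f$, because $\min f=\min h$ (the minimum is attained along the image). Hence $\operatorname*{argmin}f\cap K\ne\varnothing$, and $f$ belongs to the target class.

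\emph{Step 3 (simulation).} Given a successful target algorithm, we build an algorithm on $\mathcal C_{m,L,r_m}$. Each target query $x$ is answered by the single source query $Qx-z$, consistent with the remark that each target value is simulated by at most one source value. The output is $\widehat y=Q\widehat x-z$. Since $f(\widehat x)-\min f=h(\widehat y)-\min h$, the success probability and the budget $T$ carry over. Measurability is preserved, because the maps are affine compositions of Borel maps.

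Applying \cref{thm:main} with $d\to m$ and $R\to r_m$, valid for $\eps\le Lr_m^2/32$, yields \eqref{gp:eq-norm}.

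The main point to verify is Step 2 together with the output convention. The minimizer of $h$ must lift to a point of $K$, and this is exactly why $\mathcal R_m$ requires an inscribed Euclidean ball in $Q(K)$ rather than in some other set. The source class also allows outputs outside the ball, so the unrestricted output $\widehat y$ is admissible. Step 1 is routine duality. The rank of $Q$ needs no separate treatment: if $Q$ were not surjective, $Q(K)$ could not contain an $m$-ball with $r_m>0$.
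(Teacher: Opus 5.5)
Your proposal is correct and matches the paper's proof essentially step for step: pull back each source objective through the attaining contraction $Q$ and center $z$, use $\|Q^\top v\|_*\le\|v\|_2$ for the regularity transfer, lift the source minimizer through the inscribed ball into $K$, simulate each target query with one source value and decode the output by $x\mapsto Qx-z$. You then apply \cref{thm:main} in dimension $m$ at radius $r_m$, exactly as the paper does.
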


\begin{theorem}[Smooth transfer with a known regularizer]\label{gp:thm-composite}
Let $F=g+\lambda\psi$ be minimized over $K$, where $\psi$ is as above, the output
must lie in $K$, the unknown $g$ is globally convex with
$\|\nabla g(x)-\nabla g(y)\|_*\le L\|x-y\|$, and queries access only $g$ and are
unrestricted. Then, for every $1\le m\le d$ and $\tau\ge0$ with $r_m(\tau)>0$ and
$\eps+\lambda\tau\le Lr_m(\tau)^2/32$,
\begin{equation}\label{gp:eq-composite}
 T\ge\max\left\{\frac{5m}{16},\;
 \frac m{\Cstar}\min\left\{m,
 \sqrt{\frac{Lr_m(\tau)^2}{\eps+\lambda\tau}}\right\}\right\}.
\end{equation}
\end{theorem}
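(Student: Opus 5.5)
The plan is to reduce to the Euclidean lower bound of \cref{thm:main} in dimension $m$, following the smooth transfer of \cref{gp:thm-norm} but applied to the sublevel set $K_\tau$ instead of $K$.

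\emph{Setup of the embedding.} Fix $m$ and $\tau$ with $r:=r_m(\tau)>0$. By \cref{gp:lem-attainment}, choose a linear map $Q:\R^d\to\R^m$ with $\|Q\|_{B\to2}\le1$ and a point $z$ such that $z+r\mathbb B_2^m\subseteq Q(K_\tau)$. For every hard Euclidean instance $h\in\mathcal C_{m,L,r}$ from the proof of \cref{thm:main}, define
\[
 g(x)=h(Qx-z).
\]
The map $Q$ is a contraction from $\|\cdot\|$ to $\|\cdot\|_2$, so $\|Q^\top y\|_*\le\|y\|_2$. Hence $\nabla g(x)=Q^\top\nabla h(Qx-z)$ satisfies
\[
 \|\nabla g(x)-\nabla g(y)\|_*\le L\|Q(x-y)\|_2\le L\|x-y\|,
\]
so $g$ is an admissible globally convex $L$-smooth unknown. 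Each value of $g$ is one value of $h$. An algorithm for the composite problem therefore runs as an algorithm for $h$ on $\R^m$ with the same budget $T$: it simulates the query $x$ by querying $h$ at $Qx-z$ and maps its output $\widehat x\in K$ to $Q\widehat x-z$. The free $\psi$ computations need no queries, since $\psi$ is known.

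\emph{Accuracy transfer.} Let $y^\star\in\mathbb B_2^m(r)$ minimize $h$. Choose $x^\star\in K_\tau$ with $Qx^\star-z=y^\star$; such a point exists because $z+y^\star\in Q(K_\tau)$. Since $g\ge\min h$ everywhere and $\psi\ge0$ on $K$,
\[
 \min_K F\le h(y^\star)+\lambda\tau=\min h+\lambda\tau .
\]
Then for $\widehat x\in K$,
\[
 h(Q\widehat x-z)-\min h\le g(\widehat x)+\lambda\psi(\widehat x)-\min_K F+\lambda\tau=F(\widehat x)-\min_K F+\lambda\tau .
\]
So an $\eps$-accurate composite output gives an $(\eps+\lambda\tau)$-accurate output for $h$, with the same success probability. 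Measurability is preserved because all the maps involved are continuous or Borel.

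\emph{Conclusion and obstacle.} Applying \cref{thm:main} with $d=m$, $R=r$, and accuracy $\eps+\lambda\tau\le Lr^2/32$ yields \eqref{gp:eq-composite}. The first term $5m/16$ is inherited in the same way.

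The main subtlety is that \cref{thm:main} is stated for the full class $\mathcal C_{m,L,r}$ with unrestricted outputs, whereas the reduced algorithm only produces outputs in $Q(K)-z$. This is harmless, because a restricted output is still a valid output for the unrestricted class. A second point needs care: the minimizer of $g$ need not lie in $K$, and $F$ is minimized only over $K$. The argument above uses only the lower bound $g\ge\min h$ and the existence of $x^\star\in K_\tau$, so it does not require $\operatorname*{argmin}g\cap K\ne\varnothing$.
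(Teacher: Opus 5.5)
Your proposal is correct and matches the paper's proof step for step. Both use the certificate $z+r\mathbb B_2^m\subseteq Q(K_\tau)$, the pullback $g(x)=h(Qx-z)$, a lift of the source minimizer into $K_\tau$, and the chain $h(Q\widehat x-z)\le F(\widehat x)\le\min_K F+\eps\le\min h+\lambda\tau+\eps$, and then apply \cref{thm:main} in dimension $m$ at accuracy $\eps+\lambda\tau$. Your verification uses only $h\in\mathcal C_{m,L,r}$, so you can let $h$ range over that whole class and use \cref{thm:main} as a black box, without referring to the hard instances of its proof.
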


\Needspace{8\baselineskip}
\begin{theorem}[Nonsmooth transfer with a known regularizer]\label{gp:thm-nonsmooth}
In the same composite model with $|g(x)-g(y)|\le M\|x-y\|$ in place of
smoothness, there are universal constants $c_Z,\eps_Z>0$ and
$m_Z^\circ\in\N$ such that, for every integer $m_Z^\circ\le m\le d$ and every
$\tau\ge0$ with $r_m(\tau)>0$ and $\eps+\lambda\tau\le\eps_ZMr_m(\tau)$,
\begin{equation}\label{gp:eq-nonsmooth}
 T\ge c_Z\,\mathcal P\!\left(m,\;
 \min\left\{m,\left(\frac{Mr_m(\tau)}{\eps+\lambda\tau}\right)^{2}\right\}\right).
\end{equation}
\end{theorem}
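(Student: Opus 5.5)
The plan is a reduction to an $m$-dimensional Euclidean nonsmooth lower bound. I take as external input the bound of \citet{ZhangZhangQiLin2026}, which I would state as an external theorem. It says: for unrestricted randomized exact-value algorithms minimizing $M$-Lipschitz convex functions over $\mathbb B_2^m(r)$, with $m\ge m_Z^\circ$ and $\eps'\le\eps_Z Mr$, success with probability $2/3$ requires $T\ge c_Z\,\mathcal P(m,\min\{m,(Mr/\eps')^2\})$.

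\textbf{Embedding.} Fix $m$ and $\tau$, and take a maximizer $Q$ in \eqref{gp:eq-radius} for $C=K_\tau$, attained by \cref{gp:lem-attainment}. It satisfies $\|Q\|_{B\to2}\le1$ and $z+r\mathbb B_2^m\subseteq Q(K_\tau)$ with $r=r_m(\tau)$. For a hard Euclidean instance $h$ on the ball $z+r\mathbb B_2^m$, I would first extend it to an $M$-Lipschitz convex function on all of $\R^m$ (a McShane-type infimal convolution with $M\|\cdot\|_2$ preserves convexity and the Lipschitz constant). Then set $g=h\circ Q$. For the Lipschitz property, $|g(x)-g(y)|\le M\|Q(x-y)\|_2\le M\|x-y\|$ by the definition of $\|Q\|_{B\to2}$. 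Each $g$-query at $x$ is answered by one $h$-query at $Qx$, which gives the one-source-value simulation. The algorithm's knowledge of $\psi$ is free and independent of $h$, so a composite algorithm with budget $T$ becomes an $h$-algorithm with budget $T$. The output map is $\widehat y=Q\widehat x$, followed by a projection onto the ball if needed; a Euclidean projection onto the ball does not increase $h$ when $h$ is extended suitably, or the extension can be chosen so that this holds.

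\textbf{Error transfer.} Let $y^\star$ minimize $h$ on the ball. Pick $x^\star\in K_\tau$ with $Qx^\star=y^\star$. Then
\[
\min_K F\le g(x^\star)+\lambda\psi(x^\star)\le h(y^\star)+\lambda\tau .
\]
For any $\widehat x\in K$ we have $\psi\ge0$, so $F(\widehat x)\ge h(Q\widehat x)$. Hence $F(\widehat x)-\min F\le\eps$ implies $h(Q\widehat x)-h(y^\star)\le\eps+\lambda\tau$.

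One issue remains: $Q\widehat x$ lies in $Q(K)$, which may extend outside the ball. I would handle this by relying on the external theorem's tolerance of unrestricted outputs. The Zhang et al.\ bound is for the globally extended function with minimization over the ball, and the global extension's infimum equals the ball minimum if the hard instance is built with its minimizer interior and has slopes pointing inward. If needed, I would reformulate the external input for the global extension so that outputs anywhere count.

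\textbf{Conclusion.} Applying the external theorem with $\eps'=\eps+\lambda\tau\le\eps_Z Mr$ yields \eqref{gp:eq-nonsmooth}.

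The main obstacle is exactly this output-location and extension compatibility. The hypothesis requires the Euclidean hard family to remain hard when the algorithm may output points of $Q(K)$ outside the ball, measured against the ball minimum. I would first try to verify this directly for the hard family of \citet{ZhangZhangQiLin2026}: its functions are maxima of affine pieces plus a norm term, so the global minimizer lies in the ball and the global minimum equals the ball minimum.
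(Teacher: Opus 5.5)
Your error transfer matches the paper's: lifting a source minimizer into $K_\tau$ and using $\psi\ge0$ on $K$ gives source accuracy $\delta=\eps+\lambda\tau$. The genuine gap is the step you defer, and neither of your suggested resolutions closes it.

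The available external input (\cref{g:ext-zzql}) is an obstruction only for algorithms whose queries \emph{and} outputs lie in $\mathbb B_2^m$. It concerns a family of globally defined, convex, positively one-homogeneous, one-Lipschitz functions. The unrestricted Euclidean theorem you posit is essentially what has to be proved. Reaching its $\mathcal P(m,\sigma)$ form also needs the rounding $k=\lfloor a_Z\sigma\rfloor$.

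Your target queries $g$ at arbitrary $x\in\R^d$, hence queries the source at arbitrary points of $\R^m$. Its decoded output $Q\widehat x-z$ may also lie outside the ball. Each of your two candidate fixes handles only one of these problems:
\begin{itemize}
\item Take the McShane extension $\tilde h(y)=\inf_u\{h(u)+M\|y-u\|_2\}$, with $u$ ranging over the ball. It is convex, $M$-Lipschitz, and satisfies $\tilde h(y)\ge h(\pi(y))$ for the Euclidean projection $\pi$ onto the ball, so projection decoding would be safe. But its value outside the ball is an infimum over the unknown $h$. It is not determined by one source value, so the query count is not preserved.
\item The globally defined $h$ itself can be simulated: by homogeneity, $h(y)=\|y\|_2h(y/\|y\|_2)$ costs one query at a unit vector, an observation your plan does not use. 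But then projection is unsafe. Your hope that its global minimum equals its ball minimum is false. Since $h(0)=0$, a nontrivial hard instance has $\min_{\mathbb B_2^m}h<0$, and a positively one-homogeneous function with this property is unbounded below. An image point outside the ball can therefore satisfy the transferred inequality while saying nothing about its projection.
\end{itemize}

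The missing idea is the known radial penalty of \cref{g:lem-radial}, which secures both properties at once. Set $g_h(x)=\frac M2\Psi_{r,h}(Qx-z)$ with $\Psi_{r,h}(y)=h(y)+(\|y\|_2-r)_+$. Then:
\begin{itemize}
\item The penalty is known, so each target value costs at most one feasible source value, via homogeneity.
\item $\Psi_{r,h}$ is $2$-Lipschitz, which explains the factor $M/2$.
\item Its global minimum $r\min_{\mathbb B_2^m}h$ is attained at $ru_*$, which lifts into $K_\tau$.
\item Since $h$ is $1$-Lipschitz, $h(\pi_r(y))\le\Psi_{r,h}(y)$, so the decoder $\pi_r(Q\widehat x-z)/r$ has source error at most $2\delta/(Mr)$.
\end{itemize}
With $\sigma=\min\{m,(Mr/\delta)^2\}$ and $k=\lfloor a_Z\sigma\rfloor$, applying the external obstruction conditionally on the seed and integrating over it yields \eqref{gp:eq-nonsmooth}.
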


Take the maximum over admissible image dimensions and sublevels, each within
its own window. Setting $\lambda=0$ in \cref{gp:thm-nonsmooth} gives the
arbitrary-norm nonsmooth lower bound. The smooth proofs use only
\cref{thm:main}; the nonsmooth proof uses
\citet[Theorem~5, version~2]{ZhangZhangQiLin2026}. The fixed pullback preserves
regularity, and comparison with a lift in $K_\tau$ gives source accuracy
$\eps+\lambda\tau$ (\cref{gp:app,gp:app-nonsmooth}).

\paragraph{Explicit norm dependence.}
For $B=\mathbb B_\beta^d$ and $K=\mathbb B_\alpha^d(R_\alpha)$, put
$\theta=(1/2-1/\beta)_++(1/\alpha-1/2)_+$ and
$\gamma=(1/\beta-1/\alpha)_+$. Then
\begin{equation}\label{geom:eq-certificate}
 \mathcal R_m(B,K)\ge
 R_\alpha\lfloor d/m\rfloor^\gamma m^{-\theta},\qquad
 \mathcal R_d(B,K)=R_\alpha d^{-\theta}.
\end{equation}
The full-dimensional image radius is exact for every pair of norms.
For $\beta=2$ and $\alpha=p\in[1,2]$, set $q=p/(p-1)$, with $q=\infty$
at $p=1$. \Cref{sec:matching} gives the resulting smooth and nonsmooth
minimax rates, their accuracy windows, and the distinction between
Euclidean regularity and non-Euclidean localization.
\Cref{g:sec} also gives complementary
constant-radius images of dimension
$\mathfrak m_{p,d}\asymp\min\{d,\min\{q,\log(ed)\}d^{2/q}\}$.

\paragraph{Geometry beyond norm equivalence.}
Matching ellipsoidal regularity and localization have
$\mathcal R_m(B,RB)=R$ without a condition-number penalty
(\cref{gp:prop-ellipsoid}). Block and Schatten examples,
strong-convexity compatibility for every $p$, and a fixed-reference
relative-smoothness extension appear in
\cref{gp:app,app:additional-geometry,app:relative-smoothness}.

\subsection{Spectral dependence of known regularization}\label{geom:sec-spectral}
A scalar upper bound on the curvature of a known regularizer can lose
information relevant to the oracle lower bound. The following specialization
of \cref{gp:thm-composite} keeps the whole spectrum. The oracle still reveals
only exact values of the unknown smooth part $g$; the penalty and its
proximal operations are available for free, and the output must be in $K$.

\begin{corollary}[Spectral profile of a quadratic regularizer]\label{gp:cor-spectral}
Let $B=\mathbb B_2^d$, $K=R\mathbb B_2^d$, and $\psi(x)=x^\top Hx/2$ with $H\succ0$
having eigenvalues $0<\nu_1\le\cdots\le\nu_d$. Then
$r_m(\tau)=\min\{R,\sqrt{2\tau/\nu_m}\}$, and \eqref{gp:eq-composite} at the
maximizing level $\tau=\nu_mR^2/2$ becomes
\begin{equation}\label{gp:eq-spectral}
 T\ge\max\left\{\frac{5m}{16},\;
 \frac m{\Cstar}\min\left\{m,
 \sqrt{\frac{LR^2}{\eps+\lambda\nu_mR^2/2}}\right\}\right\},
 \qquad \eps+\frac{\lambda\nu_mR^2}2\le\frac{LR^2}{32}.
\end{equation}
\end{corollary}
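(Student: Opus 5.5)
The corollary is a specialization of \cref{gp:thm-composite} with $B=\mathbb B_2^d$, $K=R\mathbb B_2^d$ and $\psi(x)=x^\top Hx/2$. The work has two parts: compute the image radius $r_m(\tau)=\mathcal R_m(\mathbb B_2^d,K_\tau)$, then choose the level $\tau$ that optimizes \eqref{gp:eq-composite}. We have $\min_K\psi=0$, and $\psi$ is continuous and convex, so the normalization holds. The sublevel set is $K_\tau=R\mathbb B_2^d\cap E_\tau$ with the ellipsoid $E_\tau=\{x:x^\top Hx\le 2\tau\}$. Its semi-axes along the eigenvectors $e_i$ of $H$ are $\sqrt{2\tau/\nu_i}$, and they are nonincreasing in $i$. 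Since $K_\tau$ is symmetric, \cref{gp:lem-attainment} lets us take $z=0$. The constraint $\|Q\|_{B\to2}\le1$ says exactly that $Q$ is a Euclidean contraction.

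\emph{Lower bound on $r_m(\tau)$.} Take $Q$ to be the orthogonal projection onto $\operatorname{span}\{e_1,\dots,e_m\}$, identified with $\R^m$. The intersection $K_\tau\cap\operatorname{span}\{e_1,\dots,e_m\}$ is contained in $Q(K_\tau)$. It contains the ball of radius $\min\{R,\sqrt{2\tau/\nu_m}\}$, because on that span $x^\top Hx\le\nu_m\|x\|_2^2$. Hence $r_m(\tau)\ge\min\{R,\sqrt{2\tau/\nu_m}\}$.

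\emph{Upper bound on $r_m(\tau)$.} The bound $r_m(\tau)\le R$ is immediate, since a contraction maps $K_\tau\subseteq R\mathbb B_2^d$ into $R\mathbb B_2^m$. For the other bound, suppose $r\mathbb B_2^m\subseteq Q(K_\tau)\subseteq Q(E_\tau)$. Then $Q(E_\tau)$ is an ellipsoid in $\R^m$ with shape matrix $2\tau\,QH^{-1}Q^\top$. Containing the ball means $QH^{-1}Q^\top\succeq (r^2/2\tau)I_m$, so $\lambda_{\min}(QH^{-1}Q^\top)\ge r^2/(2\tau)$.

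This is the main step. With $\|Q\|\le1$, the Poincaré/Courant–Fischer interlacing bound gives $\lambda_m(QH^{-1}Q^\top)\le\lambda_m(H^{-1})=1/\nu_m$, where eigenvalues are listed in decreasing order and those of $H^{-1}$ are $1/\nu_1\ge\cdots\ge1/\nu_d$. To see this, write $QH^{-1}Q^\top=(QH^{-1/2})(QH^{-1/2})^\top$; its nonzero eigenvalues are those of $H^{-1/2}Q^\top QH^{-1/2}\preceq H^{-1}$, and eigenvalues are monotone under the Loewner order. Therefore $r^2\le2\tau/\nu_m$, and the formula $r_m(\tau)=\min\{R,\sqrt{2\tau/\nu_m}\}$ follows.

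\emph{Choice of $\tau$.} Substitute this radius into \eqref{gp:eq-composite}. For $\tau\ge\nu_mR^2/2$ the radius equals $R$, and the ratio $Lr_m(\tau)^2/(\eps+\lambda\tau)$ is decreasing in $\tau$. For $\tau\le\nu_mR^2/2$ the ratio equals $2L\tau/(\nu_m(\eps+\lambda\tau))$, which is increasing in $\tau$. So the maximum is at $\tau=\nu_mR^2/2$, where $r_m=R$ and the ratio is $LR^2/(\eps+\lambda\nu_mR^2/2)$. The window condition of \cref{gp:thm-composite} becomes $\eps+\lambda\nu_mR^2/2\le LR^2/32$, and $r_m(\tau)=R>0$. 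Hence \eqref{gp:eq-spectral} follows directly, with the $5m/16$ term carried over unchanged.
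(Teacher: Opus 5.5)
Your proof is correct and follows the paper's route. The radius formula is obtained as in \cref{gp:prop-sublevel}: projection onto the $m$ lowest-curvature eigendirections gives the lower bound, containment of $K_\tau$ in $R\mathbb B_2^d$ and in the ellipsoid with shape matrix $2\tau H^{-1}$ gives the upper bound, and the level is chosen by the same monotonicity of $Lr_m(\tau)^2/(\eps+\lambda\tau)$ on either side of $\nu_mR^2/2$. The only difference is in the ellipsoid step: you bound $\lambda_{\min}(QH^{-1}Q^\top)$ using Loewner monotonicity and the shared nonzero spectrum with $H^{-1/2}Q^\top QH^{-1/2}$, while \cref{gp:prop-ellipsoid} uses a test vector orthogonal to the top $m-1$ eigenvectors; both express the same Courant--Fischer fact.
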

Take the maximum over all $1\le m\le d$ satisfying the displayed window.
Projection onto the $m$ least-curved eigendirections attains the radius
$r_m(\tau)$. The two containing sets, the radius-$R$ ball and the ellipsoid
$\{x:x^\top Hx\le2\tau\}$, give the reverse inequality. Moreover,
$Lr_m(\tau)^2/(\eps+\lambda\tau)$ increases up to
$\tau=\nu_mR^2/2$ and does not increase afterwards. Thus the same level
optimizes both the lower-bound expression and its validity window.
The full proof, including invariance under normalized affine tilts of
$\psi$, is in \cref{gp:app}.

\paragraph{Why the spectrum matters.}
For $H=\diag(I_n,AI_n)$, the choice $m=n$ retains $n$ directions at
regularization cost $\lambda R^2/2$, independently of the larger curvature
$A$. Replacing $H$ by the scalar majorant $AI$ charges the larger cost in
every direction. This distinction persists after optimizing both the image dimension
and the sublevel.
For the explicit family of \cref{gp:prop-gain}, let $A>7$, $\lambda>0$, and
\[
 \begin{gathered}
 D=2560,\quad b=D\sqrt{(A+1)/2},\quad n=\lceil b\rceil,\quad d=2n,\\
 R=1,\quad H=\diag(I_n,AI_n),\quad \eps=\lambda/2,\quad L=\lambda b^2.
 \end{gathered}
\]
Write $\mathcal L_{\mathrm{profile}}$ for the best actual-sublevel bound
and $\mathcal L_{\mathrm{majorant}}$ for the best bound using the balls
certified by $A\|x\|_2^2/2$, in both cases optimized over all admissible
dimensions and levels. \Cref{gp:prop-gain} gives
\[
 \mathcal L_{\mathrm{profile}}=\frac{nb}{4096},\qquad
 \mathcal L_{\mathrm{majorant}}=\frac{2nD}{4096},\qquad
 \frac{\mathcal L_{\mathrm{profile}}}{\mathcal L_{\mathrm{majorant}}}
 =\frac{\sqrt{A+1}}{2\sqrt2}.
\]
The comparison includes the dimension baseline, the saturation cap, and
both source windows. Hence the loss caused by the scalar majorant is
unbounded, even after its own dimension and sublevel are optimized.

\paragraph{The nonsmooth sublevel is different.}
For the same quadratic penalty and a globally Euclidean $M$-Lipschitz
unknown part $g$, \cref{gp:thm-nonsmooth} instead optimizes
$M^2r_m(\tau)^2/(\eps+\lambda\tau)^2$. For $\lambda>0$, its maximizing
level and the localized accuracy parameter are
\[
 \tau^*_{m,\mathrm{nonsmooth}}
 =\min\{\eps/\lambda,\nu_mR^2/2\},\qquad
 \sigma=\min\!\left\{m,\frac{M^2}{2\lambda\nu_m\eps}\right\}
 \quad\text{if }\eps/\lambda\le\nu_mR^2/2.
\]
The resulting bound is $T\ge c_Z\mathcal P(m,\sigma)$, provided
$m\ge m_Z^\circ$ and the source window in \cref{gp:thm-nonsmooth} holds.
\Cref{gp:prop-ns-gain} gives an improvement of at least $A/4$ over the
optimized scalar-majorant bound on an admissible two-level family.
The gains quantify the improvement in the lower-bound expression for
each fixed known-composite class. The smooth family varies $d$ and
$L/\lambda$ with $A$; the nonsmooth family varies $d$ and $M$.
Sharp complexity for general regularizers remains open.

\section{Why adaptive exact queries still have a cost}\label{sec:mechanism}
For the quadratic construction, one function value reveals one rank-one
linear measurement of a Wishart matrix $W$. Conditional on the seed and the
complete adaptive transcript, its law is the original density restricted to
the realized affine slice. This calculation applies to total Borel query
policies. The
sharp comparison in \cref{thm:affine} gives
$\E W^{-1}\preceq(1+(d+1)/(2\zeta))(\E W)^{-1}$ on every such slice.
Combined with an exact trace identity, it bounds the expected operator norm
of the posterior inverse in terms of the number of observations
(\cref{eq:posterior-main,eq:budget-main}). Successful optimization would
produce a sufficiently accurate estimate of $W^{-1}u$; the posterior budget
then forces the dimension--accuracy product.

The normalization is supported on valid positive-definite quadratics with
probability at least $49/50$. Three independent runs, each followed by one
validation query, give the correlation needed for the lower bound. All
$3(T+1)$ queries are counted. A separate spherical family gives the baseline
$5d/16$ and covers the small-parameter regime. The analytic comparison,
conditioning argument, and calibration are detailed in
\cref{app:mechanism-details,app:euclidean}.

The logarithmic result uses different instances: nested smooth perturbations
with independent spherical directions. Outside a selected child ball, deeper
perturbations vanish exactly. Entering a child requires recovering its
hidden direction, and a single query may enter several children. The proof in \cref{app:multiscale} accounts for these cascades to
obtain the dimension-times-depth lower bound.

\section{Saddle-point problems with unequal block dimensions}\label{sp:sec}
Consider the convex--concave saddle-point problem
$\min_{x\in X}\max_{y\in Y}F(x,y)$ with block dimensions $d_x,d_y$,
smoothness constants $L_x,L_y$, coupling constant $L_{xy}$, and strong
convexity and strong concavity moduli $\mu_x,\mu_y\ge0$. In the coupled
constructions below we write $C=L_{xy}$. A global saddle point is localized
by radii $R_x,R_y$; queries remain unrestricted. We distinguish the gap
\begin{equation}\label{sp:eq-main-gap}
 \operatorname{Gap}_{X,Y}(x,y)
 =\sup_{v\in Y}F(x,v)-\inf_{u\in X}F(u,y)
\end{equation}
on the compact balls $X=R_x\mathbb B_2^{d_x}$, $Y=R_y\mathbb B_2^{d_y}$, with
a feasible output, from the global gap, whose best responses range over the
full spaces. We state the reductions separately for these two error criteria. The success and budget
conventions are those of \cref{sec:results}; \cref{sp:app} gives the full model.

\begin{table}[!t]
\centering
\caption{Upper and lower bounds for convex--concave saddle-point problems.
See \cref{sp:app-comparison} for the oracle models and accuracy ranges.}
\label{sp:tab-main}
\begingroup
\small
\setlength{\tabcolsep}{3pt}
\renewcommand{\arraystretch}{1.13}
\begin{tabularx}{\linewidth}{@{}>{\raggedright\arraybackslash}p{.255\linewidth}
 >{\centering\arraybackslash}p{.335\linewidth}
 >{\raggedright\arraybackslash}X@{}}
\toprule
Setup and gap & Lower bound on $T$ & Upper bound / source \\
\midrule
Smooth block class; compact gap &
$\displaystyle\max_i\mathcal B_i(\eps)$ &
$O((d_x+d_y)LD^2/\eps)$\par
\citet{SadievEtAl2021} \\
Bilinear-affine; compact gap &
$\displaystyle\Omega\!\left(r\min\left\{r,\sqrt{\frac{CR_xR_y}{\eps}}\right\}\right)$ &
$d_y(d_x+1)$\par
\Cref{sp:prop-bilinear-upper} \\
Known diagonal Hessians; global gap &
$\Omega(r\min\{r,\sqrt\chi\})$ &
$1+d_x+d_y+d_xd_y$\par
\Cref{sp:prop-known-diagonal-upper} \\
\bottomrule
\end{tabularx}
\smallskip
\emph{Mixed access: scalar values and arbitrary additional $\nabla_xF$ queries.}\par
\smallskip
\begin{tabularx}{\linewidth}{@{}>{\raggedright\arraybackslash}p{.275\linewidth}
 >{\centering\arraybackslash}p{.465\linewidth}
 >{\raggedright\arraybackslash}X@{}}
\toprule
Setup and error & Lower bound on $N_0$ & Result \\
\midrule
Blockwise; compact or global pair gap &
$\mathcal B_y(\eps)$ &
\Cref{sp:thm-mixed} \\
Known coupling; outer objective error &
$\displaystyle\Omega\!\left(r\min\left\{r,\sqrt{\kappa_0},\sqrt{\frac{s\rho^2}{\eps}}\right\}\right)$ &
\Cref{sp:thm-mixed-coupled} \\
Known rectangular coupling; global pair gap &
$\displaystyle\Omega\!\left(d_y\min\left\{d_y,\sqrt{\frac{L_y}{\mu_y+s}},\sqrt{\frac{L_y\rho^2}{\eps}}\right\}\right)$ &
\Cref{sp:thm-mixed-full} \\
\bottomrule
\end{tabularx}
\smallskip
\begin{minipage}{\linewidth}
\scriptsize
$r=\min\{d_x,d_y\}$ and $\chi=C/\sqrt{\mu_x\mu_y}$. The block maximum ranges over admissible indices in \eqref{sp:eq-main-blocks}.
For the first upper bound, $L$ bounds the full saddle operator and $D$ is
its Euclidean diameter parameter. The next two upper bounds count coefficient-recovery
queries. When $d_x=d_y=d$, they match the respective lower bounds at
$\Theta(d^2)$ for $\eps\le CR_xR_y/(32d^2)$ and $\chi\ge4d^2$,
within \eqref{sp:eq-main-bilinear} and \eqref{sp:eq-main-strong}.
In the mixed rows, $s=c^2/\alpha$, $\rho=\min\{R_y,\alpha R_x/c\}$,
and $\kappa_0=(L_y+s)/(\mu_y+s)$; the last row requires $\mu_y+s\le L_y$.
Lower bounds hold with success probability $2/3$ in the cited windows.
\end{minipage}
\endgroup
\end{table}

\paragraph{Block costs and mixed information.}
Let $\mathcal B_i(\eps)$ be the minimization bound \eqref{sp:eq-block-budget}
for $(d_i,L_i,\mu_i,R_i)$, defined when $\eps\le L_iR_i^2/32$.
Embedding an unknown objective in one block and a known quadratic in
the other gives, in either gap model,
\begin{equation}\label{sp:eq-main-blocks}
 T\ge\max_{i:\,\eps\le L_iR_i^2/32}\mathcal B_i(\eps),
\end{equation}
where an empty maximum is zero. With arbitrary additional $\nabla_xF$
queries, pair-gap accuracy requires $N_0\ge\mathcal B_y(\eps)$ on its
window: for $F(x,y)=\mu_x\|x\|_2^2/2-g(y)$, these gradients are independent
of $g$ (\cref{sp:thm-blocks,sp:thm-mixed}). Every returned scalar value,
including one bundled with a gradient, counts toward $N_0$.

\paragraph{The cost of coupling.}
Write $r=\min\{d_x,d_y\}$. The first construction has vanishing diagonal
Hessians.
\begin{theorem}[Compact bilinear games]\label{sp:main-bilinear}
For $F(x,y)=y^\top Ax-b^\top y$ on the balls of radii $R_x,R_y$, with
$\|A\|_{\mathrm{op}}\le C$, every exact-value algorithm satisfying the
compact-gap guarantee requires
\begin{equation}\label{sp:eq-main-bilinear}
 T\ge\max\left\{\frac{5r}{32},\frac r{8192}
 \min\left\{r,\sqrt{\frac{CR_xR_y}{\eps}}\right\}\right\},
 \qquad 0<\eps\le CR_xR_y/32.
\end{equation}
Every hard instance has a global saddle point in the prescribed balls.
\end{theorem}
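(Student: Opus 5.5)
The plan is to reduce \cref{thm:main}, in dimension $r$, to the bilinear game. I will embed a hard convex quadratic into the coupling of $F(x,y)=y^\top Ax-b^\top y$.

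Assume without loss of generality $r=d_x\le d_y$. Otherwise swap the roles of the players, using $-F$ and transposing $A$. Pad $y$ with coordinates that do not enter $F$, so that effectively $d_x=d_y=r$.

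The hard family is built as follows.
\begin{enumerate}
\item Take a hard quadratic $f(u)=\tfrac12 u^\top Hu-c^\top u$ from the proof of \cref{thm:main} on $\R^r$, with $0\preceq H\preceq L I$ and minimizer $u^\star=H^{-1}c\in\mathbb B_2^r(R)$.
\item Factor $H=S^2$ with $S$ symmetric positive semidefinite.
\item Set $A=\alpha S$ and $b=\beta Su^\star=\beta S^{-1}c$, with scalings chosen so that $\|A\|_{\mathrm{op}}\le C$.
\end{enumerate}
A value query $F(x,y)=y^\top Ax-b^\top y$ is a linear measurement of the pair $(S,b)$. This is not directly a query to $f$, so I will not try to simulate the $f$-oracle. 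Instead, I will reuse the information argument of \cref{sec:mechanism}.

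The point is that, for symmetric $A$, the bilinear form $y^\top Ax$ is determined by the symmetric part of $yx^\top$. Hence one query reveals one symmetric measurement $\langle A,\Sym(yx^\top)\rangle$ plus the linear term $b^\top y$. Each such measurement is a rank at most two linear functional of a Wishart-type matrix, which costs at most two rank-one measurements. The affine-slice comparison (\cref{thm:affine}) and the posterior trace identity apply to arbitrary linear measurements. They therefore bound the posterior accuracy of estimates of $A^{-1}b$ after $T$ queries, with $2T$ in place of $T$. I expect this factor, together with the gap-to-estimation conversion below, to turn $\Cstar$ into $8192$ and $5/16$ into $5/32$.

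Next I convert success into an estimation guarantee. The compact gap at a feasible output $(\hat x,\hat y)$ equals
\[
R_y\|A\hat x-b\|_2+R_x\|A^\top\hat y\|_2+b^\top\hat y .
\]
When the saddle point $x^\star=A^{-1}b$ lies in the interior of $X$, a small gap forces $\|A(\hat x-x^\star)\|_2\le\eps/R_y$. The quantity $\|A(\hat x-x^\star)\|_2$ plays the role of $\sqrt{f(\hat x)-\min f}$ in the minimization proof.

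Matching scales requires the following calibration. The spectrum of $A$ is the square root of that of $H$, with $\|A\|\le C$ and $\|x^\star\|\le R_x/2$. An error $\|A\hat e\|\le\eps/R_y$ corresponds to quadratic accuracy $\|A\hat e\|^2\approx \eps^2/R_y^2$ in the induced quadratic. With $L=C^2$ and radius $R_x$, the ratio $LR^2/\eps_{\mathrm{quad}}$ becomes $C^2R_x^2R_y^2/\eps^2$, which would give $\sqrt{\cdot}=CR_xR_y/\eps$. That is too strong, so the calibration must instead use $A=\alpha S$ with $S^2$ the hard Hessian. In that parametrization the relevant recovery is of $S^{-1}$ applied to a vector, and the condition scale $(k+1)^2$ of \cref{thm:main} enters as $k+1$. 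This halving of the exponent yields $\sqrt{CR_xR_y/\eps}$.

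\emph{Main obstacle.} The hardest step is exactly this calibration. I need to show that the Wishart posterior bound controls $A^{-1}$ applied to a known direction. That requires estimating $\E W^{-1/2}$ on affine slices rather than $\E W^{-1}$. I would first try to avoid square roots by instead taking $A=W$ itself, with the hard Wishart of \cref{thm:main} rescaled to spectrum in $[C/(50(k+1)^2),C]$. I would then choose $k\le\sqrt{CR_xR_y/\eps}/8$ so that the required resolution of $W^{-1}u$ matches the posterior budget exactly as in \cref{sec:mechanism}.

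The remaining steps are routine.
\begin{enumerate}
\item Check that the saddle point $(W^{-1}b,0)$ lies in the balls with probability $49/50$.
\item Obtain the baseline $5r/32$ by a spherical family $b=\beta v$, $A=CI$. Recovering $v$ to constant correlation costs $5r/16$ scalar measurements, at two measurements per query.
\item Carry out the three-run validation step verbatim, counting all queries.
\end{enumerate}
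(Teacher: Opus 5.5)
There is a genuine gap, and it sits at the step you call the main obstacle. You never establish how a compact gap of $\eps$ turns into source accuracy at scale $\sqrt{CR_xR_y/\eps}$.

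Your heuristic treats $\|A\hat e\|_2^2$ as the induced quadratic error. That is the error of $\tfrac12\|Ax-b\|_2^2$, whose Hessian $A^\top A$ is not measured linearly by bilinear values; this is why you get the impossible rate $CR_xR_y/\eps$.

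The square-root repair $A=\alpha S$ with $S^2=H$ also fails. Values are then linear in $W^{1/2}$, not in $W$, so neither \cref{thm:affine} nor the posterior identities apply, and the ``halving of the exponent'' has no support.

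Your fallback $A\propto W$ is the right object, but the remaining steps are not carried out:
\begin{itemize}
\item You assert $k\le\sqrt{CR_xR_y/\eps}/8$ rather than deriving it; the $8$ looks borrowed from the cap $k\le\sqrt\kappa/8$ of \cref{cor:strong}.
\item The validation step cannot be copied verbatim, because $F(\hat x,\hat y)$ does not rank candidates by their gap.
\item Nothing in the direct re-run fixes the constants $8192$ and $5/32$.
\end{itemize}

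The missing idea is a \emph{linear} gap-to-suboptimality conversion. It comes with the observation that, once $A$ is proportional to the source Hessian itself, the target oracle is simulable, so \cref{thm:main} can be used as a black box.

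Take a positive-definite source $q(z)=\tfrac12z^\top Hz-h^\top z+c_0$ with $H\preceq I$ and $z^\star=H^{-1}h\in\mathbb B_2^r(R_x)$, and fixed isometries $U:\R^r\to\R^{d_x}$, $V:\R^r\to\R^{d_y}$. Define
\[
 F_q(x,y)=\tfrac C2\bigl[q(z+w)-q(z-w)\bigr]=C\bigl(z^\top Hw-h^\top w\bigr),\qquad z=U^\top x,\quad w=V^\top y .
\]
Your own splitting of a rank-two measurement into two rank-one measurements is exactly this polarization, so a bilinear value \emph{is} two source values.

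From $h^\top w=(z^\star)^\top Hw\ge-R_x\|Hw\|_2$ you get $\operatorname{Gap}_{X,Y}\ge CR_y\|H(z-z^\star)\|_2$, which is your bound on $\|A\hat e\|_2$. Feasibility gives $\|z-z^\star\|_2\le2R_x$, hence
\[
 q(z)-q(z^\star)=\tfrac12(z-z^\star)^\top H(z-z^\star)\le R_x\|H(z-z^\star)\|_2\le\frac{R_x}{CR_y}\operatorname{Gap}_{X,Y}(x,y).
\]
A $T$-query saddle solver therefore yields a $2T$-query source algorithm at accuracy $\delta=R_x\eps/(CR_y)$. Apply \cref{thm:main} in dimension $r$ with $L=1$ and $R=R_x$:
\begin{itemize}
\item its window $\delta\le R_x^2/32$ is exactly $\eps\le CR_xR_y/32$;
\item its accuracy factor is $\sqrt{R_x^2/\delta}=\sqrt{CR_xR_y/\eps}$;
\item halving \eqref{eq:main} gives exactly $5r/32$ and $r/8192$.
\end{itemize}
No reopening of the Wishart posterior, the three-run validation, or the spherical baseline is needed.

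Separately, your WLOG swap via $-F$ leaves the class, because the linear term would move to the minimizing player. It is also unnecessary, since $U,V$ handle either ordering of $d_x,d_y$.
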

Polarization simulates one target value with two source values and
transfers compact-gap accuracy to source suboptimality
(\cref{thm:bilinear-compact}). \Cref{sp:tab-main} compares the bound with
coefficient recovery and gives the matched $d^2$ regime.

\begin{theorem}[Known isotropic diagonal Hessians]\label{sp:main-strong}
Let $\mu_x,\mu_y>0$, $\chi=C/\sqrt{\mu_x\mu_y}\ge4$, and
$\rho=\min\{R_x,4\sqrt{\mu_y/\mu_x}R_y\}$.
For quadratics with known Hessians $\nabla^2_{xx}F=\mu_xI$,
$\nabla^2_{yy}F=-\mu_yI$, coupling norm at most $C$, and the prescribed saddle
localization, the global-gap guarantee requires
\begin{equation}\label{sp:eq-main-strong}
 T\ge\max\left\{\frac{5r}{32},\frac r{8192}
              \min\{r,\tfrac12\sqrt\chi\}\right\},
 \qquad 0<\eps\le\mu_x\rho^2/8.
\end{equation}
For the compact gap, use $\rho=\min\{R_x,2\sqrt{\mu_y/\mu_x}R_y\}$ and
additionally require $\eps\le\mu_yR_y^2/8$. The statement with the blocks
interchanged holds with $R_x,\mu_x$ and $R_y,\mu_y$ exchanged throughout.
\end{theorem}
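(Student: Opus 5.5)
We prove \cref{sp:main-strong} by reduction from a strongly convex quadratic minimization problem to which \cref{cor:strong} applies, much as \cref{sp:main-bilinear} reduces from \cref{thm:main}. Take $r=\min\{d_x,d_y\}$ and embed an $r\times r$ block. Consider
\[
F(x,y)=\tfrac{\mu_x}{2}\|x\|_2^2+y^\top Ax-b^\top y-\tfrac{\mu_y}{2}\|y\|_2^2 ,
\]
with $A=\sqrt{\mu_x\mu_y}\,\widetilde A$, where $\widetilde A$ is an unknown $r\times r$ matrix of operator norm at most $\chi$, padded by zeros. Maximizing in $y$ gives $y^*(x)=(Ax-b)/\mu_y$ and the outer objective
\[
\phi(x)=\tfrac{\mu_x}{2}\|x\|_2^2+\tfrac{1}{2\mu_y}\|Ax-b\|_2^2 .
\]
This is a quadratic with Hessian $\mu_x(I+\widetilde A^\top\widetilde A)$, so its condition number can be any value up to $1+\chi^2$. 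To obtain the $\sqrt\chi$ rather than $\chi$ cap, I would instead take $\widetilde A$ symmetric positive semidefinite, with $\widetilde A=W^{1/2}$-type structure. Then the relevant quantity behaves like an inverse of $I+\widetilde A^2$, whose effective conditioning in the hard family is governed by $\sqrt\chi$ after the scaling $k\le\sqrt\chi/8$ of the hard quadratic from \cref{cor:strong}. Concretely, I would take the hard instance $W$ from \cref{cor:strong} (Hessian between $LI/[50(k+1)^2]$ and $LI$) and set $\widetilde A^\top\widetilde A\approx W/\mu_x-I$, choosing $L\asymp\mu_x\chi$. That gives the condition cap $\sqrt{L/\mu}\asymp\tfrac12\sqrt\chi$ at accuracy low enough that the $\sqrt{LR^2/\eps}$ term is inactive.

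\textbf{Query simulation.} This is the main obstacle. One value of $F$ at $(x,y)$ involves the bilinear form $y^\top Ax$, not the value $\phi$ of the target. I would therefore reparametrize so that the unknown is a symmetric matrix $S=\widetilde A$ itself, and the source problem is minimizing a quadratic with Hessian built from $S$. The form $y^\top Sx$ is recovered from two values of the source quadratic $\tfrac12 z^\top Sz-c^\top z$ via polarization, at $z=x+y$ and $z=x-y$, together with known terms. This gives the factor $2$ behind the constants $5r/32$ and $8192=2\Cstar$. Concretely, I would choose the source objective so that its Hessian is the unknown $S$ (from \cref{cor:strong} on $\R^r$, with $\mu$ and $L$ scaled by $\sqrt{\mu_x\mu_y}$), and set the saddle instance so that its solution $x^*$ is an explicit affine image of $S^{-1}c$-type quantities.

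\textbf{Transferring accuracy.} For the global gap, I would use the identity $\operatorname{Gap}(x,y)=\phi(x)-\phi^*+(\psi^*-\psi(y))$, which dominates $\tfrac{\mu_x}{2}\|x-x^*\|^2$. Strong convexity of the source then converts a gap at most $\eps$ into source suboptimality at most a constant multiple of $\eps$, in the window $\eps\le\mu_x\rho^2/8$. Localization is handled by $\rho$: the source minimizer lies in radius $\rho$, which forces $x^*\in R_x\mathbb B$, and $y^*=(Ax^*-b)/\mu_y$ has norm at most $\chi\sqrt{\mu_x/\mu_y}\rho$-type bounds. These are controlled by the choice of $b$, and the factor $4$ in $\rho$ accommodates this. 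For the compact gap, best responses are restricted to the balls. I would verify that the unconstrained best responses stay inside under the stronger radius factor $2$ and the extra condition $\eps\le\mu_yR_y^2/8$, so that the compact gap coincides with the global gap on the relevant outputs. The block-interchanged statement follows by symmetry $F\mapsto-F(y,x)$. The baseline $5r/32$ comes from the $5d/16$ term of \cref{cor:strong} halved by the two-to-one simulation.
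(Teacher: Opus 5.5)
Your final construction is the paper's: the coupling is $C$ times the unknown source Hessian, realized by polarization $\frac C2[q(U^\top x+V^\top y)-q(U^\top x-V^\top y)]$ at two source values per target value. That factor two is indeed where $5r/32$ and $8192=2\Cstar$ come from. The gap is in the accuracy and localization transfer, which is exactly where the $\frac12\sqrt\chi$ cap originates, and your outline does not supply it.

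\emph{The saddle point is biased.} Your claim that $x^*$ is an explicit affine image of $S^{-1}c$ is false for this family. Solving the two stationarity equations gives the primal shared coordinate $s=(I+\chi^{-2}H^{-2})^{-1}z^\star$ with $z^\star=H^{-1}h$, which depends nonlinearly on the unknown $H$. So even an exact saddle decodes to a source point with error $\frac12\|s-z^\star\|_H^2\le\frac12\rho^2/(\chi^4\eta^3)$ when $\eta I\preceq H\preceq I$. This bias does not shrink with $\eps$, and it fits inside a source tolerance of order $\eta\rho^2$ only if $\eta\gtrsim1/\chi$. That requirement is what limits the source condition number to order $\chi$ and hence the cap to order $\sqrt\chi$.

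\emph{The gap lower bound is too weak.} Bounding the gap below by $\frac{\mu_x}2\|x-x^*\|_2^2$ loses a factor $\chi$. At $\eps=\mu_x\rho^2/8$ it only yields source error up to $\rho^2/8$, outside even the window of \cref{cor:strong}. A cap of order $\sqrt\chi$ needs source accuracy of order $\rho^2/\chi$. Note also that it is the smoothness $H\preceq I$, not strong convexity of the source, that turns distance into suboptimality.

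\emph{The missing argument.} Keep the full primal Hessian, $\operatorname{Gap}\ge\frac12(z-s)^\top(\mu_xI+\frac{C^2}{\mu_y}H^2)(z-s)$, and use $H^2\succeq\eta H$ to get $\|z-s\|_H^2\le2\eps/(\mu_x\chi^2\eta)$. Then fix the source floor $\eta=4/\chi$. This makes $\|z-s\|_H^2\le\eta\rho^2/64$ and $\|s-z^\star\|_H^2\le\eta\rho^2/256$, so the decoded error $q(z)-q(z^\star)$ stays below $\delta=\eta\rho^2/32$. Applying \cref{cor:strong} with smoothness $1$, modulus $\eta$ and radius $\rho$ gives the cap $\eta^{-1/2}=\frac12\sqrt\chi$, since the accuracy term $\sqrt{\rho^2/\delta}=\sqrt{32/\eta}$ is larger.

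\emph{Localization.} The same inequality $\chi\eta\ge4$ localizes the dual component $t=-\frac{\mu_x}{C}H^{-1}s$: $\|t\|_2\le\mu_x\rho/(C\eta)\le\frac14\sqrt{\mu_x/\mu_y}\,\rho\le R_y$. This is why $\rho$ carries the factor $4$. The crude bound $\chi\sqrt{\mu_x/\mu_y}\,\rho$ you mention would not be absorbed by that factor.

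\emph{Your first construction.} Your heuristic through $\widetilde A^\top\widetilde A\approx W/\mu_x-I$ cannot replace this step. Its values are bilinear in a square root of $W$, so they cannot be simulated from values of a quadratic with Hessian $W$.

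\emph{Compact gap.} This part of your plan is in line with the paper. \Cref{sp:lem-compact-response} shows that $\|t\|_2\le R_y/2$ (hence the factor $2$) and a compact gap at most $\mu_yR_y^2/8$ force the unrestricted maximizing response to be feasible. The compact gap then dominates the same primal error.
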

The unknown coupling moves the saddle point even when both within-block
condition numbers equal one. A source curvature floor gives the
$\tfrac12\sqrt\chi$ cap
(\cref{thm:known-diagonal-coupling,cor:known-diagonal-compact}).
Coefficient recovery matches the $d^2$ plateau in \cref{sp:tab-main}.

\paragraph{Outer accuracy under mixed access.}
Partial gradients expose the unknown coupling in the two constructions above.
For mixed access we therefore fix a known coupling of rank $r$ and vary an
inner quadratic instead. Put $\alpha=L_x=\mu_x>0$, $c>0$, $s=c^2/\alpha$,
$\kappa_0=(L_y+s)/(\mu_y+s)$, and $\rho=\min\{R_y,\alpha R_x/c\}$, with
$L_y\ge\mu_y>0$. Even with arbitrarily many $\nabla_xF$ calls, an output with
$\Phi(\widehat x)-\min\Phi\le\eps$, where $\Phi(x)=\max_{y\in\R^{d_y}}F(x,y)$,
requires
\[
 N_0=\Omega\!\left(r\min\left\{r,\sqrt{\kappa_0},
                      \sqrt{s\rho^2/\eps}\right\}\right),
 \qquad 0<\eps\le s\rho^2/32.
\]
\Cref{sp:thm-mixed-coupled} proves this statement and its pair-gap version.
A second pair-gap construction retains the full inner dimension $d_y$ when
$\mu_y+s\le L_y$, including when $d_x\ll d_y$ (\cref{sp:thm-mixed-full}).
These bounds isolate the scalar cost of recovering the unknown
inner solution. \Cref{sp:app-comparison} compares the information models
and discusses the remaining gaps.

\section{Matching non-Euclidean gradient-free rates}\label{sec:matching}
The geometric lower bounds match known gradient-free upper bounds in two
classes. Fix $1\le p\le2$ and $q=p/(p-1)$, with $q=\infty$ at $p=1$.
In each statement, $\Tval_\eps$ denotes the minimax number of exact scalar
values for the stated class, under a deterministic worst-case budget and
success probability at least $2/3$. The notation $\widetilde\Theta$ means
matching up to logarithmic factors. The regularity constants are
\emph{Euclidean}: $L_2$ and $M_2$.

\begin{corollary}[Smooth functions with non-Euclidean localization]\label{match:cor-smooth}
Consider globally convex $L_2$-smooth functions on $\R^d$ with a global
minimizer in $\mathbb B_p^d(R_p)$. Queries and output are unrestricted.
For every $\eps$ satisfying
\begin{equation}\label{match:eq-main-smooth-window}
 \frac{L_2R_p^2}{d^{1+2/p}}\le\eps
 \le\frac{L_2R_p^2}{32d^{2/p-1}}
\end{equation}
the minimax complexity satisfies
\begin{equation}\label{match:eq-main-smooth-rate}
 \Tval_\eps=\widetilde\Theta\!\left(
 d^{1/2+1/q}\sqrt{\frac{L_2R_p^2}{\eps}}\right).
\end{equation}
The lower bound has no logarithmic loss. At $p=2$, the bounds match up to absolute constants.
\end{corollary}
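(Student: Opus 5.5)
The lower bound will follow from \cref{gp:thm-norm} with $B=\mathbb B_2^d$, $K=\mathbb B_p^d(R_p)$ and $m=d$. The upper bound is the accelerated $\ell_1$/$\ell_p$-setup method of \citet{GorbunovDvurechenskyGasnikov2022}, which we quote.

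\emph{Step 1 (radius).} By \eqref{geom:eq-certificate} with $\beta=2$ and $\alpha=p\le2$, we have $\theta=1/p-1/2$. The full-dimensional radius is therefore
\[
 r_d=\mathcal R_d(\mathbb B_2^d,\mathbb B_p^d(R_p))=R_p\,d^{1/2-1/p}.
\]
A direct check confirms this. The identity map gives the inscribed Euclidean ball of radius $R_pd^{1/2-1/p}$. Conversely, any $Q$ with $\|Q\|_{2\to2}\le1$ maps $\mathbb B_p^d(R_p)$ into $Q(\mathbb B_2^d(R_pd^{1/2-1/p}))\cdots$. We only need the stated formula, which the paper certifies as exact.

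\emph{Step 2 (lower bound).} The upper end of \eqref{match:eq-main-smooth-window} is $\eps\le L_2R_p^2d^{1-2/p}/32=L_2r_d^2/32$, which is exactly the validity condition of \cref{gp:thm-norm} at $m=d$. The lower end of the window reads $\eps\ge L_2R_p^2d^{-1-2/p}=L_2r_d^2/d^2$. This makes $\sqrt{L_2r_d^2/\eps}\le d$, so the minimum in \eqref{gp:eq-norm} selects the accuracy branch. Hence
\[
 T\ge\frac d{\Cstar}\sqrt{\frac{L_2R_p^2d^{1-2/p}}{\eps}}
 =\frac{1}{\Cstar}\,d^{3/2-1/p}\sqrt{\frac{L_2R_p^2}{\eps}}.
\]
Since $1/q=1-1/p$, the exponent is $3/2-1/p=1/2+1/q$. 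This gives the lower bound in \eqref{match:eq-main-smooth-rate} with no logarithmic factor and an explicit constant. At $p=2$ it reduces to \cref{cor:general-moderate}.

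\emph{Step 3 (upper bound).} Under Euclidean $L_2$-smoothness with an $\ell_p$ prox setup, the zero-order accelerated method has complexity $\widetilde O(d^{1/2+1/q}\sqrt{L_2R_p^2/\eps})$. The $\ell_1$ case uses an $\ell_{1+1/\log d}$ setup and loses a $\log d$ factor. Finite differences replace the random directional estimates, since values are exact. I must verify three things: the method works with a minimizer merely localized in the $\ell_p$ ball (via a projection or restriction to a slightly enlarged ball), the budget is fixed in advance, and success holds with probability $2/3$ by Markov's inequality on expected error.

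The main obstacle is Step 3's bookkeeping. One issue is matching the cited method's assumptions (constrained versus unconstrained problems, noise level from finite steps) to this exact-value model. The other is checking that the logarithms appear only there. At $p=2$ I instead invoke \cref{cor:general-moderate}, whose window coincides with \eqref{match:eq-main-smooth-window} and gives the match up to constants.
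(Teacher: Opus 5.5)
Your proposal is correct and follows the paper's proof: \cref{gp:thm-norm} at $m=d$, $\beta=2$, $\alpha=p$ with radius $R_pd^{1/2-1/p}$, the right endpoint of the window as the validity condition and the left endpoint as the condition keeping the bound below its $d^2$ cap, the accelerated method of \citet{GorbunovDvurechenskyGasnikov2022} with zero noise for the upper bound, and \cref{cor:general-moderate} at $p=2$. Two small cautions. First, your unfinished converse in Step~1 has the inclusion backwards for $p<2$ ($\mathbb B_p^d(R_p)$ contains $\mathbb B_2^d(R_pd^{1/2-1/p})$); this is harmless, since only the identity certificate is needed and \eqref{gp:eq-norm} is nondecreasing in the radius. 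Second, in Step~3 the finite differences must be taken along the method's random directions, because coordinatewise difference gradients would only give order $d\sqrt{L_2R_p^2/\eps}$; no projection is needed, as the $\ell_t$ prox of \cref{g:lem-prox} gives $V[0](x^\star)\le a_{p,d}R_p^2/2$.
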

\begin{proof}
Use $m=d$, $\beta=2$, and $\alpha=p$ in
\cref{gp:thm-norm,geom:eq-certificate}. The left endpoint in
\eqref{match:eq-main-smooth-window} keeps the lower bound below its $d^2$
cap; the right endpoint is its validity condition. The accelerated
gradient-free upper bound of
\citet{GorbunovDvurechenskyGasnikov2022}, with both noise terms
zero, has the same polynomial order. \Cref{app:matching} gives the prox
normalization and finite-difference choice. At $p=2$, apply
\cref{cor:general-moderate}.
\end{proof}

\begin{corollary}[Nonsmooth functions on a non-Euclidean feasible ball]\label{match:cor-nonsmooth}
Consider globally convex, Euclidean $M_2$-Lipschitz functions on $\R^d$,
minimized over $\mathbb B_p^d(R_p)$. Queries are unrestricted, but the output
must be feasible. For $d\ge m_Z^\circ$ and every $\eps$ satisfying
\begin{equation}\label{match:eq-main-nonsmooth-window}
 \frac{M_2R_p}{d^{1/p}}\le\eps
 \le\frac{\eps_ZM_2R_p}{d^{1/p-1/2}},
\end{equation}
the minimax complexity satisfies
\begin{equation}\label{match:eq-main-nonsmooth-rate}
 \Tval_\eps=\widetilde\Theta\!\left(
 d^{2/q}\frac{M_2^2R_p^2}{\eps^2}\right).
\end{equation}
Here $m_Z^\circ$ and $\eps_Z$ are the constants of
\cref{gp:thm-nonsmooth}.
\end{corollary}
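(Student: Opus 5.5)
The plan is to read off the lower bound from \cref{gp:thm-nonsmooth} with $\lambda=0$, $B=\mathbb B_2^d$, $K=\mathbb B_p^d(R_p)$, $m=d$, and to compare the result with the two-point method of \citet{Shamir2017} for the upper bound.

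\textbf{Radius.} By \eqref{geom:eq-certificate} with $\beta=2$ and $\alpha=p\le2$, we get $\theta=1/p-1/2$, so
\[
 r_d=\mathcal R_d(\mathbb B_2^d,\mathbb B_p^d(R_p))=R_pd^{1/2-1/p}.
\]
The identity map is not the extremal $Q$ here. It is the Euclidean inscribed ball of $\mathbb B_p^d(R_p)$ with $\|Q\|_{2\to2}\le1$, and $z=0$ is optimal by symmetry (\cref{gp:lem-attainment}). Since $\lambda=0$, the sublevel $K_\tau$ equals $K$, and we may take $\tau=0$. The source window $\eps\le\eps_ZM_2r_d$ is then exactly the right endpoint of \eqref{match:eq-main-nonsmooth-window}.

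\textbf{Lower bound.} \Cref{gp:thm-nonsmooth} gives $T\ge c_Z\mathcal P(d,\sigma)$ with
\[
 \sigma=\min\{d,\,M_2^2R_p^2d^{1-2/p}/\eps^2\}.
\]
The left endpoint $\eps\ge M_2R_pd^{-1/p}$ is equivalent to $M_2^2R_p^2d^{1-2/p}/\eps^2\le d$, so the cap $d$ is inactive. Using $1-2/p=2/q-1$, we get $d\sigma=d^{2/q}M_2^2R_p^2/\eps^2$. The factor $\log(e\sigma)\le\log(ed)$ is logarithmic, which gives $\widetilde\Omega$ of the claimed rate. We also need $\sigma\ge1$, which requires $M_2R_pd^{1/2-1/p}\ge\eps$. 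This follows from the right endpoint provided $\eps_Z\le1$; we may shrink $\eps_Z$ if necessary, or else handle it by the trivial bound.

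\textbf{Upper bound.} The two-point randomized method of \citet{Shamir2017}, run with mirror descent under an $\ell_p$-adapted prox function on the feasible ball, uses $\widetilde O(d^{2/q}M_2^2R_p^2/\eps^2)$ values. This is the general-$p$ form of the bound: the $\ell_q$-moment of the spherical gradient estimator scales like $d^{1/2+1/q}$ times $M_2$ up to logarithms, and this combines with the $\ell_p$ prox diameter. The outputs are feasible because mirror descent iterates stay in $K$. A success probability of $2/3$ follows from the expectation bound via Markov's inequality, at the cost of a constant factor. The fixed budget is set in advance from the known parameters.

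\textbf{Main obstacle.} I expect the main obstacle to be checking that Shamir's rate, which is stated with $\ell_p$ geometry and smoothing, matches $d^{2/q}$ up to logarithms uniformly on the window. This includes the case $p=1$, where the prox setup carries a $\log d$ factor. I will defer the exact constants to \cref{app:matching}.
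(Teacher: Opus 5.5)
Your lower-bound half is exactly the paper's argument. You apply \cref{gp:thm-nonsmooth} with $\lambda=0$, $m=d$ and $r_d=R_pd^{1/2-1/p}$. The right endpoint of \eqref{match:eq-main-nonsmooth-window} is its source window, and the left endpoint removes the cap, so $d\sigma=d^{2/q}M_2^2R_p^2/\eps^2$ with only the $\log(e\sigma)\le\log(ed)$ loss. The upper half, like the paper's, is \citet[Theorem~1]{Shamir2017} with an $\ell_p$-adapted prox.

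What is wrong is your explanation of why that theorem yields $d^{2/q}$, and this is exactly the point your ``main obstacle'' turns on. Consider the estimator $\tilde g=\frac d{2\delta}\bigl(g(x+\delta u)-g(x-\delta u)\bigr)u$. An $\ell_q$-size of $d^{1/2+1/q}M_2$ is what the worst-case bound $|g(x+\delta u)-g(x-\delta u)|\le2\delta M_2$ gives. Combined with a prox range of order $R_p^2$, that would give $d^{1+2/q}M_2^2R_p^2/\eps^2$ iterations, a factor $d$ above the lower bound, and nothing would match.

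Shamir's key step is different. The map $u\mapsto g(x+\delta u)-g(x-\delta u)$ is odd and $2\delta M_2$-Lipschitz on the sphere, so it concentrates at scale $\delta M_2/\sqrt d$. Cauchy--Schwarz then gives $\E\|\tilde g\|_q^2\lesssim dM_2^2\sqrt{\E\|u\|_q^4}\asymp\bar q\,d^{2/q}M_2^2$, where $\bar q=\min\{q,\log(ed)\}$. So the $\ell_q$-size is $d^{1/q}M_2$ up to $\sqrt{\bar q}$. Use the prox of \cref{g:lem-prox}, whose range on the ball is at most $a_{p,d}R_p^2/2\lesssim\bar qR_p^2$. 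This gives $O(\bar q^2d^{2/q}M_2^2R_p^2/\eps^2)$ two-point steps, which is the claimed $\widetilde O$ bound, including the $\log d$ you anticipate at $p=1$.

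You should also note that the perturbed points $x\pm\delta u$ may leave $\mathbb B_p^d(R_p)$. This is legitimate only because $g$ is globally defined and queries are unrestricted; the averaged output still remains feasible.

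There are two smaller slips. First, the identity \emph{is} the extremal certificate. For $\beta=2$ the map of \cref{gp:prop-full-lp} is $d^0I=I$, and $R_pd^{1/2-1/p}\mathbb B_2^d$ is precisely the Euclidean inscribed ball of $\mathbb B_p^d(R_p)$. Second, you may not shrink $\eps_Z$, because the window in the statement uses the constant of \cref{gp:thm-nonsmooth}. Nor do you need to: $\eps_Z=\sqrt{a_Z/2}\le1/(2\sqrt2)$ by definition, and the proof of that theorem already derives $a_Z\sigma\ge2$ from the dimension and window hypotheses.
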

\begin{proof}
Set $\lambda=0$ in \cref{gp:thm-nonsmooth} and use the full-dimensional
image in \eqref{geom:eq-certificate}. In
\eqref{match:eq-main-nonsmooth-window}, the accuracy term is unsaturated.
The lower bound retains the logarithmic denominator in $\mathcal P$ and
uses the Euclidean information theorem of \citet{ZhangZhangQiLin2026}.
The two-point mirror-descent upper bound of
\citet[Theorem~1]{Shamir2017} has the same polynomial order;
\cref{app:matching} gives the norm-matched prox and query-domain details.
\end{proof}

\paragraph{Dimension dependence and accuracy regimes.}
At $p=1$, the smooth rate has a $\sqrt d$ factor, whereas the nonsmooth
rate has no polynomial dimension factor. At $p=2$, both have a $d$ factor.
These statements concern $\ell_p$ localization or feasibility under
Euclidean regularity; replacing $L_2,M_2$ by $L_p,M_p$ changes the class.
At the lower endpoints of \eqref{match:eq-main-smooth-window} and
\eqref{match:eq-main-nonsmooth-window}, the respective lower bounds
reach their dimension-dependent caps. Each two-point observation costs two scalar values.
Running an expected-error upper guarantee to $\eps/3$ gives success at
least $2/3$ by Markov's inequality with a budget fixed in advance.

\subsection{Available upper bounds for arbitrary norms}\label{match:sec-main-arbitrary}
Known gradient-free algorithms yield upper bounds for every
finite-dimensional regularity norm. For its unit ball $B$ and a known
nonempty compact convex set $K$, write
\[
 a_B=\max_{\|u\|_2=1}\|u\|,\qquad
 R_2(K)=\min_{z\in K}\max_{x\in K}\|x-z\|_2,\qquad
 \mathcal D_2=a_BR_2(K).
\]
For a nonsingleton $K$, valid Euclidean regularity constants are
$L_2=a_B^2L$ and $M_2=a_BM$. Applying smooth acceleration
\citep{GorbunovDvurechenskyGasnikov2022} and nonsmooth two-point mirror
descent \citep{Shamir2017}, respectively, gives
\[
 \begin{aligned}
 &O\!\left(d\mathcal D_2\sqrt{L/\eps}\right)
 &&(0<\eps\le L\mathcal D_2^2),\\
 &O\!\left(dM^2\mathcal D_2^2/\eps^2\right)
 &&(0<\eps\le M\mathcal D_2).
 \end{aligned}
\]
The smooth class has a global minimizer in $K$ and permits unrestricted
output. The nonsmooth class is minimized over $K$ and requires a feasible
output; the average of the mirror-descent iterates has this property.
Both objectives are globally defined, so the finite-difference queries
outside $K$ are admissible. A singleton $K$ needs no oracle calls.

\paragraph{Proximal geometry.}
Sharper bounds retain the proximal range and directional moments.
\citet[Theorem~1]{Shamir2017} allows an arbitrary proximal norm, while
\citet[Theorems~2.2 and~2.9, preprint version]{GorbunovDvurechenskyGasnikov2022}
give accelerated and non-accelerated smooth bounds with an $\ell_p$
proximal setup. \Cref{match:sec-arbitrary} states these guarantees,
including their Bregman radii and finite-difference parameters. The
objective regularity constants are $M_2$ and $L_2$, respectively.
Two-point observations cost two scalar evaluations; running to expected
error $\eps/3$ yields the same fixed-budget success probability $2/3$.

\section{Conclusion and open questions}\label{sec:discussion}\label{sec:scope}
We characterize exact-value complexity on the quadratic class throughout
the stated accuracy window, on the full smooth class at moderate accuracy,
and uniformly near $\mu=L$. Our non-Euclidean lower bounds match the
known smooth and nonsmooth rates in
\cref{match:cor-smooth,match:cor-nonsmooth}. The common image profile
separates regularity from localization, and actual regularizer sublevels
retain the low-curvature directions that determine the spectral gains.

The polynomial and multiscale constructions give complementary lower
bounds whose maximum applies to the full class. Determining the sharp
joint dependence on accuracy and a growing condition number remains open.
Further questions concern the complete image profile for arbitrary norms,
sharp complexity for known regularizers, and the coupling and dimension
dependence of saddle-point complexity outside the matched plateaus.
A joint characterization of scalar-value and partial-gradient budgets
would extend the mixed-oracle results. \Cref{app:open} formulates these
questions for the corresponding objective classes and gap criteria.

\label{main-text-end}

\bibliography{references}
\bibliographystyle{plainnat}
\clearpage
\appendix
\addtocontents{toc}{\protect\setcounter{tocdepth}{2}}
\renewcommand{\contentsname}{Appendix contents}
\tableofcontents
\section*{Guide to the appendices}
\Cref{app:mechanism-details} gives the detailed proof overview. The proof of the main minimization theorem occupies \cref{app:euclidean}: the
inverse comparison is followed by the adaptive posterior calculation, its
operator-norm budget, the radial moments, and the final calibration.
\Cref{app:corollaries} gives the upper bounds and the extension to prescribed
strong convexity. \Cref{app:high-accuracy} establishes logarithmic accuracy dependence
for the full smooth class.
\Cref{app:multiscale} strengthens that observation to a dimension-times-depth
bound and proves the matching near-endpoint theorem. \Cref{app:matching}
gives the upper--lower comparisons establishing the matching
non-Euclidean rates in \cref{sec:matching}.

\Cref{sp:app} treats blockwise and mixed oracles, and
\cref{app:coupled-saddles} isolates the contribution of the coupling through
\cref{thm:bilinear-compact,thm:known-diagonal-coupling}. \Cref{sp:app-comparison} compares saddle-point bounds according to their
gap criteria and parameter ranges.
\Cref{g:sec,gp:app,g:app,g:app-lp,gp:app-nonsmooth} contain the geometric
statements and their proofs. The remaining appendices collect the literature
comparison, the open questions, the numerical checks and protocols in
\cref{app:experiments}. \Cref{app:additional-geometry,app:relative-smoothness}
include the all-$p$ compatibility and fixed-reference extensions. The final
appendix, \cref{app:wu}, compares our results with
\citet{WuEtAl2026DeterministicExactValue}.

\paragraph{Notation.}
In the geometry sections, $\mu$ denotes a strong-convexity modulus,
$\lambda$ the coefficient of the known penalty, and $\nu_i$ the eigenvalues
of its Hessian. In the two-norm formulas, regularity is measured in
$\ell_\beta$ and localization in $\ell_\alpha$. $\mathcal R_m(B,K)$ is the
optimized image radius and $r_m(\tau)$ its specialization to the sublevel
sets of a known penalty. In the posterior proof, $M=\E W$ and $Q=\E W^{-1}$
denote conditional matrix moments. In the geometry sections, $Q$ is a
linear map between normed spaces. Saddle-point gaps are defined with
their compact or global optimization domains. We use $x^\star$ for
optimal points, $Q^*$ for adjoints, and $F^*$ for convex conjugates.

\section{Detailed information argument}\label{app:mechanism-details}
The proof tracks the conditional distribution of an unknown matrix. For
symmetric matrices, $A\preceq B$ means that $B-A$ is positive semidefinite,
and $\langle A,B\rangle=\tr(AB)$ is the Frobenius inner product. The hard
quadratic family is parameterized by a Wishart matrix $W$, and each exact
value encodes one rank-one linear measurement $\langle xx^\top,W\rangle$.
After any history, the conditional density is the Wishart density restricted
to the affine fiber determined by the observations. An inverse-moment
comparison and a trace identity then bound the posterior operator norm using
first moments alone, and an algorithm that optimizes successfully must
correlate its output with $W^{-1}u$ strongly enough to violate that bound
unless it has spent enough queries. The analytic statement holds uniformly
over \emph{every} affine fiber, which is why the query policy need not
satisfy a linear-span restriction or keep its queries in a bounded set.

\subsection{An inverse comparison on every affine slice}
\begin{theorem}[Affine-slice inverse comparison]\label{thm:affine}
Let $\zeta>0$, let $\mathcal A=W_0+V\subseteq\Sym_d$ be affine, and suppose
$\Omega=\mathcal A\cap\{W\succ0\}$ is nonempty. With respect to affine Lebesgue
measure, equip $\Omega$ with the density
$\rho(W)=Z_0^{-1}\det(W)^\zeta\exp(-\langle C,W\rangle)$ with $C$ symmetric.
If the normalizing constant and the means $M=\E W$, $Q=\E W^{-1}$ are finite, then
\begin{equation}\label{eq:affine}
 Q\preceq\left(1+\frac{d+1}{2\zeta}\right)M^{-1}.
\end{equation}
The coefficient is sharp over this class of densities and slices.
\end{theorem}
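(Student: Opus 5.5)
The plan is to derive everything from Gaussian-type integration by parts (a Stein identity) on the slice $\Omega$, after normalizing $M$ by a congruence. For a smooth vector field $X:\Omega\to V$ with enough decay, the divergence theorem on the open set $\Omega\subset\mathcal A$ gives
\[
\E\bigl[\operatorname{div}_V X(W)+\langle \nabla_V\log\rho(W),X(W)\rangle\bigr]=0,
\qquad
\langle\nabla_V\log\rho(W),H\rangle=\zeta\tr(W^{-1}H)-\langle C,H\rangle .
\]
On the relative boundary of $\Omega$, where $W$ becomes singular, the factor $\det(W)^\zeta$ vanishes, and the finiteness of the normalizing constant and of $M$, $Q$ gives the decay at infinity. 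I would justify the vanishing of the boundary terms by integrating over the truncations $\{\det W\ge\eta,\ \|W\|\le N\}$ and letting $\eta\to0$, $N\to\infty$.

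\emph{Step 1 (reduction).} The class is invariant under congruence $W\mapsto PWP^\top$ with $P$ invertible. The slice stays affine, $\det$ changes by a constant factor, and $C\mapsto P^{-\top}CP^{-1}$. The inequality \eqref{eq:affine} is also congruence-covariant, so I may assume $M=I$. Note that $M\in\mathcal A$, since $\mathcal A$ is affine and $\Omega\subseteq\mathcal A$, so $W-M\in V$ for every $W\in\Omega$.

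\emph{Step 2 (first-order identities).} Take constant fields $X\equiv H\in V$. This yields $\langle C,H\rangle=\zeta\langle Q,H\rangle$ for all $H\in V$, that is, $P_VC=\zeta P_VQ$, which eliminates $C$. Next take the radial field $X(W)=W-M$. Its divergence is $k=\dim V\le d(d+1)/2$, and since $\E\langle C,W-M\rangle=0$ this gives the trace identity
\[
\tr(QM)=d+k/\zeta\le d\Bigl(1+\tfrac{d+1}{2\zeta}\Bigr).
\]
This already has the right constant, but only in trace form.

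\emph{Step 3 (upgrading to the operator bound).} This is the main obstacle. The bound must hold in every direction $u$, whereas Step 2 controls only the sum of the eigenvalues of $Q$ when $M=I$. My first attempt would be the weighted radial field $X(W)=\phi_u(W)(W-M)$ with $\phi_u(W)=u^\top W^{-1}u$. Its divergence is
\[
k\phi_u-u^\top W^{-1}(W-M)W^{-1}u .
\]
The drift term produces $\zeta\phi_u\tr(W^{-1}(W-M))-\phi_u\langle C,W-M\rangle$, and by Step 2 the $C$-part becomes a covariance against $\zeta\langle Q,W-M\rangle$. With $M=I$, I would then use the pointwise inequalities $\tr(W^{-1})\ge d^2/\tr W$ and $u^\top W^{-2}u\ge(u^\top W^{-1}u)^2$, the latter by Cauchy--Schwarz. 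The goal is an inequality of the form
\[
\zeta\,\E\bigl[\phi_u\tr(W^{-1}W)\bigr]-\zeta\,\E[\phi_u]\,d\le\dots,
\]
which should rearrange to $\E\phi_u\le(1+(d+1)/(2\zeta))\,|u|^2$. If the covariance terms do not close, the fallback is polar coordinates around $M$ inside $V$. Along each ray $M+sD$ the function $s\mapsto u^\top(M+sD)^{-1}u$ is convex, and the radial density $s^{k-1}\det(M+sD)^\zeta e^{-s\langle C,D\rangle}$ is log-concave times a power. A one-dimensional comparison on each ray, using the fact that the radial identity holds ray by ray, would then give the directional bound.

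\emph{Step 4 (sharpness).} Take $V=\Sym_d$, so the density is a Wishart density. Then $\E W=(\zeta+\tfrac{d+1}{2})C^{-1}$ and $\E W^{-1}=\zeta^{-1}C$ give
\[
Q=\Bigl(1+\tfrac{d+1}{2\zeta}\Bigr)M^{-1}
\]
exactly, so the coefficient in \eqref{eq:affine} cannot be improved.
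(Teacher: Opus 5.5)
Your Steps 1, 2 and 4 are sound. The congruence reduction is legitimate. The radial field gives exactly the trace identity $\tr(QM)=d+k/\zeta$, which the paper also proves for its posterior fibers. The full-cone Wishart computation is the paper's sharpness example.

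The gap is Step 3, which is the entire content of the theorem and is not carried out. First-order identities constrain only linear functionals of $Q$. With $M=I$, combining $\tr Q=d+k/\zeta$ with Jensen's inequality $Q\succeq M^{-1}=I$ gives $\lambda_{\max}(Q)\le1+k/\zeta$. This proves the claim only when $k=\dim V\le(d+1)/2$. For the slices that matter it is off by a factor of order $d$: the full cone, and the fibers of dimension $d(d+1)/2-r$ used later.

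Your weighted field does not close this. Take $M=I$ and substitute your own Step 2 relations $P_VC=\zeta P_VQ$ and $\tr Q=d+k/\zeta$. Then the terms $(k-1+\zeta d)\E\phi_u$ and $\zeta\E\phi_u\,\tr Q$ cancel up to $-\E\phi_u$, and the identity becomes exactly
\[
\E[u^\top W^{-2}u]-\E\phi_u=\zeta\operatorname{Cov}\bigl(\phi_u,\ \tr W^{-1}+\langle Q,W\rangle\bigr).
\]
This is a pure fluctuation statement. It bounds $\E\phi_u$ only if you control that covariance, that is, only with a variance inequality, which is precisely what is missing. The pointwise inequality $\tr W^{-1}\ge d^2/\tr W$ points the wrong way for this purpose.

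The ray fallback has the same defect. Convexity of $s\mapsto u^\top(M+sD)^{-1}u$ gives lower bounds through Jensen, not upper bounds. Moreover, the centering $\E\langle C,W-M\rangle=0$ is a global statement and does not hold ray by ray.

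The missing idea is second-order curvature information, supplied by a matrix-weighted Brascamp--Lieb variance inequality. The paper proves this inequality on smooth bounded convex subdomains of the slice, using a Neumann problem, a Bochner-type identity, and a boundary term that is nonnegative by convexity. It then passes to $\Omega$ by exhaustion with sublevel sets of $\|t\|_2^2+\tr(W(t)^{-1})$.

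The inequality is applied to the weight $G(W)=\rho(W)W$ and the test function $f(W)=W^{-1}a$, whose $G$-weighted mean is $M^{-1}a$. Its left side is then exactly $a^\top(Q-M^{-1})a$. The curvature form of $G$ is $\rho\bigl[\zeta\sum_{j,k}\Gamma_{jk}z_j\cdot z_k+\|\sum_jH_jz_j\|_2^2\bigr]$ with $H_j=W^{-1/2}B_jW^{-1/2}$. After orthonormalizing the whitened tangent space to $E_1,\dots,E_k$, the polar energy of $\nabla f$ is $\rho\,v_a^\top\mathcal K(\zeta I+\mathcal K)^{-1}v_a$, where $v_a=W^{-1/2}a$ and $\mathcal K=\sum_iE_i^2$.

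The decisive fact is $\mathcal K\preceq\frac{d+1}{2}I$, obtained by completing the $E_i$ to an orthonormal basis of $\Sym_d$, whose squares sum to $\frac{d+1}{2}I$. Meanwhile $\tr\mathcal K=k$. Roughly, your trace identity sees only $\tr\mathcal K=k$, whereas the directional bound needs $\lambda_{\max}(\mathcal K)\le(d+1)/2$. This yields $Q-M^{-1}\preceq\frac{c}{\zeta+c}Q$ with $c=(d+1)/2$, which rearranges to the theorem.
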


The argument is a matrix-weighted variance inequality applied to the weight
$G(W)=\rho(W)W$ on a bounded smooth convex domain inside the slice. For a
Frobenius-orthonormal tangent basis $B_j$, differentiating $G^{-1}\partial_jG$
and whitening produce the curvature energy
$\rho(W)(\zeta\sum_{j,k}\Gamma_{jk}z_j^\top z_k+\|\sum_jH_jz_j\|_2^2)$ with
$H_j=W^{-1/2}B_jW^{-1/2}$ and $\Gamma_{jk}=\tr(H_jH_k)$; both terms are
nonnegative. Orthonormalizing the whitened tangent space gives symmetric $E_i$
and $A\xi=\sum_iE_i\xi_i$ with $\mathcal K=AA^*=\sum_iE_i^2\preceq cI$,
$c=(d+1)/2$, by completion to an orthonormal basis of all symmetric matrices.
Testing with $f(W)=W^{-1}a$ turns the left-hand side into
$a^\top(Q_D-M_D^{-1})a$, while $\mathcal K(\zeta I+\mathcal K)^{-1}\preceq
c(\zeta+c)^{-1}I$ bounds the polar energy; rearranging and exhausting
$\Omega$ gives \eqref{eq:affine}. The complete proof derives the
convex-boundary term through a Neumann problem and exhausts $\Omega$ by
smooth convex sublevel sets; sharpness is attained by the full-cone Wishart
law with $2\zeta+d+1$ degrees of freedom (\cref{r12:sec-affine}).

\subsection{The adaptive posterior and its operator-norm bound}
Let $X\in\R^{n\times d}$ have independent standard Gaussian entries, where
$n=d+m$ for an integer $m>3$, put $g=m-1$, and let $W=X^\top X$. For a history
$\mathcal H_q$ of at most $q$ scalar quadratic-form observations, including the
independent seed, set $Z=gW^{-1}$, $M=\E[W\mid\mathcal H_q]$,
$J=\E[Z\mid\mathcal H_q]$, and let $r$ be the dimension of
$\operatorname{span}\{x_ix_i^\top\}_{i\le q}$. Each nonredundant query is a
linear functional fixed by the preceding history, and its coarea factor is
constant along the next fiber, so the conditional law is again the Wishart
density restricted to the realized affine fiber. \Cref{thm:affine} and a
centered divergence identity therefore give
\begin{equation}\label{eq:posterior-main}
 J\preceq nM^{-1},\qquad \tr(MJ)=nd-2r,\qquad r\le q.
\end{equation}
Put $A=M/n$ and $\Upsilon=\tfrac12\tr(A+J)-\tr(AJ)$, a scalar defect that
vanishes only when $J=I$. If $b_i>0$ are the eigenvalues of $J$ and $a_i$ the
corresponding diagonal entries of $A$, then $0<a_i\le1/b_i$, and the terms
$\Upsilon_i=(a_i+b_i)/2-a_ib_i$ are nonnegative, with
$\Upsilon_i\ge(b_i-1)^2/(2b_i)$ once $b_i\ge1/2$. Applied to the largest
eigenvalue, this gives $\|J\|_{\mathrm{op}}\le\phi(\Upsilon)$, where
$\|\cdot\|_{\mathrm{op}}$ is the $\ell_2\!\to\!\ell_2$ operator norm and
$\phi(v)=1+v+\sqrt{v^2+2v}$. The tower identities $\E A=\E J=I$ together with
\eqref{eq:posterior-main} give $\E\Upsilon=2\E r/n$, and since $\phi$ is
increasing and concave, Jensen's inequality yields
\begin{equation}\label{eq:budget-main}
 \E\|J\|_{\mathrm{op}}\le\phi(2\E r/n)\le\phi(2q/n).
\end{equation}
The posterior operator bound follows from these conditional first-moment
identities.

Fix a known unit vector $u$, and put $Y=W^{-1}u$ and $a^2=\E\|Y\|_2^2$. If a
history-measurable $\widehat h$ satisfies $\|\widehat h\|_2\le a$ and
$\E\langle\widehat h,Y\rangle\ge ca^2$, then
$cga^2\le\E\langle\widehat h,Ju\rangle\le a\sqrt{\E\|J\|_{\mathrm{op}}}$, by
$J^2\preceq\|J\|_{\mathrm{op}}J$, the Cauchy--Schwarz inequality, and
$\E[u^\top Ju]=1$. Consequently, for $B=c^2g^2a^2\ge1$, inverting
\eqref{eq:budget-main} gives
\begin{equation}\label{eq:correlation-query-main}
 q\ge\frac{n(B-1)^2}{4B}.
\end{equation}

\subsection{Prior validity, amplification, and the explicit constant}
Set $k=d/m$ and assume $m\ge64$ and $k\ge8$. Gaussian regression gives
$a^2=(n-1)/[m(m-1)(m-3)]$ and $g^2a^2\ge k$. For $r_Y=\|Y\|_2/a$, the radial
moments satisfy $\E r_Y\ge\frac{39}{40}$, $\Var(r_Y)\le\frac3{64}$,
$\E r_Y^{-1}\le\frac{13}{12}$, and $\E r_Y^{12}<16$. Together with Gaussian
norm and pseudoinverse tail bounds, these estimates show that the event
$E=\{\|Y\|_2\le7a/4,\ m^2I/(10n)\preceq W\preceq5nI\}$ has probability at
least $49/50$. Use the normalization
\begin{equation}\label{eq:prior-main}
 t=\frac{7a}{4R},\qquad \sigma=\frac{16LR^2}{245na^2},\qquad
 f_W(y)=\sigma\left[\tfrac12(ty)^\top W(ty)-u^\top(ty)\right].
\end{equation}
On $E$, the global minimizer is $y^\star=Y/t$, its Euclidean norm is at most
$R$, and $L/[50(k+1)^2]I\preceq\nabla^2f_W=(L/5n)W\preceq LI$, so $f_W$
belongs to the target class. One exact value at any point $y$ is equivalent,
in both directions, to one quadratic measurement after subtracting the known
linear term. This equivalence holds for arbitrary finite query points. For
$\eps\le LR^2/(10000k^2)$, successful optimization forces the raw output
$x=t\widehat y$ to satisfy $\|x-Y\|_2\le a/5$.

To convert the pointwise success probability $2/3$ into the required
correlation under the prior, we run three independent copies of the $T$-query algorithm on the same objective,
query each output once, and return the output with the smallest observed
value, breaking ties by index. This uses at most $q=3(T+1)$ calls and succeeds
with probability at least $26/27$ on each valid objective, so the joint event
of prior validity and successful optimization has probability at least
$(49/50)(26/27)=637/675$ under the unconditioned prior. With
$\widehat h=a\operatorname{unit}(x)$, where $\operatorname{unit}(0)=0$, the
unit-direction inequality, the radial moments, and the Cauchy--Schwarz
inequality for the covariance give
\begin{equation}\label{eq:success-main}
 \frac{\E\langle\widehat h,Y\rangle}{a^2}
 \ge\frac{39}{40}\cdot\frac{599}{675}-\frac1{10}-\frac{13}{150}
 =\frac{6107}{9000}>\frac23,
\end{equation}
where the expectation still includes failed runs and histories generated by
objectives outside the target class. The twelfth radial moment is used only
to bound the probability of prior validity.

It remains to choose the parameters. Substituting $B\ge4k/9$ into
\eqref{eq:correlation-query-main} yields $T\ge dk/40$ for $k\ge12$. Set
$\eta=\min\{d/64,\sqrt{LR^2/\eps}/100\}$ and $m=\lceil d/\eta\rceil$. If
$\min\{d,\sqrt{LR^2/\eps}\}>1280$, then $m\ge64$ and
$12<64\min\{d,\sqrt{LR^2/\eps}\}/6500\le k\le\sqrt{LR^2/\eps}/100$, so every
condition on the prior and on the accuracy holds, and
$T\ge dk/40\ge d\min\{d,\sqrt{LR^2/\eps}\}/\Cstar$. Otherwise the spherical
family $f_\theta(x)=L\|x-R\theta\|_2^2/2$ gives $T\ge5d/16$, which already
dominates the product term because
$\min\{d,\sqrt{LR^2/\eps}\}\le1280=5\Cstar/16$; its dimension bound follows
from the mean squared residual radius and an antipodal pairing on the
conditional residual sphere. Both priors are fixed before the algorithmic seed
is drawn. This proves \cref{thm:main}.

\section{Proof of the Euclidean lower bound}\label{app:euclidean}
\subsection{Inverse moments on affine positive-definite slices}
\label{app:r12-affine}
\label{r12:sec-affine}

Throughout this appendix, matrix inner products are Frobenius inner
products, $\langle A,B\rangle=\operatorname{tr}(AB)$ for symmetric
matrices, and $\operatorname{Sym}_d^{++}$ denotes the positive-definite
cone.  Integrals over affine subspaces are taken with respect to the
induced Euclidean Lebesgue measure.  We begin with the analytic result
applied to each realized adaptive observation fiber.

\begin{theorem}[Affine-slice inverse comparison]
\label{r12:affine}
Let $\zeta>0$, let $\mathcal A=W_0+V\subseteq\operatorname{Sym}_d$ be an
affine subspace, and suppose that
$\Omega=\mathcal A\cap\operatorname{Sym}_d^{++}$ is nonempty.  For a fixed
$C\in\operatorname{Sym}_d$, let the probability density on $\Omega$ be
\[
 \rho(W)=Z_0^{-1}\det(W)^\zeta\exp(-\langle C,W\rangle).
\]
Assume that the normalizing integral and the first and inverse-first
moments $M=\mathbb EW$ and $Q=\mathbb EW^{-1}$ are finite.  Then
\begin{equation}
 Q\preceq\left(1+\frac{d+1}{2\zeta}\right)M^{-1}.
 \label{r12:eq-affine}
\end{equation}
The coefficient is sharp over this class of densities.
\end{theorem}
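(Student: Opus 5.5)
We prove \eqref{r12:eq-affine} by a matrix-weighted Brascamp--Lieb (Poincaré-type) variance inequality for the log-concave-type density $\rho$ on the convex set $\Omega$. Tested with the vector-valued function $f(W)=W^{-1}a$, it gives $a^\top(Q-M^{-1})a\le\frac{d+1}{2\zeta}\,a^\top M^{-1}a$.

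\textbf{Reduction to a bounded domain.} Fix a Frobenius-orthonormal basis $B_1,\dots,B_p$ of $V$ and coordinates $W=W_0+\sum_j w_jB_j$. We first work on a bounded smooth convex $D\subset\Omega$, for instance a sublevel set of $-\log\det W+\langle C',W\rangle$ intersected with $\mathcal A$. We then exhaust $\Omega$ by such sets. Monotone and dominated convergence, using the assumed finiteness of $M$ and $Q$, pass the inequality from $M_D,Q_D$ to $M,Q$.

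\textbf{Weighted variance inequality.} On $D$ we introduce the matrix weight $G(W)=\rho(W)W$. This is natural because $\E_\rho[W]=M$ appears as $\int G$, and the claim compares $\int\rho W^{-1}$ with $(\int G)^{-1}$. Writing $Q_D=\int\rho\,W^{-1}$, we get for any vector $a$
\[
 a^\top Q_Da-a^\top M_D^{-1}a=\min_{h}\int_D \rho\,(W^{-1}a-h)^\top W(W^{-1}a-h)\,dW ,
\]
with the minimum over constant vectors $h$ attained at $h=M_D^{-1}a$. We represent the centered field $W^{-1}a-M_D^{-1}a$, weighted by $G$, as a divergence $\sum_j G^{-1}\partial_j(G z_j)$ of a vector-field tuple $z_j:D\to\R^d$. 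This comes from solving a Neumann problem on $D$ with zero flux on $\partial D$. Integration by parts then yields a Cauchy--Schwarz bound of the variance by a ``polar energy''.

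Differentiating $\log G$ produces the curvature terms. These are $\zeta\,\tr(W^{-1}B_jW^{-1}B_k)$ from $\det^\zeta$, and the term $\|\sum_jW^{-1/2}B_jW^{-1/2}z_j\|^2$ from the matrix factor $W$. Convexity of $D$ makes the boundary contribution nonnegative. The linear term $\langle C,W\rangle$ has zero Hessian, so it drops out.

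\textbf{Algebraic core.} After whitening, set $H_j=W^{-1/2}B_jW^{-1/2}$ and orthonormalize them in Frobenius inner product as $E_i$. The energy becomes $\zeta|\xi|^2+|A\xi|^2$ with $A\xi=\sum_iE_i\xi_i$. The test function's gradient pairs with $A^*$, so the dual bound is $\langle \mathcal K(\zeta I+\mathcal K)^{-1}\,\cdot,\cdot\rangle$, where $\mathcal K=\sum_iE_i^2$. Completing $\{E_i\}$ to an orthonormal basis of $\Sym_d$ gives $\mathcal K\preceq\sum_{\text{all}}E^2=\frac{d+1}{2}I$. The whole pipeline thus yields
\[
a^\top(Q_D-M_D^{-1})a\le \frac{c}{\zeta+c}\,a^\top Q_Da,\qquad c=\frac{d+1}{2}.
\]
Rearranging gives $Q_D\preceq(1+c/\zeta)M_D^{-1}$.

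\textbf{Sharpness.} Take $V=\Sym_d$ and $C=\tfrac12\Sigma^{-1}$, so that $W$ is Wishart with $N=2\zeta+d+1$ degrees of freedom. Then $M=N\Sigma$ and $Q=\Sigma^{-1}/(N-d-1)=\Sigma^{-1}/(2\zeta)$. The ratio is $N/(2\zeta)=1+\frac{d+1}{2\zeta}$, so the coefficient cannot be improved.

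\textbf{Main obstacle.} The hard step is the weighted variance inequality with matrix weight $G$. It requires setting up the correct divergence identity, and in particular checking that the second-order terms of $\partial_j(G^{-1}\partial_kG)$ combine into exactly the two nonnegative curvature terms. It also requires verifying the sign of the convex-boundary term in the Neumann argument. The orthonormal-completion bound $\mathcal K\preceq cI$ and the exhaustion step are routine by comparison.
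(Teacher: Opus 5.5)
Your proposal follows the paper's proof essentially step for step: the convex-domain matrix-weighted variance inequality for the weight $G=\rho W$ (Neumann problem, Bochner identity, nonnegative convex boundary term), the whitened curvature form $\zeta|\xi|^2+|\mathcal B\xi|^2$ with $\mathcal K\preceq\frac{d+1}{2}I$ by orthonormal completion, the test function $W^{-1}a$ giving $Q_D\preceq(1+c/\zeta)M_D^{-1}$, exhaustion of $\Omega$, and sharpness from the full-cone Wishart law with $2\zeta+d+1$ degrees of freedom. The only difference is the exhaustion function. The paper uses sublevels of $\|t\|_2^2+\tr(W(t)^{-1})$, and yours also works provided you take $C'\succ0$, so that the sublevel sets are bounded.
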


The matrix-weighted Brascamp--Lieb inequality on the whole space is
Theorem~4 of \citet{CorderoErausquin2019}.  The bounded-domain version
below includes the boundary argument that our application requires.

\subsubsection{A matrix-weighted variance inequality on convex domains}

Let $D\subset\mathbb R^v$ be a bounded, connected, smooth, convex domain,
and let $G:\overline D\to\operatorname{Sym}_d^{++}$ be smooth up to the
boundary.  Define
\[
 A_j=G^{-1}\partial_jG,\qquad
 \Theta_{jk}=\partial_kA_j,\qquad
 B_G(z,z)=-\sum_{j,k}z_k^\top G\Theta_{jk}z_j,
\]
where $z=(z_1,\ldots,z_v)$ is a tuple of vectors in $\mathbb R^d$.
Assume that this quadratic form on vector tuples is symmetric and
positive definite at every point, and define its polar energy by
\[
 B_G^\circ(h)=\sup_z\left\{2\sum_jh_j^\top Gz_j-B_G(z,z)\right\}.
\]

\begin{lemma}[Convex-domain matrix-weighted variance]
\label{r12:weighted}
For smooth $f:\overline D\to\mathbb R^d$, set
$Z_G=\int_DG$ and $\overline f_G=Z_G^{-1}\int_DGf$.  Then
\begin{equation}
 \int_D(f-\overline f_G)^\top G(f-\overline f_G)
 \le\int_DB_G^\circ(\nabla f).
 \label{r12:eq-weighted}
\end{equation}
\end{lemma}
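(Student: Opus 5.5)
We prove the Lemma by the Hörmander $L^2$ (Bochner) method, adapted to a matrix weight. Work in the Hilbert space $L^2(D;G)$ of $\R^d$-valued functions with inner product $\langle f,h\rangle_G=\int_D f^\top G h$. On this space define the weighted Laplacian
\[
 \mathcal L u=G^{-1}\sum_j\partial_j(G\,\partial_ju)=\sum_j\bigl(\partial_j^2u+A_j\partial_ju\bigr),
\]
with Neumann boundary condition $\partial_\nu u=0$ on $\partial D$. Integrating by parts gives
\[
 \langle \mathcal L u,h\rangle_G=-\sum_j\int_D(\partial_jh)^\top G\,\partial_ju,
\]
so $-\mathcal L$ is symmetric and nonnegative. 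Its kernel consists of the constants, since $D$ is connected. The constant projection in $L^2(D;G)$ is exactly $f\mapsto\overline f_G$, because $\int_D G(f-\overline f_G)=0$.

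The first step is to solve the Neumann problem $-\mathcal L u=f-\overline f_G$. The right-hand side is $G$-orthogonal to the constants, and the weight is smooth and uniformly positive definite up to the boundary. Standard elliptic theory for this strongly elliptic system with diagonal principal part then yields a solution $u$, smooth up to $\partial D$. With this $u$ we write
\[
 \int_D(f-\overline f_G)^\top G(f-\overline f_G)=\langle f-\overline f_G,-\mathcal L u\rangle_G=\sum_j\int_D(\partial_jf)^\top G\,\partial_ju .
\]
Setting $z_j=\partial_ju$ and $h=\nabla f$, the definition of the polar gives, pointwise,
\[
 2\sum_jh_j^\top Gz_j\le B_G^\circ(h)+B_G(z,z).
\]
Hence
\[
 2\,\mathrm{Var}\le\int_D B_G^\circ(\nabla f)+\int_D B_G(\nabla u,\nabla u),
\]
where $\mathrm{Var}$ denotes the left-hand side of \eqref{r12:eq-weighted}.

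The key step is the matrix Bochner inequality
\[
 \int_D B_G(\nabla u,\nabla u)\le\int_D(\mathcal L u)^\top G(\mathcal L u)=\mathrm{Var}.
\]
Substituting it into the previous display gives $2\,\mathrm{Var}\le\int_D B_G^\circ(\nabla f)+\mathrm{Var}$, which is \eqref{r12:eq-weighted}.

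To prove the Bochner inequality, expand $\int_D(\mathcal L u)^\top G\,\mathcal L u=-\sum_k\int_D(\partial_k\mathcal L u)^\top G\,\partial_ku$, which is valid by the Neumann condition. Commuting derivatives gives
\[
 \partial_k\mathcal L u=\mathcal L\partial_ku+\sum_j\Theta_{jk}\partial_ju .
\]
A second integration by parts produces three pieces:
\begin{enumerate}
 \item the Hessian energy $\sum_{j,k}\int_D(\partial_j\partial_ku)^\top G\,\partial_j\partial_ku\ge0$;
 \item the curvature term $\int_D B_G(\nabla u,\nabla u)$, which comes from $-\sum_{j,k}(\partial_ku)^\top G\,\Theta_{jk}\,\partial_ju$;
 \item a boundary integral $-\int_{\partial D}\sum_k(\partial_ku)^\top G\,\partial_\nu\partial_ku$.
\end{enumerate}
The symmetry assumption on $B_G$ is what makes the cross terms combine correctly here. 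For the boundary integral, differentiate the Neumann condition tangentially. This gives the Reilly-type identity
\[
 \sum_k(\partial_ku)^\top G\,\partial_\nu\partial_ku=-\mathrm{II}_G(\nabla_Tu,\nabla_Tu),
\]
where $\mathrm{II}_G$ is the second fundamental form of $\partial D$ acting on the tangential gradient, with the vector components paired by $G$. Convexity of $D$ makes $\mathrm{II}_G\ge0$, so the boundary contribution has the favorable sign. Discarding the Hessian energy and the boundary term yields the Bochner inequality.

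I expect the main obstacle to be this boundary computation: tracking signs for a matrix weight, and checking that the tangential differentiation of $\partial_\nu u=0$ produces only the second fundamental form, with no weight derivatives. If the sign bookkeeping becomes delicate, I would fall back on an approximation argument. Replace the Neumann problem by a penalized problem on all of $\R^v$, with the weight multiplied by $e^{-N\,\mathrm{dist}(\cdot,D)^2}$ (a convex potential that only increases the curvature $B_G$). Then apply the whole-space matrix Brascamp--Lieb inequality of \citet[Theorem~4]{CorderoErausquin2019} and let $N\to\infty$.
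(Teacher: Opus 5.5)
Your proposal is correct and follows essentially the paper's route. You solve the weighted Neumann problem for $\mathcal L$ with right-hand side $f-\overline f_G$. You then use the Bochner identity, where convexity makes the second-fundamental-form boundary term nonnegative, to get $\int_D B_G(\nabla u,\nabla u)\le\int_D(f-\overline f_G)^\top G(f-\overline f_G)$. Finally you pair $\nabla f$ with $\nabla u$ through the polar.

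The only real difference is the last step. You use the Fenchel--Young bound $2\sum_j h_j^\top G z_j\le B_G^\circ(h)+B_G(z,z)$, whereas the paper uses dual Cauchy--Schwarz followed by division; your version avoids the separate zero case. Also, the curvature cross terms equal $\int_D B_G(\nabla u,\nabla u)$ directly by definition, so no symmetry of $B_G$ is needed there.
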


\begin{proof}
Consider the operator
\[
 \mathcal Lu=G^{-1}\sum_j\partial_j(G\partial_ju)
 =\Delta u+\sum_jA_j\partial_ju
\]
with the componentwise Neumann condition $\partial_\nu u=0$.
Its negative is the operator associated with the symmetric bilinear
form $\sum_j\int_D(\partial_jz)^\top G\partial_ju$.
Since the eigenvalues of $G$ are bounded above and away from zero on
$\overline D$, the ordinary Poincar\'e inequality gives coercivity
modulo the constant vector functions, which form the kernel.  The
Lax--Milgram theorem yields a solution of $\mathcal Lu=h$ whenever
$\int_DGh=0$, unique modulo constants.  Its natural conormal condition
$G\partial_\nu u=0$ is equivalent to $\partial_\nu u=0$.
The required smoothness follows from standard variational Neumann
regularity: after multiplication by $G^{-1}$, the principal part is the
componentwise Laplacian and the remaining terms are smooth first-order
matrix couplings.  Starting from the variational $H^1$ solution, Neumann
$H^2$ regularity and bootstrapping justify the differentiations below.
Explicitly, testing the weak equation with $G^{-1}\phi$ gives the
componentwise weak equation
$\Delta u=h-\sum_j(G^{-1}\partial_jG)\partial_ju$ with zero Neumann data.
Its right-hand side is initially in $L^2$, so scalar Neumann regularity
gives $H^2$ regularity, and iteration with smooth coefficients completes
the argument.

Now set $h=f-\overline f_G$ and let $u$ be a corresponding solution.
Differentiating,
\[
 \partial_k\mathcal Lu=\mathcal L(\partial_ku)
                  +\sum_j\Theta_{jk}\partial_ju.
\]
Two integrations by parts yield
\begin{align}
 \int_D(\mathcal Lu)^\top G\mathcal Lu
 &=\sum_{j,k}\int_D(\partial_{jk}u)^\top G\partial_{jk}u
   +\int_DB_G(\nabla u,\nabla u)\notag\\
 &\quad-\int_{\partial D}\sum_k(\partial_ku)^\top
                    G\partial_\nu\partial_ku.
 \label{r12:eq-bochner}
\end{align}
The first integration by parts uses $\partial_\nu u=0$; the second
integrates $-\int_D(\partial_ku)^\top G\mathcal L(\partial_ku)$.
All interior derivatives of $G$ are accounted for by $\mathcal L$ and
$\Theta$.  Let $\mathrm{II}$ denote the second fundamental form with
respect to the outward normal.  Differentiating $\partial_\nu u=0$
tangentially gives, in an orthonormal tangent frame,
\[
 -\sum_k(\partial_ku)^\top G\partial_\nu\partial_ku
 =\sum_{a,b}\mathrm{II}_{ab}
           (\partial_{\tau_a}u)^\top G\partial_{\tau_b}u\ge0.
\]
Here $\mathrm{II}\succeq0$ by convexity, and no further normal
derivative of $G$ enters this boundary flux identity.  Consequently
\[
 \int_DB_G(\nabla u,\nabla u)\le\int_Dh^\top Gh.
\]
Finally, compatibility and integration by parts give
\[
 \int_Dh^\top Gh=\int_Df^\top G\mathcal Lu
              =-\sum_j\int_D(\partial_jf)^\top G\partial_ju.
\]
The pointwise dual Cauchy--Schwarz inequality for $B_G$, followed by its
integral form, bounds the square of the last expression by
\[
 \left(\int_DB_G^\circ(\nabla f)\right)
 \left(\int_DB_G(\nabla u,\nabla u)\right)
 \le\left(\int_DB_G^\circ(\nabla f)\right)\left(\int_Dh^\top Gh\right).
\]
If the second factor is positive, dividing by it proves the claim; if it
vanishes, the desired inequality is immediate.
\end{proof}

\subsubsection{Curvature of the determinant-power matrix weight}

Fix a Frobenius-orthonormal basis $B_1,\ldots,B_v$ of $V$ and write
$W(t)=W_0+\sum_jt_jB_j$.  We first restrict to a smooth bounded convex
domain $D$ whose closure lies in $\Omega$, normalize the density there
to $\rho_D$, and take $G(t)=\rho_D(W(t))W(t)$.  Direct differentiation
gives
\begin{align*}
 G^{-1}\partial_jG
 &=\bigl[\zeta\operatorname{tr}(W^{-1}B_j)-\langle C,B_j\rangle\bigr]I
      +W^{-1}B_j,\\
 -\Theta_{jk}
 &=\zeta\operatorname{tr}(W^{-1}B_kW^{-1}B_j)I
      +W^{-1}B_kW^{-1}B_j.
\end{align*}
The product order in the second term gives the squared norm in the
curvature identity below. At a fixed $W$, define
\[
 H_j=W^{-1/2}B_jW^{-1/2},\qquad
 \Gamma_{jk}=\operatorname{tr}(H_jH_k),\qquad z_j=W^{1/2}u_j.
\]
The $H_j$ are linearly independent, so $\Gamma\succ0$, and
\begin{equation}
 B_G(u,u)=\rho_D\left[
 \zeta\sum_{j,k}\Gamma_{jk}z_j\cdot z_k
 +\left\|\sum_jH_jz_j\right\|_2^2\right].
 \label{r12:eq-curvature}
\end{equation}
Indeed, before completing the square, the second summand reads
$\sum_{j,k}z_k^\top H_kH_jz_j$.
This establishes that the curvature form is symmetric and positive
definite.

Next, set
\[
 \xi_i=\sum_j(\Gamma^{1/2})_{ij}z_j,
 \qquad E_i=\sum_j(\Gamma^{-1/2})_{ij}H_j.
\]
The $E_i$ are Frobenius-orthonormal in the whitened symmetric tangent
space.  Define the linear map $\mathcal B\xi=\sum_iE_i\xi_i$ and put
\[
 \mathcal K=\mathcal B\mathcal B^*=\sum_iE_i^2,
 \qquad c=\frac{d+1}{2}.
\]
Then \eqref{r12:eq-curvature}, divided by $\rho_D$, equals
$\zeta\|\xi\|_2^2+\|\mathcal B\xi\|_2^2$, and
\begin{equation}
 0\preceq\mathcal K\preceq cI.
 \label{r12:eq-symmetric-basis}
\end{equation}
To prove this bound, complete the $E_i$ to an orthonormal basis of
$\operatorname{Sym}_d$.  For the coordinate basis $E_{ii}$ and
$(E_{ij}+E_{ji})/\sqrt2$, $i<j$, the sum of squares equals
$I+(d-1)I/2=cI$.  This sum is invariant under orthogonal changes of
basis, and every omitted square is positive semidefinite.
All changes of frame here are pointwise algebraic changes of variables;
no derivative of a varying frame is taken.

\subsubsection{The inverse test and exhaustion}

Fix $a\in\mathbb R^d$ and apply Lemma~\ref{r12:weighted} to
$f(W)=W^{-1}a$.  Write $M_D=\mathbb E_DW$ and $Q_D=\mathbb E_DW^{-1}$.
Since $\int_DG=M_D$ and $\int_DGf=a$, the left-hand side of
\eqref{r12:eq-weighted} equals $a^\top(Q_D-M_D^{-1})a$.  With
$v_a=W^{-1/2}a$, the gradient pairing is
\[
 \sum_j(\partial_jf)^\top Gu_j=-\rho_Dv_a^\top\mathcal B\xi.
\]
Completing the square in the definition of the polar yields
\begin{align*}
 B_G^\circ(\nabla f)
 &=\rho_Dv_a^\top\mathcal B
       (\zeta I+\mathcal B^*\mathcal B)^{-1}\mathcal B^*v_a\\
 &=\rho_Dv_a^\top\mathcal K(\zeta I+\mathcal K)^{-1}v_a
 \le\rho_D\frac c{\zeta+c}\,a^\top W^{-1}a.
\end{align*}
The equality uses
$(\zeta I+\mathcal B^*\mathcal B)\mathcal B^*
=\mathcal B^*(\zeta I+\mathcal B\mathcal B^*)$.
For the inequality, diagonalize $\mathcal K$ and use
$s/(\zeta+s)\le c/(\zeta+c)$ for $0\le s\le c$.
Hence
\[
 Q_D-M_D^{-1}\preceq\frac c{\zeta+c}Q_D,
 \qquad Q_D\preceq\frac{\zeta+c}{\zeta}M_D^{-1}.
\]

To pass to $\Omega$, consider the smooth strictly convex function
\[
 \phi(t)=\|t\|_2^2+\operatorname{tr}(W(t)^{-1}).
\]
It diverges at infinity and on approach to any finite boundary point
of $\Omega$, it has a unique minimizer, and every level above the
minimum is regular.  The sublevel sets $D_b=\{\phi<b\}$ are therefore smooth
bounded convex domains with closure in $\Omega$, and they exhaust
$\Omega$.  By dominated convergence, the assumed moments give
$M_{D_b}\to M$ and $Q_{D_b}\to Q$, normalizing constants included, since
$\operatorname{tr}W$ and $\operatorname{tr}W^{-1}$ dominate all relevant
matrix entries.  Since $M\succ0$, inversion is continuous at $M$, and
the positive-semidefinite order is closed.  This proves
\eqref{r12:eq-affine}.  A zero-dimensional slice is a point mass and
satisfies the assertion directly.  Uniform ellipticity is needed only
on each individual compact domain.

For sharpness, take the full cone, $C=I/2$, and $n=2\zeta+d+1$.  The
determinant-power (Wishart) density then has $M=nI$ and
$Q=I/(2\zeta)$, also for the corresponding real degrees-of-freedom
parameter, so equality holds in \eqref{r12:eq-affine}.  This completes
the proof of Theorem~\ref{r12:affine}.

\subsection{The posterior on an adaptive observation fiber}
\label{app:r12-posterior}

Let $X\in\mathbb R^{n\times d}$ have independent standard Gaussian
entries, where $n=d+m$ and $m>3$ are integers, and put
\[
 W=X^\top X,\qquad g=m-1,\qquad Z=gW^{-1},\qquad s(W)=\frac{I-Z}{2}.
\]
The density of $W$ on $\operatorname{Sym}_d^{++}$ is proportional to
$\det(W)^{g/2}\exp(-\operatorname{tr}W/2)$, and $s$ is its negative
score with respect to the Frobenius metric.
Let $\mathcal H$ include the independent algorithmic seed and the
complete history of at most $q$ adaptive observations $x_i^\top Wx_i$,
and define
\[
 M=\mathbb E[W\mid\mathcal H],\qquad
 J=\mathbb E[Z\mid\mathcal H].
\]
The prior moments are $\mathbb EW=nI$ and $\mathbb EW^{-1}=I/g$, so
that $\mathbb EM=nI$ and $\mathbb EJ=I$.
The inverse mean also follows from the Schur-complement law derived
below: for any fixed unit vector $u$, $u^\top W^{-1}u$ is distributed
as $1/\chi^2_{m+1}$.

\begin{lemma}[Conditional law on the realized fiber]
\label{r12:disintegration}
Let
$S=\operatorname{span}\{x_ix_i^\top:1\le i\le q\}$ for the realized
history, and let $r=\dim S$.  Almost surely, the conditional law of $W$
is its original density normalized on the realized affine slice
$W_0+S^\perp$ intersected with $\operatorname{Sym}_d^{++}$.
\end{lemma}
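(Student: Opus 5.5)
The plan is an induction on the number of observations, tracking the joint law of the seed, the history and $W$. I would write the algorithm's $i$-th query as a Borel map $x_i=\chi_i(\omega,y_1,\dots,y_{i-1})$ of the seed $\omega$ and the previous answers $y_j=\langle x_jx_j^\top,W\rangle$. The induction hypothesis is that, conditional on $(\omega,y_{<i})$, the law of $W$ is the prior density $p(W)\propto\det(W)^{g/2}e^{-\tr W/2}$ normalized on $\Omega_{i-1}=(W_0^{(i-1)}+S_{i-1}^\perp)\cap\operatorname{Sym}_d^{++}$. Here $S_{i-1}=\operatorname{span}\{x_jx_j^\top\}_{j<i}$, and $W_0^{(i-1)}$ is any point of the realized fiber. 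The case $i=1$ is immediate: the seed is independent of $W$, and $S_0=\{0\}$, so the slice is the whole cone.

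For the inductive step I fix $(\omega,y_{<i})$, which fixes $x_i$ and the current slice. There are two cases.
\begin{itemize}
\item If $x_ix_i^\top\in S_{i-1}$, then $y_i$ is a deterministic function of $y_{<i}$ on the slice. Conditioning on it changes nothing, and $S_i=S_{i-1}$.
\item Otherwise, let $e$ be the unit Frobenius-normalized component of $x_ix_i^\top$ orthogonal to $S_{i-1}$. On the slice, $y_i=\langle x_ix_i^\top,W_0^{(i-1)}\rangle+|x_ix_i^\top-P_{S_{i-1}}x_ix_i^\top|_F\,\langle e,W-W_0^{(i-1)}\rangle$. So $y_i$ is an affine function of $W$, and its gradient along the slice has constant Frobenius norm.
\end{itemize}
In the second case I apply Fubini in orthonormal coordinates of $S_{i-1}^\perp$ split as $\mathbb R e\oplus S_i^\perp$, which is the coarea formula with a constant Jacobian. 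This shows that the conditional density of $W$ given $y_i$ is $p$ restricted to the level set and renormalized. For almost every value of $y_i$ the normalizer is finite and positive, since it is the marginal density of $y_i$. The level set is exactly $(W_0^{(i)}+S_i^\perp)\cap\operatorname{Sym}_d^{++}$, which closes the induction, and after $q$ steps gives the lemma with $r=\dim S_q$.

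To make this rigorous I would write an explicit regular conditional probability and verify it against test functions. I would take the candidate kernel $\kappa(\omega,y_{\le i};\cdot)$ to be the normalized restriction. For bounded Borel $\varphi(\omega,y_{\le i})$ and $\psi(W)$, I would check $\E[\varphi\psi]=\E[\varphi\int\psi\,d\kappa]$ by integrating first over $\omega$ (which is independent), then over the slice coordinates via Fubini. I also need the kernel to be jointly Borel in the history. I would get this by using a Borel choice of orthonormal bases for $S_{i-1}^\perp$, for instance Gram--Schmidt on the projections of a fixed basis with a Borel rule for discarding dependent vectors, together with measurability of parametric integrals.

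The main obstacle is not the Fubini computation but the measurability bookkeeping, because the dimension of $S_{i-1}$ varies with the history. I would handle this by partitioning the history space into the Borel sets on which the rank pattern (which queries are nonredundant) is fixed. On each piece the bases depend Borel-measurably on the history and the argument above applies. The exceptional histories where the normalizer is zero or infinite form a null set, which accounts for the ``almost surely'' in the statement.
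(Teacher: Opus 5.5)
Your proposal is correct and follows the paper's proof essentially step for step: you condition on the independent seed, induct over queries with the redundant/nonredundant dichotomy, disintegrate each nonredundant linear measurement by a linear coarea (Fubini) step whose constant Jacobian cancels on normalization, and get measurability from fixed-order Gram--Schmidt with skipped pivots on the rank strata. The one ingredient the paper states explicitly and you leave implicit is a Borel choice of base point $W_0^{(i)}$ for each slice, namely the Moore--Penrose solution of the realized linear system (equivalently the projection of $W$ onto $S_i$, a Borel function of the history), which your kernel needs in order to be jointly measurable.
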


\begin{proof}
First condition on the independent seed; by independence, the prior
density is unchanged.  Given a partial history, the next query $x$ is
fixed, and its answer is the linear functional
$\ell_x(W)=\langle xx^\top,W\rangle$ on the current affine slice.
If the projection of $xx^\top$ onto the tangent space of the slice is
nonzero, the linear coarea formula disintegrates affine Lebesgue
measure into Lebesgue measures on the codimension-one level sets.  The
coarea factor is the reciprocal norm of this projection; it is constant
on each resulting fiber and cancels when the conditional density is
normalized.  If the projection is zero, the answer is already
determined by the current history, and neither the conditional law nor
the fiber changes.

Induction therefore gives the stated conditional density.  Explicitly,
on each positive-dimensional resulting fiber $\mathcal F$ the kernel is
\[
 \mathbb P(W\in B\mid\mathcal H)
 =\frac{\displaystyle\int_{B\cap\mathcal F\cap\operatorname{Sym}_d^{++}}
          \det(U)^{g/2}e^{-\operatorname{tr}U/2}\,dU}
        {\displaystyle\int_{\mathcal F\cap\operatorname{Sym}_d^{++}}
          \det(U)^{g/2}e^{-\operatorname{tr}U/2}\,dU}
\]
at almost every realized history, and a singleton fiber carries a point
mass.  Previous query choices are determined by previous answers, so
they introduce no additional weight varying within the fiber.
Fixed-order Gram--Schmidt with zero pivots skipped gives measurable
tangent bases and separates the measurable rank strata, and applying
the Moore--Penrose inverse to the realized finite linear system gives
measurable affine base points.  Together, these constructions yield
measurable versions of the successive kernels for Borel policies and
random ranks.  The argument applies the coarea formula to each fixed
linear measurement after conditioning on its past; it requires neither
differentiability nor continuity of the policy or of the full
transcript map.
\end{proof}

\begin{remark}[General linear measurements]\label{r12:rem-linear}
The conditional-law proof and all posterior identities below hold
unchanged for observations $\langle B_i,W\rangle$ with arbitrary
symmetric $B_i$ chosen measurably from the preceding history.  Replace
the measured span by $\operatorname{span}\{B_i\}$; each informative
scalar observation still reduces the affine dimension by exactly one.
Neither positivity nor the rank-one structure of $x_ix_i^\top$ is used in
these arguments.  The rank-one restriction enters the optimization
reduction only through the quadratic objective representation.
\end{remark}

\begin{lemma}[Inverse and trace identities on the conditional fiber]
\label{r12:fiber}
\label{r12:posterior}
Almost surely,
\begin{equation}
 J\preceq nM^{-1},\qquad
 \operatorname{tr}(MJ)=nd-2r,\qquad r\le q.
 \label{r12:eq-fiber}
\end{equation}
\end{lemma}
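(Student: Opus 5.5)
The plan is to work on the realized fiber supplied by Lemma~\ref{r12:disintegration} and treat the three claims separately. Throughout, condition on the full history $\mathcal H$. By that lemma, $W$ then has density proportional to $\det(W)^{g/2}e^{-\operatorname{tr}W/2}$ on $\Omega=(W_0+S^\perp)\cap\operatorname{Sym}_d^{++}$. Here $S=\operatorname{span}\{x_ix_i^\top\}$ has dimension $r$, so the slice has dimension $v=d(d+1)/2-r$.

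\emph{The bound $r\le q$.} This is immediate, since $S$ is spanned by at most $q$ matrices.

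\emph{The inequality $J\preceq nM^{-1}$.} This is Theorem~\ref{r12:affine} with $\zeta=g/2$ and $C=I/2$. The moments are finite on almost every fiber because the prior moments $\mathbb EW=nI$ and $\mathbb EW^{-1}=I/g$ are finite. By the tower property, the conditional moments are then finite almost surely. The theorem gives
\[
\mathbb E[W^{-1}\mid\mathcal H]\preceq\Bigl(1+\frac{d+1}{g}\Bigr)M^{-1}=\frac{g+d+1}{g}M^{-1}=\frac{n}{g}M^{-1},
\]
using $g+d+1=m+d=n$. Multiplying by $g$ gives $J\preceq nM^{-1}$. Zero-dimensional fibers are point masses, and there the claim is $gW^{-1}\preceq nW^{-1}$, which holds since $g<n$.

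\emph{The trace identity.} I would use a divergence (integration by parts) identity on the slice with the vector field $\Phi(W)=P(W-W_0)\rho(W)$. Here $P$ is the orthogonal projection of $\operatorname{Sym}_d$ onto $S^\perp$, and the centering is at the base point $W_0$. The divergence of $P(W-W_0)$ along the slice is $v$. The derivative of $\log\rho$ along a tangent direction $H$ is $\langle \tfrac g2W^{-1}-\tfrac12 I,H\rangle=-\langle s(W),H\rangle$. If the boundary terms vanish, this gives
\[
v=\mathbb E\bigl[\langle s(W),P(W-W_0)\rangle\mid\mathcal H\bigr]=\mathbb E\bigl[\langle Ps(W),W-W_0\rangle\bigr].
\]
The difficulty is that $P$ mixes in the unknown tangent structure, whereas the target identity involves the full trace. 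So I would instead choose $W_0$ orthogonal to $S^\perp$; the Moore--Penrose base point achieves this. Then $W-W_0\in S^\perp$ and $\langle s,P(W-W_0)\rangle=\langle s,W-W_0\rangle$. Next I would use
\[
\langle s(W),W\rangle=\tfrac12(\operatorname{tr}W-gd),\qquad \langle s(W),W_0\rangle=\tfrac12\langle I-Z,W_0\rangle .
\]
This still leaves the term $\langle Z,W_0\rangle$, which does not obviously reduce to $\operatorname{tr}(MJ)$.

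A cleaner route, which I would try first, is to apply the identity with the matrix-valued test $\Psi(W)=P(W)$ paired appropriately, that is, to use the Stein identity $\mathbb E[\langle s,PH\rangle]=0$ for constant tangent $H$. This gives $P\,\mathbb E[s\mid\mathcal H]=0$, i.e.\ $P(I-J)=0$: the conditional mean of $Z$ agrees with $I$ on tangent directions. Combining this with the fact that $W-M\in S^\perp$ (the measured coordinates of $W$ are fixed on the fiber) gives
\[
\operatorname{tr}(M J)=\langle M,J\rangle=\langle M,I\rangle+\langle M,J-I\rangle .
\]
Splitting $M$ into its $S$ and $S^\perp$ parts, and using the first-order identity $v=\mathbb E\langle s,W-M\rangle$ derived above, should give $\operatorname{tr}(MJ)=nd-2r$ after the algebra. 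The count works out as $nd-2r=(g+d+1)d-2(d(d+1)/2-v)$. The main obstacle is this bookkeeping: identifying exactly which centered combination yields $nd-2r$ and justifying the vanishing boundary terms. For the latter I would use the same exhaustion by the sublevel sets $D_b$ as in Theorem~\ref{r12:affine}, together with the decay of $\det(W)^{g/2}$ at $\partial\operatorname{Sym}_d^{++}$ (recall $g>2$) and the exponential decay at infinity.
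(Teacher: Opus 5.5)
Your treatment of $r\le q$ and of $J\preceq nM^{-1}$ is the paper's argument: the affine-slice comparison with $\zeta=g/2$, $C=I/2$, the identity $g+d+1=n$, and finiteness of the conditional moments by the tower property. The trace identity, however, is not actually established. You set up the divergence identity on the fiber correctly. Then you stall on the term $\langle Z,W_0\rangle$ and leave the conclusion as unspecified ``bookkeeping,'' without saying which combination produces $nd-2r$.

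The missing observation is to center the tangent field at $M$ itself. Given $\mathcal H$, $M$ is a fixed point of the fiber, because the conditional law is supported on the affine slice and so is its mean. Hence $W-M\in S^\perp$ for every $W$ on the fiber, and $P(W-M)=W-M$. Your divergence computation works verbatim for any $\mathcal H$-measurable constant center, since $P(W-A)$ always has slice divergence $v$. With $A=M$ it gives
\[
 v=\mathbb E\bigl[\langle s(W),W-M\rangle\mid\mathcal H\bigr].
\]
Now expand directly:
\[
 \langle s(W),W-M\rangle=\tfrac12\operatorname{tr}(W-M)-\tfrac g2\bigl(d-\operatorname{tr}(W^{-1}M)\bigr).
\]
Since $\mathbb E[W-M\mid\mathcal H]=0$ and $g\,\mathbb E[W^{-1}\mid\mathcal H]=J$, taking conditional expectations gives $v=\tfrac12(\operatorname{tr}(MJ)-gd)$. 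Therefore $\operatorname{tr}(MJ)=gd+d(d+1)-2r=nd-2r$. You need neither a splitting of $M$ nor a separate Stein identity; this is exactly the paper's step.

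Your $W_0$-centered route also closes with the pieces you already have. Take $W_0\in S$ to be the Moore--Penrose base point. Your identity then reads $gd+2v=\operatorname{tr}(M-W_0)+\langle J,W_0\rangle$. The Stein identity $P(I-J)=0$, applied to the tangent vector $M-W_0$, gives $\langle J,M-W_0\rangle=\operatorname{tr}(M-W_0)$. Adding the two yields $gd+2v=\langle J,M\rangle$.

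On the boundary terms, exhausting by the sets $D_b$ would require showing that the flux of $\rho\,(W-M)$ through $\partial D_b$ tends to zero, and your plan does not address this. The paper avoids surface estimates altogether. It extends $p(W)=\det(W)^{g/2}e^{-\operatorname{tr}W/2}$ by zero, which is $C^1$ on the whole slice because $g/2>1$. It then bounds $p$, $(1+\|W\|_F)p$, $(1+\|W\|_F)\nabla p$ and $W^{-1}p$ by integrable polynomial-times-exponential functions, choosing the base point orthogonal to the tangent space, and removes the contribution at infinity with compact cutoffs.
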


\begin{proof}
By the tower property, the conditional first and inverse-first moments
are finite almost surely.  Applying Theorem~\ref{r12:affine} with
$\zeta=g/2$ to the conditional law of Lemma~\ref{r12:disintegration}
and multiplying by $g$ gives $J\preceq(g+d+1)M^{-1}=nM^{-1}$.

The conditional mean $M$ lies in the same affine slice.  In its $v=d(d+1)/2-r$ orthonormal tangent
coordinates, the centered vector field $W-M$ has divergence $v$, so
integration by parts gives
\[
 v=\mathbb E[\langle s(W),W-M\rangle\mid\mathcal H]
  =\frac{\operatorname{tr}(MJ)-gd}{2},
\]
where we used $\mathbb E[W-M\mid\mathcal H]=0$ and $W^{-1}W=I$.
Since $gd+2v=nd-2r$, the trace identity follows.

It remains to justify the boundary terms and the integrability in this
calculation.  Set $\zeta=g/2>1$ and extend
$p(W)=\det(W)^\zeta e^{-\operatorname{tr}W/2}$ by zero outside the
positive-definite cone.  Inside the cone,
\[
 \nabla p=e^{-\operatorname{tr}W/2}
 \left[\zeta\det(W)^{\zeta-1}\operatorname{adj}W
              -\frac12\det(W)^\zeta I\right].
\]
Both terms tend to zero at every finite boundary point, so the zero
extension is $C^1$.  For positive-semidefinite $W$,
\[
 \|W\|_F\le\operatorname{tr}W,\qquad
 \det W\le\|W\|_F^d,\qquad
 \|\operatorname{adj}W\|_{\mathrm{op}}\le\|W\|_F^{d-1}.
\]
Hence $p$, $(1+\|W\|_F)p$, $(1+\|W\|_F)\nabla p$, and
$W^{-1}p=\operatorname{adj}W\det(W)^{\zeta-1}e^{-\operatorname{tr}W/2}$
are bounded on every fixed affine slice by integrable functions of the
form polynomial times decaying exponential.  Choose the affine base
point orthogonal to the tangent space; then $\|W\|_F$ dominates the
tangent-coordinate norm.  Compact cutoffs therefore remove the boundary
contribution at infinity, while the zero extension removes the flux
through the finite boundary.  This also gives finite forward and inverse
means on every slice meeting the positive-definite cone.  For a
zero-dimensional slice, $M=W$ and $J=gW^{-1}$, and the identity follows
directly.  Finally, each scalar query reveals at most one new
dimension, so $r\le q$.
\end{proof}

\subsection{A direct posterior operator-norm budget}
\label{app:r12-budget}
\label{r12:sec-budget}

For $v\ge0$, define $\varphi(v)=1+v+\sqrt{v^2+2v}$.

\begin{lemma}[Deterministic operator bound]
\label{r12:operator}
If $A,J\succ0$ and $A\preceq J^{-1}$, then
\[
 \Upsilon=\tfrac12\operatorname{tr}(A+J)-\operatorname{tr}(AJ)\ge0,
 \qquad \|J\|_{\mathrm{op}}\le\varphi(\Upsilon)\le2+2\Upsilon.
\]
\end{lemma}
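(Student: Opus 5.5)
Work in the eigenbasis of $J$: write $J=\sum_i b_i e_ie_i^\top$ with $b_i>0$, and set $a_i=e_i^\top Ae_i>0$. Then $\operatorname{tr}J=\sum_ib_i$ and $\operatorname{tr}A=\sum_ia_i$. Also $\operatorname{tr}(AJ)=\sum_i b_i\,e_i^\top Ae_i=\sum_ia_ib_i$, so
\[
 \Upsilon=\sum_i\Upsilon_i,\qquad \Upsilon_i=\tfrac12(a_i+b_i)-a_ib_i .
\]
The hypothesis $A\preceq J^{-1}$, tested against $e_i$, gives $0<a_i\le e_i^\top J^{-1}e_i=1/b_i$.

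\emph{Each term is nonnegative.} For fixed $b_i$, the function $a\mapsto \tfrac12(a+b_i)-ab_i$ is affine on $[0,1/b_i]$, so it suffices to check the endpoints. At $a=0$ it equals $b_i/2>0$. At $a=1/b_i$ it equals $\tfrac12(1/b_i+b_i)-1=(b_i-1)^2/(2b_i)\ge0$. Hence $\Upsilon_i\ge\min\{b_i/2,(b_i-1)^2/(2b_i)\}\ge0$, and summing gives $\Upsilon\ge0$.

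\emph{Operator bound.} Take $i$ with $b_i=\|J\|_{\mathrm{op}}=:b$. If $b\le1$, then $b\le1\le\varphi(\Upsilon)$ trivially. If $b>1$, the endpoint analysis still applies. For $b\ge1/2$ we have $b/2\ge(b-1)^2/(2b)$, which is equivalent to $b^2\ge(b-1)^2$. So in that case $\Upsilon\ge\Upsilon_i\ge(b-1)^2/(2b)$, i.e.
\[
 b^2-2(1+\Upsilon)b+1\le0 .
\]
Thus $b$ is at most the larger root $1+\Upsilon+\sqrt{(1+\Upsilon)^2-1}=1+\Upsilon+\sqrt{\Upsilon^2+2\Upsilon}=\varphi(\Upsilon)$.

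\emph{Linear bound.} Finally, $\sqrt{\Upsilon^2+2\Upsilon}\le\Upsilon+1$ because $\Upsilon^2+2\Upsilon\le(\Upsilon+1)^2$. This gives $\varphi(\Upsilon)\le2+2\Upsilon$.

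The argument has no real obstacle. The only point needing care is the reduction to scalars: we use diagonal entries of $A$ in the eigenbasis of $J$, not eigenvalues of $A$. This works because $\operatorname{tr}(AJ)$ depends only on those diagonal entries, and because the Loewner bound $A\preceq J^{-1}$ already controls them.
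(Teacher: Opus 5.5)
Your proof is correct and follows essentially the same route as the paper: you take the diagonal entries of $A$ in the eigenbasis of $J$, bound each term $\Upsilon_i$, and use the largest eigenvalue to get a quadratic inequality whose larger root is $\varphi(\Upsilon)$. Your affine-in-$a_i$ endpoint check is only a compact repackaging of the paper's case split at $b_i=1/2$.
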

\begin{proof}
In an orthonormal eigenbasis of $J$, write $b_i>0$ for its eigenvalues
and $a_i$ for the diagonal entries of $A$, so that $0<a_i\le1/b_i$.
Put $\Upsilon_i=(a_i+b_i)/2-a_ib_i$.
If $b_i\le1/2$, both terms in $\Upsilon_i=b_i/2+(1/2-b_i)a_i$ are
nonnegative.  If $b_i\ge1/2$, minimizing over $a_i\le1/b_i$ gives
\[
 \Upsilon_i\ge\tfrac12(b_i+b_i^{-1}-2)=\frac{(b_i-1)^2}{2b_i}\ge0.
\]
Thus $\Upsilon=\sum_i\Upsilon_i\ge0$.  If $b=\|J\|_{\mathrm{op}}>1$,
its contribution gives $(b-1)^2\le2b\Upsilon$, and the larger root of
the corresponding quadratic equation is $\varphi(\Upsilon)$.
If $b\le1$, use $\varphi(\Upsilon)\ge1$.  Finally,
$\sqrt{\Upsilon^2+2\Upsilon}\le\Upsilon+1$.
\end{proof}

\begin{proposition}[Adaptive operator budget]
\label{r12:budget}
For the realized history of at most $q$ scalar quadratic measurements,
with $J=(m-1)\mathbb E[W^{-1}\mid\mathcal H]$, $n=d+m$, and integer $m>3$,
\begin{equation}
 \mathbb E\|J\|_{\mathrm{op}}
 \le\varphi(2\mathbb Er/n)\le\varphi(2q/n).
 \label{r12:eq-budget}
\end{equation}
Here $r$ is the dimension of the measured matrix span.
\end{proposition}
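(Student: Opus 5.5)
The plan combines Lemma~\ref{r12:fiber} with Lemma~\ref{r12:operator} pointwise in the history, then takes expectations using the tower identities and concavity of $\varphi$.

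\emph{Step 1: the deterministic lemma applies almost surely.} Set $A=M/n$. Lemma~\ref{r12:fiber} gives $J\preceq nM^{-1}=A^{-1}$. Both $M$ and $J$ are positive definite almost surely, since they are conditional means of positive-definite matrices with finite moments. Because $A,J\succ0$, the inequality $J\preceq A^{-1}$ is equivalent to $A\preceq J^{-1}$ (inversion reverses the order). Hence Lemma~\ref{r12:operator} yields
\[
 \|J\|_{\mathrm{op}}\le\varphi(\Upsilon),\qquad \Upsilon=\tfrac12\tr(A+J)-\tr(AJ)\ge0 .
\]

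\emph{Step 2: compute $\E\Upsilon$.} The trace identity gives $\tr(AJ)=\tr(MJ)/n=d-2r/n$. By the tower property, $\E M=\E W=nI$ and $\E J=(m-1)\E W^{-1}=I$. Hence $\E\tr A=d$ and $\E\tr J=d$. Therefore
\[
 \E\Upsilon=\tfrac12(d+d)-d+\tfrac{2\E r}{n}=\tfrac{2\E r}{n}.
\]
All terms are integrable, because $\tr A$ and $\tr J$ are nonnegative with finite means and $0\le r\le d(d+1)/2$.

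\emph{Step 3: Jensen and monotonicity.} First check that $\varphi$ is nondecreasing and concave on $[0,\infty)$. Writing $\varphi(v)=1+v+\sqrt{(v+1)^2-1}$, we get $\varphi'(v)=1+(v+1)/\sqrt{(v+1)^2-1}>0$. Moreover $h(t)=\sqrt{t^2-1}$ is concave for $t>1$, since $h''=-(t^2-1)^{-3/2}<0$, and $\varphi$ is continuous at $0$. Jensen's inequality then gives
\[
 \E\|J\|_{\mathrm{op}}\le\E\varphi(\Upsilon)\le\varphi(\E\Upsilon)=\varphi(2\E r/n).
\]
Finally, $r\le q$ almost surely, so $\E r\le q$, and monotonicity gives $\varphi(2\E r/n)\le\varphi(2q/n)$.

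\emph{Main obstacle.} The only delicate point is Step~1: the hypotheses of Lemma~\ref{r12:operator} must hold almost surely, including the case of a zero-dimensional fiber, where $J=gW^{-1}$ and $M=W$ satisfy $J\preceq nM^{-1}$ trivially. Integrability for Jensen is also needed, and it is supplied by $\varphi(v)\le2+2v$ together with $\E\Upsilon<\infty$.
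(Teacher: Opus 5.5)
Your proposal is correct and follows the paper's proof essentially step for step: set $A=M/n$, use Lemma~\ref{r12:fiber} to get $A\preceq J^{-1}$ and $\tr(AJ)=d-2r/n$, apply Lemma~\ref{r12:operator} pointwise, get $\E\Upsilon=2\E r/n$ from the tower identities $\E A=\E J=I$, and finish with Jensen for the increasing concave $\varphi$ together with $r\le q$. The only difference is that you write out in full the order reversal $J\preceq A^{-1}\iff A\preceq J^{-1}$, the concavity of $\varphi$, and the integrability via $\varphi(v)\le2+2v$, which the paper states only briefly.
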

\begin{proof}
Set $A=M/n$.  Lemma~\ref{r12:fiber} gives $A\preceq J^{-1}$ and
$\operatorname{tr}(AJ)=d-2r/n$, and the tower identities
$\mathbb EA=\mathbb EJ=I$ give $\mathbb E\Upsilon=2\mathbb Er/n$.
The function $\varphi$ is increasing and concave: for $v>0$,
$\varphi''(v)=-(v^2+2v)^{-3/2}$, and concavity extends continuously to
zero.  Applying Jensen's inequality to the bound of
Lemma~\ref{r12:operator} and using $r\le q$ gives the claim.
Integrability follows from $\varphi(\Upsilon)\le2+2\Upsilon$ and finite
first moments.  The argument requires neither higher inverse moments
nor spectral-tail estimates.
\end{proof}

\begin{lemma}[Correlation-to-query bound]
\label{r12:correlation-query}
Let $u$ be a fixed unit vector, $Y=W^{-1}u$, and $a^2=\mathbb E\|Y\|_2^2$.
Suppose that an observable $\widehat h$ satisfies $\|\widehat h\|_2\le a$
and $\mathbb E[\widehat h^\top Y]\ge ca^2$ for $c>0$.
Put $g=m-1$ and $B=c^2g^2a^2$. If $B\ge1$, then
\begin{equation}
 q\ge\frac{n(B-1)^2}{4B}.
 \label{r12:eq-cost-inversion}
\end{equation}
\end{lemma}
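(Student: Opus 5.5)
The plan follows the sketch already given in the overview: bound the correlation above by the posterior operator norm, then invert \eqref{r12:eq-budget}. Since $\widehat h$ is $\mathcal H$-measurable, the tower property gives
\[
 \E[\widehat h^\top Y]=\E[\widehat h^\top\E[W^{-1}\mid\mathcal H]u]=g^{-1}\E[\widehat h^\top Ju].
\]
Hence $cga^2\le\E[\widehat h^\top Ju]$.

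Next bound the right-hand side. Since $J\succ0$, we have $J^2\preceq\|J\|_{\mathrm{op}}J$, so $\|Ju\|_2^2\le\|J\|_{\mathrm{op}}\,u^\top Ju$. Using $\|\widehat h\|_2\le a$ and then Cauchy--Schwarz in expectation,
\[
 \E[\widehat h^\top Ju]\le a\,\E\bigl[\|J\|_{\mathrm{op}}^{1/2}(u^\top Ju)^{1/2}\bigr]
 \le a\sqrt{\E\|J\|_{\mathrm{op}}}\sqrt{\E[u^\top Ju]}.
\]
The tower identity $\E J=I$ gives $\E[u^\top Ju]=1$. Combining with the first step, $c^2g^2a^2\le\E\|J\|_{\mathrm{op}}$, that is, $B\le\E\|J\|_{\mathrm{op}}\le\varphi(2q/n)$ by Proposition~\ref{r12:budget}.

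Finally, invert $\varphi$. Write $v=2q/n$. Since $B\ge1$ and $\varphi(v)=1+v+\sqrt{v^2+2v}$, the inequality $B\le\varphi(v)$ is equivalent to $\sqrt{v^2+2v}\ge B-1-v$. If $B-1-v\le0$, then $v\ge B-1\ge(B-1)^2/(2B)$. Otherwise squaring gives
\[
 v^2+2v\ge(B-1-v)^2=(B-1)^2-2(B-1)v+v^2,
\]
so $2Bv\ge(B-1)^2$ and again $v\ge(B-1)^2/(2B)$. Substituting $v=2q/n$ yields $q\ge n(B-1)^2/(4B)$, which is \eqref{r12:eq-cost-inversion}.

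The only delicate points are integrability and the conditional-expectation step. The first step needs $\widehat h^\top Y$ integrable, which follows from $\|\widehat h\|_2\le a$ and $\E\|Y\|_2<\infty$. Finiteness of $\E\|J\|_{\mathrm{op}}$ comes from Proposition~\ref{r12:budget}.
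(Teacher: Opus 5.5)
Your proof is correct and follows the paper's argument step for step. You pass to $J$ via the tower property, use $J^2\preceq\|J\|_{\mathrm{op}}J$ with Cauchy--Schwarz and $\E[u^\top Ju]=1$ to get $B\le\E\|J\|_{\mathrm{op}}\le\varphi(2q/n)$, and then invert $\varphi$ on $[1,\infty)$. Your explicit case split in the inversion simply verifies the paper's one-line claim that the inverse of $\varphi$ there is $B\mapsto(B-1)^2/(2B)$.
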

\begin{proof}
Conditional expectation and boundedness give
\[
 cga^2\le\mathbb E[\widehat h^\top Ju]\le a\mathbb E\|Ju\|_2.
\]
Since $J^2\preceq\|J\|_{\mathrm{op}}J$ and $\mathbb E[u^\top Ju]=1$,
the Cauchy--Schwarz inequality yields
\[
 \mathbb E\|Ju\|_2
 \le\mathbb E\sqrt{\|J\|_{\mathrm{op}}u^\top Ju}
 \le\sqrt{\mathbb E\|J\|_{\mathrm{op}}}.
\]
Hence $B\le\mathbb E\|J\|_{\mathrm{op}}\le\varphi(2q/n)$.
On $[1,\infty)$, the inverse of $\varphi$ is
$B\mapsto(B-1)^2/(2B)$, so $(B-1)^2/(2B)\le2q/n$.
\end{proof}

\subsection{Exact radial moments}
\label{app:r12-moments}
\label{r12:sec-moments}

Let $m\ge64$ and $d\ge8m$ be integers, and define $k=d/m$, $n=d+m$,
$g=m-1$.  Fix a known deterministic unit vector $u$ and set
$Y=W^{-1}u$.  Gaussian regression gives
\begin{equation}
 \|Y\|_2^2\ \stackrel{\mathrm{law}}=\ \frac{1+U/V}{S_0^2},
 \quad U\sim\chi^2_{d-1},\quad V\sim\chi^2_{m+2},\quad
 S_0\sim\chi^2_{m+1},
 \label{r12:eq-radial-law}
\end{equation}
where the three variables on the right-hand side are independent.
To see this, rotate $u$ to $e_1$, partition $X=[x,Z_0]$, and let
\[
 C_0=Z_0^\top Z_0,\quad h=C_0^{-1}Z_0^\top x,\quad
 S_0=x^\top(I-P_{Z_0})x.
\]
Then $Y=S_0^{-1}(1,-h)$, $S_0$ is independent of $(C_0,h)$, and
$h\mid C_0\sim N(0,C_0^{-1})$.
Integrating this normal density against the Wishart density of $C_0$
gives a density proportional to $(1+\|h\|_2^2)^{-(n+1)/2}$: the matrix
rate is $I+hh^\top$, and the congruence change of variables contributes
its determinant raised to the power $-(n+1)/2$.  The same density
arises from $h=z/\sqrt V$ with $z\sim N(0,I_{d-1})$ independent of $V$,
and taking $U=\|z\|_2^2$ proves \eqref{r12:eq-radial-law}.
Moreover, $u^\top W^{-1}u=S_0^{-1}$, which gives the inverse mean used
earlier.  Gamma integration gives
\[
 \mathbb E(\chi^2_\ell)^{-j}=\prod_{i=1}^j(\ell-2i)^{-1},\quad\ell>2j.
\]
In particular,
\begin{equation}
 a^2=\frac{n-1}{m(m-1)(m-3)},\qquad g^2a^2\ge k.
 \label{r12:eq-second-moment}
\end{equation}
The inequality holds because the numerator difference is
$(m-1)(d+m-1)-d(m-3)=2d+(m-1)^2>0$.

\begin{lemma}[Uniform radial bounds]
\label{r12:moments}
For $R_Y=\|Y\|_2/a$, $\mu_Y=\mathbb ER_Y$, $\nu_Y=\mathbb ER_Y^{-1}$,
\begin{equation}
 \mu_Y\ge\frac{39}{40},\quad\operatorname{Var}(R_Y)\le\frac3{64},
 \quad\nu_Y\le\frac{13}{12},\quad\mathbb ER_Y^{12}<16.
 \label{r12:eq-radial-bounds}
\end{equation}
\end{lemma}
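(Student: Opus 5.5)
The plan is to compute every quantity in \eqref{r12:eq-radial-bounds} in closed form from \eqref{r12:eq-radial-law}, and then bound the resulting ratios uniformly over $m\ge64$ and $d\ge8m$. The key observation is that $1+U/V=1/\beta$, where $\beta=V/(U+V)\sim\mathrm{Beta}\bigl(\tfrac{m+2}2,\tfrac{d-1}2\bigr)$, and $\beta$ is independent of $S_0$. Hence every moment $\mathbb E R_Y^{s}=a^{-s}\,\mathbb E\beta^{-s/2}\,\mathbb E S_0^{-s}$ factors into explicit Gamma ratios. With $\alpha_0=(m+2)/2$ and $\alpha_0+\beta_0=(n+1)/2$ we have
\[
 \mathbb E\beta^{-t}=\frac{\Gamma(\alpha_0-t)\Gamma(\alpha_0+\beta_0)}{\Gamma(\alpha_0)\Gamma(\alpha_0+\beta_0-t)},
 \qquad
 \mathbb E S_0^{-t}=\frac{\Gamma(\tfrac{m+1}2-t)}{2^{t}\Gamma(\tfrac{m+1}2)}.
\]
As a sanity check, taking $s=2$ reproduces $a^2$ in \eqref{r12:eq-second-moment}, so $\mathbb ER_Y^2=1$ exactly.

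First I would handle $\mu_Y$ and the variance together. Since $\mathbb ER_Y^2=1$, we have $\operatorname{Var}(R_Y)=1-\mu_Y^2$. So $\operatorname{Var}(R_Y)\le3/64$ is equivalent to $\mu_Y\ge\sqrt{61}/8\approx0.9763$, which already implies $\mu_Y\ge39/40$; only one inequality needs proof. Write
\[
 \mu_Y=\frac{\mathbb E\beta^{-1/2}}{\sqrt{\mathbb E\beta^{-1}}}\cdot\frac{\mathbb ES_0^{-1}}{\sqrt{\mathbb ES_0^{-2}}}.
\]
The second factor is exactly $\sqrt{(m-3)/(m-1)}$. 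For the first factor I would use Wendel/Gautschi-type bounds $\Gamma(x+\tfrac12)/\Gamma(x)\ge\sqrt{x-\tfrac14}$ (or the log-convexity of $\Gamma$) on both Gamma ratios. This gives a lower bound of the form $1-c/(m+O(1))$ with $c\approx\tfrac18$, independent of $d$; the worst case is $d\to\infty$, where $\beta^{-1}$ behaves like a scaled inverse $\chi^2_{m+2}$. Heuristically the relative variance is $2/(m-3)$ for $S_0^{-1}$ and about $1/(2(m+2))$ for $\beta^{-1/2}$, so $\mu_Y^2\approx1-0.04$ at $m=64$. This leaves a small but genuine margin below $3/64$. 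I would then check that the explicit lower bound is monotone in $m$ and evaluate it at $m=64$.

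For $\nu_Y$, the same factorization gives
\[
 \nu_Y=a\,\mathbb E\beta^{1/2}\,\mathbb ES_0=a\,(m+1)\,\mathbb E\beta^{1/2}.
\]
Since $\beta^{1/2}$ is concave, Jensen gives $\mathbb E\beta^{1/2}\le\sqrt{\mathbb E\beta}=\sqrt{(m+2)/(n+1)}$. Combined with the exact $a$, this yields
\[
 \nu_Y\le(m+1)\sqrt{\frac{(n-1)(m+2)}{(n+1)\,m(m-1)(m-3)}}.
\]
This is $1+O(1/m)$ and is at most $13/12$ for $m\ge64$, after a direct check of monotonicity. For the twelfth moment, everything is rational: $\mathbb E\beta^{-6}=\prod_{i=1}^{6}\frac{\alpha_0+\beta_0-i}{\alpha_0-i}$ and $\mathbb ES_0^{-12}=\prod_{i=1}^{12}(m+1-2i)^{-1}$. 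Therefore $\mathbb ER_Y^{12}$ is a product of twelve factors of the form $1+O(1/m)$ (after dividing by $a^{12}$). Each factor is decreasing in $m$, and the $d$-dependence is monotone with the supremum attained as $d\to\infty$. I would evaluate at $m=64$, $d=\infty$ to get roughly $e^{\sum}\approx 6$–$8<16$, bounding the product by $\exp$ of a sum of logarithms.

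The main obstacle is the $\mu_Y$/variance step. Its margin is small (about $0.04$ against $0.047$), so crude inequalities such as Hölder $\mathbb E\sqrt X\ge(\mathbb EX)^{3/2}/(\mathbb EX^2)^{1/2}$ lose a factor of two in the $\beta$-term and fail. I would therefore insist on the exact Gamma-ratio expression with sharp Gautschi/Kershaw bounds, with the form $\Gamma(x+1)/\Gamma(x+\tfrac12)\le\sqrt{x+\tfrac12}$ applied in the direction needed for each ratio. The other delicate point is verifying monotonicity in $d$, so that the uniform worst case is the limit $d\to\infty$ (or $d=8m$ if the monotonicity runs the other way).
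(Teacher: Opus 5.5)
Your proposal follows the paper's proof. It uses the same Beta--$\chi^2$ factorization and the same reduction $\operatorname{Var}(R_Y)=1-\mu_Y^2$ handled by Gamma-ratio bounds; plain log-convexity of $\Gamma$ already gives $\mu_Y^2\ge m(m-3)/(m^2-1)$, hence $\operatorname{Var}(R_Y)\le3/m$, so the Kershaw-type sharpening you insist on is not needed. Your Jensen bound for $\nu_Y$ is applied only to $\beta^{1/2}$ with the exact $\mathbb E S_0=m+1$, giving about $1.065$, which is slightly sharper than the paper's $\sqrt{\mathbb E R_Y^{-2}}\approx1.081$. Like the paper, you then evaluate an exact rational product for $\mathbb E R_Y^{12}$, monotone in $m$ and $d$, at $m=64$, $d\to\infty$.

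One correction: that worst-case value is about $15.10$ (the paper's $C_6(64)$), not $6$--$8$. The margin below $16$ is therefore thin and the product must be evaluated exactly. With the pairing $\frac{m}{m-(2j-2)}\cdot\frac{m-1}{m-(4j-3)}\cdot\frac{m-3}{m-(4j-1)}$, first-order bounds $\log(1+x)\le x$ would already give about $22$.
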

\begin{proof}
The ratio $B_0=V/(U+V)$ has a beta distribution with parameters
$((m+2)/2,(d-1)/2)$ and is independent of $S_0$.  Its moment of order
minus one-half gives
\[
 \mu_Y=\frac1{ga}
 \frac{\Gamma((m+1)/2)\Gamma((n+1)/2)}{\Gamma((m+2)/2)\Gamma(n/2)}
 =\frac{\mathbb E\chi_n}{ga\mathbb E\chi_{m+1}},
\]
where $\chi_j$ denotes the norm of a standard Gaussian vector in
dimension $j$.  The Cauchy--Schwarz inequality in the Gamma integral
gives
\[
 \Gamma(x+1)\le\sqrt{\Gamma(x+1/2)\Gamma(x+3/2)}
 =\sqrt{x+1/2}\,\Gamma(x+1/2).
\]
Thus $\mathbb E\chi_n\ge n/\sqrt{n+1}\ge\sqrt{n-1}$, whereas
$\mathbb E\chi_{m+1}\le\sqrt{m+1}$.  Using \eqref{r12:eq-second-moment},
we obtain
\[
 \mu_Y^2\ge\frac{m(m-3)}{m^2-1},\qquad
 \operatorname{Var}(R_Y)=1-\mu_Y^2\le\frac{3m-1}{m^2-1}\le\frac3m.
\]
Hence $\mu_Y\ge\sqrt{61/64}>39/40$ and $\operatorname{Var}(R_Y)\le3/64$.
Next, the second moment of $S_0$ and the first beta moment give
\[
 \mathbb ER_Y^{-2}=\frac{n-1}{n+1}
 \frac{(m+1)(m+2)(m+3)}{m(m-1)(m-3)}.
\]
Each of the three ratios in the last fraction is decreasing in $m$, so
the Cauchy--Schwarz inequality gives
\[
 \nu_Y\le\sqrt{\frac{65\cdot66\cdot67}{64\cdot63\cdot61}}<\frac{13}{12}.
\]
Finally, for an integer $p$ with $4p<m+1$, beta and Gamma integration
yield
\[
 \mathbb E(1+U/V)^p=\prod_{j=1}^p\frac{n+1-2j}{m+2-2j},\qquad
 \mathbb ES_0^{-2p}=\prod_{j=1}^{2p}\frac1{m+1-2j}.
\]
Dividing their product by $a^{2p}$ and bounding $n+1-2j\le n-1$ gives
\[
 \mathbb ER_Y^{2p}\le C_p(m):=
 \prod_{j=1}^p\frac m{m-(2j-2)}
 \frac{m-1}{m-(4j-3)}\frac{m-3}{m-(4j-1)}.
\]
Every ratio is $1+c/(m-b)$ with $c\ge0$, so $C_p$ is decreasing on its
domain, and exact rational arithmetic gives
\[
 C_6(64)=\frac{1562239696858251264}{103452970118876125}<16.
\]
\end{proof}

\subsection{A valid quadratic prior with offset 64}
\label{app:r12-prior}
\label{r12:sec-prior}

For an $n\times d$ standard Gaussian matrix with $n=d+m$, the
pseudoinverse bound of \citet[Proposition~10.4, equation~(10.6)]
{HalkoMartinssonTropp2011}, applied to $X^\top$, reads
\begin{equation}
 \mathbb P\left\{\|X^\dagger\|_{\mathrm{op}}
              \ge\frac{e\sqrt n}{m+1}t\right\}
 \le t^{-(m+1)},\qquad t\ge1.
 \label{r12:eq-pseudoinverse}
\end{equation}
Their expectation and concentration estimates for the Gaussian norm
(Propositions~10.1 and~10.3) give
\begin{equation}
 \mathbb P\{\|X\|_{\mathrm{op}}>\sqrt n+\sqrt d+t\}\le e^{-t^2/2}.
 \label{r12:eq-gaussian-norm}
\end{equation}

\begin{lemma}[Supported normalization]
\label{r12:lem-prior}
For integers $m\ge64$ and $k=d/m\ge8$, the event
\[
 E=\left\{\|Y\|_2\le7a/4,\quad
             \frac{m^2}{10n}I\preceq W\preceq5nI\right\}
\]
has probability at least $49/50$. Set
\begin{equation}
 t=\frac{7a}{4R},\qquad\sigma=\frac{16LR^2}{245na^2},\qquad
 f_W(y)=\sigma\left[\tfrac12(ty)^\top W(ty)-u^\top(ty)\right].
 \label{r12:eq-normalization}
\end{equation}
On $E$, $f_W\in\mathcal C_{d,L,R}$. In fact,
\begin{equation}
 y^\star=Y/t,\quad\|y^\star\|_2\le R,\qquad
 \frac{Lm^2}{50n^2}I\preceq\nabla^2f_W=\frac{L}{5n}W\preceq LI.
 \label{r12:eq-supported-hessian}
\end{equation}
On the same event, if an algorithm returns an $\varepsilon$-accurate
output and $\varepsilon\le LR^2/(10000k^2)$, the corresponding raw output
$x=t\widehat y$ satisfies $\|x-Y\|_2\le a/5$.
\end{lemma}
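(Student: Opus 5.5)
The proof has three independent parts: the probability estimate for $E$, the verification that $f_W$ lies in the class on $E$, and the conversion of $\eps$-accuracy into the bound $\|x-Y\|_2\le a/5$. All three are calculations with the moments of \cref{r12:moments} and the tail bounds \eqref{r12:eq-pseudoinverse} and \eqref{r12:eq-gaussian-norm}. Throughout, $m\ge64$ and $d=km\ge8m$, so $n\ge9m\ge576$ and $n/d=1+1/k\le9/8$.

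\emph{Probability of $E$.} I would take a union bound over three failure events.
\begin{itemize}
\item For the radial event, Markov's inequality with the twelfth moment of \cref{r12:moments} gives $\Prob\{\|Y\|_2>7a/4\}\le\E R_Y^{12}/(7/4)^{12}<16/(7/4)^{12}$. Since $(7/4)^{12}>825$, this is about $0.0194$.
\item For the upper spectral event, $W\preceq5nI$ is equivalent to $\|X\|_{\mathrm{op}}\le\sqrt{5n}$. Because $\sqrt n+\sqrt d\le2\sqrt n$, I can apply \eqref{r12:eq-gaussian-norm} with $t=(\sqrt5-2)\sqrt n\ge5.6$. The resulting tail is at most $e^{-15}$.
\item For the lower spectral event, $\lambda_{\min}(W)=\|X^\dagger\|_{\mathrm{op}}^{-2}$, so the event is $\|X^\dagger\|_{\mathrm{op}}\le\sqrt{10n}/m$. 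In \eqref{r12:eq-pseudoinverse} I would take $t=\sqrt{10}(m+1)/(em)\ge\sqrt{10}/e>1.16$. The failure probability is then at most $1.16^{-65}<10^{-4}$.
\end{itemize}
The three probabilities sum to less than $0.0194+10^{-4}+e^{-15}<1/50$.

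\emph{Membership in the class.} The Hessian of $f_W$ is $\sigma t^2W$. Substituting the normalization gives
\[
\sigma t^2=\frac{16LR^2}{245na^2}\cdot\frac{49a^2}{16R^2}=\frac{L}{5n}.
\]
On $E$, this yields $\frac{Lm^2}{50n^2}I\preceq\nabla^2f_W\preceq LI$. Hence $f_W$ is a strongly convex quadratic, and it is globally $L$-smooth. Setting the gradient $\sigma(t^2Wy-tu)$ to zero gives the unique minimizer $y^\star=W^{-1}u/t=Y/t$. On $E$, its norm satisfies $\|y^\star\|_2\le(7a/4)/t=R$.

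\emph{Accuracy conversion.} For a quadratic, the suboptimality of the output is
\[
f_W(\widehat y)-f_W(y^\star)=\frac{\sigma}{2}(x-Y)^\top W(x-Y),\qquad x=t\widehat y.
\]
On $E$, this is at least $\frac{\sigma m^2}{20n}\|x-Y\|_2^2$. Therefore $\|x-Y\|_2^2\le20n\eps/(\sigma m^2)$. Inserting $\sigma$ and the hypothesis $\eps\le LR^2/(10000k^2)$, with $km=d$, gives
\[
\|x-Y\|_2^2\le\frac{20\cdot245}{16\cdot10^4}\cdot\frac{n^2}{d^2}\,a^2\le\frac{4900}{160000}\cdot\frac{81}{64}\,a^2<\frac{a^2}{25}.
\]

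\emph{Main obstacle.} The only delicate step is the radial tail. The twelfth-moment Markov bound uses nearly the whole $1/50$ budget, so the two spectral tails must be shown to be genuinely negligible. This is why I would take the explicit margins above, $t\ge5.6$ in the Gaussian norm bound and a pseudoinverse base exceeding $1.16$ raised to the power $m+1\ge65$, rather than relying on asymptotics.
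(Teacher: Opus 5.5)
Your proposal is correct and follows essentially the same route as the paper. Both arguments use a union bound built from three pieces: the twelfth-moment Markov estimate, the pseudoinverse tail with parameter $\sqrt{10}(m+1)/(em)$, and the Gaussian norm tail at $(\sqrt5-2)\sqrt n$. Both then use $\sigma t^2=L/(5n)$ and the exact quadratic gap $\tfrac{\sigma}{2}(x-Y)^\top W(x-Y)$ with $n/d\le 9/8$. Your numerical margins, $1.16^{-65}<10^{-4}$ and $e^{-15}$, differ only cosmetically from the paper's $(37/50)^{32}$ and $e^{-576/50}$.
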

\begin{proof}
By Lemma~\ref{r12:moments} and Markov's inequality,
$\mathbb P\{\|Y\|_2>7a/4\}\le16(4/7)^{12}$.
Applying \eqref{r12:eq-pseudoinverse} with parameter
$\sqrt{10}(m+1)/(em)>1$ gives
\[
 \mathbb P\left\{\lambda_{\min}(W)<\frac{m^2}{10n}\right\}
 \le\left(\frac{e^2m^2}{10(m+1)^2}\right)^{(m+1)/2}
 \le(37/50)^{32}<\frac1{10000}.
\]
Here $e^2<37/5$.  Next, $\sqrt5-2>1/5$ and $n\ge576$, so using
$\sqrt n+\sqrt d\le2\sqrt n$ in \eqref{r12:eq-gaussian-norm} gives
\[
 \mathbb P\{\lambda_{\max}(W)>5n\}
 \le e^{-n/50}\le e^{-576/50}<\frac1{10000}.
\]
The exponential series gives $8/3<e<68/25$, which implies $e^2<37/5$
and the exponential comparisons above.  Since
\[
 16(4/7)^{12}+\frac1{5000}<\frac1{50},
\]
the union bound gives $\mathbb P(E)\ge49/50$.
The minimizer and the Hessian follow from \eqref{r12:eq-normalization},
since $\sigma t^2=L/(5n)$.  For an $\varepsilon$-accurate output, the
exact quadratic gap gives
\begin{align*}
 \frac{\|x-Y\|_2^2}{a^2}
 &\le\frac{20n\varepsilon}{\sigma m^2a^2}
 =\frac{1225}{4}(k+1)^2\frac{\varepsilon}{LR^2}\\
 &\le\frac{99225}{256}k^2\frac{\varepsilon}{LR^2}
 <\frac{388}{10000}<\frac1{25}.
\end{align*}
Here we used $k+1\le9k/8$.  All scaling factors are deterministic and
known.  A single value at an arbitrary point $y$ determines the
measurement
\[
 (ty)^\top W(ty)=2\left[f_W(y)/\sigma+u^\top(ty)\right],
\]
and conversely.  This gives an exact one-to-one correspondence between
arbitrary adaptive histories, including those with queries outside the
ball.  The law conditioned on $E$ is therefore a fixed prior supported
on globally valid positive-definite quadratics.
\end{proof}

\subsection{Three validated runs give an unconditional correlation}
\label{app:r12-success}
\label{r12:sec-success}

Run three independent copies of the original $T$-query algorithm on the
same objective, query each returned point once, and select the output
with the smallest observed value, breaking ties by index.  The total
number of calls is at most
\begin{equation}
 q=3(T+1).
 \label{r12:eq-amplification-cost}
\end{equation}
This is an admissible adaptive protocol with a tuple of independent
seeds.  Each run uses its own transcript and seed, so, conditional on
a fixed objective, its failure is independent of the other runs.  The
three validation calls are part of the combined history.

\begin{proposition}[Success correlation]
\label{r12:success}
Under Lemma~\ref{r12:lem-prior}, suppose that the original algorithm
succeeds with probability at least $2/3$ on every valid objective at
$\varepsilon\le LR^2/(10000k^2)$. Then the combined history admits an
observable $\widehat h$ with $\|\widehat h\|_2\le a$ and
\begin{equation}
 \mathbb E[\widehat h^\top Y]\ge\frac23a^2.
 \label{r12:eq-correlation}
\end{equation}
The expectation is taken over the unconditioned Wishart prior and all seeds.
\end{proposition}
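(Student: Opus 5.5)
The plan is to take $\widehat h=a\operatorname{unit}(x)$, where $x=t\widehat y$ is the raw output selected by the three-run protocol, and to lower-bound $\E[\widehat h^\top Y]/a^2$ by splitting on a good event. This $\widehat h$ is a measurable function of the combined history, including the three validation values, and satisfies $\|\widehat h\|_2\le a$.

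\emph{Step 1: the good event.} Let $G$ be the intersection of the prior-validity event $E$ of Lemma~\ref{r12:lem-prior} with the event that the selected output is $\eps$-accurate. First fix a valid objective. If at least one run succeeds, its validated value is at most $\min f_W+\eps$. The selected output has the smallest observed value, hence is also $\eps$-accurate. The three runs fail independently given the objective, so the selected output fails with probability at most $(1/3)^3$, and the protocol succeeds with probability at least $26/27$. Integrating over the prior gives
\[
 \Prob(G)\ge\tfrac{49}{50}\cdot\tfrac{26}{27}=\tfrac{637}{675},
 \qquad \Prob(G^c)\le\tfrac{38}{675}.
\]
On $G$, Lemma~\ref{r12:lem-prior} gives $\|x-Y\|_2\le a/5$.

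\emph{Step 2: pointwise bounds.} I would use an elementary unit-direction inequality: if $\|x-Y\|_2\le\delta$, then
\[
 \langle\operatorname{unit}(x),Y\rangle\ge\|Y\|_2-2\delta^2/\|Y\|_2 .
\]
Write $\theta$ for the angle between $x$ and $Y$. Then $\sin\theta\le\delta/\|Y\|_2$ and $1-\cos\theta\le\sin^2\theta$ when $\cos\theta\ge0$. The case $\delta\ge\|Y\|_2/\sqrt2$ is trivial with the factor $2$, which I keep as a safe margin. With $R_Y=\|Y\|_2/a$ and $\delta=a/5$, this gives on $G$
\[
 \langle\widehat h,Y\rangle/a^2\ge R_Y-\tfrac{2}{25}R_Y^{-1}.
\]
Off $G$, including failed runs and histories generated by invalid objectives, Cauchy--Schwarz gives $\langle\widehat h,Y\rangle/a^2\ge-R_Y$.

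\emph{Step 3: combine with the radial moments.} Adding the two cases,
\[
 \frac{\E\langle\widehat h,Y\rangle}{a^2}\ge\E R_Y-2\E[R_Y\mathbf 1_{G^c}]-\tfrac{2}{25}\E R_Y^{-1}.
\]
For the middle term, the covariance Cauchy--Schwarz inequality gives
\[
 \E[R_Y\mathbf 1_{G^c}]\le\mu_Y\Prob(G^c)+\sqrt{\Var(R_Y)\,\Prob(G^c)\Prob(G)} .
\]
Substituting the bounds of Lemma~\ref{r12:moments}, namely $\mu_Y\ge39/40$, $\Var(R_Y)\le3/64$ and $\nu_Y\le13/12$, yields
\[
 \frac{39}{40}\cdot\frac{599}{675}
 -2\sqrt{\tfrac{3}{64}\cdot\tfrac{38}{675}\cdot\tfrac{637}{675}}
 -\frac{2}{25}\cdot\frac{13}{12}
 \ge\frac{39}{40}\cdot\frac{599}{675}-\frac1{10}-\frac{13}{150}
 =\frac{6107}{9000}>\frac23 .
\]
The first term should be read with the exact $\mu_Y$: the bound $\mu_Y(1-2\Prob(G^c))$ uses only $\Prob(G^c)\le38/675$, and then $\mu_Y\ge39/40$ is applied. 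The square-root term is about $0.096<1/10$.

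\emph{Main obstacle.} The delicate point is that the expectation runs over the unconditioned prior. Histories outside $E$ and failed runs must be controlled without any success guarantee. This is why the crude bound $-R_Y$ on $G^c$ is combined with the variance bound, rather than with a bound on $\mu_Y$ alone. The measurability of the tie-breaking selection must also be checked; it is immediate from the total Borel policies.
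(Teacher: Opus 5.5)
Your proposal is correct and follows the paper's proof essentially step for step: the same good event of probability at least $637/675$, the same direction inequality giving $R_Y-\tfrac{2}{25}R_Y^{-1}$ on that event and $-R_Y$ off it, and the same covariance Cauchy--Schwarz bound combined with the radial moments. Two small remarks: replacing $\Prob(G^c)\Prob(G)$ by $\tfrac{38}{675}\cdot\tfrac{637}{675}$ uses that $p(1-p)$ is increasing on $[0,1/2]$, which holds because $p\le p_0<1/2$ as the paper notes; and the subtracted square-root term is about $0.0998$ (its square is $12103/1215000$), not $0.096$, so it lies below $1/10$ only narrowly.
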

\begin{proof}
For each fixed valid instance, all three runs fail with probability at
most $1/27$, and the selected output succeeds whenever some run
succeeds.  Let $A$ be the intersection of the validity event with the
event that the selected output succeeds.  Then
\[
 \mathbb P(A)\ge\frac{49}{50}\frac{26}{27}=\frac{637}{675},
 \qquad p:=\mathbb P(A^c)\le p_0:=\frac{38}{675}.
\]
Write $x$ for the selected raw output and set
$\widehat h=a\operatorname{unit}(x)$, with $\operatorname{unit}(0)=0$.
For $x\ne0$, the distance between its direction and that of $Y$ is at
most $2\|x-Y\|_2/\|Y\|_2$.  Squaring and using the unit-vector
inner-product identity gives
\[
 \frac{\widehat h^\top Y}{a^2}
 \ge R_Y-\frac{2\|x-Y\|_2^2}{a^2R_Y}.
\]
This remains true for $x=0$, since the right-hand side is $-R_Y$.
On $A$, the raw error is at most $a/5$, and for every history the
scalar product is at least $-a^2R_Y$.  Thus
\[
 \frac{\mathbb E[\widehat h^\top Y]}{a^2}
 \ge\mu_Y-2\mathbb E[R_Y\mathbf1_{A^c}]-\frac2{25}\nu_Y.
\]
Applying the Cauchy--Schwarz inequality to the covariance bounds the
middle expectation by
\[
 \mathbb E[R_Y\mathbf1_{A^c}]
 \le p\mu_Y+\sqrt{\operatorname{Var}(R_Y)p(1-p)}.
\]
Since $p\le p_0<1/2$, Lemma~\ref{r12:moments} implies
\begin{align*}
 \frac{\mathbb E[\widehat h^\top Y]}{a^2}
 &\ge\frac{39}{40}(1-2p_0)
     -2\sqrt{\frac3{64}p_0(1-p_0)}-\frac{13}{150}\\
 &\ge\frac{39}{40}\frac{599}{675}-\frac1{10}-\frac{13}{150}
 =\frac{6107}{9000}>\frac23.
\end{align*}
The second line holds because the square of the middle subtracted
term is
\[
 4\frac3{64}\frac{38}{675}\frac{637}{675}
 =\frac{12103}{1215000}<\frac1{100}.
\]
Failed runs and invalid instances remain in the expectation.  Total
Borel policies make the combined experiment well-defined under the
original prior, so the information calculation is carried out without
conditioning on $E$.
\end{proof}

By \eqref{r12:eq-second-moment}, the parameter $B$ of
Lemma~\ref{r12:correlation-query} is at least $4k/9$.  Since
$(B-1)^2/B$ is increasing for $B\ge1$, we obtain, for $k\ge12$,
\begin{equation}
 3(T+1)\ge\frac{n(4k-9)^2}{144k},\qquad
 m\ge64,\quad k\ge12,\quad
 \varepsilon\le\frac{LR^2}{10000k^2}.
 \label{r12:eq-calibrated}
\end{equation}
This complexity bound accounts for all three runs and their three
validation queries.

\subsection{Dimension baseline and the constant 4096}
\label{app:r12-rounding}
\label{r12:sec-rounding}

\begin{lemma}[Adaptive dimension baseline]
\label{r12:baseline}
\label{r12:lem-baseline}
Under the hypotheses of Theorem~\ref{thm:main}, $T\ge5d/16$.
This bound holds even for the positive-definite quadratics
$f_\theta(x)=L\|x-R\theta\|_2^2/2$ with $\|\theta\|_2=1$.
\end{lemma}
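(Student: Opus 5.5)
We prove the baseline by Yao's principle applied to the spherical family. Take $\theta$ uniform on the unit sphere $S^{d-1}$, independent of the seed. Every $f_\theta$ lies in $\mathcal C_{d,L,R}$: it is $L$-smooth and convex, and its unique minimizer $R\theta$ lies in $\mathbb B_2^d(R)$. The query at $x$ returns
\[
 f_\theta(x)=\tfrac L2\bigl(\|x\|_2^2+R^2\bigr)-LR\langle x,\theta\rangle,
\]
so one value is equivalent, in both directions, to one linear measurement $\langle x,\theta\rangle$. Queries with $x=0$ reveal nothing. After $T$ queries, the history together with the seed determines a subspace $S=\operatorname{span}\{x_i\}$ with $\dim S\le T$, and it also determines $P_S\theta$ exactly. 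By the same disintegration argument as in \cref{r12:disintegration}, applied here to linear functionals on the sphere, the conditional law of $\theta$ is uniform on the residual sphere
\[
 \{\theta: P_S\theta=p\},\qquad\text{of radius } \rho=\sqrt{1-\|p\|_2^2}\ \text{in } S^\perp.
\]
This law is symmetric under the antipodal reflection $\theta-p\mapsto-(\theta-p)$ within $S^\perp$.

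Next, compare the output with that residual sphere. Success means
\[
 f_\theta(\widehat x)-\min f_\theta=\tfrac L2\|\widehat x-R\theta\|_2^2\le\eps ,
\]
that is, $\|\widehat x-R\theta\|_2\le\sqrt{2\eps/L}\le R/4$, using $\eps\le LR^2/32$. Write $w$ for the component of $\widehat x$ in $S^\perp$. The output is a function of the history, so for the antipodal pair $R\theta$ and $R\theta'$ with the same $p$ we get
\[
 \|\widehat x-R\theta\|_2^2+\|\widehat x-R\theta'\|_2^2\ \ge\ 2R^2\rho^2,
\]
because the $S^\perp$-parts are $R(\theta-p)$ and $-R(\theta-p)$, and their distances to $w$ sum in squares to at least $2R^2\rho^2$. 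Hence at least one member of each pair has error at least $R\rho$, and conditional on the history the success probability is at most $\tfrac12$ whenever $R\rho>R/4$, that is, whenever $\rho^2>1/16$. Summing over the history, success is at most
\[
 \Pr(\rho^2\le 1/16)+\tfrac12\Pr(\rho^2>1/16).
\]

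It remains to bound $\rho^2$ from below, using the mean squared residual radius. For adaptive queries, each new informative direction is chosen from the past, and conditionally $\theta$ is uniform on the current residual sphere. The expected squared projection onto one fresh orthogonal direction is therefore $\rho_{\text{cur}}^2/\dim$, and iterating gives
\[
 \E\rho^2=\prod_{j<T}\Bigl(1-\frac1{d-j}\Bigr)\ \ge\ 1-\frac Td .
\]
(Precisely, $\E\rho^2=(d-T)/d$.) Write $\pi=\Pr(\rho^2>1/16)$. Since $\rho^2\le1$, we have $\E\rho^2\le\tfrac1{16}+\tfrac{15}{16}\pi$, so $\pi\ge\tfrac{16}{15}\bigl(1-\tfrac Td-\tfrac1{16}\bigr)$. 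Success at least $2/3$ requires $1-\pi/2\ge2/3$, i.e.\ $\pi\le 2/3$. This gives
\[
 1-\frac Td\ \le\ \frac1{16}+\frac{15}{16}\cdot\frac23=\frac{11}{16},
 \qquad\text{so}\qquad T\ge\frac{5d}{16},
\]
which is exactly the stated constant.

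The main obstacle is making the adaptive conditional-uniformity step rigorous for arbitrary Borel policies. This step rests on the invariance of the uniform measure under rotations fixing $S$, applied inductively, and then mixed over the seed by Yao's principle.
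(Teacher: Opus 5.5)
Your proposal is correct and follows the paper's proof essentially step for step. Both use the uniform spherical prior on $f_\theta$, conditional uniformity on the residual sphere, the mean squared residual radius, the antipodal pairing that caps conditional success at $1/2$ once $\rho>1/4$, and a Markov-type step that yields exactly $5d/16$. One small caveat: your parenthetical $\E\rho^2=(d-T)/d$ is exact only when all $T$ queries are informative (the paper pads with orthogonal measurements to arrange this), but the inequality $\E\rho^2\ge1-T/d$, which is all you use, holds in general.
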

\begin{proof}
Let $\theta$ be uniform on the unit sphere and independent of the
seed.  After subtracting known terms, each exact value reveals one
linear measurement.  If $T\ge d$, the claim is immediate; otherwise
condition on the seed.  Conditional on the history, the unobserved
component is uniform on a residual sphere in the orthogonal complement
of the query span.  An informative query reveals one coordinate in a
direction fixed by the past, whereas a redundant query reveals nothing.
If necessary, append orthogonal measurements after the original
experiment until there are exactly $T$ informative measurements.  This
additional information can only increase the success probability, to
which we now apply an upper bound.

Let $r_j$ be the residual radius after $j$ informative measurements.
Rotational symmetry gives
\[
 \mathbb E[r_{j+1}^2\mid\mathcal H_j]
 =r_j^2\frac{d-j-1}{d-j},\qquad
 \mathbb E(1-r_T^2)=\frac Td.
\]
Put $\delta=\sqrt{2\varepsilon/(LR^2)}\le1/4$.
When $r_T>\delta$, a ball of radius $\delta$ contains at most one point
of each antipodal pair on the residual sphere: if both $v$ and $-v$
belonged to such a ball centered at $z$, averaging their squared
distances would give $r_T^2+\|z\|_2^2\le\delta^2$, a contradiction.
Hence, on this event, the conditional success probability is at most
$1/2$ for any output; a component of the output outside the residual
subspace only increases these distances.  Markov's inequality gives
\[
 \frac23\le\frac12+\frac12\mathbb P(r_T\le\delta)
 \le\frac12+\frac{T}{2d(1-\delta^2)}.
\]
Therefore $T\ge d(1-\delta^2)/3\ge5d/16$.
The same pairing argument applies to a two-point residual sphere and to
$d=1,T=0$.
\end{proof}

\begin{proof}[Proof of Theorem~\ref{thm:main}]
Let $s=\sqrt{LR^2/\varepsilon}$ and $K=\min\{d,s\}$.
If $K\le1280$, Lemma~\ref{r12:baseline} gives
$T\ge5d/16\ge dK/4096$.  Otherwise, set
\[
 \eta=\min\{d/64,s/100\},\qquad
 m=\lceil d/\eta\rceil,\qquad k=d/m.
\]
Then $m\ge64$, $k\le\eta\le s/100$, and
\[
 k\ge\frac{\eta}{1+\eta/d}\ge\frac{64}{65}\eta
 \ge\frac{64K}{6500}>12.
\]
Thus all conditions on the prior and on the accuracy are satisfied.
Since $n=d(1+1/k)$, \eqref{r12:eq-calibrated} gives
\[
 T\ge\frac{dk}{432}(1+1/k)(4-9/k)^2-1.
\]
For $k\ge12$,
\[
 (1+1/k)(4-9/k)^2
 =16-\frac{56}{k}+\frac9{k^2}+\frac{81}{k^3}\ge\frac{34}{3}.
\]
Moreover, $dk=mk^2\ge64k^2\ge9216>810$.  Since $17/648-1/40=1/810$,
\[
 T\ge\frac{17}{648}dk-1\ge\frac{dk}{40}
 \ge\frac{64}{260000}dK\ge\frac{dK}{4096}.
\]
Both hard families consist of globally valid positive-definite
quadratics, with priors fixed before the independent seeds.
Lemma~\ref{r12:baseline} supplies the other term in the maximum.
\end{proof}

\begin{remark}[Moment requirements and proof dependencies]
The operator budget uses only conditional first moments.  Together with
the spectral estimates, the twelfth radial moment gives a validity event
of probability at least $49/50$; conditioning on that event yields a
fixed prior supported on the required class.  The probabilistic reduction
combines elliptic regularity, conditional disintegration, and Wishart
integration.
Optimizing the explicit constant $4096$ remains an open question.
\end{remark}

\section{Upper bounds, moderate-accuracy optimality, and strong convexity}\label{app:corollaries}
\subsection{The two quadratic upper bounds}
Write $q(x)=\tfrac12x^\top Hx+b^\top x+c$ with $H$ symmetric.  The value
at zero determines $c$, and the values at $\pm e_i$ give
\[
 b_i=\frac{q(e_i)-q(-e_i)}2,\qquad
 H_{ii}=q(e_i)+q(-e_i)-2q(0).
\]
For $i<j$,
\[
 H_{ij}=q(e_i+e_j)-q(e_i)-q(e_j)+q(0).
\]
This procedure uses exactly $1+2d+\binom d2=(d+1)(d+2)/2$ queries.
Convexity implies $H\succeq0$, and the existence of a global minimizer
implies $b\in\operatorname{range}(H)$.  Hence $-H^\dagger b$ is the
minimum-norm global minimizer, and its norm is at most $R$.
Pseudoinversion is a Borel map, since it is continuous on each of the
finitely many fixed-rank strata.  On inconsistent transcripts the policy
may return zero.  This defines a total Borel rule.

For any fixed $h\ne0$, expanding the quadratic also gives
\[
 \frac{q(x+he_i)-q(x-he_i)}{2h}=(Hx+b)_i.
\]
Therefore $2d$ values recover the gradient exactly at any iterate.  The
accelerated method with known $L$ and starting point zero satisfies
$q(y_N)-\min q\le2LR^2/N^2$ \citep[Theorem~3.19]{Bubeck2015}, and
$N=\lceil\sqrt{2LR^2/\eps}\rceil$ gives the claimed budget of values.
Both are fixed-budget deterministic algorithms and hence admissible in
the randomized model.  Selecting the cheaper procedure on the basis of
$d,L,R,\eps$ proves the upper bound in Corollary~\ref{cor:quadratic};
Theorem~\ref{thm:main} supplies the lower bound.

\subsection{Upper bounds for the general smooth class}\label{app:general-upper}
The two procedures above rely on the fact that finitely many values
identify a quadratic or recover its gradient exactly.  For general
$f\in\mathcal C_{d,L,R}$, neither property is guaranteed.  The model
imposes no precision limit, however, so central differences with an
arbitrarily small step are available: for $h>0$, the vector
$g_h(x)_i=[f(x+he_i)-f(x-he_i)]/(2h)$ requires $2d$ values and, since
$|\partial_if(x+te_i)-\partial_if(x)|\le L|t|$, satisfies
\[
 \|g_h(x)-\nabla f(x)\|_2\le Lh\sqrt d/2=:\Delta.
\]
The acceleration bound below proves the upper bound in
\cref{cor:general-moderate}; together with \cref{thm:main}, it is tight
throughout $LR^2/d^2\le\eps\le LR^2/32$.  The cutting-plane bound gives
a smaller upper bound for the general class at sufficiently high
accuracy, where a matching dependence on dimension and accuracy remains
open.

\emph{Finite-difference acceleration, $O(d\sqrt{LR^2/\eps})$ values.}
On $Q=\mathbb B_2^d(R)$, which contains a minimizer, the pair
$(f(x)-2R\Delta,\,g_h(x))$ is a $(\delta,L)$-oracle in the sense of
\citet{DevolderGlineurNesterov2014} with $\delta=4R\Delta$, because
$|\langle g_h(x)-\nabla f(x),y-x\rangle|\le2R\Delta$ for $x,y\in Q$.
With the Euclidean prox-function $\|x\|_2^2/2$, their fast gradient
method satisfies
\[
 f(y_N)-\min f\le\frac{2LR^2}{N(N+1)}+\frac{N+2}{3}\delta.
\]
Choose $N=\lceil2\sqrt{LR^2/\eps}\rceil$ and $h=\eps/(4NLR\sqrt d)$.
Then $\delta=\eps/(2N)$, each displayed error term is at most $\eps/2$,
and $(2d+1)N=O(d\sqrt{LR^2/\eps})$ values suffice, including the value
at each current iterate.  These fixed finite choices define an
admissible deterministic policy.

\emph{Finite-difference cutting planes, $O(d^2\log(LR^2/\eps))$ values.}
Run the center-of-gravity method \citep[Section~2.1]{Bubeck2015} on
$X_0=\mathbb B_2^d(R)$: at step $k$, compute the centroid $x_k$ of
$X_{k-1}$, query its value, and compute $g_h(x_k)$.  If $g_h(x_k)=0$,
then $\|\nabla f(x_k)\|_2\le\Delta$, so convexity and
$\|x_k-x^\star\|_2\le2R$ give $f(x_k)-\min f\le2R\Delta\le\eps/2$, and
the method stops and returns $x_k$; otherwise, set
$X_k=X_{k-1}\cap\{y:\langle g_h(x_k),y-x_k\rangle\le0\}$.
The halfspace used in this intersection has a nonzero normal, and its
boundary passes through the centroid, so Gr\"unbaum's inequality gives
$\operatorname{vol}(X_N)\le(1-1/e)^N\operatorname{vol}(X_0)$.  Let
$x^\star$ be a minimizer in $X_0$ and set $\alpha=\eps/(4LR^2)$, which
is at most $1/128$ because $\eps\le LR^2/32$.  Since
$\max_{X_0}f-\min f\le2LR^2$, convexity on
$X_\alpha=x^\star+\alpha(X_0-x^\star)\subseteq X_0$ gives
$f\le\min f+2\alpha LR^2=\min f+\eps/2$ there, and
$\operatorname{vol}(X_\alpha)=\alpha^d\operatorname{vol}(X_0)$.  Once
$(1-1/e)^N<\alpha^d$, some $y\in X_\alpha$ was removed at a step $k$,
so $\langle g_h(x_k),y-x_k\rangle>0$ and
\[
 f(x_k)\le f(y)-\langle\nabla f(x_k),y-x_k\rangle
 \le f(y)+\|g_h(x_k)-\nabla f(x_k)\|_2\|y-x_k\|_2
 \le\min f+\eps/2+2R\Delta.
\]
Since $2R\Delta\le\eps/2$, the best queried point is an
$\eps$-minimizer after $N=O(d\log(1/\alpha))=O(d\log(LR^2/\eps))$ steps
of $2d+1$ values each.  The same guarantee holds if the method stops
early.  Computing centroids is allowed, since the model does not
restrict computation.

\subsection{A uniform corollary for fixed \texorpdfstring{$\mu$}{mu}}
\begin{proof}[Proof of Corollary~\ref{cor:strong}]
In this proof, let $\sigma=\sqrt{LR^2/\eps}$ and
$\Pi=\min\{d,\sigma,\sqrt\kappa\}$.
The spherical family of Lemma~\ref{r12:lem-baseline} has Hessian $LI$
and therefore belongs to the prescribed strongly convex class for every
$\mu\le L$; it gives $T\ge5d/16$.  If $\Pi\le1280$, this already
implies $T\ge d\Pi/\Cstar$.

Suppose now that $\Pi>1280$, and choose
\[
 \eta=\min\{d/64,\ \sigma/100,\ \sqrt\kappa/8\},\qquad
 m=\lceil d/\eta\rceil,\qquad k=d/m.
\]
All three denominators are at most $100$, so $\eta\ge\Pi/100$.  Since
$\eta\le d/64$, the ceiling in the definition of the integer parameter
gives
\[
 m\ge64,\qquad
 \frac{64\Pi}{6500}\le\frac{64\eta}{65}
 \le\frac{\eta}{1+\eta/d}\le k\le\eta,
\]
and hence $k>12$.  Every quadratic in the supported prior of
Lemma~\ref{r12:lem-prior} satisfies
\[
 \nabla^2 f_W\succeq\frac{L}{50(k+1)^2}I
 \succeq\frac{L}{64k^2}I\succeq\mu I.
\]
Indeed, for $k\ge8$ we have $50(k+1)^2\le(2025/32)k^2<64k^2$, and
$k\le\sqrt\kappa/8$ gives $64k^2\le\kappa=L/\mu$.
Thus every member of the supported prior is globally $\mu$-strongly
convex.  The accuracy cutoff $k\le\sigma/100$ gives
$\eps\le LR^2/(10000k^2)$, so the success amplification and the
operator-budget argument apply unchanged.  In particular,
\eqref{r12:eq-calibrated}, which includes all three validation queries,
gives
\[
 T\ge\frac{dk}{432}(1+1/k)(4-9/k)^2-1
 \ge\frac{17}{648}dk-1\ge\frac{dk}{40}
 \ge\frac{64d\Pi}{260000}\ge\frac{d\Pi}{\Cstar}.
\]
The middle inequality uses $dk=mk^2\ge64k^2>9216>810$ and
$17/648-1/40=1/810$, and the factor in the first expression is at least
$34/3$ for $k\ge12$, as in the proof of Theorem~\ref{thm:main}.
Together with the dimension baseline, this proves the claimed maximum.
Finally, when $\eps\le\mu R^2$ we have $\sigma\ge\sqrt\kappa$, which
gives the stated condition-number bound.
\end{proof}

Here $\mu$-strong convexity means that the modulus is at least $\mu$.
At $\kappa=1$, the globally smooth strongly convex class has known
quadratic curvature, and $O(d)$ values suffice for exact reconstruction.
For larger $\kappa$, this lower bound combines with the multiscale bound
by taking their maximum.

\section{A nonquadratic obstruction at high accuracy}\label{app:high-accuracy}
The quadratic reconstruction bound of \cref{cor:quadratic} relies on a
finite parameterization of the objective.  The following construction
makes the distinction from the full smooth class explicit.  Its logarithmic term combines with \cref{cor:strong} by taking the
maximum of the two lower bounds.  Classical information-based complexity
already studies logarithmic dependence on the accuracy
\citep{NemirovskiYudin1983}; our purpose here is an explicit
construction in the unrestricted exact-value model.

\begin{theorem}[Fixed-dimensional high-accuracy bound]\label{thm:nonquadratic-log}
Let $d\ge1$, $0<\mu<L$, $R>0$, and
$\rho=(L-\mu)/(128(L+\mu))$.
For every integer $m\ge1$ and every
$0<\eps\le2\mu R^2\rho^{2m}$, an algorithm with the success convention of
\cref{thm:main}, on the globally $\mu$-strongly convex subclass of
$\mathcal C_{d,L,R}$, requires $T\ge m/6$ queries.
Thus, for fixed $\kappa=L/\mu>1$, its complexity is
$\Omega_\kappa(\log(\mu R^2/\eps))$ as $\eps\downarrow0$.
\end{theorem}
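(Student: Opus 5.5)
The plan is to build a one-parameter family of objectives indexed by a uniformly random bit string $\beta\in\{\pm1\}^m$. The minimizer is localized by a nested sequence of intervals along the first coordinate. Exact values reveal the level-$j$ bit only through queries landing in the level-$(j-1)$ interval. A direct counting argument on the revealed depth then gives $T\ge m/6$. Since the bound must hold already for $d=1$, I would work with $f(x)=h(x_1)+\tfrac{L+\mu}{4}\|x_{2:d}\|_2^2$. The transverse part is known and carries no information, so I would reduce to the one-dimensional function $h$.

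\textbf{Construction.} Put $\bar\ell=(L+\mu)/2$ and $\Delta=(L-\mu)/2$, so that any $C^{1,1}$ function with second derivative in $[\bar\ell-\Delta,\bar\ell+\Delta]$ is $\mu$-strongly convex and $L$-smooth. Let $I_0=[-R,R]$ with center $c_0=0$, and let $r_j=R\rho^j$. Given the center $c_{j-1}$ of $I_{j-1}$, define two candidate children of radius $r_j$ centered at $c_{j-1}\pm 4r_j$, and let $I_j$ be the one selected by $\beta_j$.

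Set $h(t)=\bar\ell(t-c_0)^2/2+\sum_{j=1}^m\phi_j(t)$. Each $\phi_j$ is a $C^{1,1}$ bump supported in $I_{j-1}$ with $|\phi_j''|\le\Delta 2^{-j}$. It is chosen so that on $I_{j-1}$ the sum $\bar\ell(t-c_0)^2/2+\sum_{i\le j}\phi_i$ coincides, up to an additive constant, with $\bar\ell(t-c_j)^2/2$ near $I_j$. Concretely, $\phi_j$ is a smoothly cut-off linear tilt that moves the vertex from $c_{j-1}$ to $c_j$.

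The tilt has slope $\bar\ell|c_j-c_{j-1}|=4\bar\ell r_j$. It is cut off over a length of order $r_{j-1}=r_j/\rho$, so its curvature is of order $\bar\ell\rho$. The choice $\rho=(L-\mu)/(128(L+\mu))$ is exactly what makes this curvature at most $\Delta 2^{-j}$ with room to spare. The curvature bounds then sum to at most $\Delta$, so $f$ lies in the class. The minimizer of $h$ is $c_m\in I_m\subset[-R,R]$. Because the bumps are supported in $I_{j-1}$, the values of $h$ outside $I_{j-1}$ do not depend on $\beta_j,\dots,\beta_m$.

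\textbf{Accuracy forces depth.} Strong convexity gives $|\widehat x_1-c_m|\le\sqrt{2\eps/\mu}\le 2R\rho^m=2r_m$. The two candidate centres at level $m$ are $8r_m$ apart. Hence a successful output determines $\beta_m$, and by nesting it determines every $\beta_j$: the candidate intervals at each level are separated by gaps much larger than all deeper radii combined.

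\textbf{Information counting.} I would track the depth $D_k$, the largest $j$ such that $\beta_1,\dots,\beta_j$ are determined by the first $k$ answers. Given the history, the undetermined bits are still independent and uniform. A query at a point $t$ is informative about $\beta_{D_k+1}$ only if $t\in I_{D_k}$. It reveals $\beta_{D_k+1}$, and it reveals more only if $t$ also lies in the realized child, which is one of two equally likely candidates. Iterating, it reveals $\beta_{D_k+2}$ only if it hits the correct grandchild, and so on.

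Thus $\E[D_{k+1}-D_k\mid\mathcal H_k]\le 1+\sum_{i\ge1}2^{-i}=2$. This is the cascade bookkeeping; the real content is that a single exact value is a function only of the bits of the levels containing $t$. The output can add at most one correct guess chain, and the probability that it gets all of the remaining $m-D_T$ bits right is $2^{-(m-D_T)}$. So
\[
\tfrac23\le \E\bigl[2^{-(m-D_T)}\bigr]\le \Prob(D_T\ge m-1)+\tfrac14,
\]
and Markov's inequality with $\E D_T\le 2T$ gives $2T\ge \tfrac{5}{12}(m-1)$. Tightening the bookkeeping, for example by also counting the output in the budget, should yield $T\ge m/6$, with the case $m=1$ handled directly.

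\textbf{Main obstacle.} I expect the hardest step to be designing the bumps $\phi_j$ so that two conditions hold simultaneously: the total curvature stays in $[\mu,L]$ uniformly across all levels, and the values outside $I_{j-1}$ are literally independent of deeper bits, which is needed for exact information rather than approximate. The calibration of $\rho$ to $(L-\mu)/(L+\mu)$ with the factor $128$ is where this balance is struck. The conditional uniformity of the undetermined bits under arbitrary Borel adaptive policies then follows because each answer is a function of the revealed prefix and of $t$ only.
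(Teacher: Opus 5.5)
Your construction has the paper's geometry: children centred at $c_{j-1}\pm4r_j=c_{j-1}\pm\alpha r_{j-1}$ with $\alpha=4\rho$, disjoint success sets at depth $m$, and an expected cascade gain of at most $2$ per query. The gap is in the curvature step, which you yourself flag as the main obstacle: the way you resolve it cannot work.

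\textbf{The curvature bound.} You ask for $|\phi_j''|\le\Delta2^{-j}$ and sum over levels, but such bumps do not exist once $j\ge5$. Since $\phi_j$ is $C^{1,1}$ and supported in $I_{j-1}$, $\phi_j'$ vanishes at the endpoints of $I_{j-1}$. On $I_j$, however, it must equal $-\bar\ell(c_j-c_{j-1})$, which has modulus $4\bar\ell r_j$. The set $I_j$ lies within distance $r_{j-1}-5r_j$ of the nearer endpoint of $I_{j-1}$. Hence the Lipschitz constant of $\phi_j'$ is at least $4\bar\ell r_j/(r_{j-1}-5r_j)>4\bar\ell\rho=(L-\mu)/64\ge\Delta2^{-j}$.

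The construction is self-similar, so every level costs curvature of order $\bar\ell\rho$ regardless of $j$. With $\rho$ fixed, as the window $\eps\le2\mu R^2\rho^{2m}$ requires, these costs are not summable. A level-dependent $\rho_j\propto2^{-j}$ would not help: it shrinks $r_m$ like $2^{-m^2/2}$ and leaves only depth of order $\sqrt{\log(1/\eps)}$.

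The missing idea is that no summation is needed. Make $\phi_j$ exactly linear on the central half of $I_{j-1}$; this half contains $I_j$, and hence every deeper support, because $5\rho<1/2$. Then the sets where the various $\phi_j''$ are nonzero are pairwise disjoint. On the transition annulus of $\phi_j$, all deeper bumps vanish and all shallower bumps are linear. Consequently $|h''-\bar\ell|\le\max_j\|\phi_j''\|_\infty$ almost everywhere. The constant $\rho$ is calibrated so that this single-level bound is below $\Delta$; with the paper's cubic cutoff it equals $30\cdot4\rho\bar\ell=\tfrac{15}{32}(L-\mu)$.

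\textbf{Conditional uniformity.} A smaller point concerns the information count. With the plain value oracle, each answer depends jointly on several bits. It is therefore not automatic that the bits ``determined by the answers'' form a prefix with an independent uniform suffix. You would have to prove that the completions consistent with the transcript always form a product set.

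The paper avoids this with a more informative oracle. For a query in the current interval $I_j$, it discloses $\beta_{j+1}$ and keeps descending while the point lies in the newly selected child. The disclosed bits determine the exact value. Both the decision to continue and the answer depend only on the disclosed prefix, so the undisclosed suffix stays independent and uniform. Your closing sentence describes exactly this mechanism, so define $D_k$ as the number of bits disclosed by that oracle.

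With that definition, your endgame needs no tightening: $\E[2^{-(m-D_T)}]\le\tfrac12+\tfrac12\Prob(D_T=m)\le\tfrac12+T/m$, which gives $T\ge m/6$ at once. Your cruder bound $T\ge5(m-1)/24$, together with the trivial $T\ge1$, also suffices. The asymptotic statement then follows by taking $m=\lfloor\log(2\mu R^2/\eps)/(2\log(1/\rho))\rfloor$.
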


\begin{proof}
First let $d=1$.  Put $a=(L+\mu)/2$ and
$\alpha=(L-\mu)/(32(L+\mu))=4\rho$.  For $s\in[0,1]$, let
$q(s)=1-3s^2+2s^3$, and define
\[
 \eta(t)=\begin{cases}
 1,&|t|\le1/2,\\
 q(2|t|-1),&1/2<|t|<1,\\
 0,&|t|\ge1,
 \end{cases}
 \qquad \psi(t)=t\eta(t).
\]
Since $q'(0)=q'(1)=0$, the first derivative of $\psi$ is continuous and
Lipschitz.  Almost everywhere, $|\eta'|\le3$, $|\eta''|\le24$, and
$|\psi''|\le30$; the last bound follows from $\psi''=2\eta'+t\eta''$ on
the transition intervals, and outside them the second derivative
vanishes almost everywhere.

For $\sigma=(\sigma_1,\ldots,\sigma_m)\in\{-1,1\}^m$, set
\[
 c_0=0,\quad r_j=R\rho^j,\quad
 c_j=c_{j-1}-\sigma_j\alpha r_{j-1},\quad
 I_j=[c_j-r_j,c_j+r_j],
\]
and define
\begin{equation}\label{eq:nested-hard-family}
 f_\sigma(x)=\frac a2x^2+
 \sum_{j=1}^m\sigma_j a\alpha r_{j-1}^2
       \psi\!\left(\frac{x-c_{j-1}}{r_{j-1}}\right).
\end{equation}
Each modification is supported in its parent interval $I_{j-1}$.
The two possible child intervals are disjoint because $\rho<\alpha$,
and they lie in the central half of their parent because
$\alpha+\rho<1/2$.  On $I_j$, the base quadratic plus the first $j$
modifications equals $a(x-c_j)^2/2$ plus a constant: the $j$th
modification is linear there and shifts the zero of the derivative from
$c_{j-1}$ to $c_j$.

At almost every point, at most one modification has a nonzero second
derivative: the transition region of a modification lies outside the
central half of its parent, whereas every later modification is
supported inside that half, and earlier modifications are linear on the
support of a later one.  Hence
\[
 |f_\sigma''-a|\le30a\alpha
 =\frac{30}{64}(L-\mu)\le\frac{L-\mu}{2}
 \quad\text{almost everywhere}.
\]
Since the derivative is locally absolutely continuous, integration
proves global $\mu$-strong convexity and $L$-smoothness.  The unique
minimizer is $c_m$, with $|c_m|\le\alpha R/(1-\rho)<R$, and all
modifications vanish outside $[-R,R]$.

If two sign sequences first differ at position $k$, their minimizers
satisfy
\[
 |c_m(\sigma)-c_m(\sigma')|
 \ge2\alpha r_{k-1}\frac{1-2\rho}{1-\rho}
 >\alpha r_{m-1}.
\]
By strong convexity, every successful output lies within
$\sqrt{2\eps/\mu}\le2R\rho^m=\alpha r_{m-1}/2$ of its minimizer.
The sets of successful outputs for different sign sequences are
therefore disjoint, including at the endpoint of the stated accuracy
window.

Now draw the signs independently and uniformly before drawing the
algorithm's seed, and consider an enhanced oracle that reveals a prefix
of the sign sequence and can simulate every exact value.  If $j<m$
signs are known and a query $x$ lies outside $I_j$, all remaining
modifications vanish at $x$.  If $x\in I_j$, reveal $\sigma_{j+1}$ and
continue until $x$ leaves the next child interval or all signs are
known.  The revealed modifications can then be evaluated, and the
unrevealed ones vanish at $x$, so this rule determines $f_\sigma(x)$
exactly.  Using closed intervals only makes the enhanced oracle more
informative at endpoints.

Conditional on the seed and the enhanced history, the unrevealed signs
remain independent and uniform: at every disclosure, both the decision
to continue and the numerical answer depend only on the prefix already
disclosed, so no condition is imposed on the remaining suffix.
A query can belong to at most one of the two disjoint children, so its
conditional probability of reaching each further level is at most
$1/2$, and its expected number of new signs is at most
$\sum_{k\ge0}2^{-k}=2$.

Let $J_T$ be the number of disclosed signs after $T$ queries.  Then
$\E J_T\le2T$ and $\Prob\{J_T=m\}\le2T/m$.
If $J_T<m$, at least two equally likely sign sequences remain, and
their sets of successful outputs are disjoint, so any
history-measurable output succeeds with conditional probability at most
$1/2$.  Therefore,
\[
 \Prob\{\text{success}\}
 \le\frac12+\frac12\Prob\{J_T=m\}
 \le\frac12+\frac Tm.
\]
The required instancewise success probability implies average success
probability at least $2/3$ under this prior, proving $T\ge m/6$.
All query comparisons and polynomial evaluations are Borel, and
arbitrary queries outside the support disclose nothing further.

For $d>1$, use $f_\sigma(x_1)+(\mu/2)\sum_{i=2}^d x_i^2$.  One source
value simulates a target value, the minimizer remains in the ball of
radius $R$, and target suboptimality bounds source suboptimality.
Finally, for $\eps\le2\mu R^2\rho^2$, choose
$m=\lfloor\log(2\mu R^2/\eps)/(2\log(1/\rho))\rfloor$.  It satisfies
the stated window and gives the asymptotic conclusion.
\end{proof}

\section{Multiscale lower bounds and matching fine-accuracy complexity}\label{app:multiscale}
We prove \cref{thm:near-quadratic} and the dimension-times-logarithm
lower bound by a nested smooth construction. The class, exact-value oracle, success probability,
and deterministic worst-case budget are those of \cref{sec:results}.
A nonnegative modulus $\mu<L$ is prescribed; at $\mu=0$ only convexity is
required. The hidden directions are sampled before the independent
algorithmic seed. Every query and the output may be any finite vector.
The parameters $a,\alpha,\rho$ specify the curvature, perturbation
amplitude, and scale contraction in this construction.

\subsection{The smooth hard family}

\begin{theorem}[Smooth nested perturbations]\label{ms:thm-construction}
Suppose $0\le\mu<L$, and set

\[
a=\frac{L+\mu}{2},\qquad
\alpha=\frac{L-\mu}{32(L+\mu)}.
\]

Choose $0<\rho\le\alpha/4$, an integer $m\ge1$, an initial radius $r_0>0$, and vectors $\xi_1,\ldots,\xi_m$ of norm at most one. Define

\[
r_j=r_0\rho^j,\qquad c_0=0,\qquad
c_{j+1}=c_j-\alpha r_j\xi_{j+1}.
\]

There is an explicit fixed $C^\infty$ cutoff $\phi$ such that

\[
f(x)=\frac a2\|x\|^2+
\sum_{j=0}^{m-1}a\alpha r_j
\langle\xi_{j+1},x-c_j\rangle
\phi\left(\frac{\|x-c_j\|}{r_j}\right)
\]

is globally $C^\infty$ and satisfies

\[
\frac{L+63\mu}{64}I\preceq\nabla^2f(x)
\preceq\frac{63L+\mu}{64}I.
\]

Its unique minimizer is $c_m$. On the last ball,

\[
f(x)-f(c_m)=\frac a2\|x-c_m\|^2,
\qquad x\in B(c_m,r_m).
\]

If $\alpha\sum_{j=0}^{m-1}r_j\le R$, this is an admissible instance. Moreover,

\[
0<\eps\le\frac{a r_m^2}{4}
\]

implies that every successful output belongs to the open last ball and to every preceding selected child ball.

\end{theorem}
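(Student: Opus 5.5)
We follow the one-dimensional proof of \cref{thm:nonquadratic-log}, replacing $t\eta(t)$ by a radial cutoff. Fix a $C^\infty$ function $\phi:[0,\infty)\to[0,1]$ with $\phi\equiv1$ on $[0,1/2]$ and $\phi\equiv0$ on $[1,\infty)$, and bounds $|\phi'|\le c_1$, $|\phi''|\le c_2$. The $j$th perturbation is
\[
P_j(x)=a\alpha r_j\langle\xi_{j+1},x-c_j\rangle\phi(\|x-c_j\|/r_j).
\]
It is $C^\infty$, because $\phi$ is constant near $0$, and it is supported in $B(c_j,r_j)$. On $B(c_j,r_j/2)$ it is the linear function $a\alpha r_j\langle\xi_{j+1},x-c_j\rangle$.

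\emph{Hessian of one perturbation.} Write $w=x-c_j$ and $s=\|w\|/r_j$, and differentiate the product. This gives
\[
\nabla^2P_j=\frac{a\alpha}{r_j}\Bigl[\phi'(s)\bigl(\xi\hat w^\top+\hat w\xi^\top\bigr)+\langle\xi,w\rangle\bigl(\phi''(s)r_j^{-1}\hat w\hat w^\top+\phi'(s)\|w\|^{-1}(I-\hat w\hat w^\top)\bigr)\Bigr].
\]
The second bracket is only nonzero for $s\in[1/2,1]$, where $|\langle\xi,w\rangle|\le r_j$ and $\|w\|\ge r_j/2$. The scale factors therefore cancel, and
\[
\|\nabla^2P_j\|_{\mathrm{op}}\le a\alpha(4c_1+c_2)=:a\alpha c_\phi .
\]
The cutoff must be chosen with $c_\phi\le 31/2$. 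Then
\[
a\alpha c_\phi\le\frac{L-\mu}{64}\cdot\frac{31}{2}\cdot\frac12\cdot\ldots
\]
and more precisely the requirement is $a\alpha c_\phi\le\tfrac{31}{64}(L-\mu)$, since $a\alpha=(L-\mu)/64$. This yields the displayed bounds $(L+63\mu)/64$ and $(63L+\mu)/64$ around $a$. Securing such a $\phi$ (for example a mollified version of the cubic $q$ from the earlier proof, with $4c_1+c_2\le 31$ after mollification) is the step I expect to be the main obstacle, because the constant is tight. If it fails, I would enlarge the transition region or note that the available slack $L-\mu$ fixes only the ratio $\tfrac{31}{64}$.

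\emph{Separation of curvature regions.} At each point at most one $P_j$ has a nonzero Hessian. The transition annulus of $P_j$ is $B(c_j,r_j)\setminus B(c_j,r_j/2)$. Later supports satisfy
\[
B(c_{j+1},r_{j+1}+\alpha r_j\textstyle\sum_{i\ge0}\rho^i)\subset B(c_j,r_j/2),
\]
because $\|c_{j+1}-c_j\|\le\alpha r_j$ and $\alpha+\rho+\alpha\rho/(1-\rho)<1/2$. All later perturbations therefore live inside the linear core of $P_j$, and each earlier perturbation is linear on the support of a later one. Summing gives the Hessian sandwich.

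\emph{Minimizer and localization.} We show by induction that on $B(c_j,r_j)$,
\[
\frac a2\|x\|^2+\sum_{i<j}P_i=\frac a2\|x-c_j\|^2+\text{const}.
\]
The inductive step holds because $B(c_{j+1},r_{j+1})\subset B(c_j,r_j/2)$, where adding $P_j$ contributes a linear term. The identity
\[
a(x-c_j)+a\alpha r_j\xi_{j+1}=a(x-c_{j+1})
\]
recenters the quadratic. At $j=m$ this gives the formula on the last ball. Strong convexity with modulus at least $(L+63\mu)/64>0$ when $L>0$ makes $c_m$ the unique minimizer. For $\mu=0$ the sandwich still gives strict convexity, since $L/64>0$. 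Finally, $\|c_m\|\le\alpha\sum_j r_j\le R$ gives admissibility.

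\emph{Successful outputs.} Let $\eps\le ar_m^2/4$. Then $f(x)-f(c_m)=\tfrac a2\|x-c_m\|^2$ on $B(c_m,r_m)$. At the boundary this value is $ar_m^2/2>\eps$, so by convexity of sublevel sets the $\eps$-sublevel set is contained in the open last ball. The nesting $B(c_{j+1},r_{j+1})\subset B(c_j,r_j)$ then gives membership in every preceding selected child ball.
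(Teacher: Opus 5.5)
\textbf{Gap: the cutoff that gives the constant $63/64$ is never exhibited.} Your overall route matches the paper's:
\begin{itemize}
\item radial-cutoff perturbations $a\alpha r_j^2h((x-c_j)/r_j)$ with a per-perturbation Hessian bound;
\item disjoint transition annuli, because each child ball lies in the parent's linear plateau;
\item the recentering identity $a(x-c_j)+a\alpha r_j\xi_{j+1}=a(x-c_{j+1})$;
\item a sublevel argument for localization (your convex-sublevel version is an equivalent substitute for the paper's radial-derivative argument).
\end{itemize}
What is missing is the one quantitative step that produces $(L+63\mu)/64$ and $(63L+\mu)/64$. You do not construct an admissible cutoff, and the one you suggest fails under your own estimate.

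Since $a\alpha=(L-\mu)/64$, the requirement is $c_\phi\le31$, not $31/2$; the displayed chain at that point is also garbled. You use $c_\phi=4c_1+c_2$ and pin the plateau to $[0,1/2]$. A mollified cubic then has $c_1\approx3$ and $c_2\approx24$. Keeping the plateau at $[0,1/2]$ and the support in $[0,1]$ forces you to compress the cubic before mollifying, which only raises these values. So $4c_1+c_2\approx36>31$, and as written the Hessian sandwich is not proved.

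Two smaller points. The prefactor in your Hessian display should be $a\alpha$, not $a\alpha/r_j$; with $a\alpha/r_j$ the scales do not cancel. The remark that the slack ``fixes only the ratio $31/64$'' is not a fallback, because the theorem asserts that constant.

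\textbf{Either of two repairs closes the gap.}

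\emph{Sharpen the estimate.} Use $|\langle\xi,w\rangle|\le\|w\|$, so $|\langle\xi,w\rangle|/\|w\|\le1$ rather than $2$. Also, $\hat w\hat w^\top$ and $I-\hat w\hat w^\top$ have orthogonal ranges, so the bracket has norm at most $\max\{s|\phi''(s)|,|\phi'(s)|\}$. The total is then at most $a\alpha(2c_1+\max\{c_1,c_2\})$. This is what the paper does, with the explicit cutoff $\phi(t)=\int_{-1}^{1}\eta((101t-s)/100)\theta(s)\,ds$ built from the cubic $\eta$ and a standard bump $\theta$. This $\phi$ equals one for $|t|\le49/101$ and vanishes for $|t|\ge1$. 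It satisfies $\|\phi'\|_\infty\le303/100$ and $\|\phi''\|_\infty\le30603/1250$, so the bound is $19089/625<31$. Note that its plateau is $49/101$, not $1/2$.

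\emph{Keep your loose estimate but drop the plateau radius $1/2$.} Nesting only needs a plateau radius exceeding $\alpha+\rho\le5/128$, and a longer transition lowers both constants. For example, the cubic transition on $[1/4,1]$ has $\sup|\eta'|=2$ and $\sup|\eta''|=32/3$, so $4c_1+c_2<31$ survives a mild mollification.

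With either repair, and with the transition annulus redefined to match the chosen plateau, the rest of your argument goes through.
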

\begin{proof}
 Start with the even $C^{1,1}$ function

\[
\eta(t)=
\begin{cases}
1,&|t|\le1/2,\\
1-3(2|t|-1)^2+2(2|t|-1)^3,&1/2<|t|<1,\\
0,&|t|\ge1.
\end{cases}
\]

On the transition interval, writing $u=2|t|-1$, its first derivative has magnitude $12u(1-u)\le3$, and its almost-everywhere second derivative has magnitude at most $24$. The values and first derivatives agree at the two endpoints. Thus

\[
\|\eta'\|_\infty\le3,\qquad
\|\eta''\|_\infty\le24.
\]

For definiteness, take the even nonnegative density

\[
\theta(s)=C\exp\left(-\frac1{1-s^2}\right)\mathbf{1}_{\{|s|<1\}},
\qquad \int_{\R}\theta(s)\,ds=1.
\]

Here $C>0$ is the normalizing constant, $\theta\in C_c^\infty(\R)$, and its support is $[-1,1]$. Define

\[
\phi(t)=\int_{-1}^{1}\eta\left(\frac{101t-s}{100}\right)\theta(s)\,ds.
\]

This is a smooth convolution followed by a dilation. It is even, equals one when $|t|\le49/101$, and vanishes when $|t|\ge1$. Differentiating the convolution, with the second derivative of $\eta$ interpreted almost everywhere, gives

\[
\|\phi'\|_\infty\le\frac{303}{100},\qquad
\|\phi''\|_\infty\le24\left(\frac{101}{100}\right)^2
=\frac{30603}{1250}.
\]

The map $z\mapsto\phi(\|z\|)$ is smooth at zero because it is constant near zero. For a unit vector $\xi$, put $h(z)=\langle\xi,z\rangle\phi(\|z\|)$. At $t=\|z\|>0$, with $e=z/t$, direct differentiation yields

\[
\nabla^2h(z)=\phi'(t)(\xi e^\top+e\xi^\top)
+\langle\xi,z\rangle
\left[\phi''(t)ee^\top+
\frac{\phi'(t)}t(I-ee^\top)\right].
\]

The two projectors in brackets have orthogonal ranges. Since $|\langle\xi,z\rangle|\le t$, for $0<t\le1$ we obtain

\[
\begin{aligned}
\|\nabla^2h(z)\|_{\mathrm{op}}
&\le2|\phi'(t)|+\max\{t|\phi''(t)|,|\phi'(t)|\}\\
&\le2\frac{303}{100}+\frac{30603}{1250}
=\frac{19089}{625}<31.
\end{aligned}
\]

For a vector of norm at most one the same bound holds by linearity in $\xi$. The Hessian vanishes on $t\le49/101$ and on $t\ge1$. Each perturbation in $f$ equals $a\alpha r_j^2h((x-c_j)/r_j)$, so its Hessian has norm less than $31a\alpha$ and is supported only in its transition annulus.

Every child ball lies strictly inside the parent's linear plateau, because

\[
B(c_{j+1},r_{j+1})\subset
B(c_j,(\alpha+\rho)r_j),\qquad
\alpha+\rho\le\frac{5\alpha}{4}\le\frac5{128}<\frac{49}{101}.
\]

Consequently all later supports lie inside the current plateau, and the transition annuli are pairwise disjoint. At any point at most one perturbation has a nonzero Hessian. Therefore

\[
(a-31a\alpha)I\preceq\nabla^2f(x)\preceq(a+31a\alpha)I.
\]

Substitution gives the stated bounds. In particular, the constructed functions remain strictly strongly convex even if the allowed class has $\mu=0$.

Inside $B(c_j,r_j)$ all earlier cutoffs equal one. The gradient of the base quadratic plus its first $j$ perturbations is

\[
ax+a\alpha\sum_{i=0}^{j-1}r_i\xi_{i+1}=a(x-c_j).
\]

For $j=m$ this proves stationarity at $c_m$ and the exact quadratic identity throughout the last ball. Global strong convexity proves uniqueness. Also

\[
\|c_m\|\le\alpha\sum_{j=0}^{m-1}r_j.
\]

At the boundary of the last ball the gap is $a r_m^2/2$, and the outward radial derivative is $a r_m>0$. Along every ray from $c_m$, convexity makes this derivative nondecreasing. Thus outside that ball the gap is at least $a r_m^2/2$. An output with gap at most $a r_m^2/4$ must be inside it, where its distance to $c_m$ is at most $r_m/\sqrt2$. Strict nesting gives all preceding memberships. This proves the theorem.
\end{proof}

\begin{corollary}[Radius normalization and admissible depth]\label{ms:cor-radius}
All subsequent information arguments are valid with either of the following radius normalizations, using the corresponding window:

\[
\begin{array}{c|c}
r_0&\text{sufficient upper bound on }\eps\\
R&(L+\mu)R^2\rho^{2m}/8\\
R(1-\rho)/\alpha&
\displaystyle\frac{(L+\mu)R^2(1-\rho)^2}{8\alpha^2}\rho^{2m}.
\end{array}
\]

For the first choice, $\|c_m\|\le\alpha R/(1-\rho)<R$. For the second,

\[
\|c_m\|\le R(1-\rho^m)<R.
\]

In the second normalization, two useful spherical choices are

\[
\begin{array}{c|c|c}
\rho& s=(\rho/\alpha)^2&\text{sufficient upper bound on }\eps\\
\alpha/16&1/256&
\displaystyle\frac{(L+\mu)R^2(1-\rho)^2}{2048}\rho^{2(m-1)}\\
\alpha/4&1/16&
\displaystyle\frac{(L+\mu)R^2(1-\rho)^2}{128}\rho^{2(m-1)}.
\end{array}
\]

These formulas follow by substituting $r_m$ into \cref{ms:thm-construction}. In a window $\eps\le C\rho^{2(m-1)}$, one may take

\[
m=1+\left\lfloor\frac{\log(C/\eps)}{2\log(1/\rho)}\right\rfloor
\quad\text{whenever }0<\eps\le C.
\]

The initial support may exceed $B_2^d(R)$, while the minimizer lies in
the prescribed ball. All radii are finite for $\mu<L$, the parameter
range of this construction. The endpoint $\mu=L$ is treated separately.
\end{corollary}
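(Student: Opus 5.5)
The corollary follows by substituting each choice of $r_0$ and $\rho$ into \cref{ms:thm-construction}; the work is checking its two hypotheses. The first is admissibility, $\alpha\sum_{j<m}r_j\le R$. The second is the window $\eps\le ar_m^2/4$ with $a=(L+\mu)/2$, which reads $\eps\le(L+\mu)r_0^2\rho^{2m}/8$. Throughout, $\mu<L$, so $\alpha>0$ and $\rho\le\alpha/4$ is a legitimate positive choice; this is why all radii are finite, and why $\mu=L$ is excluded.

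\emph{First normalization, $r_0=R$.} The window is exactly $(L+\mu)R^2\rho^{2m}/8$. Summing the geometric series gives $\|c_m\|\le\alpha\sum_{j<m}R\rho^j\le\alpha R/(1-\rho)$. Since $\alpha\le1/32$ and $\rho\le1/128$, this is $<R$, which is admissibility.

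\emph{Second normalization, $r_0=R(1-\rho)/\alpha$.} Here $\alpha\sum_{j<m}r_j=R(1-\rho)(1-\rho^m)/(1-\rho)=R(1-\rho^m)<R$. Substituting $r_0$ into the window gives $(L+\mu)R^2(1-\rho)^2\rho^{2m}/(8\alpha^2)$. The spherical table then follows by writing $\rho^{2m}/\alpha^2=(\rho/\alpha)^2\rho^{2(m-1)}=s\rho^{2(m-1)}$. For $s=1/256$ this gives the constant $1/2048$, and for $s=1/16$ it gives $1/128$. Both choices respect $\rho\le\alpha/4$.

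\emph{Depth formula.} With $m=1+\lfloor\log(C/\eps)/(2\log(1/\rho))\rfloor$, we have $m\ge1$ whenever $\eps\le C$. Moreover $m-1\le\log(C/\eps)/(2\log(1/\rho))$ gives $\rho^{2(m-1)}\ge\eps/C$, i.e.\ $\eps\le C\rho^{2(m-1)}$.

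The remaining remarks are immediate from the construction. Only the minimizer $c_m$ must lie in $B_2^d(R)$, whereas the supports of size $r_0$ may extend beyond it; this is allowed because the class only localizes a minimizer.

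There is no real obstacle. The only point needing care is matching the constant conventions, namely $a/4\cdot r_m^2$ with $a=(L+\mu)/2$.
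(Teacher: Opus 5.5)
Your proposal is correct and follows the paper's own argument, which is simply to substitute $r_m=r_0\rho^m$ into the admissibility condition $\alpha\sum_{j<m}r_j\le R$ and the window $\eps\le a r_m^2/4$ of \cref{ms:thm-construction}. You have written out the steps the paper leaves implicit: the geometric series, the factorization $\rho^{2m}/\alpha^2=s\rho^{2(m-1)}$, the floor inversion, and the check that both spherical choices satisfy $\rho\le\alpha/4$ (so $s\le1/16$), which is the remaining hypothesis the later information arguments use.
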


\subsection{A first-hit argument that counts cascades}
The following spherical recovery estimate bounds the probability of
a first entry into a hidden child.

\begin{lemma}[Adaptive spherical recovery]\label{ms:lem-recovery}
Let $\xi$ be uniform on $S^{d-1}$. After at most $t<d$ adaptive scalar
linear measurements, any Borel estimator, with independent randomization
allowed, has probability at most
\[
 \frac12+\frac{t}{2d(1-s)}
\]
of recovering $\xi$ with squared error at most $s\in(0,1)$.
\end{lemma}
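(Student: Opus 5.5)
The plan is to adapt the antipodal-pairing argument of \cref{r12:lem-baseline}, with the residual-radius bookkeeping done for a general squared-error threshold $s$.

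First I condition on the independent randomization, so the measurement policy and the estimator become deterministic Borel maps of the history. Conditional on the history after the $t$ measurements, $\xi$ decomposes as $P\xi+P^\perp\xi$. Here $P$ is the orthogonal projection onto the span $S$ of the measured directions, and $P\xi$ is revealed. Rotational invariance of the uniform law, together with the coarea argument already used for \cref{r12:disintegration}, shows that $P^\perp\xi$ is uniform on the sphere of radius $r_t=\sqrt{1-\|P\xi\|_2^2}$ in $S^\perp$. If fewer than $t$ measurements are informative, I append orthogonal measurements until exactly $t$ are informative. This can only help the estimator, so an upper bound for the augmented experiment suffices.

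Next I bound the residual radius. As in \cref{r12:lem-baseline}, each new informative direction is fixed by the past and is orthogonal to the current span. Since the residual is uniform on a sphere of dimension $d-j$, this gives $\E[r_{j+1}^2\mid\mathcal H_j]=r_j^2(d-j-1)/(d-j)$, and hence $\E(1-r_t^2)=t/d$. Markov's inequality then yields
\[
 \Prob(r_t^2\le 1-s)=\Prob(1-r_t^2\ge s)\le \frac{t}{ds}.
\]
This is not the form I want, so I instead compare with the event $\{r_t^2\le s\}$, equivalently $\{1-r_t^2\ge 1-s\}$. Markov gives probability at most $t/(d(1-s))$, which matches the stated bound.

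On the complementary event $r_t^2>s$, I use the pairing. Suppose both $v$ and $-v$ on the residual sphere, shifted by the known component $P\xi$, lay within squared distance $s$ of a common estimate $z$. Averaging the two squared distances would give $r_t^2+\|P^\perp z\|_2^2\le s$, a contradiction. So any estimate captures at most one point of each antipodal pair, and by symmetry of the uniform residual law the conditional success probability is at most $1/2$. This also holds with independent randomization of the output, by averaging. The condition $t<d$ guarantees that the residual sphere has dimension at least zero, and a two-point sphere is handled by the same pairing. Combining the two events gives success probability at most $\tfrac12+\tfrac12\cdot t/(d(1-s))$, which is the claim.

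The main obstacle is measurability and the conditional-uniformity claim for arbitrary Borel adaptive linear policies. I will cite the disintegration of \cref{r12:disintegration}, applied to the sphere instead of a Wishart slice. The mechanism is the same: each measurement is a linear functional fixed by the past, and rotations of $S^\perp$ preserve both the prior and all answers.
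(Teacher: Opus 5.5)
Your proof is correct and follows the paper's argument essentially step for step: you complete to exactly $t$ informative orthogonal measurements, use the residual-radius recursion to get $\E(1-r_t^2)=t/d$, apply Markov's inequality to $\{1-r_t^2\ge 1-s\}$, and use antipodal pairing on $\{r_t^2>s\}$. The preliminary Markov bound $t/(ds)$ for $\{r_t^2\le 1-s\}$ plays no role and can simply be dropped.
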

\begin{proof}
Orthogonalize the informative queries and supplement dependent queries by a
measurable orthogonal completion. Conditional on the transcript, the
unobserved component is uniform on the residual sphere. If $U_i$ is its
squared radius after $i$ informative measurements, rotational invariance gives
$\E[U_{i+1}\mid\mathcal H_i]=U_i(d-i-1)/(d-i)$. Hence
$\E U_t=(d-t)/d$. When $U_t>s$, a radius-$\sqrt s$ ball contains at most one
point from each antipodal pair of the residual sphere, and conditional
success is at most $1/2$. Otherwise the observed squared norm is at least
$1-s$. Markov's inequality bounds the probability of the latter event by
$t/[d(1-s)]$. Averaging proves the claim. Orthogonal completion can test the
standard coordinate vectors in their fixed order, so all maps are Borel.
The antipodal argument includes the two-point sphere.
\end{proof}

\begin{lemma}[First entry into a hidden child]\label{ms:lem-hit}
In \cref{ms:thm-construction}, let the directions be independent and uniform
on $S^{d-1}$, and put $s=(\rho/\alpha)^2\le1/16$. There is an oracle more
informative than exact values for which the number $N_j$ of queries
processed at level $j$, including its first child hit, obeys
\[
 \Prob\{N_j\le k\mid\text{causal history before level }j\}
 \le\frac12+\frac{k-1}{2d(1-s)},\qquad 1\le k\le d.
\]
\end{lemma}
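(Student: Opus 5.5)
The plan is to build an enhanced oracle that reveals hidden directions level by level, mirroring the one-dimensional argument of \cref{thm:nonquadratic-log}, and then to reduce the event $\{N_j\le k\}$ to the recovery event of \cref{ms:lem-recovery}. Throughout we condition on the causal history before level $j$. At that point $\xi_1,\ldots,\xi_j$ are revealed, so $c_j$ is known. The suffix $\xi_{j+1},\ldots,\xi_m$ is still independent and uniform, because every earlier disclosure decision depended only on revealed prefixes.

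\emph{Oracle at level $j$.} For a query $x$, the oracle proceeds in three cases.
\begin{itemize}
\item If $x\notin B(c_j,r_j)$, all perturbations of index $\ge j$ vanish at $x$, so $f(x)$ is computable from the known prefix.
\item If $x\in B(c_j,r_j)$ but $x$ lies outside the child ball $B(c_{j+1},r_{j+1})$, the oracle returns the linear measurement $\langle\xi_{j+1},x-c_j\rangle\phi(\|x-c_j\|/r_j)$. Deeper perturbations are supported inside the child ball, so this determines $f(x)$. It is at most one scalar linear measurement of $\xi_{j+1}$ in a direction fixed by the past.
\item If $x$ enters the child ball, the oracle reveals $\xi_{j+1}$ outright. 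This is the ``first child hit'' that terminates level $j$, and any cascade into deeper levels is charged to those levels.
\end{itemize}
Thus $N_j$ equals the number of level-$j$ queries up to and including the first query landing in $B(c_{j+1},r_{j+1})$. Each query before the hit gives at most one linear measurement of $\xi_{j+1}$.

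\emph{Reduction to spherical recovery.} Suppose $N_j\le k$. Then after at most $k-1$ linear measurements, the next query $x$ (a Borel function of the history and the seed) satisfies $\|x-c_{j+1}\|\le r_{j+1}$. Set the estimator $\widehat\xi=(c_j-x)/(\alpha r_j)$. Since $c_{j+1}=c_j-\alpha r_j\xi_{j+1}$, we get
\[
\|\widehat\xi-\xi_{j+1}\|=\|x-c_{j+1}\|/(\alpha r_j)\le\rho/\alpha,
\]
so the squared error is at most $s$. Hence $\{N_j\le k\}$ is contained in the event that a Borel estimator, built from $k-1<d$ adaptive linear measurements, recovers $\xi_{j+1}$ within squared error $s$. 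Applying \cref{ms:lem-recovery} with $t=k-1$ gives the bound. For $k=d$, note $t=d-1<d$, so the lemma still applies.

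\emph{Main obstacle.} The delicate step is the measurability and independence bookkeeping. We must verify that the conditional law of $\xi_{j+1}$ given the causal history is uniform, and unaffected by the algorithm's knowledge of which queries did \emph{not} hit. A non-hit is an event depending on $\xi_{j+1}$ itself, so it biases the posterior; a naive conditioning on non-hits would break the argument.

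I would avoid this by defining the enhanced oracle to also reveal the binary hit indicator as a function of the full measurement. Then I would run \cref{ms:lem-recovery} on the linear measurements alone, with the query sequence simulated as if no hit occurs. On the event $N_j\le k$, this simulated sequence coincides with the true one up to the hit. The hitting query is then a Borel function of $k-1$ measurements under the no-hit simulation, which is exactly the setting of the lemma.
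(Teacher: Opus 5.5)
Your architecture matches the paper's: an enhanced oracle that returns a linear measurement at the current level and discloses the direction on child entry, a stopped experiment, and \cref{ms:lem-recovery} with $t=k-1$. However, the step that makes the reduction valid is missing.

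The estimator in \cref{ms:lem-recovery} must be a Borel function of the $k-1$ linear measurements alone. On $\{N_j\le k\}$ the first hit may occur at some $i<k$. After that hit your no-hit simulation diverges from the true run, so the $k$th simulated query is useless. To output the candidate $(c_j-x_i)/(\alpha r_j)$ you must therefore know $i$ from the measurements. Your sentence ``the hitting query is then a Borel function of $k-1$ measurements'' presupposes exactly that a query's hit indicator is determined by that query's own linear measurement. You gesture at this (``as a function of the full measurement'') but never establish it. Your remark that a non-hit biases the posterior suggests you are treating the indicator as extra, nonlinear information. In that case \cref{ms:lem-recovery}, which concerns adaptive scalar linear measurements only, would not apply to an experiment that also reveals it.

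The missing fact is an elementary identity. Since $\|\xi_{j+1}\|_2=1$ and $c_{j+1}=c_j-\alpha r_j\xi_{j+1}$,
\[
 \|x-c_{j+1}\|_2^2=\|x-c_j\|_2^2+\alpha^2r_j^2+2\alpha r_j\langle\xi_{j+1},x-c_j\rangle .
\]
Hence child membership is a known function of the query point and its linear measurement. Your scaled response $\langle\xi_{j+1},x-c_j\rangle\phi(\|x-c_j\|/r_j)$ determines that measurement wherever $\phi>0$. Where the response vanishes (on or outside the parent sphere), entry is impossible because the child lies strictly inside the parent's plateau.

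With this identity, non-hit events carry no information beyond the measurements, so your ``main obstacle'' disappears. The estimator then becomes explicit:
\begin{itemize}
\item scan the first $k-1$ measurements;
\item if one of them reveals an entry, return the candidate built from that query point and stop before the disclosure;
\item otherwise, return the candidate built from the $k$th query point without requesting its answer.
\end{itemize}
On $\{N_j\le k\}$ this has squared error at most $s$, and \cref{ms:lem-recovery} with $t=k-1$ gives the bound. This is exactly how the paper closes the argument.
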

\begin{proof}
At the first unrevealed level $j$, a query $x$ receives
$\langle\xi_j,x-c_{j-1}\rangle$. If $x$ is outside the child
$B(c_j,r_j)$, processing stops. If it is inside, the oracle reveals the
whole direction and processes the same point at the next level. The query
may therefore cascade through several levels. Child membership is already
determined by the supplied linear measurement, since
\[
 \|x-c_j\|_2^2=\|x-c_{j-1}\|_2^2+\alpha^2r_{j-1}^2
       +2\alpha r_{j-1}\langle\xi_j,x-c_{j-1}\rangle.
\]
The response determines the original function value: revealed perturbations
are known, the current perturbation is computed from its projection, and
all deeper perturbations vanish where processing stops. The smooth cutoff
also vanishes on a support boundary. Thus no extra nonlinear measurement
is hidden in the child indicator.

At the instant a new level is first processed, its direction is independent
of the causal past. The causal past ends immediately before the level's first internal
oracle operation, including when that operation occurs within a cascade. Simulate the
first $k-1$ measurements at that level. If an earlier child entry occurs,
its measurement identifies the entry; return
$-(x-c_{j-1})/(\alpha r_{j-1})$ and stop before the full-direction disclosure.
If no entry occurs, return the same candidate formed from the $k$th query
point without requesting its answer. On $\{N_j\le k\}$ this recovers $\xi_j$
with squared error less than $s$. \Cref{ms:lem-recovery} applies with $t=k-1$.
The recovery estimate applies directly to this stopped experiment
under the direction's original conditional law.
\end{proof}

\begin{theorem}[Dimension times admissible depth]\label{ms:thm-depth}
Use the spherical family above at a depth $m$ allowed by
\cref{ms:cor-radius}. Every algorithm successful on each member of the
function class must satisfy
\begin{equation}\label{ms:eq-depth}
 T\ge\frac{3m}{20}\left(1+\left\lfloor\frac d8\right\rfloor\right)
 \ge\frac{3dm}{160}.
\end{equation}
\end{theorem}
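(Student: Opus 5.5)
We will prove \eqref{ms:eq-depth} with a Yao-type argument. The directions $\xi_1,\ldots,\xi_m$ are drawn independently and uniformly on $S^{d-1}$ before the seed, and the exact-value oracle is replaced by the more informative oracle of \cref{ms:lem-hit}. Every exact value is a function of that oracle's responses, so any algorithm that succeeds with probability $2/3$ on each instance also succeeds with average probability at least $2/3$ under this prior with the enhanced oracle. By \cref{ms:thm-construction} and the window of \cref{ms:cor-radius}, every successful output lies in the last ball $B(c_m,r_m)$ and in every preceding selected child.

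\textbf{Step 1: success requires revealing all directions.} Call level $j$ \emph{completed} when some query enters the child $B(c_j,r_j)$. Success forces the output into the last child. If fewer than $m$ levels are completed, the final direction $\xi_m$ (or an earlier unrevealed one) is still uniform given the history. In that case an output inside $B(c_m,r_m)$ would recover it with squared error at most $s$, by the same map $x\mapsto-(x-c_{j-1})/(\alpha r_{j-1})$ used in \cref{ms:lem-hit}. By \cref{ms:lem-recovery} with $t=0$, this succeeds with conditional probability at most $1/2$. Hence $\Prob\{\text{all }m\text{ levels completed within }T\text{ queries}\}\ge1/3$.

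\textbf{Step 2: counting queries per level, with cascades.} Let $N_j$ be the number of queries processed at level $j$, as in \cref{ms:lem-hit}. A cascading query is counted at every level it traverses. Each such query is one physical query at level $j$, and the next level's count starts there, so $\sum_j N_j\le T+m$. It is cleaner to bound each $N_j$ separately. The first query processed at level $j$ is the cascading one, and the subsequent ones are new physical queries. Thus the physical cost of level $j$ is at least $N_j-1$, giving $T\ge\sum_j(N_j-1)$ on the completion event.

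Set $k=1+\lfloor d/8\rfloor$. Since $s\le1/16$, \cref{ms:lem-hit} gives
\[
\Prob\{N_j\le k\mid\text{past}\}\le\tfrac12+\tfrac{k-1}{2d(1-s)}\le\tfrac12+\tfrac{1}{15}.
\]
Hence each level independently, conditionally on its causal past, has probability at least $13/30$ of costing at least $k-1$ fresh queries. We will instead use the conditional bound directly: $\E[\min\{N_j,k\}\mid\text{past}]\ge k\cdot\Prob\{N_j>k-1\}$, which is at least of order $k$. Summing and taking expectations gives $\E\sum_j\min\{N_j,k\}\ge c\,mk$.

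\textbf{Step 3: from expectation to the fixed budget.} This is the main obstacle. We truncate at the budget: $\sum_j\min\{N_j,k\}$ over levels processed within $T$ queries is at most $T+m$ deterministically. We also need the contribution of levels never reached on the failure event to be controlled. The plan is a supermartingale/Markov argument. Let $G_j=\mathbf 1\{N_j>k\}$. By \cref{ms:lem-hit}, the conditional probability of $G_j$ is at least $1/2-1/15$. On the success event all $m$ levels are reached, so $T\ge k\sum_j G_j-m$ there, and a Freedman/Azuma-type bound for $\sum_jG_j$ then shows that $\sum_jG_j\ge m/5$ on a set of probability more than $2/3$ minus its complement.

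A cleaner alternative, which we would try first, avoids concentration. If $T<\tfrac{3m}{20}k$, then $\Prob\{\sum_jG_j\le 3m/20\}$ must be at least $1/3$, whereas $\E\sum_j(1-G_j)\le m(1/2+1/15)$ gives by Markov $\Prob\{\sum_j(1-G_j)\ge 17m/20\}\le (17/30)/(17/20)=2/3$. Matching these constants, and the precise form $\frac{3m}{20}(1+\lfloor d/8\rfloor)$, is where the bookkeeping must be done carefully. The final inequality $\ge 3dm/160$ is immediate from $1+\lfloor d/8\rfloor\ge d/8$.
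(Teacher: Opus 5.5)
Your Step~1 is where the argument fails, and Steps~2--3 cannot recover from it. You claim that success forces completion of all $m$ levels within the $T$ queries with probability at least $1/3$, and this claim is false. At the first uncompleted level $j$, the direction $\xi_j$ is \emph{not} uniform given the history. Every level-$j$ query that missed the child has already returned $\langle\xi_j,x-c_{j-1}\rangle$, so \cref{ms:lem-recovery} applies with $t=N_j$ rather than $t=0$. When the uncompleted level is $m$ itself, there is no untouched direction left to fall back on.

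Here is a concrete failure. An algorithm may finish levels $1,\ldots,m-1$, then make $d$ coordinate measurements at level $m$ from points well away from $B(c_m,r_m)$. It then knows $\xi_m$ exactly and outputs $c_m$ without ever querying the last ball, so it succeeds although level $m$ is never completed.

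Moreover, even granting probability $1/3$, the counting cannot close. Let $B=\#\{j:N_j\le k\}$, with $\E B\le17m/30$ and $B\ge m-T/k$ on an event of probability $p$. Then you only get $T\ge km\bigl(1-\tfrac{17}{30p}\bigr)$, which is vacuous for $p=1/3<17/30$. This is why your Markov comparison ends with $1/3$ against $2/3$ and no contradiction.

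The missing idea is to charge the output as one extra query to the enhanced oracle. Success then itself forces entry into $B(c_m,r_m)$. Hence all $m$ levels are finished within $T+1$ physical queries on an event of probability at least $2/3$, and no recovery argument is needed in Step~1.

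To make every $N_j$ well defined, also continue the experiment after the budget (coordinate measurements, then a query at the recovered child center) until all levels finish. This does not affect success within the budget.

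On success, your bookkeeping $T+1\ge1+\sum_j(N_j-1)$ gives $T\ge k\sum_j\mathbf 1\{N_j>k\}$, that is, $B\ge m-T/k$. The tower property with \cref{ms:lem-hit} gives $\E B\le17m/30$. Since $B\ge0$, for $T<km$ this yields $\tfrac{17m}{30}\ge\E[B\mathbf 1_{\{\text{success}\}}]\ge\tfrac23(m-T/k)$, hence $T\ge3km/20$. No Freedman/Azuma step is needed, and for $T\ge km$ the bound is immediate.
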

\begin{proof}
Give the algorithm access to the enhanced oracle and add its output as one
final query. Success implies entry into the last ball, so all $m$ levels
are finished within $T+1$ queries. For analysis, continue after this budget
until every level is finished, using coordinate measurements and a query at
the recovered child center. This makes every $N_j$ finite without altering
success within the original budget.

Let $Q$ count queries until completion. A query is counted more than once
in $\sum_jN_j$ only when it crosses a level boundary, and at most $m-1$
boundaries can be crossed. Consequently
$Q\ge1+\sum_{j=1}^m(N_j-1)$. On success, $Q\le T+1$, and hence
\[
 T\ge k\sum_{j=1}^m\mathbf1_{\{N_j>k\}},
 \qquad k=1+\lfloor d/8\rfloor.
\]
By \cref{ms:lem-hit}, conditionally on the causal past of each level,
\[
 \Prob\{N_j\le k\}\le\frac12+\frac{8(k-1)}{15d}\le\frac{17}{30}.
\]
Let $B$ count levels with $N_j\le k$. Then $\E B\le17m/30$. On success,
$B\ge m-T/k$. If $T<km$, the pointwise success guarantee gives average
success at least $2/3$ under this fixed prior, so
\[
 \frac{17m}{30}\ge\frac23(m-T/k),\qquad T\ge3km/20.
\]
For $T\ge km$ the same conclusion is immediate. Finally $k\ge d/8$.
The argument permits cascades and arbitrary adaptive randomized policies.
\end{proof}

For $\rho=\alpha/4=(L-\mu)/(128(L+\mu))$, the rescaled construction permits
\begin{equation}\label{ms:eq-admissible-depth}
 m=1+\left\lfloor
 \frac{\log((L+\mu)R^2(1-\rho)^2/(128\eps))}
 {2\log(128(L+\mu)/(L-\mu))}\right\rfloor
\end{equation}
whenever $0<\eps\le(L+\mu)R^2(1-\rho)^2/128$.
Thus $T=\Omega_{\mu/L}(d\log(LR^2/\eps))$ for each fixed
$0\le\mu/L<1$ at sufficiently small relative accuracy. A single continuous multiscale family with independently sampled
spherical directions yields this lower bound.

\subsection{A matching value-only upper bound}

\begin{theorem}[Midpoint-curvature finite differences]\label{ms:thm-upper}
For $0<\mu<L$ and $0<\eps<LR^2/2$, every function in the full $C^{1,1}$ class can be minimized to gap at most $\eps$ deterministically using at most

\[
(d+1)\left\lceil
\frac{\log(2LR^2/\eps)}
{2\log((L+\mu)/(L-\mu))}
\right\rceil
\]

values. The budget is fixed in advance, and the policies are total Borel maps.

\end{theorem}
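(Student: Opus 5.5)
The plan is to exploit the midpoint curvature $a=(L+\mu)/2$. For $f$ that is $\mu$-strongly convex and $L$-smooth, the gradient step $x\mapsto x-\nabla f(x)/a$ is a contraction with factor $\gamma=(L-\mu)/(L+\mu)$ in the Euclidean norm. Indeed, the map $x\mapsto x-\nabla f(x)/a$ has Jacobian $I-\nabla^2f/a$ almost everywhere, with spectrum in $[-\gamma,\gamma]$. More robustly, co-coercivity gives $\|x-y-(\nabla f(x)-\nabla f(y))/a\|_2\le\gamma\|x-y\|_2$. Starting from $x_0=0$ with $\|x_0-x^\star\|_2\le R$, after $N$ exact steps we get $\|x_N-x^\star\|_2\le\gamma^N R$. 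Hence $f(x_N)-\min f\le L\gamma^{2N}R^2/2$. This is at most $\eps/2$ once $N\ge\log(LR^2/\eps)/(2\log(1/\gamma))$, which leaves a factor $2$ of slack in the logarithm's argument for finite-difference errors. That slack explains the $2LR^2/\eps$ in the statement.

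Next I will replace exact gradients by forward differences. Each step costs $d+1$ values: $f(x_k)$ and $f(x_k+he_i)$ for $i\le d$. The estimate $g_h(x)_i=[f(x+he_i)-f(x)]/h$ satisfies $\|g_h(x)-\nabla f(x)\|_2\le Lh\sqrt d/2=:\Delta$ by $L$-smoothness along coordinates. The perturbed iteration $x_{k+1}=x_k-g_h(x_k)/a$ then obeys
\[
 \|x_{k+1}-x^\star\|_2\le\gamma\|x_k-x^\star\|_2+\Delta/a,
\]
so $\|x_N-x^\star\|_2\le\gamma^NR+\Delta/(a(1-\gamma))=\gamma^NR+\Delta/\mu$. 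I choose $N=\lceil\log(2LR^2/\eps)/(2\log(1/\gamma))\rceil$, which makes $\gamma^NR\le\sqrt{\eps/(2L)}$. Since precision is unrestricted, I also choose $h>0$ so small that the two contributions together give $(L/2)\|x_N-x^\star\|_2^2\le\eps$. For instance, $\Delta/\mu\le(\sqrt2-1)\sqrt{\eps/(2L)}$ suffices, since then $\|x_N-x^\star\|_2\le\sqrt{\eps/L}$.

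Care is needed in counting the output. The output $x_N$ need not be queried, so only the $N$ gradient evaluations at $x_0,\dots,x_{N-1}$ are charged, giving $(d+1)N$ values. With forward differences the value $f(x_k)$ is used inside $g_h$, so the count is exact. The budget, the step size and $h$ all depend only on $d,L,\mu,R,\eps$, so the budget is fixed in advance. Each iterate is a polynomial (hence Borel) function of the history, and the policy is total and deterministic.

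The only real obstacle is the contraction constant for merely $C^{1,1}$ functions, where Hessians exist only almost everywhere. I would first try the standard co-coercivity inequality for $\phi=f-\mu\|\cdot\|^2/2$, which is convex and $(L-\mu)$-smooth:
\[
 \langle\nabla\phi(x)-\nabla\phi(y),x-y\rangle\ge\|\nabla\phi(x)-\nabla\phi(y)\|_2^2/(L-\mu).
\]
Expanding $\|(1-\mu/a)(x-y)-(\nabla\phi(x)-\nabla\phi(y))/a\|_2^2$ with this inequality yields the bound $\gamma^2\|x-y\|_2^2$. The remaining accuracy condition $\eps<LR^2/2$ only ensures the logarithm is positive, so that $N\ge1$.
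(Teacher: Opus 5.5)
Your proposal is correct and follows the paper's proof. It uses the same midpoint-curvature step $x\mapsto x-\nabla f(x)/a$ with contraction factor $(L-\mu)/(L+\mu)$, the same $(d+1)$-value forward differences started at $x_0=0$, the same iteration count, and a perturbed-contraction error budget met by a fixed step $h$. The differences are minor: the paper differences $g=f-\frac a2\|x\|_2^2$ (error $bh\sqrt d/2$ with $b=(L-\mu)/2$) and proves $\nabla g$ is $b$-Lipschitz by mollification, whereas you difference $f$ directly and obtain the contraction from co-coercivity of $f-\frac\mu2\|x\|_2^2$, which avoids the smoothing step.
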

\begin{proof}
 In this proof set

\[
a=\frac{L+\mu}{2},\qquad b=\frac{L-\mu}{2},\qquad
\delta=\frac ba,\qquad g(x)=f(x)-\frac a2\|x\|^2.
\]

First, $\nabla g$ is globally $b$-Lipschitz even though $f$ need not be $C^2$. Convolve $f$ with a compactly supported nonnegative smooth kernel of mass one and scale tending to zero. The convolutions preserve strong convexity by averaging its defining inequality, and preserve the Lipschitz gradient bound by averaging gradient differences. Their Hessians therefore have eigenvalues in $[\mu,L]$. Subtracting $aI$ gives operator norm at most $b$. Integrating along a segment and passing to the locally uniform limit of the gradients proves the assertion for $f$. The convolution step justifies the segment argument for every pair of points.

For any fixed $h>0$, obtain $d+1$ values at $x,x+he_1,\ldots,x+he_d$ and set

\[
D_i(x)=\frac{g(x+he_i)-g(x)}h
=\frac{f(x+he_i)-f(x)}h-ax_i-\frac{ah}{2}.
\]

The unknown additive constant cancels. The fundamental theorem of calculus yields

\[
|D_i(x)-\partial_i g(x)|\le\frac1h\int_0^hbt\,dt
=\frac{bh}{2},\qquad
\|D(x)-\nabla g(x)\|\le\frac{bh\sqrt d}{2}.
\]

The map $T(x)=-\nabla g(x)/a$ is a $\delta$-contraction and fixes $x^\star$. Thus the update $x_{j+1}=-D(x_j)/a$ satisfies

\[
\|x_{j+1}-x^\star\|\le\delta\|x_j-x^\star\|+\frac{\delta h\sqrt d}{2}.
\]

Choose

\[
r=\sqrt{\frac{2\eps}{L}},\qquad
h=\frac{(1-\delta)r}{\delta\sqrt d},\qquad x_0=0,
\]

\[
n=\left\lceil\frac{\log(2R/r)}{\log(1/\delta)}\right\rceil
=\left\lceil\frac{\log(2LR^2/\eps)}{2\log(1/\delta)}\right\rceil.
\]

After exactly $n$ iterations,

\[
\|x_n-x^\star\|\le\delta^nR+
\frac{\delta h\sqrt d}{2}\sum_{j=0}^{n-1}\delta^j
=\delta^nR+\frac r2(1-\delta^n)\le r.
\]

Smoothness then gives $f(x_n)-f(x^\star)\le L\|x_n-x^\star\|^2/2\le\eps$. No value at the final output is needed.

All constants and the iteration count are fixed before the first answer. Arithmetic operations divide only by fixed positive quantities, so every finite real history produces finite query points and an output. The maps are Borel even on inconsistent histories. A large finite-difference step when $\delta$ is very small is allowed because the smoothness bound is global and query norms are unrestricted. A small step does not introduce rounding error in the exact-value model.

\end{proof}

If $\eps\ge LR^2/2$, zero is an adequate output without queries.
If $\mu=L$, the upper and lower quadratic increment inequalities coincide:
\[
 f(x)=\tfrac L2\|x\|_2^2+\langle\nabla f(0),x\rangle+f(0).
\]
The $d+1$ values at $0,e_1,\ldots,e_d$ therefore recover the minimizer exactly.
The dimension lower bound must be treated separately at this endpoint;
the nested construction is defined only for $\mu<L$.

\subsection{Proof of the matching endpoint estimate}\label{ms:proof-matching}
\begin{proof}[Proof of \cref{thm:near-quadratic}]
 For $\mu<L$, write $v=\log(LR^2/\eps)$ and $b=\log((L+\mu)/(L-\mu))\ge\log3$. In the rescaled spherical construction with $\rho=\alpha/4$, we have $\rho=e^{-b}/128\le1/384$ and

\[
(L+\mu)(1-\rho)^2\ge L.
\]

The admissible depth from \cref{ms:cor-radius} satisfies

\[
m\ge\max\left\{1,\frac{v}{24b}\right\}
\ge\frac{1+v/b}{25}.
\]

Indeed, the numerator defining the depth is at least $v-\log128\ge v/2$, and $\log128<5b$, so its denominator is at most $12b$. Also $1+\lfloor t\rfloor\ge t$. The last displayed inequality follows by splitting at $v/b=24$. \cref{ms:thm-depth} gives the claimed lower bound. For the upper bound, \cref{ms:thm-upper} and $v\ge\log2$ imply

\[
\Tval_\eps(\mathcal C_{d,L,R}^{\mu})\le(d+1)\left\lceil\frac{v+\log2}{2b}\right\rceil
\le2d(1+v/b).
\]

At $\mu=L$, the $d+1$-value reconstruction stated after \cref{ms:thm-upper} applies. For the dimension lower bound use the fixed spherical family

\[
f_\xi(x)=\frac L2\|x\|^2-LR\langle\xi,x\rangle,
\qquad \xi\sim\operatorname{Unif}(S^{d-1}).
\]

Each value is one linear measurement of $\xi$, and success implies recovery with squared error $s=2\eps/(LR^2)\le1/16$. If fewer than $d$ queries are used, \cref{ms:lem-recovery} and success $2/3$ require $T\ge d(1-s)/3\ge5d/16$; with at least $d$ queries this bound is immediate. This exceeds $3d/4000$ and completes the endpoint argument without substituting zero into the multiscale construction.
\end{proof}

\section{Saddle-point models, blockwise and mixed information bounds}
\label{sp:app}

\subsection{Models and blockwise information bounds}
Let $d_x,d_y\ge1$, $L_i,R_i>0$, and $0\le\mu_i\le L_i$ for
$i\in\{x,y\}$. The objective $F:\R^{d_x}\times\R^{d_y}\to\R$ is globally
differentiable, $\mu_x$-strongly convex in $x$ and $\mu_y$-strongly
concave in $y$; a zero modulus means ordinary convexity in $x$,
respectively ordinary concavity in $y$. The partial gradients satisfy
\begin{align}
 \|\nabla_xF(x,y)-\nabla_xF(x',y)\|_2&\le L_x\|x-x'\|_2,\notag\\
 \|\nabla_yF(x,y)-\nabla_yF(x,y')\|_2&\le L_y\|y-y'\|_2,\label{sp:eq-block-smooth}\\
 \|\nabla_xF(x,y)-\nabla_xF(x,y')\|_2&\le L_{xy}\|y-y'\|_2,\notag\\
 \|\nabla_yF(x,y)-\nabla_yF(x',y)\|_2&\le L_{xy}\|x-x'\|_2.\notag
\end{align}
The smoothness and coupling constants are upper bounds, whereas the
strong-convexity and strong-concavity moduli are lower bounds; in
particular, the class permits zero coupling even when $L_{xy}>0$. We
require a global saddle point $(x^\star,y^\star)$ with
$\|x^\star\|_2\le R_x$ and $\|y^\star\|_2\le R_y$.

The localized model takes $X=R_x\mathbb B_2^{d_x}$ and
$Y=R_y\mathbb B_2^{d_y}$, requires an output in $X\times Y$, and measures
the gap
\begin{equation}\label{sp:eq-gap}
 \operatorname{Gap}_{X,Y}(\widehat x,\widehat y)
 =\sup_{y\in Y}F(\widehat x,y)-\inf_{x\in X}F(x,\widehat y).
\end{equation}
The global-gap model takes $X=\R^{d_x}$, $Y=\R^{d_y}$ and places no
restriction on the output; the radii then serve only to localize a saddle
point. The two models impose different output guarantees on the same class
of objectives. We give the corresponding gap comparisons separately. Queries are unrestricted in both
models. As in \cref{sec:results}, policies are total Borel maps, seeds are
independent, and an $\eps$-accurate output must be returned with
probability at least $2/3$ on every instance. The budget of scalar-value
queries must be respected on every history.

For $0<\eps\le L_iR_i^2/32$, put
\begin{equation}\label{sp:eq-block-budget}
 \mathcal B_i(\eps)=\max\left\{\frac{5d_i}{16},\;
 \frac{d_i}{\Cstar}\min\left\{d_i,
 \sqrt{\frac{L_i}{\mu_i}},\sqrt{\frac{L_iR_i^2}{\eps}}\right\}\right\},
\end{equation}
with the convention $\sqrt{L_i/\mu_i}=+\infty$ when $\mu_i=0$.
An index is called admissible when the required accuracy lies in the
corresponding window.

\begin{theorem}[Blockwise scalar-value lower bound]\label{sp:thm-blocks}
In either gap model, an algorithm that receives only exact scalar values
of $F$ must satisfy $T\ge\mathcal B_i(\eps)$ for each admissible index $i$.
If both indices are admissible, then
\[
 T\ge\max\{\mathcal B_x(\eps),\mathcal B_y(\eps)\}
 \ge\tfrac12\bigl(\mathcal B_x(\eps)+\mathcal B_y(\eps)\bigr).
\]
The result holds for every $L_{xy}\ge0$, even on the subclass of separable
quadratics.
\end{theorem}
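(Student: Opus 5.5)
The plan is a direct reduction from minimization in one block to the saddle-point problem. By symmetry, treat the index $i=y$ and let $i=x$ follow by exchanging roles. Given an instance $g\in\mathcal C^{\mu_y}_{d_y,L_y,R_y}$ drawn from the hard quadratic priors of \cref{thm:main} and \cref{cor:strong}, set
\[
 F(x,y)=\frac{L_x}{2}\|x\|_2^2-g(y).
\]
The steps, in order, are as follows.

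\emph{Membership in the class.} Check that $F$ lies in the saddle-point class for every $L_{xy}\ge0$. The $x$-block has Hessian $L_xI$, so it is $\mu_x$-strongly convex and $L_x$-smooth because $\mu_x\le L_x$. The $y$-block is $\mu_y$-strongly concave and $L_y$-smooth. The coupling is zero, which the class permits. The saddle point is $(0,y_g^\star)$, with $\|y_g^\star\|_2\le R_y$.

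\emph{Query simulation.} One value of $g$ simulates one value of $F$, since $F(x,y)=L_x\|x\|_2^2/2-g(y)$ with the first term known. Hence a $T$-query saddle algorithm yields a $T$-query minimization algorithm for $g$. This algorithm queries $g$ at the $y$-components and outputs $\widehat y$.

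\emph{Gap-to-suboptimality transfer.} Show that an $\eps$-accurate pair gives $g(\widehat y)-\min g\le\eps$ in either gap model. In the global model,
\[
 \operatorname{Gap}=\sup_v F(\widehat x,v)-\inf_u F(u,\widehat y)=\frac{L_x}{2}\|\widehat x\|_2^2+\min g-\min_u\frac{L_x}{2}\|u\|_2^2+g(\widehat y)\ge g(\widehat y)-\min g.
\]
In the compact model, $\widehat y\in Y$ and
\[
 \operatorname{Gap}_{X,Y}\ge -\inf_{u\in X}\frac{L_x}{2}\|u\|_2^2+g(\widehat y)+\frac{L_x}{2}\|\widehat x\|_2^2-\sup_{v\in Y}g(v)\cdot0\ldots
\]
More carefully, $\sup_{v\in Y}(-g(v))=-\min_Y g=-\min g$, because the minimizer lies in $Y$. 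The compact gap is therefore at least $g(\widehat y)-\min g$. In both models the gap dominates the suboptimality of $\widehat y$.

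\emph{Conclusion.} Success with probability $2/3$ then transfers directly to the minimization problem. \Cref{cor:strong}, applied with $(d_y,L_y,\mu_y,R_y)$, gives $T\ge\mathcal B_y(\eps)$ in its window $\eps\le L_yR_y^2/32$. When $\mu_y=0$, the cap $\sqrt{L_y/\mu_y}=\infty$ and we use \cref{thm:main} instead. The hard instances are quadratics, so $F$ is a separable quadratic. The maximum over admissible indices, and the bound $\max\ge$ average, are then immediate.

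The only delicate point is bookkeeping rather than difficulty. We must confirm that the hard priors in \cref{cor:strong} and in the baseline \cref{r12:lem-baseline} consist of globally valid members of the $\mu_y$-strongly convex class. We must also confirm that the compact-gap output constraint $\widehat y\in Y$ is harmless, since the reduction only needs the output's value and minimization permits any output. I expect no real obstacle beyond verifying these class memberships.
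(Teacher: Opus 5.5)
Your proposal is correct and is essentially the paper's proof: embed a hard source instance in one block with a known quadratic in the other (the paper uses $\mu_x\|x\|_2^2/2$ where you use $L_x\|x\|_2^2/2$; both are admissible), simulate each target value with one source value, and observe that in either gap model the gap equals the source suboptimality plus a nonnegative known term, because $0\in X$ and a minimizer of $g$ lies in $Y$. Then apply \cref{cor:strong}, or \cref{thm:main} when $\mu_y=0$. Two small fixes: in your global-gap display, $\sup_v(-g(v))$ contributes $-\min g$, not $+\min g$; and drop the abandoned compact-gap display, since the sentence after it already gives the correct bound.
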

\begin{proof}
We first prove the $x$ bound. Let $f$ range over the hard quadratic
subclass used in \cref{thm:main,cor:strong} with parameters
$(d_x,L_x,R_x,\mu_x)$, and set
\[
 F_f(x,y)=f(x)-\tfrac{\mu_y}{2}\|y\|_2^2.
\]
When $\mu_y=0$, the second term vanishes. This function belongs to the
saddle-point class and has the saddle point $(x^\star,0)$. One value of
$F_f$ is simulated by one value of $f$ together with the known quadratic
term. In the localized model, $0\in Y$ and, by construction,
$\operatorname*{argmin}f$ meets $X$, so the inner supremum and the inner
infimum are attained at the same points as in the global model. Hence, for
every feasible output in the localized model and every output in the
global model,
\[
  \operatorname{Gap}_{X,Y}(\widehat x,\widehat y)
 =f(\widehat x)-\min f+\tfrac{\mu_y}{2}\|\widehat y\|_2^2
 \ge f(\widehat x)-\min f.
\]
Thus the decoded output $\widehat x$ solves the source problem with the
same budget and success probability, and the corresponding minimization
theorem applies. For the $y$ bound, use
$F_g(x,y)=\mu_x\|x\|_2^2/2-g(y)$ and decode $\widehat y$.
The two arguments need not use the same hard instance: each of them
constrains the same worst-case query budget.
\end{proof}

\begin{theorem}[A scalar-value lower bound in the mixed model]\label{sp:thm-mixed}
Suppose that arbitrary exact partial-gradient queries $\nabla_xF(x,y)$ are
available in addition to exact scalar values. Let $N_0$ count every
returned scalar value, including the values bundled with a partial-gradient
call. Then $N_0\ge\mathcal B_y(\eps)$ whenever the $y$ index is admissible,
regardless of the number of pure partial-gradient calls. With $x$ and $y$
interchanged, $N_0\ge\mathcal B_x(\eps)$ holds in the other mixed model.
\end{theorem}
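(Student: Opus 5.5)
We reuse the embedding from the $y$ half of \cref{sp:thm-blocks}, which the main text already points to. Let $g$ range over the hard quadratic subclass behind \cref{thm:main,cor:strong} with parameters $(d_y,L_y,R_y,\mu_y)$, and set
\[
 F_g(x,y)=\tfrac{\mu_x}{2}\|x\|_2^2-g(y).
\]
This lies in the saddle-point class for every $L_{xy}\ge0$, and its saddle point is $(0,y^\star)$ with $y^\star\in\operatorname*{argmax}(-g)$ in the prescribed ball. The key observation is that $\nabla_xF_g(x,y)=\mu_x x$ does not depend on $g$. Any pure partial-gradient call can therefore be answered by the simulator with no oracle access. A call that also bundles a scalar value costs exactly one source value, since $g(y)=\mu_x\|x\|_2^2/2-F_g(x,y)$, and that value is already counted in $N_0$.

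Step one builds the source algorithm for minimizing $g$. It runs the mixed-model algorithm with the same seed and simulates each returned scalar value of $F_g$ by one query of $g$ at the $y$-coordinate. Each $\nabla_xF_g$ answer it computes as $\mu_x x$ from the query. The deterministic bound on the number of scalar values on every history transfers, so the source budget is $N_0$ on every history. The simulated transcript has the same law as the true one for every fixed $g$, so the success probability $2/3$ also transfers. Measurability is immediate, because the simulation maps are continuous compositions with the original Borel policy.

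Step two transfers the accuracy guarantee, separately in both gap models, exactly as in \cref{sp:thm-blocks}. In the localized model we have $0\in X$, and $\operatorname*{argmin}g$ meets $Y$. Hence the inner infimum over $X$ is attained at $x=0$ and the inner supremum of $-g$ over $Y$ equals $-\min g$, giving
\[
 \operatorname{Gap}_{X,Y}(\widehat x,\widehat y)=\tfrac{\mu_x}{2}\|\widehat x\|_2^2+g(\widehat y)-\min g\ge g(\widehat y)-\min g .
\]
The same identity holds in the global model, where the best responses range over the full spaces. The decoded output $\widehat y$ is thus $\eps$-optimal for $g$ whenever the pair is $\eps$-accurate. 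By \cref{thm:main,cor:strong} on the admissible window $\eps\le L_yR_y^2/32$, this gives $N_0\ge\mathcal B_y(\eps)$.

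The interchanged statement follows symmetrically from $F_f(x,y)=f(x)-\mu_y\|y\|_2^2/2$ with $\nabla_yF_f=-\mu_y y$. The only delicate point is the bookkeeping: a bundled value-plus-gradient call must be charged as exactly one source value, and no gradient answer may leak information about $g$. Both hold because the gradient is known in closed form. When $\mu_x=0$ the $x$-gradient is identically zero, and the argument is unchanged.
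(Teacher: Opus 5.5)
Your proposal is correct and follows the paper's proof essentially verbatim. You use the same separable embedding $F_g(x,y)=\mu_x\|x\|_2^2/2-g(y)$, the same observation that $\nabla_xF_g=\mu_x x$ is answered without source queries while each scalar value costs one value of $g$, and the gap identity from \cref{sp:thm-blocks} to reduce to \cref{thm:main,cor:strong}. Your extra bookkeeping (bundled calls, the case $\mu_x=0$, measurability) spells out what the paper states in a single line.
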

\begin{proof}
Take $F_g(x,y)=\mu_x\|x\|_2^2/2-g(y)$. Every partial-gradient response
$\nabla_xF_g(x,y)=\mu_xx$ is independent of $g$ and is computed without any
source query, whereas each scalar response requires one value of $g$. The
same gap identity as before proves the claim. The simulation is adaptive
in both types of response and reuses the algorithm's own seed, so the
induced source policy is again a total Borel map with the same budget.
\end{proof}

\begin{remark}[Outer accuracy and partial-gradient information]\label{sp:rem-models}
The mixed-oracle theorem concerns a returned pair with a small gap. On its
hard family, $\Phi_g(x)=\sup_yF_g(x,y)=\mu_x\|x\|_2^2/2-\min g$, so $x=0$
minimizes $\Phi_g$ without any observation. The coupled construction below makes the outer solution depend on
the unknown inner solution. In a bilinear model, an observed partial
gradient contains $A^\top y$ or $Ax$ and thus reveals coupling information. In the mixed-oracle construction, the coupling is known
and the unknown inner objective contains the hidden information.
\end{remark}

\subsection{A coupled construction for mixed information and outer accuracy}
The hard family of the preceding mixed-oracle reduction is separable. The
next construction uses a nonzero known coupling, so that the unknown inner
solution determines the outer solution while the partial-gradient response
remains independent of the unknown objective.

\begin{theorem}[Known coupling with an unknown inner quadratic]
\label{sp:thm-mixed-coupled}
Let $\alpha,c,\mu_y,R_x,R_y>0$ and $L_y\ge\mu_y$. Set
\[
 r=\min\{d_x,d_y\},\qquad s=c^2/\alpha,\qquad
 L_0=L_y+s,\qquad \mu_0=\mu_y+s,\qquad
 \rho=\min\{R_y,\alpha R_x/c\}.
\]
Consider globally defined strongly convex--strongly concave quadratics
with block constants $L_x=\mu_x=\alpha$, $L_y,\mu_y$, coupling norm at
most $c$, and a saddle point localized by $R_x,R_y$. The algorithm may
query $\nabla_xF$ arbitrarily, and $N_0$ counts all returned scalar
values. For the global pair-gap criterion and $0<\eps\le L_0\rho^2/32$,
\begin{equation}\label{sp:eq-mixed-coupled-gap}
 N_0\ge\max\left\{\frac{5r}{16},\frac r{\Cstar}
 \min\left\{r,\sqrt{\frac{L_0}{\mu_0}},
                   \sqrt{\frac{L_0\rho^2}{\eps}}\right\}\right\}.
\end{equation}
For an outer output satisfying $\Phi(\widehat x)-\min\Phi\le\eps$, where
$\Phi(x)=\max_{y\in\R^{d_y}}F(x,y)$, the bound is instead
\begin{equation}\label{sp:eq-mixed-coupled-outer}
 N_0\ge\max\left\{\frac{5r}{16},\frac r{\Cstar}
 \min\left\{r,\sqrt{\frac{L_0}{\mu_0}},
             \sqrt{\frac{c^2\rho^2}{\alpha\eps}}\right\}\right\},
 \qquad 0<\eps\le\frac{c^2\rho^2}{32\alpha}.
\end{equation}
Both statements require a success probability of at least $2/3$ on every
instance and hold even for a fixed known coupling matrix of rank $r$.
\end{theorem}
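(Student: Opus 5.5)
The plan is to reduce both statements to \cref{cor:strong}, applied in dimension $r$ with parameters $(r,L_0,\rho,\mu_0)$. Fix a known partial isometry $E\in\R^{d_x\times d_y}$ of rank $r$. After rotations it maps the first $r$ coordinates of $y$ isometrically onto the first $r$ coordinates of $x$; write $y=(y_1,y_2)$ with $y_1\in\R^r$. Let $\varphi$ range over the hard quadratic family of \cref{cor:strong} on $\R^r$ with these parameters, so every $\varphi$ has Hessian between $\mu_0 I$ and $L_0 I$. Set
\[
 h_\varphi(y)=\varphi(y_1)-\tfrac s2\|y_1\|_2^2+\tfrac{\mu_y}2\|y_2\|_2^2,\qquad
 F_\varphi(x,y)=\tfrac\alpha2\|x\|_2^2+c\,x^\top Ey-h_\varphi(y).
\]

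The first step is to check class membership. In the $y_1$ directions the Hessian of $h_\varphi$ lies between $(\mu_0-s)I=\mu_yI$ and $(L_0-s)I=L_yI$, and in the $y_2$ directions it equals $\mu_yI$. Hence $F_\varphi$ is $\mu_y$-strongly concave and $L_y$-smooth in $y$, and $\alpha$-strongly convex and $\alpha$-smooth in $x$. Since $\|E\|_{\mathrm{op}}=1$, the coupling norm is $c$.

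Next, eliminate $x$. For fixed $y$ the minimizer is $x=-(c/\alpha)Ey$, which gives $\min_xF_\varphi(x,y)=-\varphi(y_1)-\tfrac{\mu_y}2\|y_2\|_2^2$. The saddle point is therefore $y^\star=(\operatorname{argmin}\varphi,0)$ and $x^\star=-(c/\alpha)Ey^\star$. Their norms are at most $\rho\le R_y$ and $c\rho/\alpha\le R_x$, so the localization holds.

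For the oracle simulation, note that $\nabla_xF_\varphi=\alpha x+cEy$ does not depend on $\varphi$ and costs no source query. Each scalar value of $F_\varphi$ equals $-\varphi(y_1)$ plus known terms, so it costs exactly one source value. The simulation is adaptive and uses the algorithm's own seed, exactly as in \cref{sp:thm-mixed}.

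For the pair gap, write
\[
 \operatorname{Gap}\ge F_\varphi(x^\star,y^\star)-\inf_uF_\varphi(u,\widehat y)
 =\varphi(\widehat y_1)-\min\varphi+\tfrac{\mu_y}2\|\widehat y_2\|_2^2 .
\]
Thus $\widehat y_1$ is an $\eps$-minimizer of $\varphi$, and \cref{cor:strong} with $\kappa=L_0/\mu_0$ gives \eqref{sp:eq-mixed-coupled-gap} in the window $\eps\le L_0\rho^2/32$. The baseline term $5r/16$ comes from the spherical family, which belongs to the same family since its Hessian is $L_0I\succeq\mu_0I$.

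For the outer criterion, use that $\Phi(x)=\max_yF_\varphi(x,y)$ is $\alpha$-strongly convex, as a supremum of $\alpha$-strongly convex functions. Hence $\Phi(\widehat x)-\min\Phi\le\eps$ implies $\|\widehat x-x^\star\|_2^2\le2\eps/\alpha$. Decode $\widehat y_1=-(\alpha/c)E^\top\widehat x$, restricted to the first $r$ coordinates. Since $y_1^\star=-(\alpha/c)E^\top x^\star$ and $E^\top$ is a contraction, this gives $\|\widehat y_1-y^\star_1\|_2^2\le2\alpha\eps/c^2=2\eps/s$. By $L_0$-smoothness of $\varphi$, $\varphi(\widehat y_1)-\min\varphi\le L_0\eps/s=:\eps'$.

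Applying \cref{cor:strong} at accuracy $\eps'$ gives $\sqrt{L_0\rho^2/\eps'}=\sqrt{s\rho^2/\eps}$. The window $\eps'\le L_0\rho^2/32$ becomes exactly $\eps\le c^2\rho^2/(32\alpha)$, which yields \eqref{sp:eq-mixed-coupled-outer}.

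The main obstacle I expect is the outer-accuracy decoding. One must make sure the strong-convexity constant of $\Phi$ is exactly $\alpha$, rather than something degraded by the conjugate term $h_\varphi^*(cE^\top x)$; this holds because that term is convex. One must also make sure that the crude conversion $\eps'=L_0\eps/s$ loses no more than the stated window allows. Everything else is a direct simulation argument of the same form as the blockwise and mixed theorems.
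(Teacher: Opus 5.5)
Your proposal is correct and follows the paper's proof essentially step for step. It uses the same construction: a known rank-$r$ partial isometry (the paper's $cUV^\top$) with the correction $+\tfrac s2\|w\|_2^2$. It uses the same elimination $\inf_xF=-q(w)-\tfrac{\mu_y}{2}\|y_\perp\|_2^2$ for the pair gap. For the outer criterion it uses the same decoder $\widehat w=-(\alpha/c)U^\top\widehat x$, justified by the $\alpha$-strong convexity of $\Phi$, and applies \cref{cor:strong} at source accuracy $L_0\alpha\eps/c^2$.
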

\begin{proof}
Fix isometries $U:\R^r\to\R^{d_x}$ and $V:\R^r\to\R^{d_y}$. Let
$q(w)=w^\top Hw/2-h^\top w+c_0$ be a source quadratic with
$\mu_0 I\preceq H\preceq L_0 I$ whose minimizer $w^\star=H^{-1}h$ has norm
at most $\rho$. Put $z=U^\top x$, $w=V^\top y$ and
$y_\perp=(I-VV^\top)y$, and define
\[
 F_q(x,y)=\frac\alpha2\|x\|_2^2+c\langle z,w\rangle-q(w)
             +\frac s2\|w\|_2^2-\frac{\mu_y}{2}\|y_\perp\|_2^2.
\]
The Hessian with respect to $y$ equals $-(H-sI)$ on
$\operatorname{range}V$ and $-\mu_y I$ on its complement, so it lies
between $-L_y I$ and $-\mu_y I$; the Hessian with respect to $x$ is
$\alpha I$, and the coupling is $cUV^\top$. The saddle point is
\[
 y^\star=Vw^\star,\qquad x^\star=-\frac c\alpha Uw^\star,
\]
so that $\|x^\star\|_2=(c/\alpha)\|w^\star\|_2\le R_x$ and
$\|y^\star\|_2=\|w^\star\|_2\le R_y$. Each scalar response requires exactly
one source value, whereas $\nabla_xF_q(x,y)=\alpha x+cUw$ is known without
querying $q$.

Eliminating $x$ gives
$\inf_xF_q(x,y)=-q(w)-\mu_y\|y_\perp\|_2^2/2$.
Since the saddle-point value is $-\min q$, the global gap dominates
$q(w)-\min q$, and \eqref{sp:eq-mixed-coupled-gap} follows from
\cref{cor:strong} with source parameters $(r,L_0,\mu_0,\rho)$.

For the outer criterion, $H-sI\succeq\mu_yI$ on $\operatorname{range}V$
and $-\nabla^2_{yy}F_q=\mu_yI$ on its orthogonal complement, so
$F_q(x,\cdot)$ is $\mu_y$-strongly concave on $\R^{d_y}$ with $\mu_y>0$.
Hence $\Phi$ is finite, its maximizer is unique, and $\Phi$ is
differentiable everywhere by Danskin's theorem. Moreover, $\Phi$ is
$\alpha$-strongly convex, being $\alpha\|x\|_2^2/2$ plus a supremum of
affine functions. Consequently,
\[
 \frac\alpha2\|\widehat x-x^\star\|_2^2
 \le\Phi(\widehat x)-\Phi(x^\star).
\]
Decode $\widehat w=-(\alpha/c)U^\top\widehat x$. Source smoothness and
$\nabla q(w^\star)=0$ give
\[
 q(\widehat w)-q(w^\star)
 \le\frac{L_0}{2}\|\widehat w-w^\star\|_2^2
 \le\frac{L_0\alpha}{c^2}
       \bigl[\Phi(\widehat x)-\Phi(x^\star)\bigr].
\]
It remains to apply the source theorem at accuracy
$\delta=L_0\alpha\eps/c^2$: its window $\delta\le L_0\rho^2/32$ is exactly
$\eps\le c^2\rho^2/(32\alpha)$, and
$\sqrt{L_0\rho^2/\delta}=\sqrt{c^2\rho^2/(\alpha\eps)}$, which gives
\eqref{sp:eq-mixed-coupled-outer}.
\end{proof}

This construction yields a scalar-value lower bound even when arbitrarily
many partial-gradient queries are available and only an outer solution is
required. The shifted condition number
$(L_y+c^2/\alpha)/(\mu_y+c^2/\alpha)$ decreases as the coupling grows. Determining a sharp coupling-dependent lower bound and a joint
trade-off between scalar-value and partial-gradient budgets remains open.

\begin{theorem}[A full-dimensional mixed lower bound with rectangular coupling]
\label{sp:thm-mixed-full}
Let $\alpha,c,\mu_y,R_x,R_y>0$, assume
$\mu_y+c^2/\alpha\le L_y$, and fix any known matrix
$K:\mathbb R^{d_y}\to\mathbb R^{d_x}$ with
$\|K\|_{\mathrm{op}}=c$. Set
\[
 s=c^2/\alpha,\qquad \mu_0=\mu_y+s,\qquad
 \rho=\min\{R_y,\alpha R_x/c\}.
\]
In the global-gap mixed model with arbitrary partial-gradient queries
$\nabla_xF$, block constants $L_x=\mu_x=\alpha$ and $L_y,\mu_y$,
known coupling $K$, and saddle-point radii $R_x,R_y$, the total number
$N_0$ of returned scalar values must satisfy
\[
 N_0\ge\max\left\{\frac{5d_y}{16},\;
 \frac{d_y}{\Cstar}\min\left\{d_y,
          \sqrt{\frac{L_y}{\mu_y+c^2/\alpha}},
          \sqrt{\frac{L_y\rho^2}{\eps}}\right\}\right\},
 \qquad 0<\eps\le L_y\rho^2/32.
\]
The partial-gradient budget is unrestricted, and no relation between
$d_x$ and $d_y$ is required. The success probability is at least $2/3$
on each instance, as in the preceding statements.
\end{theorem}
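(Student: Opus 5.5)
The plan is to reduce from the strongly convex minimization bound of \cref{cor:strong} in dimension $d_y$, in the same way as in \cref{sp:thm-mixed-coupled}. The difference is that the unknown quadratic now lives on all of $\R^{d_y}$, and the known coupling $K$ is absorbed into the source Hessian rather than into a rank-$r$ block. Concretely, let $q(y)=y^\top Hy/2-h^\top y+c_0$ range over the hard quadratic family of \cref{cor:strong} with source parameters $(d_y,L_y,\mu_0,\rho)$. Then $\mu_0I\preceq H\preceq L_yI$ and the minimizer $w^\star=H^{-1}h$ satisfies $\|w^\star\|_2\le\rho$. Define
\[
 F_q(x,y)=\frac\alpha2\|x\|_2^2+\langle x,Ky\rangle-g_q(y),\qquad
 g_q(y)=q(y)-\frac1{2\alpha}\|Ky\|_2^2 .
\]

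\emph{Membership in the class.} First I check that each $F_q$ belongs to the class.
\begin{itemize}
\item The $x$-Hessian is $\alpha I$, so $L_x=\mu_x=\alpha$, and the coupling is $K$, with norm $c$.
\item The negative $y$-Hessian is $H-K^\top K/\alpha$. Since $0\preceq K^\top K/\alpha\preceq sI$, it lies between $(\mu_0-s)I=\mu_yI$ and $L_yI$. This is the step where the hypothesis $\mu_0\le L_y$ is needed: it makes the condition number $\kappa=L_y/\mu_0\ge1$ legitimate in \cref{cor:strong}, and it makes the source family nonempty.
\item Stationarity gives $\alpha x+Ky=0$ and $K^\top x-\nabla g_q(y)=0$. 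Substituting $x=-Ky/\alpha$ gives $\nabla q(y)=0$. The saddle point is therefore $y^\star=w^\star$ and $x^\star=-Kw^\star/\alpha$, with $\|y^\star\|_2\le\rho\le R_y$ and $\|x^\star\|_2\le c\rho/\alpha\le R_x$.
\end{itemize}

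\emph{Simulation.} Next I verify the oracle simulation. The response $\nabla_xF_q(x,y)=\alpha x+Ky$ involves only known data, so arbitrarily many partial-gradient calls cost nothing. Each scalar value $F_q(x,y)$ equals a known expression in $(x,y)$ minus $q(y)$. It is therefore obtained from exactly one source value, including values bundled with gradient calls. The induced source policy uses the algorithm's own seed and is a total Borel map with budget $N_0$.

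\emph{Gap transfer.} Eliminating $x$ gives $\inf_xF_q(x,y)=-\|Ky\|_2^2/(2\alpha)-g_q(y)=-q(y)$. On the other side, $\sup_yF_q(\widehat x,y)\ge\inf_x\sup_yF_q=F_q(x^\star,y^\star)=-\min q$. Hence the global gap at $(\widehat x,\widehat y)$ is at least $q(\widehat y)-\min q$. Decoding $\widehat y$ thus solves the source problem at accuracy $\eps$, with the same success probability $2/3$. \Cref{cor:strong}, applied in the window $\eps\le L_y\rho^2/32$, then gives the stated bound with $\sqrt{\kappa}=\sqrt{L_y/(\mu_y+c^2/\alpha)}$. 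The baseline $5d_y/16$ is included, because the spherical family has $H=L_yI$ and is therefore admissible.

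\emph{Main obstacle.} I expect the delicate step to be the curvature bookkeeping in the membership check. The correction $-\|Ky\|_2^2/(2\alpha)$ must keep $g_q$ $\mu_y$-strongly convex for every matrix $K$ with $\|K\|_{\mathrm{op}}=c$, whatever its rank or shape. The argument needs only $K^\top K\preceq c^2I$, and this requires no relation between $d_x$ and $d_y$. With that in hand, the remaining steps repeat the argument of \cref{sp:thm-mixed-coupled}.
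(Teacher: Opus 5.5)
Your proposal is correct and follows essentially the same argument as the paper's proof. The shared ingredients are:
\begin{itemize}
\item the instance $F_q(x,y)=\frac\alpha2\|x\|_2^2+x^\top Ky-q(y)+\frac1{2\alpha}\|Ky\|_2^2$;
\item the Hessian sandwich $\mu_yI\preceq H-K^\top K/\alpha\preceq L_yI$;
\item the identity $\inf_xF_q(x,y)=-q(y)$ for the gap transfer;
\item the free responses $\nabla_xF_q=\alpha x+Ky$;
\item an application of \cref{cor:strong} in dimension $d_y$ with parameters $(L_y,\mu_0,\rho)$.
\end{itemize}
Your justification of the other half of the gap, $\sup_yF_q(\widehat x,y)\ge F_q(x^\star,y^\star)=-\min q$, just spells out a step the paper leaves implicit.
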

\begin{proof}
Let $q(y)=y^\top Hy/2-h^\top y+c_0$ range over the source quadratics on
$\mathbb R^{d_y}$ with $\mu_0 I\preceq H\preceq L_y I$ whose minimizer
$y^\star$ has norm at most $\rho$, and define
\[
 F_q(x,y)=\frac\alpha2\|x\|_2^2+x^\top Ky-q(y)
                         +\frac1{2\alpha}\|Ky\|_2^2.
\]
Its negative $y$-Hessian satisfies
\[
 \mu_y I\preceq H-\frac1\alpha K^\top K\preceq L_y I,
\]
and its remaining derivatives satisfy the stated block bounds.
Eliminating $x$ gives $\inf_xF_q(x,y)=-q(y)$ exactly. Hence the unique
saddle point is $(x^\star,y^\star)$ with $x^\star=-Ky^\star/\alpha$, where
$\|x^\star\|_2\le c\rho/\alpha\le R_x$ and $\|y^\star\|_2\le R_y$, and the
global gap of a returned pair dominates $q(\widehat y)-\min q$.

Each scalar response is simulated by one query to $q$, whereas
$\nabla_xF_q(x,y)=\alpha x+Ky$ is independent of $q$ and requires no
source query. The mixed procedure therefore induces an $N_0$-query source
algorithm with the same accuracy and probability of success. It remains
to apply \cref{cor:strong} in dimension $d_y$ with smoothness $L_y$,
strong convexity $\mu_0$, and radius $\rho$.
\end{proof}

This construction retains the full inner dimension even when
$d_x\ll d_y$. Its parameter regime is complementary to that of
\cref{sp:thm-mixed-coupled}: it requires $\mu_y+c^2/\alpha\le L_y$ and
uses the source smoothness $L_y$, whereas the shared-rank construction
permits any coupling and uses the source smoothness $L_y+c^2/\alpha$. The
full-dimensional assertion concerns a returned pair: an outer output alone
does not in general identify the inner solution when the known rectangular
coupling has a nontrivial kernel.

\begin{corollary}[Compact gaps when the minimizing responses are feasible]
\label{sp:cor-mixed-compact}
The pair-gap conclusions of \cref{sp:thm-mixed-coupled,sp:thm-mixed-full}
also hold for feasible outputs and the gap on the balls $X,Y$ if
$R_x\ge cR_y/\alpha$. Their source accuracy windows are unchanged.
\end{corollary}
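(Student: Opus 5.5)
The plan is to reuse the hard families of \cref{sp:thm-mixed-coupled,sp:thm-mixed-full} unchanged. We then show that, under $R_x\ge cR_y/\alpha$, the compact gap on $X\times Y$ of a feasible output still dominates the source suboptimality $q(\widehat w)-\min q$ (respectively $q(\widehat y)-\min q$). Since every hard instance already has its global saddle point inside the balls, as shown in both proofs, the class membership and the scalar-value simulation carry over verbatim. The only new point is the gap comparison. Once it holds, \cref{cor:strong} applies with the same source parameters and windows.

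\textbf{Step 1: the inner infimum.} Write $\inf_{x\in X}F_q(x,\widehat y)\le F_q(\widehat x_{\mathrm{br}},\widehat y)$, where $\widehat x_{\mathrm{br}}$ is the global best response to $\widehat y$:
\begin{itemize}
\item[] $\widehat x_{\mathrm{br}}=-(c/\alpha)U\widehat w$ in the shared-rank construction;
\item[] $\widehat x_{\mathrm{br}}=-K\widehat y/\alpha$ in the rectangular one.
\end{itemize}
In both cases $\|\widehat x_{\mathrm{br}}\|_2\le (c/\alpha)\|\widehat y\|_2\le cR_y/\alpha\le R_x$, using $\|U^\top\|,\|V^\top\|\le1$, $\|K\|_{\mathrm{op}}=c$, and feasibility $\widehat y\in Y$. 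So the global minimizing response is feasible, and
\[
\inf_{x\in X}F_q(x,\widehat y)=\inf_xF_q(x,\widehat y)=-q(\widehat w)-\tfrac{\mu_y}{2}\|\widehat y_\perp\|_2^2
\]
in the shared-rank case, and $-q(\widehat y)$ in the rectangular case. This is exactly the quantity used in the global-gap proofs.

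\textbf{Step 2: the supremum term.} Since $y^\star\in Y$ (shown in both proofs), we get $\sup_{y\in Y}F_q(\widehat x,y)\ge F_q(\widehat x,y^\star)$. Next, $x^\star$ minimizes $F_q(\cdot,y^\star)$ globally, so $F_q(\widehat x,y^\star)\ge F_q(x^\star,y^\star)=-\min q$. Combining with Step 1 gives
\[
\operatorname{Gap}_{X,Y}(\widehat x,\widehat y)\ge q(\widehat w)-\min q,
\]
or $q(\widehat y)-\min q$ in the rectangular case. Decoding $\widehat w=V^\top\widehat y$ (respectively $\widehat y$) therefore yields a source algorithm with the same scalar-value count $N_0$, accuracy $\eps$, and success probability. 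Invoking \cref{cor:strong} with the parameters $(r,L_0,\mu_0,\rho)$, respectively $(d_y,L_y,\mu_0,\rho)$, reproduces the two bounds on the original windows.

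\textbf{Main obstacle.} The only delicate point is Step 1: the best response in $x$ must lie in $X$ for \emph{every} feasible $\widehat y$, not just near $y^\star$. This is precisely where $R_x\ge cR_y/\alpha$ is used. One must also check that this condition is compatible with $\rho=\min\{R_y,\alpha R_x/c\}$; under the hypothesis, $\rho=R_y$. The rest is the bookkeeping already done in the global-gap proofs, and no new information argument is needed.
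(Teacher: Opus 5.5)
Your proposal is correct and follows the paper's proof. In both, $R_x\ge cR_y/\alpha$ makes the unconstrained best response $-K\widehat y/\alpha$ (with $K=cUV^\top$ in the shared-rank case) feasible, so the compact dual value equals the global one. Feasibility of the saddle point then bounds the compact primal value below by the saddle value $-\min q$, which reproduces the source-gap inequality and leaves the windows unchanged.
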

\begin{proof}
For every feasible $y$, the unconstrained minimizing response
$x=-Ky/\alpha$ lies in $X$, where $K=cUV^\top$ for the shared-rank
construction. Hence the restricted dual function coincides with the global
dual function used in the proofs above. Since the saddle point is
feasible, the restricted primal value at any feasible $x$ is at least the
saddle-point value, and subtracting the dual value gives the same
source-gap inequality as before.
\end{proof}

\subsection{Block geometry and known regularizers}
Let $K_i$ be nonempty compact convex sets, let $\|\cdot\|_i$ be
independently chosen regularity norms with unit balls $B_i$, and let
$\psi_i:K_i\to\R$ be known continuous convex functions that are
nonnegative with minimum value zero. In the constrained composite model,
let $g_x,g_y$ be globally convex and consider
\[
 F(x,y)=g_x(x)+\lambda_x\psi_x(x)-g_y(y)-\lambda_y\psi_y(y),
 \qquad \lambda_x,\lambda_y\ge0.
\]
The regularity bounds apply only to the unknown terms; a known regularizer
need not be smooth. The oracle returns $g_x(x)-g_y(y)$ at unrestricted
queries; equivalently, it may return $F$ wherever both known penalty
values are defined, and unrestricted values of $F$ require specified
finite extensions of the penalties. Outputs lie in $K_x\times K_y$, and
the gap is taken over these sets; it equals the sum of the two composite
suboptimalities. For $i\in\{x,y\}$, $1\le m\le d_i$ and $\tau_i\ge0$,
write
\[
 r_{i,m}(\tau_i)=\mathcal R_m\bigl(B_i,
 \{z\in K_i:\psi_i(z)\le\tau_i\}\bigr),\qquad
 \delta_i=\eps+\lambda_i\tau_i.
\]

\begin{corollary}[Independent block-image transfers]\label{sp:cor-geometry}
If $g_i$ is globally $L_i$-smooth in $\|\cdot\|_i$, then each pair
$(m,\tau_i)$ with $r_{i,m}(\tau_i)>0$ and
$0<\delta_i\le L_ir_{i,m}(\tau_i)^2/32$ gives
\[
 T\ge\max\left\{\frac{5m}{16},\frac{m}{\Cstar}
 \min\left\{m,\sqrt{\frac{L_ir_{i,m}(\tau_i)^2}{\delta_i}}\right\}\right\}.
\]
If instead $g_i$ is globally $M_i$-Lipschitz, then every admissible pair
in \cref{gp:thm-nonsmooth} gives
\[
 T\ge c_Z\mathcal P\left(m,\min\left\{m,
 \left(\frac{M_ir_{i,m}(\tau_i)}{\delta_i}\right)^2\right\}\right).
\]
Take the supremum over the admissible pairs $(m,\tau_i)$ for each block,
and then the maximum of the two resulting bounds. Both conclusions also
bound $N_0$ in the mixed model when the partial-gradient queries concern
the other, known block and those derivatives are defined and computable
from the known data.
\end{corollary}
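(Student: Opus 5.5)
The plan is to reduce each block to the single-block composite transfers \cref{gp:thm-composite,gp:thm-nonsmooth}, exactly as \cref{sp:thm-blocks} reduced block costs to \cref{thm:main}. Fix $i=x$; the case $i=y$ is symmetric with a sign change. Take any source instance $g$ admissible for \cref{gp:thm-composite} (or \cref{gp:thm-nonsmooth}) with norm $\|\cdot\|_x$, set $K=K_x$, penalty $\psi_x$ and coefficient $\lambda_x$, and embed it as
\[
 F_g(x,y)=g(x)+\lambda_x\psi_x(x)-\lambda_y\psi_y(y),
\]
that is, with $g_x=g$ and $g_y\equiv0$. The zero function is globally convex, $0$-smooth and $0$-Lipschitz, so it is admissible in every regularity norm. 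Each oracle answer $g_x(x)-g_y(y)=g(x)$ costs exactly one source value at the same unrestricted query, so the simulated algorithm keeps the target budget $T$ and its seed. It is also a total Borel policy.

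Next I decode the error. The compact gap over $K_x\times K_y$ splits as
\[
 \bigl[F_x(\widehat x)-\min_{K_x}F_x\bigr]+\lambda_y\bigl[\psi_y(\widehat y)-\min_{K_y}\psi_y\bigr],
 \qquad F_x=g+\lambda_x\psi_x.
\]
Both brackets are nonnegative, so an $\eps$-accurate feasible pair yields a feasible $\widehat x\in K_x$ with composite suboptimality at most $\eps$. This is precisely the source guarantee in the composite theorems, with the same success probability $2/3$. The source theorem then gives the displayed bound for each admissible $(m,\tau_x)$, with $r_{x,m}(\tau_x)$ and $\delta_x=\eps+\lambda_x\tau_x$. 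The windows transfer verbatim, because the source class and accuracy are unchanged. Taking the supremum over admissible pairs and then the maximum over the two blocks is legitimate, because each hard family separately constrains the same worst-case budget.

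For the mixed model, suppose the extra partial-gradient queries concern the known block. For the $x$ bound, these are $y$-derivatives of $-\lambda_y\psi_y$, which are computable from known data when defined. The simulation then answers them without source queries, and only scalar values consume source calls. Hence $N_0$ obeys the same bound.

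The main obstacle I expect is the modelling detail about unrestricted queries. The statement allows the oracle to return $F$ itself, and $\psi$ is defined only on $K_i$. I would handle this by working with the oracle that returns $g_x(x)-g_y(y)$. The $F$-version differs only by known penalty values, or by their specified finite extensions, so it carries no additional information and the simulation still costs one source value. A secondary point is to check that the source theorems allow exactly this output constraint, namely an output in $K$ and a gap measured on $K$. They do, since both composite theorems require the output to lie in $K$.
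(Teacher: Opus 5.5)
Your proposal is correct and follows the paper's proof essentially verbatim. In both, the other block's unknown term is set to zero, so that its composite term is known with minimum zero, and the gap then dominates the active composite suboptimality. Each target value is simulated by one source value after subtracting known penalty terms, \cref{gp:thm-composite,gp:thm-nonsmooth} are applied, and in the mixed model the other block's partial derivatives are independent of the source. Your extra checks are details the paper leaves implicit: admissibility of $g_y\equiv0$, the sign change for $i=y$, and the oracle-extension convention.
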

\begin{proof}
Retain the unknown term in one block and set the unknown term of the
other block to zero. The composite term of the other block is then known
and has minimum value zero. The gap dominates the active composite
suboptimality. Subtracting the known term simulates one scalar query to
the target by one source value, and
\cref{gp:thm-composite,gp:thm-nonsmooth} apply. In the mixed model, the
partial derivatives in the other block are independent of the unknown
source.
\end{proof}

A strongly convex $\psi_i$ supplies the corresponding strong
convexity or concavity of the composite objective. Smoothness of the full
objective additionally requires a smoothness bound on the known penalty.
An injective pullback preserves strong convexity after norm scaling;
a lower-dimensional pullback requires a regularizer that also controls
its kernel to obtain ambient strong convexity. In \cref{sp:cor-geometry},
the stated regularity constants concern the unknown terms.

\section{Coupled saddle-point reductions with unequal block dimensions}
\label{app:coupled-saddles}

Throughout this section, $r=\min\{d_x,d_y\}$, and
$U:\mathbb R^r\to\mathbb R^{d_x}$ and $V:\mathbb R^r\to\mathbb R^{d_y}$
are fixed linear isometries. All algorithms follow the conventions of
\cref{thm:main} on Borel measurability, independent randomization, fixed
query budgets, and unrestricted queries. Each exact scalar query is
simulated by a fixed number of evaluations of a source quadratic. This
simulation cost determines the constants in the resulting lower bounds.

\begin{theorem}[A compact bilinear lower bound]\label{thm:bilinear-compact}
Let $d_x,d_y\ge1$ and $C,R_x,R_y>0$. Consider bilinear-affine functions
$F(x,y)=y^\top A x-b^\top y$, with $\|A\|_{\mathrm{op}}\le C$, on
$X=R_x\mathbb B_2^{d_x}$ and $Y=R_y\mathbb B_2^{d_y}$.
Any scalar-value algorithm that returns a feasible pair with saddle-point
gap at most $\varepsilon$, with probability at least $2/3$ on every such
instance, must satisfy, for $0<\varepsilon\le CR_xR_y/32$,
\[
 T\ge\max\left\{\frac{5r}{32},\;
 \frac{r}{8192}\min\left\{r,
                    \sqrt{\frac{CR_xR_y}{\varepsilon}}\right\}\right\}.
\]
The assertion holds even on instances with a saddle point $(x^\star,0)$,
where $\|x^\star\|_2\le R_x$. In particular, the hard instances have
vanishing within-block curvature, so $L_x=L_y=0$ is admissible, and the
block dimensions need not be equal.
\end{theorem}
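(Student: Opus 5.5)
The plan is a reduction to the quadratic lower bound of \cref{thm:main} in dimension $r$, with source parameters $(r,L,R_x)$ for an arbitrary auxiliary $L>0$; $L$ cancels at the end. Polarization lets one bilinear value be simulated by two source values. This halving of the budget is what turns $5r/16$ and $\Cstar=4096$ into $5r/32$ and $8192$.

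\emph{Construction and simulation.} Let $q(w)=\tfrac12 w^\top Hw-h^\top w+c_0$ range over the hard positive-definite quadratics of \cref{thm:main} on $\R^r$. These satisfy $H\preceq LI$ and have a minimizer $w^\star=H^{-1}h$ with $\|w^\star\|_2\le R_x$. Define
\[
 A=\frac CL\,VHU^\top,\qquad b=\frac CL\,Vh,\qquad F_q(x,y)=y^\top Ax-b^\top y .
\]
Then $\|A\|_{\mathrm{op}}\le C$, so $F_q$ is in the bilinear-affine class and has zero within-block curvature. For arbitrary $x,y$, set $w=U^\top x$ and $v=V^\top y$. Expanding the quadratic gives
\[
 q(w+v)-q(w-v)=2v^\top Hw-2h^\top v,
\]
so $F_q(x,y)=\tfrac{C}{2L}\bigl[q(w+v)-q(w-v)\bigr]$. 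Thus each target value costs exactly two source values, and these can be taken at unrestricted points. Put $x^\star=Uw^\star$, so $\|x^\star\|_2\le R_x$. Then $Ax^\star=b$, hence $F_q(x^\star,\cdot)\equiv0$ and $F_q(\cdot,0)\equiv0$, so $(x^\star,0)$ is a saddle point lying in $X\times Y$.

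\emph{Gap to source suboptimality.} Take a feasible output $(\widehat x,\widehat y)$. Since $x^\star\in X$, we have $\inf_{x\in X}F_q(x,\widehat y)\le F_q(x^\star,\widehat y)=0$, and therefore
\[
 \operatorname{Gap}_{X,Y}\ge\sup_{y\in Y}y^\top(A\widehat x-b)=R_y\|A\widehat x-b\|_2=\frac{CR_y}{L}\|He\|_2 .
\]
Here $\widehat w=U^\top\widehat x$ and $e=\widehat w-w^\star$; the last equality uses that $V$ is an isometry. Feasibility gives $\|\widehat w\|_2\le R_x$, hence $\|e\|_2\le2R_x$. Combining these,
\[
 q(\widehat w)-\min q=\tfrac12e^\top He\le\tfrac12\|e\|_2\|He\|_2\le R_x\|He\|_2\le\frac{LR_x}{CR_y}\operatorname{Gap}_{X,Y}.
\]
So a compact gap of at most $\eps$ yields source accuracy $\delta=LR_x\eps/(CR_y)$, with the same success probability. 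The source window $\delta\le LR_x^2/32$ is exactly $\eps\le CR_xR_y/32$. The source ratio is $LR_x^2/\delta=CR_xR_y/\eps$.

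\emph{Conclusion.} The induced source algorithm is adaptive and uses the original seed. Its query and output maps are Borel compositions with fixed linear maps, and it uses at most $2T$ source values. Applying \cref{thm:main} in dimension $r$ gives
\[
 2T\ge\max\Bigl\{\tfrac{5r}{16},\tfrac r{\Cstar}\min\bigl\{r,\sqrt{CR_xR_y/\eps}\bigr\}\Bigr\},
\]
which is the claim. No relation between $d_x$ and $d_y$ is needed beyond $r=\min\{d_x,d_y\}$.

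The main obstacle is the gap-to-suboptimality step. $H$ may be nearly singular, so we cannot divide by $\lambda_{\min}(H)$. The bound $e^\top He\le\|e\|\,\|He\|$, together with the feasibility constraint $\|\widehat w\|_2\le R_x$, is what makes the step work. This is why the compact model with a feasible output is essential here, and why the dimension-$r$ source ball must be placed in the $x$ block.
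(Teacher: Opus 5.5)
Your proposal is correct and follows the paper's proof: the same polarization $F_q=\tfrac{C}{2L}\bigl[q(w+v)-q(w-v)\bigr]$ with two source values per target value, the same feasibility bound $\tfrac12 e^\top He\le R_x\|He\|_2$, and the same application of \cref{thm:main} in dimension $r$ with budget $2T$. The differences are cosmetic. You keep an auxiliary smoothness $L$ that cancels, where the paper takes $L=1$. You also lower-bound the compact gap by evaluating the inner infimum at $x^\star$, whereas the paper writes the gap in closed form and uses $h^\top w\ge -R_x\|Hw\|_2$; these are the same inequality.
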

\begin{proof}
Let $q(z)=\tfrac12z^\top Hz-h^\top z+c_0$ be any positive-definite
quadratic with $H\preceq I$ and $z^\star=H^{-1}h\in\mathbb B_2^r(R_x)$.
Put $z=U^\top x$, $w=V^\top y$, and define
\[
 F_q(x,y)=\frac C2\bigl[q(z+w)-q(z-w)\bigr]
        =C\bigl(z^\top Hw-h^\top w\bigr).
\]
Thus $A=CVHU^\top$ has operator norm at most $C$, and $(Uz^\star,0)$ is a
saddle point with value zero. Each target value equals
$\tfrac C2[q(z+w)-q(z-w)]$ and hence costs two source values; the constant
$c_0$ cancels, so it need not be known. The compact gap is exactly
\[
 \operatorname{Gap}_{X,Y}(x,y)
 =C R_y\|Hz-h\|_2+C R_x\|Hw\|_2+C h^\top w
 \ge C R_y\|H(z-z^\star)\|_2,
\]
because $h^\top w=(z^\star)^\top Hw\ge-R_x\|Hw\|_2$.
Feasibility gives $\|z-z^\star\|_2\le2R_x$, so
\[
 q(z)-q(z^\star)
 =\tfrac12(z-z^\star)^\top H(z-z^\star)
 \le R_x\|H(z-z^\star)\|_2
 \le\frac{R_x}{CR_y}\operatorname{Gap}_{X,Y}(x,y).
\]
Consequently, the assumed saddle-point algorithm yields a $2T$-query
source algorithm of accuracy $\delta=R_x\varepsilon/(CR_y)$. It remains to
apply \cref{thm:main} with dimension $r$, smoothness $1$, radius $R_x$,
and $\delta\le R_x^2/32$.
\end{proof}

\begin{proposition}[Two upper bounds for compact bilinear games]
\label{sp:prop-bilinear-upper}
For the entire bilinear-affine class of \cref{thm:bilinear-compact},
there is a deterministic exact-value algorithm with query complexity
\[
 O\left(\min\left\{d_y(d_x+1),\;
 (d_x+d_y)\left(1+\frac{CR_xR_y}{\varepsilon}\right)\right\}\right).
\]
In particular, if $d_x=d_y=d$ and
$0<\varepsilon\le CR_xR_y/(32d^2)$, the randomized minimax
query complexity is $\Theta(d^2)$.
\end{proposition}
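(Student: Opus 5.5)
We give two deterministic procedures and return the cheaper one, chosen from the known $d_x,d_y,C,R_x,R_y,\eps$. Then we combine with \cref{thm:bilinear-compact} for the $\Theta(d^2)$ claim.

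\emph{Coefficient recovery, $d_y(d_x+1)$ values.} Since $F(x,y)=y^\top Ax-b^\top y$ has no constant term and no within-block quadratic terms, $F(0,e_j)=-b_j$ and $F(e_i,e_j)=A_{ji}-b_j$. Querying all pairs $(0,e_j)$ and $(e_i,e_j)$ recovers $(A,b)$ exactly with $d_y+d_xd_y$ values. Then we solve the compact saddle-point problem exactly: the primal function $x\mapsto R_y\|Ax-b\|_2$ is convex, and the dual function $y\mapsto -R_x\|A^\top y\|_2-b^\top y$ is concave, both on compact balls. The exact saddle point on $X\times Y$ exists by Sion's theorem and has zero compact gap. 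Computation is free, so we may output a minimizer of the primal function and a maximizer of the dual one. For measurability, we choose these by a Borel selection, e.g.\ the minimum-norm minimizer of a strictly convex perturbation limit, or the lexicographically first point via a measurable selection theorem for argmin correspondences of Carathéodory functions. Inconsistent transcripts default to zero. This step is mostly a measurable-selection detail, which I expect to be the main technical nuisance, not a difficulty in the rate.

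\emph{Iterative method, $O((d_x+d_y)(1+CR_xR_y/\eps))$ values.} The partial gradients are affine: $\nabla_xF=A^\top y$ and $\nabla_yF=Ax-b$. Exact central differences recover $\nabla_xF(x,y)$ from $2d_x$ values and $\nabla_yF$ from $2d_y$ values, exactly as in \cref{app:corollaries}; the bilinear structure makes the differences exact for any step. We then run extragradient (Mirror-Prox) with Euclidean projections onto the balls. Its operator is monotone and $C$-Lipschitz after rescaling the blocks by $x\mapsto x/R_x$ and $y\mapsto y/R_y$, which makes the Lipschitz constant $CR_xR_y$ on unit balls. The averaged iterate then has compact gap $O(CR_xR_y/N)$ after $N$ iterations, each costing $O(d_x+d_y)$ values. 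Hence $N=\lceil c\,CR_xR_y/\eps\rceil$ suffices, and the average is feasible. The iteration count is fixed in advance and all maps are continuous, so the policy is admissible.

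\emph{The $\Theta(d^2)$ regime.} With $d_x=d_y=d$, the first procedure uses $d(d+1)=O(d^2)$ values. For $\eps\le CR_xR_y/(32d^2)$ we have $\sqrt{CR_xR_y/\eps}\ge\sqrt{32}\,d\ge d$. Thus the lower bound of \cref{thm:bilinear-compact}, whose window $\eps\le CR_xR_y/32$ is satisfied, gives $T\ge d^2/8192$, matching the upper bound up to constants.
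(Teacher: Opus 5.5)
Your proposal is correct and follows the paper's proof. The shared steps are coefficient recovery from the values at $(0,e_j)$ and $(e_i,e_j)$, exact finite differences driving Euclidean Mirror-Prox on the rescaled unit balls with Lipschitz constant $CR_xR_y$, selection of the cheaper branch, and \cref{thm:bilinear-compact} for the $d^2$ plateau. The differences are minor. The paper avoids your measurable-selection step by enumerating rational points of $\operatorname{int}X\times\operatorname{int}Y$ and returning the first whose explicit gap $R_y\|Ax-b\|_2+R_x\|A^\top y\|_2+b^\top y$ is below $\varepsilon$; a strict inequality makes this a Borel rule. It also uses forward differences with $d_x+d_y+1$ values per operator evaluation instead of your $2(d_x+d_y)$.
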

\begin{proof}
The values at $(0,e_j)$ recover $b_j$, and the values at $(e_i,e_j)$
then recover $A_{ji}$; this uses exactly $d_y(d_x+1)$ scalar queries.
Once the coefficients are known, the compact gap is the explicit
continuous function
\[
 R_y\|Ax-b\|_2+R_x\|A^\top y\|_2+b^\top y.
\]
A saddle point exists by compact convex minimax duality, and the displayed
gap vanishes there. Fix an enumeration of the countable dense set of
rational points of $\operatorname{int}X\times\operatorname{int}Y$ and
return the first pair whose displayed gap is smaller than $\varepsilon$.
Since the gap is continuous and
$\operatorname{int}X\times\operatorname{int}Y$ is dense in $X\times Y$,
some enumerated pair has gap below $\varepsilon$, so the search
terminates. This is a Borel rule, because each stopping condition is a
strict inequality in the observed coefficients; inconsistent transcripts
are assigned the output zero, which makes the rule total. Internal
computation is unrestricted in the present oracle model.

Alternatively, rescale $x=R_xu$, $y=R_yv$ to the unit balls. The monotone
saddle-point operator then has Lipschitz constant at most $CR_xR_y$,
independently of the affine term, and Euclidean Mirror-Prox
\citep{Nemirovski2004Prox} attains gap $O(CR_xR_y/N)$ after $N$
iterations on this bounded domain. Since the payoff is affine in each
coordinate, forward differences recover both partial gradients exactly
from at most $d_x+d_y+1$ values per operator evaluation. Each Mirror-Prox
iteration uses two operator evaluations, so $N=O(CR_xR_y/\eps)$
iterations cost $O((d_x+d_y)(1+CR_xR_y/\eps))$ values in total. Choosing
the cheaper of the two procedures on the basis of the known parameters
proves the displayed bound. For equal dimensions the first branch is
$O(d^2)$, and \cref{thm:bilinear-compact} gives the matching order in the
stated window.
\end{proof}

\begin{proposition}[Recovery under the global-saddle promise]
\label{sp:prop-bilinear-compressed}
On the subclass in \cref{thm:bilinear-compact} with a global saddle
$(x^\star,0)$ and $\|x^\star\|_2\le R_x$, a randomized algorithm recovers a
zero-gap pair from $d_x(d_x+1)$ values, almost surely on every fixed
instance. This query count is independent of $d_y$.
\end{proposition}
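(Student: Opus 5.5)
The plan is to sketch the unknown coupling through a few random $y$-directions, so that the query count depends on $d_x$ alone. The promise that $(x^\star,0)$ is a global saddle point says exactly that $\nabla_yF(x^\star,0)=Ax^\star-b=0$; the condition $\nabla_xF(x^\star,0)=A^\top 0=0$ holds automatically. Hence $b\in\operatorname{range}(A)$, and the target set is $\{x:Ax=b\}$. By the explicit compact-gap formula in the proof of \cref{sp:prop-bilinear-upper}, any feasible $\widehat x$ with $A\widehat x=b$ gives
\[
 \operatorname{Gap}_{X,Y}(\widehat x,0)=R_y\|A\widehat x-b\|_2+R_x\|A^\top 0\|_2+b^\top 0=0 .
\]
So it suffices to find some solution of $Ax=b$ with norm at most $R_x$, and to certify that it solves $Ax=b$ rather than only a sketched system.

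\emph{Queries.} The seed draws a Gaussian matrix $G=[g_1,\ldots,g_{d_x}]\in\R^{d_y\times d_x}$, independent of the instance. For each $\ell\le d_x$, query $F(0,g_\ell)=-g_\ell^\top b$ and $F(e_i,g_\ell)=g_\ell^\top Ae_i-g_\ell^\top b$ for $i=1,\ldots,d_x$. That is $d_x+1$ values per direction, hence $d_x(d_x+1)$ values in total. These values reveal exactly the matrix $S=G^\top A\in\R^{d_x\times d_x}$ and the vector $\beta=G^\top b$. The budget is deterministic and independent of $d_y$.

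\emph{Recovery and decoding.} Let $k=\operatorname{rank}A\le\min\{d_x,d_y\}$, and fix an orthonormal basis $P\in\R^{d_y\times k}$ of $\operatorname{range}A$. By rotation invariance, $G^\top P$ is a $d_x\times k$ standard Gaussian matrix. Since $k\le d_x$, it has full column rank almost surely, because its rank-deficient set is a proper algebraic subset and hence Lebesgue-null. Consequently $G^\top$ is injective on $\operatorname{range}A$ almost surely, for every fixed instance. Since $Ax-b\in\operatorname{range}A$ for all $x$, this injectivity gives
\[
 \{x:Sx=\beta\}=\{x:G^\top(Ax-b)=0\}=\{x:Ax=b\},
\]
a nonempty set that contains $x^\star$. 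Output $\widehat x=S^\dagger\beta$ and $\widehat y=0$. This $\widehat x$ is the minimum-norm solution of the consistent system $Sx=\beta$, so $\|\widehat x\|_2\le\|x^\star\|_2\le R_x$. Thus $(\widehat x,0)$ is feasible and, by the display above, has zero gap.

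\emph{Measurability, and the main obstacle.} The output map is Borel, since pseudoinversion is continuous on each of the finitely many rank strata, as already used in \cref{app:corollaries}; on inconsistent transcripts we return zero so the rule is total. The step needing care is the almost-sure injectivity of $G^\top$ on $\operatorname{range}A$. It must hold uniformly over fixed instances, with the sketch drawn from the seed and not adapted to $A$. This holds because, for each fixed $A$, the failure event $\{\operatorname{rank}(G^\top P)<k\}$ is Lebesgue-null in $G$. The same event controls recovery even when $d_y<d_x$ or when $A$ is rank deficient, so ``almost surely on every fixed instance'' follows directly.
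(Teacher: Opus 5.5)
Your proof is correct and is essentially the paper's argument. Your $G^\top$ is the paper's Gaussian sketch $Q\in\R^{d_x\times d_y}$, and the queries at $(0,g_\ell)$ and $(e_i,g_\ell)$ recover $G^\top b$ and $G^\top A$. Almost-sure injectivity on $\operatorname{range}A$ follows because a Gaussian matrix with at most $d_x$ columns has full column rank, and the minimum-norm pseudoinverse solution paired with $y=0$ has zero gap. The only difference is cosmetic: the paper returns zero whenever the computed point is infeasible, not just on inconsistent transcripts, so the output lies in $X$ on every history, including histories not generated by the promised subclass.
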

\begin{proof}
The promise implies $b=Ax^\star$. Sample a matrix
$Q\in\R^{d_x\times d_y}$ with independent standard Gaussian entries and
write $q_j^\top$ for its rows. The values at $(0,q_j)$ and $(e_i,q_j)$
recover $Qb$ and $QA$ in $d_x(d_x+1)$ queries. For each fixed $A$, the
restriction of $Q$ to $\operatorname{range}A$ is injective almost surely:
in any fixed orthonormal basis of this space it is a Gaussian matrix with
at most $d_x$ columns, which has full column rank almost surely.
Consequently,
\[
 \{x:QAx=Qb\}=\{x:Ax=b\}
\]
almost surely. The minimum-norm solution, computed by the Moore--Penrose
inverse, then has norm at most $R_x$ and solves $Ax=b$; paired with $y=0$,
it has zero gap. The pseudoinverse is Borel, and on histories whose
computed solution is infeasible or violates the compressed equations we
return zero. This defines a total Borel policy with the same budget and
the stated instancewise probability-one guarantee.
\end{proof}

The promise is essential for this compressed recovery rule. The upper
bound of \cref{sp:prop-bilinear-upper} applies to the full compact
bilinear-affine class, including functions without a global saddle, and
the lower bound of \cref{thm:bilinear-compact} holds on both classes. For
example, when $d_x=1$ the promised subclass is solvable with two values
regardless of $d_y$, so its complexity admits no uniform lower bound
proportional to the dual dimension.

The theorem isolates the dependence on the interaction;
\cref{sp:app-comparison} compares the lower bound with the available
upper bounds for compact bilinear games.

\begin{theorem}[Shared-curvature coupling and the global gap]
\label{thm:shared-curvature}
Let $a,b>0$, $c\ge0$, $R_x,R_y>0$, and $0\le\mu_x\le a$,
$0\le\mu_y\le b$. Write
\[
 \theta=\max\{\mu_x/a,\mu_y/b\},\qquad
 E_x=a+c^2/b,\qquad E_y=b+c^2/a.
\]
Consider globally defined convex--concave quadratics with block
smoothness at most $a,b$, mixed derivative norm at most $c$,
strong-convexity and strong-concavity moduli at least $\mu_x,\mu_y$, and
a saddle point with component norms at most $R_x,R_y$. The error
criterion is the \emph{global} saddle-point gap, where both best responses
range over the full Euclidean spaces. Any scalar-value algorithm that
attains error $\varepsilon$ with probability at least $2/3$ on every
instance satisfies
\[
 T\ge\max\left\{\frac r{16},\;
 \frac r{20480}\min\left\{r,\theta^{-1/2},
                   \sqrt{\frac{E_xR_x^2}{\varepsilon}}\right\}\right\},
 \qquad 0<\varepsilon\le E_xR_x^2/32,
\]
with $\theta^{-1/2}=+\infty$ when $\theta=0$.
The symmetric bound with $E_xR_x^2$ replaced by $E_yR_y^2$ also holds in
its corresponding accuracy window. Each assertion holds even on a
family for which the other component of the saddle point is zero.
\end{theorem}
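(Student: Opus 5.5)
The plan is a reduction to \cref{cor:strong} on $\R^r$. The hard instances share one unknown positive-definite Hessian $H$ across the $x$-block, the $y$-block and the coupling. Each target value will be simulated by five source values, which accounts for the constants $r/16=(5r/16)/5$ and $20480=5\Cstar$.

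\emph{Source family.} Let the source be $\tilde q(u)=\tfrac12u^\top Hu-h'^\top u$ on $\R^r$ with $\theta I\preceq H\preceq I$ and minimizer $z^\star=H^{-1}h'\in\mathbb B_2^r(R_x)$. The hard family of \cref{cor:strong} (the prior of \cref{r12:lem-prior} together with the spherical baseline) consists of such quadratics with known constant term zero. It has smoothness $1$ and condition number $\theta^{-1}$, so for accuracy $\delta\le R_x^2/32$ it forces $\max\{5r/16,\ (r/\Cstar)\min\{r,\theta^{-1/2},R_x/\sqrt\delta\}\}$ source queries.

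\emph{Construction.} With $z=U^\top x$, $w=V^\top y$, $x_\perp=(I-UU^\top)x$ and $y_\perp=(I-VV^\top)y$, define
\[
 F(x,y)=a\,\tilde q(z)+c\bigl(z^\top Hw-h'^\top w\bigr)-\tfrac b2w^\top Hw+\tfrac a2\|x_\perp\|_2^2-\tfrac b2\|y_\perp\|_2^2 .
\]
Its derivatives are as follows. The $x$-Hessian lies between $a\theta I\succeq\mu_xI$ and $aI$. The negative $y$-Hessian lies between $b\theta I\succeq\mu_yI$ and $bI$. The coupling $cUHV^\top$ has norm at most $c$. Stationarity in $w$ gives $cH(z-z^\star)=bHw$, so $w^\star=0$; stationarity in $z$ then gives $aHz^\star=ah'$. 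The saddle point is therefore $(Uz^\star,0)$, with $\|Uz^\star\|_2\le R_x$, as claimed for the family in which the other component vanishes.

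\emph{Simulation.} Using the identities
\[
 q(z+w)-q(z-w)=2(z^\top Hw-h'^\top w),\qquad q(w)+q(-w)=w^\top Hw,
\]
one target value equals
\[
 a\,\tilde q(z)+\tfrac c2\bigl[\tilde q(z+w)-\tilde q(z-w)\bigr]-\tfrac b2\bigl[\tilde q(w)+\tilde q(-w)\bigr]
\]
plus the known $\perp$ terms. This costs five source values. Hence a $T$-query saddle algorithm yields a $5T$-query source algorithm.

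\emph{Gap transfer.} Maximizing over $y$ gives
\[
 \Phi(x)=a\tilde q(z)+\tfrac{c^2}{2b}(z-z^\star)^\top H(z-z^\star)+\tfrac a2\|x_\perp\|_2^2 .
\]
Consequently
\[
 \Phi(x)-\min\Phi\ge\tfrac{E_x}2(z-z^\star)^\top H(z-z^\star)=E_x\bigl[\tilde q(z)-\tilde q(z^\star)\bigr].
\]
The dual value $\inf_{x'}F(x',\widehat y)$ is at most the saddle value, so the global gap dominates $\Phi(\widehat x)-\min\Phi$. Decoding $\widehat z=U^\top\widehat x$ therefore solves the source problem at accuracy $\delta=\eps/E_x$. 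The window $\delta\le R_x^2/32$ is exactly $\eps\le E_xR_x^2/32$, and $R_x/\sqrt\delta=\sqrt{E_xR_x^2/\eps}$. Dividing the source bound by $5$ gives the stated maximum.

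\emph{Symmetric statement.} Swap the roles of the two blocks. Take $\tilde q$ with minimizer in $\mathbb B_2^r(R_y)$, place $b\tilde q(w)$ in the concave block, and choose the linear coupling term so that the $x$-component of the saddle point vanishes. Then $-\inf_xF$ yields the factor $E_y$.

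\emph{Main obstacle.} The delicate step is matching the five-term polarization exactly to the target function, including the source's constant term. The constant would enter with net coefficient $a-b$, so one must either use a source family with known constant (as the hard family has) or check that it cancels. Alongside this, one must verify that the shared-$H$ lower curvature bound $\theta$ simultaneously certifies $\mu_x\le a\lambda_{\min}(H)$ and $\mu_y\le b\lambda_{\min}(H)$. This is what determines the cap $\theta^{-1/2}$ with $\theta=\max\{\mu_x/a,\mu_y/b\}$.
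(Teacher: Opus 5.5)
Your proposal is correct and is essentially the paper's proof: it uses the same five-value polarization with a shared source Hessian $H$, the same completion of squares producing the factor $E_x$, and the same appeal to \cref{cor:strong} (or \cref{thm:main} when $\theta=0$) with the budget multiplied by $5$. The constant term you flag as the main obstacle is not one in the paper's version, which lets the target carry $(a-b)c_0$ (the class is closed under additive constants), so $c_0$ need not be known or zero---the spherical baseline in fact has the known nonzero constant $R_x^2/2$. Also, $w^\star=0$ follows only after combining both stationarity equations, not from the $w$-equation alone.
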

\begin{proof}
Take a source quadratic $q(z)=\tfrac12z^\top Hz-h^\top z+c_0$ with
$\theta I\preceq H\preceq I$ and $H\succ0$, whose minimizer
$z^\star=H^{-1}h$ has norm at most $R_x$. Put $z=U^\top x$, $w=V^\top y$,
$x_\perp=(I-UU^\top)x$, and $y_\perp=(I-VV^\top)y$, and define
\begin{align*}
 F_q(x,y)
 &=a q(z)-\frac b2\bigl[q(w)+q(-w)\bigr]
       +\frac c2\bigl[q(z+w)-q(z-w)\bigr]\\
 &\hspace{1cm}+\frac a2\|x_\perp\|_2^2
                -\frac b2\|y_\perp\|_2^2\\
 &=\frac a2z^\top Hz+c z^\top Hw-\frac b2w^\top Hw
       -a h^\top z-c h^\top w+(a-b)c_0\\
 &\hspace{1cm}+\frac a2\|x_\perp\|_2^2
                -\frac b2\|y_\perp\|_2^2.
\end{align*}
Five source calls simulate each query to the saddle-point objective.
The block Hessians are $a[UHU^\top+I-UU^\top]$ and
$-b[VHV^\top+I-VV^\top]$, and the mixed derivative is $cUHV^\top$; all
the prescribed inequalities are satisfied. The unique saddle point is
$(Uz^\star,0)$. A direct completion of squares gives
\begin{align*}
 \operatorname{Gap}_{\mathbb R^{d_x},\mathbb R^{d_y}}(x,y)
 &=\frac{E_x}{2}(z-z^\star)^\top H(z-z^\star)
   +\frac{E_y}{2}w^\top Hw\\
 &\hspace{1cm}+\frac a2\|x_\perp\|_2^2
                +\frac b2\|y_\perp\|_2^2\\
 &\ge E_x\bigl[q(z)-q(z^\star)\bigr].
\end{align*}
The induced source algorithm therefore uses at most $5T$ calls and has
accuracy $\varepsilon/E_x$. Apply \cref{cor:strong} with smoothness $1$
and strong convexity $\theta$ when $\theta>0$, and \cref{thm:main} when
$\theta=0$. Interchanging the roles of the blocks and negating the payoff
proves the symmetric assertion.
\end{proof}

The radii in \cref{thm:shared-curvature} localize the saddle point,
while both best responses range over the full spaces. The factor $c^2/b$
arises because the maximizing response can lie at a distance of order
$c/b$ times the primal distance error from the dual solution. A
compact-domain comparison therefore requires a feasible truncated
response, which we construct next.

\begin{corollary}[Compact shared-curvature transfer on arbitrary balls]
\label{cor:shared-truncated}
Under the assumptions of \cref{thm:shared-curvature}, let
$X=R_x\mathbb B_2^{d_x}$, $Y=R_y\mathbb B_2^{d_y}$, and put
\[
 \beta=\min\left\{\frac cb,\frac{R_y}{2R_x}\right\},\qquad
 S_x=a+2c\beta-b\beta^2.
\]
For compact gaps and feasible outputs, with no further restriction on
the radii,
\[
 T\ge\max\left\{\frac r{16},\frac r{20480}
 \min\left\{r,\theta^{-1/2},
       \sqrt{\frac{S_xR_x^2}{\varepsilon}}\right\}\right\},
 \qquad 0<\varepsilon\le S_xR_x^2/32.
\]
The symmetric statement holds with $\beta'=\min\{c/a,R_x/(2R_y)\}$,
$S_y=b+2c\beta'-a\beta'^2$, and $0<\varepsilon\le S_yR_y^2/32$.
In particular, when $R_y\ge2cR_x/b$, we have $S_x=E_x$ and recover
the full $x$-oriented global-gap bound on compact domains; the symmetric
sufficient condition is $R_x\ge2cR_y/a$.
\end{corollary}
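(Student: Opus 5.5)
We reuse the five-call construction $F_q$ from the proof of \cref{thm:shared-curvature}, with the same source class ($\theta I\preceq H\preceq I$, $\|z^\star\|_2\le R_x$) and the same block Hessians, coupling and unique saddle point $(Uz^\star,0)$. The saddle point is feasible, since $\|Uz^\star\|_2\le R_x$ and $0\in Y$. The only change is the error criterion. We must show that for every feasible output $(x,y)\in X\times Y$,
\[
 \operatorname{Gap}_{X,Y}(x,y)\ \ge\ S_x\bigl[q(z)-q(z^\star)\bigr].
\]
Then the induced source algorithm uses at most $5T$ calls at accuracy $\eps/S_x$. Applying \cref{cor:strong} (or \cref{thm:main} when $\theta=0$) with smoothness $1$, modulus $\theta$ and radius $R_x$ gives the display, exactly as before. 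The window $\eps/S_x\le R_x^2/32$ is the stated one.

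\textbf{Lower bound on the gap.} Split $\operatorname{Gap}_{X,Y}=[\sup_{Y}F_q(x,\cdot)-F_q(x^\star,0)]+[F_q(x^\star,0)-\inf_XF_q(\cdot,y)]$. The second bracket is nonnegative because $(x^\star,0)$ is a feasible saddle point, so it can be dropped. For the first bracket, put $e=z-z^\star$ and use a \emph{truncated} test response $y=V(\beta e)$ in place of the unconstrained maximizer $V(ce/b)$. Feasibility follows from $\|\beta e\|_2\le\beta\cdot2R_x\le R_y$, which holds by $\beta\le R_y/(2R_x)$ together with $\|e\|_2\le2R_x$ for feasible $x$.

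Expanding $F_q$ around the saddle point (the linear terms vanish by stationarity, the constant is the saddle value, and the $x_\perp$ term is nonnegative) gives
\[
 F_q(x,V\beta e)-F_q(x^\star,0)\ \ge\ \tfrac12\bigl(a+2c\beta-b\beta^2\bigr)e^\top He=S_x\bigl[q(z)-q(z^\star)\bigr].
\]
Indeed, $F_q(x,Vw)-F_q(x^\star,0)=\tfrac a2e^\top He+c\,e^\top Hw-\tfrac b2w^\top Hw+\tfrac a2\|x_\perp\|^2$. Setting $w=\beta e$ yields the displayed coefficient.

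\textbf{Sign and special case.} The coefficient of $e^\top He$ is positive. Since $\beta\le c/b$, we have $2c\beta-b\beta^2\ge c\beta\ge0$, so $S_x\ge a$ and the bound is meaningful. When $R_y\ge2cR_x/b$, the minimum defining $\beta$ is $c/b$, and then $S_x=a+c^2/b=E_x$. The symmetric statement follows by the same interchange and negation used at the end of the proof of \cref{thm:shared-curvature}. In that case the test response is $x=U(-\beta'w)$ around the saddle point $(0,Vw^\star)$ of the swapped family.

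\textbf{Main obstacle.} The step needing most care is checking that the quadratic expansion of $F_q$ at the saddle point has exactly the claimed cross term $c\,e^\top Hw$ with the correct sign. The choice $w=+\beta e$ versus $-\beta e$ must make the coupling contribution positive. We will verify this using $\partial_{zw}F_q=cH$ and stationarity, $a(Hz^\star-h)=0$ and $c(Hz^\star-h)=0$. Everything else is bookkeeping carried over unchanged from \cref{thm:shared-curvature}.
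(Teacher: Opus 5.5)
Your proposal is correct and follows essentially the paper's proof: the same five-call family, the truncated feasible response $V\beta e$ (feasible because $\beta\le R_y/(2R_x)$ and $\|e\|_2\le 2R_x$), the identity $\tfrac12(a+2c\beta-b\beta^2)e^\top He=S_x[q(z)-q(z^\star)]$, and the transfer to \cref{cor:strong} at accuracy $\eps/S_x$ with $5T$ source calls. The only cosmetic difference is that you drop $F_q(x^\star,0)-\inf_X F_q(\cdot,y)\ge 0$ up front, whereas the paper evaluates the compact minimum at $Uz^\star$ and discards the nonnegative terms $\tfrac b2 w^\top Hw+\tfrac b2\|y_\perp\|_2^2$ afterward.
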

\begin{proof}
Use the source family of \cref{thm:shared-curvature} and write
$e=z-z^\star$. Feasibility gives $\|e\|_2\le2R_x$, so $V\beta e$ is a
feasible maximizing response. Evaluating the compact maximum at this
point and the compact minimum at $Uz^\star$ gives
\begin{align*}
 \operatorname{Gap}_{X,Y}(x,y)
 &\ge F_q(x,V\beta e)-F_q(Uz^\star,y)\\
 &=\frac{S_x}{2}e^\top He+\frac a2\|x_\perp\|_2^2
   +\frac b2w^\top Hw+\frac b2\|y_\perp\|_2^2\\
 &\ge S_x[q(z)-q(z^\star)].
\end{align*}
Here $S_x\ge a>0$ because $b\beta\le c$. Five source values still
simulate each target value, so the source theorem applies at accuracy
$\varepsilon/S_x$. The unknown minimizer enters only this gap inequality;
neither the oracle simulator nor the output decoder uses it.
\end{proof}

The coefficient agrees with $E_x$ when $R_y\ge2cR_x/b$; otherwise
\[
 S_xR_x^2=aR_x^2+cR_xR_y-bR_y^2/4.
\]
The choice of $\beta$ maximizes $2c\beta-b\beta^2$ over
$0\le\beta\le R_y/(2R_x)$, which is the interval on which this response
is feasible for every source instance and every feasible output. This
choice is optimal only among responses proportional to $e$; a different
family of feasible responses could give a larger coefficient.

\begin{corollary}[Compact domains in a local accuracy window]
\label{cor:shared-compact-local}
Assume $c>0$. If $\theta>0$, the $x$-oriented bound of
\cref{thm:shared-curvature}, with its full coefficient $E_x$, holds for
compact gaps and feasible outputs whenever
\[
 0<\varepsilon\le
 \min\left\{\frac{E_xR_x^2}{32},\;
             \frac{\theta E_x b^2R_y^2}{2c^2}\right\}.
\]
More generally, allowing also $\theta=0$, for any
$0<\varepsilon\le E_xR_x^2/32$ for which
\[
 \theta_\varepsilon
 =\max\left\{\frac{\mu_x}{a},\frac{\mu_y}{b},
             \frac{2c^2\varepsilon}{E_xb^2R_y^2}\right\}\le1,
\]
the same compact-domain bound holds with $\theta$ replaced by
$\theta_\varepsilon$.
\end{corollary}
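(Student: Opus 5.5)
The plan is to reuse the five-call source family of \cref{thm:shared-curvature}, but to shrink the source class so that a gap-accurate output automatically has a feasible maximizing response. In the general form of the statement, the source quadratics $q(z)=\tfrac12z^\top Hz-h^\top z+c_0$ are required to satisfy $\theta_\varepsilon I\preceq H\preceq I$ and $\|H^{-1}h\|_2\le R_x$. This family is nonempty because $\theta_\varepsilon\le1$. Since $\theta_\varepsilon\ge\max\{\mu_x/a,\mu_y/b\}$, the target $F_q$ still has all the prescribed block constants and moduli, and its saddle point is $(Uz^\star,0)$. Five source values simulate each target value, exactly as before. The first assertion is the special case $\theta_\varepsilon=\theta$. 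Under the stated window, $2c^2\varepsilon/(E_xb^2R_y^2)\le\theta$, so the maximum defining $\theta_\varepsilon$ equals $\theta$ and no separate argument is needed.

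The key step is a compact-gap inequality valid for every feasible output $(x,y)$. Write $e=z-z^\star$. For any scalar $t\ge0$ with $t\|e\|_2\le R_y$, the point $Vte$ is feasible. As in the proof of \cref{cor:shared-truncated}, evaluating the compact maximum at $Vte$ and the compact minimum at $Uz^\star$ gives
\[
 \operatorname{Gap}_{X,Y}(x,y)\ge\tfrac12\bigl(a+2ct-bt^2\bigr)e^\top He .
\]
I would then split into two cases.

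\emph{Case 1: $(c/b)\|e\|_2\le R_y$.} Take $t=c/b$. Then $a+2ct-bt^2=E_x$, so the gap is at least $E_x[q(z)-q(z^\star)]$.

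\emph{Case 2: $\|e\|_2>bR_y/c$.} Take $t_0=R_y/\|e\|_2<c/b$. Since $bt_0\le c$, we have $2ct_0-bt_0^2\ge ct_0$. Using $e^\top He\ge\theta_\varepsilon\|e\|_2^2$, this gives
\[
 \operatorname{Gap}_{X,Y}\ge\tfrac{\theta_\varepsilon}2\bigl(a\|e\|_2^2+cR_y\|e\|_2\bigr)
 >\tfrac{\theta_\varepsilon}2\Bigl(\tfrac{ab^2R_y^2}{c^2}+bR_y^2\Bigr)
 =\frac{\theta_\varepsilon E_xb^2R_y^2}{2c^2}\ge\varepsilon .
\]
The last inequality is exactly the third entry in the definition of $\theta_\varepsilon$. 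So Case 2 never occurs for an output with compact gap at most $\varepsilon$. Every $\varepsilon$-accurate feasible output therefore falls in Case 1 and decodes to $z=U^\top x$ with $q(z)-\min q\le\varepsilon/E_x$. Neither the simulator nor the decoder uses $z^\star$.

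It remains to apply \cref{cor:strong} in dimension $r$, with smoothness $1$, strong convexity $\theta_\varepsilon$, radius $R_x$ and accuracy $\varepsilon/E_x$. Its window $\varepsilon/E_x\le R_x^2/32$ is the assumed one, and the bound $5T\ge\ldots$ gives the stated constants, as in \cref{thm:shared-curvature}.

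I expect the main obstacle to be the Case 2 estimate. A bound using only the coupling term $cR_y\|e\|_2$ falls short by the factor $c^2/(ab+c^2)$, so the $a\|e\|_2^2$ term must be retained. The strict inequality on $\|e\|_2$ also has to be tracked so that the boundary case is absorbed into Case 1.
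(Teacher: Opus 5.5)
Your proposal is correct and follows essentially the same route as the paper's proof: the source family restricted to $H\succeq\theta_\varepsilon I$, the case split at $\|e\|_2=bR_y/c$, and the boundary response $R_ye/\|e\|_2$, which forces a compact gap strictly above $\theta_\varepsilon E_xb^2R_y^2/(2c^2)\ge\varepsilon$ whenever the unconstrained response is infeasible. The only difference is that you use the cruder estimate $2ct_0-bt_0^2\ge ct_0$, whereas the paper keeps the exact quadratic and argues by monotonicity in $\|e\|_2$; both reach the same threshold value, so nothing is lost.
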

\begin{proof}
Every source Hessian in \cref{thm:shared-curvature} satisfies
$H\succeq\theta I$. Fix a feasible output $(x,y)$ and put $e=z-z^\star$
and $s=\|e\|_2$. If $s\le bR_y/c$, the unconstrained maximizing response
is feasible and the preceding transfer applies. Otherwise, take the
feasible response $w'=R_y e/s$, which has no perpendicular component, and
evaluate the minimization term at $Uz^\star$. Dropping nonnegative terms
gives
\begin{align*}
 \operatorname{Gap}_{X,Y}(x,y)
 &\ge \left(\frac a2+\frac{cR_y}{s}
                    -\frac{bR_y^2}{2s^2}\right)e^\top He\\
 &\ge\theta\left(\frac a2s^2+cR_ys-\frac b2R_y^2\right)
 >\frac{\theta E_xb^2R_y^2}{2c^2}.
\end{align*}
Indeed, the map $s\mapsto\frac a2s^2+cR_ys-\frac b2R_y^2$ is strictly
increasing on $s\ge0$, and at $s=bR_y/c$ it equals
\[
 \frac{ab^2R_y^2}{2c^2}+\frac b2R_y^2=\frac{E_xb^2R_y^2}{2c^2},
 \qquad E_x=a+\frac{c^2}{b},
\]
so in the present case $s>bR_y/c$ the last inequality is strict. Hence an
output meeting the stated accuracy necessarily falls into the first case.
For the last assertion, choose a source Hessian with
$H\succeq\theta_\varepsilon I$; the resulting family remains in the
prescribed saddle-point class, and the required accuracy inequality holds
by construction.
\end{proof}

\paragraph{Recovery with partial-gradient information.}
For the shared-curvature family, the additional response $\nabla_xF$
reveals $h$ at $(x,y)=(0,0)$ and every column of $H$ at the queries
$(x,y)=(Ue_i,0)$; hence $r+1$ partial-gradient queries recover its saddle
point exactly. For the bilinear family, $r$ bundled queries returning
both $F$ and $\nabla_xF$ at $(0,Ve_i)$ recover the columns of $H$ and all
coordinates of $h$ simultaneously. Thus partial-gradient access identifies the unknown coefficients of
these hard families. The mixed-oracle constructions in \cref{sp:app}
instead keep the coupling known and place the unknown information in
the inner objective.

\begin{theorem}[Coupling dependence with known diagonal curvatures]
\label{thm:known-diagonal-coupling}
Let $\mu_x,\mu_y,C,R_x,R_y>0$, and define
\[
 \chi=\frac C{\sqrt{\mu_x\mu_y}}\ge4,\qquad
 \rho=\min\left\{R_x,\;4\sqrt{\frac{\mu_y}{\mu_x}}R_y\right\}.
\]
Consider globally defined strongly convex--strongly concave quadratics
with known diagonal Hessians $\nabla^2_{xx}F=\mu_x I$ and
$\nabla^2_{yy}F=-\mu_y I$, mixed derivative norm at most $C$, and a
saddle point whose block norms are at most $R_x,R_y$. Every scalar-value
algorithm attaining global gap at most $\varepsilon$ with probability
at least $2/3$ on this class satisfies
\begin{equation}\label{sp:eq-known-diagonal-extended}
 T\ge\max\left\{\frac{5r}{32},\;
 \frac r{8192}\min\left\{r,\frac{\sqrt\chi}{2},
       \left(\frac{\mu_x\chi^2\rho^2}{128\varepsilon}\right)^{1/4}\right\}\right\},
 \qquad 0<\varepsilon\le\frac{\mu_x\chi^2\rho^2}{128}.
\end{equation}
In particular, throughout $0<\varepsilon\le\mu_x\rho^2/8$,
\[
 T\ge\max\left\{\frac{5r}{32},\;
 \frac r{8192}\min\left\{r,\frac{\sqrt\chi}{2}\right\}\right\}.
\]
The corresponding assertions with the blocks interchanged also hold, with
$R_x,\mu_x$ and $R_y,\mu_y$ exchanged throughout. Since
$L_x/\mu_x=L_y/\mu_y=1$ on this class, the bound is driven by the coupling
alone. Sharpness of the square-root coupling exponent remains open.
\end{theorem}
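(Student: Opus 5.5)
The plan is a polarization reduction to \cref{cor:strong} in dimension $r$, in the spirit of \cref{thm:bilinear-compact,thm:shared-curvature}. Because both diagonal Hessians are prescribed and known, all unknown information must sit in the coupling and the linear terms.

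\emph{Construction.} Take source quadratics $q(z)=\tfrac12z^\top Hz-h^\top z+c_0$ on $\R^r$ with $\theta I\preceq H\preceq I$ and a minimizer of norm at most a source radius $\varrho$. With $z=U^\top x$, $w=V^\top y$, define
\[
 F_q(x,y)=\frac{\mu_x}{2}\|x\|_2^2-\frac{\mu_y}{2}\|y\|_2^2
 +\frac{C}{2}\bigl[q(z+w)-q(z-w)\bigr]+(\text{known linear term in }z).
\]
The bracket equals $C(z^\top Hw-h^\top w)$. So the coupling is $CUHV^\top$, of norm at most $C$, the block Hessians are exactly $\mu_xI$ and $-\mu_yI$, $c_0$ cancels, and each target value costs two source values. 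The known linear term is a fixed rescaling of $z$ that I will tune so the saddle point is an explicit affine image of $z^\star=H^{-1}h$; it can be absorbed since it is known. Dropping it for the moment, eliminating $y$ gives the outer function $\Phi(x)=\frac{\mu_x}2\|x\|^2+\frac{C^2}{2\mu_y}\|Hz-h\|^2$ on $\operatorname{range}U$. The global gap is then a sum of squares controlled by the effective matrix $\mu_xI+C^2H^2/\mu_y$ and its dual analogue.

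\emph{Gap-to-source comparison.} This is the step I expect to be the main obstacle. The gap is quadratic in $H$, whereas source suboptimality is $\tfrac12e^\top He$. I would use the scalar AM--GM bound $\mu_x+C^2h^2/\mu_y\ge2Ch\sqrt{\mu_x/\mu_y}$ eigenvalue by eigenvalue; the gap is diagonal in the eigenbasis of $H$ after completing squares. This gives
\[
 \operatorname{Gap}\ge 2C\sqrt{\mu_x/\mu_y}\,[q(\widehat z)-q(z^\star)],
\]
with $\widehat z$ decoded from $\widehat x$, or from $\widehat y$ through the best-response map. If the decoding introduces an offset, I would pick whichever of $\widehat x,\widehat y$ gives the better bound.

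\emph{Parameter choice.} I would choose $\theta=4/\chi$, which is admissible because $\chi\ge4$. Then the source condition number is $\chi/4$, and \cref{cor:strong} contributes the cap $\sqrt{\chi}/2$. The constants $5r/32$ and $r/8192$ are the source constants of \cref{thm:main} halved by the two-value polarization cost.

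\emph{Localization and accuracy window.} Next I would impose the localization: the saddle point depends on $H$ and $h$ through factors ranging over $[\theta,1]$. I would bound its blocks by $R_x$ and $R_y$ by taking $\varrho$ as large as allowed by $\rho=\min\{R_x,4\sqrt{\mu_y/\mu_x}R_y\}$. The factor $4$ should come from $\theta=4/\chi$ appearing in the worst case of $\|H^{-1}\|$ times the coupling ratio.

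The source accuracy is $\delta=\eps/(2C\sqrt{\mu_x/\mu_y})$ up to scaling. The source accuracy term $\sqrt{\varrho^2/\delta}$ would give a square root in $1/\eps$. The statement's fourth root $(\mu_x\chi^2\rho^2/(128\eps))^{1/4}$ instead suggests rebalancing: shrink $\theta$ (or the source scale) as a function of $\eps$ so that the curvature floor and the accuracy term meet. I would try $\theta\asymp\sqrt{\eps/(\mu_x\chi^2\rho^2)}$ within the constraint $\theta\ge4/\chi$. This reproduces the fourth root, and the condition $\eps\le\mu_x\chi^2\rho^2/128$ is exactly what keeps $\theta\le1$ admissible.

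The particular case $\eps\le\mu_x\rho^2/8$ then follows because there the fourth-root term dominates $\sqrt\chi/2$. The interchanged statement follows by swapping blocks and negating the payoff.
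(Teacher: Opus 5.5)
Your reduction has the right skeleton, and most of its parts coincide with the paper:
\begin{itemize}
\item the polarization $F_q$, which costs two source values per target value;
\item the constants of \cref{cor:strong} halved by that cost;
\item the factor $4$ in $\rho$, which comes from $\|t\|_2\le\mu_x\rho/(C\eta)$ with $\eta=4/\chi$;
\item the final curvature floor $\eta=\max\{4/\chi,\sqrt{128\eps/(\mu_x\chi^2\rho^2)}\}$.
\end{itemize}
However, your gap-to-source step is false, and it is the heart of the proof. Solving the two stationarity equations shows that the saddle point has shared primal coordinates $s=(I+\chi^{-2}H^{-2})^{-1}z^\star\neq z^\star$.

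Your AM--GM bound (like the paper's $H^2\succeq\eta H$) correctly controls $\|\widehat z-s\|_H$. But the inequality $\operatorname{Gap}\ge2C\sqrt{\mu_x/\mu_y}\,[q(\widehat z)-q(z^\star)]$ already fails at the exact saddle point, where the gap is zero and the source error is positive; this is \cref{sp:prop-decoder-limit}. Your proposed fixes do not remove the offset:
\begin{itemize}
\item Adding a known linear term $a^\top z$ moves the saddle to $s-(\mu_xI+\tfrac{C^2}{\mu_y}H^2)^{-1}a$. Cancelling the bias would require $a=-\mu_x z^\star$, which is the hidden quantity itself.
\item Decoding $\widehat y$ through the best response $w\mapsto-(C/\mu_x)Hw$ uses the unknown $H$.
\item So does any rule that picks ``whichever of $\widehat x,\widehat y$ is better.''
\end{itemize}
The reduction needs a decoder that is independent of the hidden source.

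The missing idea is to keep $U^\top\widehat x$ as the decoder and absorb $s-z^\star$ into the source tolerance as a bias. With $\eta I\preceq H$, a gap of at most $\eps$ gives $\|z-s\|_H^2\le2\eps/(\mu_x\chi^2\eta)$. Eigenvalue by eigenvalue, $\|s-z^\star\|_H^2\le\rho^2/(\chi^4\eta^3)$. Hence
\[
q(z)-q(z^\star)\le\frac{2\eps}{\mu_x\chi^2\eta}+\frac{\rho^2}{\chi^4\eta^3}.
\]
The bias does not vanish as $\eps\to0$, so the source accuracy cannot be $\eps/(2C\sqrt{\mu_x/\mu_y})$. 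The paper instead uses the tolerance $\delta=\eta\rho^2/32$:
\begin{itemize}
\item $\chi\eta\ge4$ makes the bias at most $\eta\rho^2/256$;
\item $\eta\ge\sqrt{128\eps/(\mu_x\chi^2\rho^2)}$ makes the first term at most $\eta\rho^2/64$.
\end{itemize}
Then $\sqrt{\rho^2/\delta}=\sqrt{32/\eta}\ge\eta^{-1/2}$, so \cref{cor:strong} gives $\min\{r,\eta^{-1/2}\}$. That is exactly the stated bound with its fourth root.

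In your bias-free picture there is no trade-off at all: raising $\theta$ above $4/\chi$ only lowers the cap. Your rebalancing therefore reproduces the target formula without a mechanism. Even the special case $\eps\le\mu_x\rho^2/8$ remains unproved until the bias is controlled.
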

\begin{proof}
Set $\tau=\chi^{-1}$ and choose the source curvature according to the
requested accuracy:
\[
 \eta=\max\left\{\frac4\chi,
       \sqrt{\frac{128\varepsilon}{\mu_x\chi^2\rho^2}}\right\}.
\]
The hypotheses ensure that $0<\eta\le1$. Take a source quadratic
$q(z)=\tfrac12z^\top Hz-h^\top z+c_0$ with
$\eta I\preceq H\preceq I$ and
$z^\star=H^{-1}h\in\mathbb B_2^r(\rho)$, and define
\begin{align*}
 F_q(x,y)
 &=\frac{\mu_x}{2}\|x\|_2^2
   +\frac C2\bigl[q(U^\top x+V^\top y)
                  -q(U^\top x-V^\top y)\bigr]
   -\frac{\mu_y}{2}\|y\|_2^2\\
 &=\frac{\mu_x}{2}\|x\|_2^2
   +C\bigl[(U^\top x)^\top H(V^\top y)-h^\top V^\top y\bigr]
   -\frac{\mu_y}{2}\|y\|_2^2.
\end{align*}
Two source values simulate each target value. The diagonal Hessians are
as prescribed, and the coupling norm is at most $C$.
The unique saddle point has zero perpendicular components and shared
coordinates
\[
 s=(I+\tau^2H^{-2})^{-1}z^\star,\qquad
 t=-\frac{\mu_x}{C}H^{-1}s.
\]
These expressions solve both stationarity equations. The inverse matrix in
the first expression has eigenvalues in $(0,1)$, and $\chi\eta\ge4$, so
\[
 \|s\|_2\le\rho\le R_x,\qquad
 \|t\|_2\le\frac{\mu_x\rho}{C\eta}
 \le\frac14\sqrt{\frac{\mu_x}{\mu_y}}\rho\le R_y.
\]

For an output $(x,y)$, put $z=U^\top x$. Completing squares in the global
primal value function gives
\begin{equation}\label{sp:eq-known-diagonal-primal}
  \operatorname{Gap}_{\mathbb R^{d_x},\mathbb R^{d_y}}(x,y)
 \ge\frac12(z-s)^\top
       \left(\mu_xI+\frac{C^2}{\mu_y}H^2\right)(z-s).
\end{equation}
Since $H^2\succeq\eta H$, a gap of at most $\varepsilon$ implies
\[
 \|z-s\|_H^2\le\frac{2\varepsilon}{\mu_x\chi^2\eta}.
\]
At an eigenvalue $\lambda$ of $H$, the multiplier taking $z^\star$ to
$s-z^\star$ is $-\tau^2/(\lambda^2+\tau^2)$. Hence
\[
 \|s-z^\star\|_H^2
 \le\frac{\tau^4}{\eta^3}\|z^\star\|_2^2
 \le\frac{\rho^2}{\chi^4\eta^3}.
\]
The squared triangle inequality and the two lower bounds in the
definition of $\eta$ yield
\begin{align*}
 q(z)-q(z^\star)
 &\le\frac{2\varepsilon}{\mu_x\chi^2\eta}
       +\frac{\rho^2}{\chi^4\eta^3}\\
 &\le\frac{\eta\rho^2}{64}+\frac{\eta\rho^2}{256}
  =\frac{5\eta\rho^2}{256}
  <\delta:=\frac{\eta\rho^2}{32}.
\end{align*}
The fixed linear decoder $x\mapsto U^\top x$ therefore produces a source
algorithm with budget $2T$ and accuracy $\delta$. Apply
\cref{cor:strong} with smoothness $1$, strong convexity $\eta$, and
radius $\rho$. Since $\sqrt{\rho^2/\delta}=\sqrt{32/\eta}$ exceeds
$\eta^{-1/2}$, substituting the definition of $\eta$ gives
\eqref{sp:eq-known-diagonal-extended}. When
$\varepsilon\le\mu_x\rho^2/8$, the chosen curvature is $\eta=4/\chi$,
which proves the simpler bound. Interchanging the blocks and negating
$F$ proves the symmetric assertions.
\end{proof}

\begin{lemma}[A compact gap forces a feasible maximizing response]
\label{sp:lem-compact-response}
Let $F$ be quadratic, with diagonal Hessians $\mu_xI,-\mu_yI$,
$\mu_x,\mu_y>0$, and coupling matrix $A=\nabla^2_{yx}F$.
Suppose its global saddle satisfies $x^\star\in X=R_x\mathbb B_2^{d_x}$ and
$\|y^\star\|_2\le R_y/2$. If a feasible pair satisfies
\[
 \operatorname{Gap}_{X,Y}(x,y)\le\frac{\mu_yR_y^2}{8},
 \qquad Y=R_y\mathbb B_2^{d_y},
\]
then the unrestricted maximizing response $y^\star+A(x-x^\star)/\mu_y$
belongs to $Y$, and
\[
 \operatorname{Gap}_{X,Y}(x,y)
 \ge\sup_{v\in\R^{d_y}}F(x,v)-F(x^\star,y^\star).
\]
\end{lemma}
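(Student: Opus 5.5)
The plan is to recenter the quadratic at its global saddle, bound the compact gap below by a primal expression evaluated at the fixed competitor $x^\star$, and then test the $y$-supremum with one explicit feasible point to rule out an infeasible maximizing response. Since $F$ is quadratic with $\nabla^2_{xx}F=\mu_xI$, $\nabla^2_{yy}F=-\mu_yI$, $\nabla^2_{yx}F=A$ and stationary point $(x^\star,y^\star)$, Taylor expansion at the saddle is exact. Writing $F^\star=F(x^\star,y^\star)$, $e=x-x^\star$ and $\Delta=v-y^\star$, it gives
\[
 F(x,v)=F^\star+\frac{\mu_x}{2}\|e\|_2^2+\Delta^\top Ae-\frac{\mu_y}{2}\|\Delta\|_2^2 .
\]
In particular, the unrestricted maximizer in $v$ is $y^\star+Ae/\mu_y$, and
\[
 \sup_{v\in\R^{d_y}}F(x,v)-F^\star=\frac{\mu_x}{2}\|e\|_2^2+\frac{\|Ae\|_2^2}{2\mu_y}.
\]

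For the first reduction I use $x^\star\in X$. This gives $\inf_{u\in X}F(u,y)\le F(x^\star,y)$. The recentered formula then yields $F(x^\star,y)=F^\star-\tfrac{\mu_y}{2}\|y-y^\star\|_2^2\le F^\star$. Hence
\[
 \operatorname{Gap}_{X,Y}(x,y)\ge\sup_{v\in Y}F(x,v)-F^\star .
\]
It therefore suffices to show that the unrestricted maximizer lies in $Y$. Once that holds, the supremum over $Y$ equals the supremum over $\R^{d_y}$, which is the claimed inequality.

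For feasibility, I argue by contradiction. Suppose $\|Ae\|_2>\mu_yR_y/2$, so in particular $Ae\ne0$. I test the supremum over $Y$ at
\[
 v=y^\star+\frac{R_y}{2}\,\frac{Ae}{\|Ae\|_2}.
\]
This point satisfies $\|v\|_2\le R_y/2+R_y/2=R_y$, so $v\in Y$. Substituting into the recentered formula gives
\[
 F(x,v)-F^\star=\frac{\mu_x}{2}\|e\|_2^2+\frac{R_y}{2}\|Ae\|_2-\frac{\mu_yR_y^2}{8}
 >\frac{\mu_yR_y^2}{4}-\frac{\mu_yR_y^2}{8}=\frac{\mu_yR_y^2}{8}.
\]
Combined with the first reduction, this says the gap strictly exceeds $\mu_yR_y^2/8$, contradicting the hypothesis. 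Therefore $\|Ae\|_2\le\mu_yR_y/2$. The unrestricted response then has norm at most $\|y^\star\|_2+\|Ae\|_2/\mu_y\le R_y$, so it belongs to $Y$.

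The only delicate point is choosing the test response: it must sit at distance exactly $R_y/2$ from $y^\star$ in the direction of $Ae$. That distance is what makes the factor $\mu_yR_y^2/8$ appear with a strict inequality. The remaining steps are exact quadratic identities and need no further estimates.
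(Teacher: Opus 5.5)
Your proof is correct and follows the paper's argument. In both, the quadratic is recentered exactly at the saddle, the compact minimum is bounded by the value at $x^\star$, and the compact maximum is tested along the direction $A(x-x^\star)$ from $y^\star$, which forces feasibility by contradiction. The only difference is the test point: you use the fixed distance $R_y/2$, which is feasible by the triangle inequality, whereas the paper uses the ray's first boundary crossing at some $a\in[R_y/2,\|A(x-x^\star)\|_2/\mu_y)$. Your choice is slightly simpler and yields the stronger intermediate bound $\|A(x-x^\star)\|_2\le\mu_yR_y/2$.
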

\begin{proof}
Put $v=A(x-x^\star)$. Stationarity at the saddle point gives, for every
$u$,
\[
 F(x,y^\star+u)-F(x^\star,y)
 =\frac{\mu_x}{2}\|x-x^\star\|_2^2
  +\frac{\mu_y}{2}\|y-y^\star\|_2^2
  +u^\top v-\frac{\mu_y}{2}\|u\|_2^2.
\]
Suppose that the unrestricted response lies outside $Y$; then $v\ne0$.
The ray from $y^\star$ in the direction $v$ first reaches the boundary at
$y^\star+a v/\|v\|_2$, where $R_y/2\le a<\|v\|_2/\mu_y$. Evaluate the
compact maximum at this point and the compact minimum at $x^\star$. After
dropping the two nonnegative squared terms, the gap is at least
\[
 a\|v\|_2-\frac{\mu_y a^2}{2}
 >\frac{\mu_y a^2}{2}\ge\frac{\mu_yR_y^2}{8},
\]
which is a contradiction. The unrestricted response is therefore
feasible. Finally, $F(x^\star,y)\le F(x^\star,y^\star)$, so evaluating the
minimum at $x^\star$ proves the remaining inequality.
\end{proof}

\begin{corollary}[A compact accuracy window for known diagonal curvatures]
\label{cor:known-diagonal-compact}
The bound \eqref{sp:eq-known-diagonal-extended} holds for the compact
gap and feasible outputs if the source radius and accuracy window are
replaced by
\[
 \rho=\min\left\{R_x,\;2\sqrt{\frac{\mu_y}{\mu_x}}R_y\right\},
 \qquad
 0<\varepsilon\le
 \min\left\{\frac{\mu_x\chi^2\rho^2}{128},\frac{\mu_yR_y^2}{8}\right\}.
\]
In particular, its simpler $\sqrt\chi$ bound holds whenever
$0<\varepsilon\le\min\{\mu_x\rho^2/8,\mu_yR_y^2/8\}$.
The assertions with the blocks interchanged also hold, with
$\rho=\min\{R_y,2\sqrt{\mu_x/\mu_y}R_x\}$ and
$0<\varepsilon\le\min\{\mu_y\chi^2\rho^2/128,\ \mu_xR_x^2/8\}$.
\end{corollary}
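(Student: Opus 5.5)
The plan is to reuse the source family from \cref{thm:known-diagonal-coupling} unchanged, except that the radius is now $\rho=\min\{R_x,2\sqrt{\mu_y/\mu_x}R_y\}$. The compact gap is then reduced to the global-type primal quantity by \cref{sp:lem-compact-response}. Concretely, take the source quadratics $q$ with $\eta I\preceq H\preceq I$ and $\|z^\star\|_2\le\rho$, with the same $\eta$. The target $F_q$ is again simulated by two source values and decoded by $x\mapsto U^\top x$.

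\emph{Step 1: localization with the new radius.} The saddle point is $(Us,Vt)$. As before, $\|s\|_2\le\rho\le R_x$, so $x^\star\in X$. The previous estimate gives $\|t\|_2\le\mu_x\rho/(C\eta)$. Using $\chi\eta\ge4$, this becomes
\[
\|t\|_2\le\tfrac14\sqrt{\mu_x/\mu_y}\,\rho\le\tfrac12R_y .
\]
The factor $2$ in the new $\rho$ (instead of $4$) is exactly what yields the hypothesis $\|y^\star\|_2\le R_y/2$ of \cref{sp:lem-compact-response}. The instances therefore still lie in the class with a saddle point in the balls.

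\emph{Step 2: transfer the gap.} Fix a feasible output whose compact gap is at most $\eps$. The window condition $\eps\le\mu_yR_y^2/8$ triggers \cref{sp:lem-compact-response}, which gives
\[
\operatorname{Gap}_{X,Y}(x,y)\ge\sup_vF(x,v)-F(x^\star,y^\star).
\]
The right-hand side is the primal part of the global gap. The completion-of-squares bound \eqref{sp:eq-known-diagonal-primal} was derived from the primal value function alone, so it applies to this quantity. This is the step I expect to need the most care: I must confirm that the lower bound in \eqref{sp:eq-known-diagonal-primal} depends only on $\sup_vF(x,v)-F(x^\star,y^\star)$ and not on the dual term. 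It should, because
\[
\sup_vF(x,v)-F(x^\star,y^\star)=\tfrac12(x-x^\star)^\top\bigl(\mu_xI+A^\top A/\mu_y\bigr)(x-x^\star),
\]
which restricts on the shared coordinates to the displayed form.

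\emph{Step 3: conclude.} The rest of the proof of \cref{thm:known-diagonal-coupling} is unchanged. It gives
\[
q(z)-q(z^\star)<\delta=\eta\rho^2/32,
\]
using the triangle inequality together with the bound on $\|s-z^\star\|_H$, which depends only on $\|z^\star\|_2\le\rho$. Apply \cref{cor:strong} with budget $2T$, smoothness $1$, strong convexity $\eta$ and radius $\rho$ to obtain \eqref{sp:eq-known-diagonal-extended}. When $\eps\le\mu_x\rho^2/8$, the choice is $\eta=4/\chi$, which gives the simpler $\sqrt\chi$ bound.

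\emph{Interchanged blocks.} Swap $x$ and $y$ and negate $F$. The symmetric version of \cref{sp:lem-compact-response}, with roles and radii exchanged, then gives the stated window $\eps\le\mu_xR_x^2/8$ and the radius $\rho=\min\{R_y,2\sqrt{\mu_x/\mu_y}R_x\}$.
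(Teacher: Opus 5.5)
Your proposal is correct and follows the paper's proof. The halved radius gives $\|t\|_2\le R_y/2$. On successful outputs, \cref{sp:lem-compact-response} bounds the global primal error by the compact gap, so \eqref{sp:eq-known-diagonal-primal} holds with the compact gap on the left, and the simulation, decoding, and application of \cref{cor:strong} are unchanged.
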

\begin{proof}
With this smaller radius, the same source family satisfies
$\|s\|_2\le R_x$ and $\|t\|_2\le R_y/2$. On every successful output,
\cref{sp:lem-compact-response} bounds the global primal error by the
compact gap, so \eqref{sp:eq-known-diagonal-primal} remains valid with
the compact gap on its left-hand side. The source simulation and the rest
of the decoding argument are unchanged.
\end{proof}

For the last family, a mixed oracle returning the value together with
$\nabla_xF$ again determines $H$ and $h$ in $r$ calls at $(0,Ve_i)$: the
gradient is $CUHe_i$ and the value equals $-Ch_i-\mu_y/2$.

\begin{proposition}[Reconstruction with known diagonal curvatures]
\label{sp:prop-known-diagonal-upper}
An arbitrary quadratic with known diagonal Hessians $\mu_xI,-\mu_yI$,
an unknown bilinear coupling, affine terms, and a constant can be
reconstructed from $1+d_x+d_y+d_xd_y$ exact scalar values. Its global
saddle point can then be determined exactly. Consequently, when
$d_x=d_y=d$, $C\ge4d^2\sqrt{\mu_x\mu_y}$, and $0<\eps\le\mu_x\rho^2/8$
with $\rho$ as in \cref{thm:known-diagonal-coupling}, the global-gap
minimax complexity of that class, with the specified saddle-point
localization, is $\Theta(d^2)$.
\end{proposition}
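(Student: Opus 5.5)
The plan is to reconstruct the quadratic coefficient by coefficient and then combine this upper bound with the lower bound of \cref{thm:known-diagonal-coupling}.

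\textbf{Reconstruction.} Write the quadratic as
\[
 F(x,y)=\tfrac{\mu_x}{2}\|x\|_2^2+y^\top Ax+b_x^\top x+b_y^\top y+c_0-\tfrac{\mu_y}{2}\|y\|_2^2,
\]
with unknowns $A\in\R^{d_y\times d_x}$, $b_x$, $b_y$, $c_0$. The count is $d_xd_y+d_x+d_y+1$, which matches the budget. The queries are:
\begin{itemize}
 \item[(i)] $F(0,0)=c_0$, which costs one value.
 \item[(ii)] $F(e_i,0)=\mu_x/2+(b_x)_i+c_0$, which determines $(b_x)_i$; this costs $d_x$ values.
 \item[(iii)] $F(0,e_j)=-\mu_y/2+(b_y)_j+c_0$, which determines $(b_y)_j$; this costs $d_y$ values.
 \item[(iv)] $F(e_i,e_j)=\mu_x/2-\mu_y/2+A_{ji}+(b_x)_i+(b_y)_j+c_0$, which determines $A_{ji}$ after subtracting the known terms; this costs $d_xd_y$ values.
\end{itemize}
Each decoding map is affine in the observed values, hence Borel and total.

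\textbf{Computing the saddle point.} Since $\mu_x,\mu_y>0$, the stationarity system is
\[
 \mu_xx+A^\top y+b_x=0,\qquad Ax+b_y-\mu_yy=0.
\]
Eliminating $y=(Ax+b_y)/\mu_y$ gives
\[
 (\mu_xI+A^\top A/\mu_y)\,x=-b_x-A^\top b_y/\mu_y.
\]
The matrix on the left is positive definite, so the solution is unique and depends continuously, hence Borel, on the coefficients. The resulting pair has global gap zero, and it is returned deterministically. No localization promise is needed for this step.

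\textbf{Matching the lower bound.} Now take $d_x=d_y=d$, so $r=d$. The hypothesis $C\ge4d^2\sqrt{\mu_x\mu_y}$ means $\chi\ge4d^2$, hence $\sqrt\chi/2\ge d$ and $\chi\ge4$. In the window $\eps\le\mu_x\rho^2/8$, the simpler form of \cref{thm:known-diagonal-coupling} then gives
\[
 T\ge\frac{d}{8192}\min\{d,\sqrt\chi/2\}=\frac{d^2}{8192}.
\]
The upper bound is $1+2d+d^2=O(d^2)$.

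One point needs care. The lower-bound family has coupling norm at most $C$ and saddle-point localization $R_x,R_y$, whereas the reconstruction algorithm works on all such quadratics. It therefore also works on the restricted class, and the two bounds refer to the same class. This gives $\Theta(d^2)$.

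There is no real obstacle. The only step needing attention is confirming that the subtractions in (iv) use only previously decoded quantities and the known constants $\mu_x/2$ and $\mu_y/2$, which they do.
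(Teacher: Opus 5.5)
Your proposal is correct and follows the paper's proof step for step. You use the same $1+d_x+d_y+d_xd_y$ queries at $(0,0)$, $(e_i,0)$, $(0,e_j)$, $(e_i,e_j)$, eliminate $y$ through the positive-definite matrix $\mu_xI+A^\top A/\mu_y$, and note that $\chi\ge4d^2$ gives $\sqrt\chi/2\ge d$, so \cref{thm:known-diagonal-coupling} supplies the matching $\Omega(d^2)$ lower bound. You also point out explicitly that the reconstruction works on the full class and therefore on the localized subclass where the lower bound is proved, which the paper leaves implicit.
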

\begin{proof}
The value at $(0,0)$ determines the constant. The values at $(e_i,0)$ and
$(0,e_j)$ determine the two affine terms after the known diagonal
quadratics are subtracted. Finally,
\[
 A_{ji}=F(e_i,e_j)-F(e_i,0)-F(0,e_j)+F(0,0)
\]
determines the coupling. The stationarity equations have a unique
solution, since eliminating $y$ gives the positive-definite matrix
$\mu_xI+A^\top A/\mu_y$; solving them is a continuous operation in the
coefficients and yields the unique global saddle point. The query count
is as claimed. Under the displayed coupling condition,
$\tfrac12\sqrt{C/\sqrt{\mu_x\mu_y}}\ge d$, so
\cref{thm:known-diagonal-coupling} gives a lower bound of the same order.
\end{proof}

\begin{proposition}[Limitation of the primal-coordinate decoder]
\label{sp:prop-decoder-limit}
Consider the known-diagonal construction in
\cref{thm:known-diagonal-coupling}, allowing source matrices
$\eta I\preceq H\preceq I$, $0<\eta\le1$, and a positive source radius
$\rho\le\min\{R_x,2\sqrt{\mu_y/\mu_x}R_y\}$.
A uniform source-error transfer that decodes only $U^\top x$ cannot
produce a source condition/accuracy factor larger than order
$\sqrt\chi$, even if the target solver returns an exact saddle.
\end{proposition}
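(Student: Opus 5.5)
The plan is to make precise what ``source condition/accuracy factor'' means and then to exhibit, for every admissible $\eta$, two source instances that the decoder cannot tell apart. Following the proof of \cref{thm:known-diagonal-coupling}, a transfer into \cref{cor:strong} with source smoothness $1$, strong convexity $\eta$ and radius $\rho$ contributes the factor
\[
 \Phi(\eta,\delta)=\min\{\eta^{-1/2},\ \sqrt{\rho^2/\delta}\}.
\]
Here $\delta$ is the worst source error guaranteed by the decoder, uniformly over the admissible source family. I would fix an arbitrary Borel decoder $D:\R^r\to\R^r$ applied to $z=U^\top x$. I would also grant the target solver the exact saddle point of $F_q$. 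By the stationarity formulas of that proof, its shared primal coordinate is $s=(I+\tau^2H^{-2})^{-1}z^\star$, with $\tau=\chi^{-1}$. Since the decoder sees only $s$, it suffices to find two source instances with the same $s$ but distant minimizers.

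\emph{Case $\eta\ge\tau$.} Then $\Phi\le\eta^{-1/2}\le\sqrt\chi$ trivially, with no instances needed.

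\emph{Case $\eta<\tau$.} Both $H_1=\tau I$ and $H_2=I$ belong to the family, because $\eta I\preceq H_i\preceq I$. Fix a vector $s$ with $\|s\|_2=\rho/2$. I would set $z^\star_1=(1+\tau^2/\tau^2)s=2s$ and $z^\star_2=(1+\tau^2)s$. Both have norm at most $\rho$ because $\tau\le1/4$, and both instances produce the same exact saddle coordinate $s$. Let $w=D(s)$ be the common decoded output. The source errors are $\tfrac\tau2\|w-z^\star_1\|_2^2$ and $\tfrac12\|w-z^\star_2\|_2^2$, and the minimizers are separated by
\[
 \|z^\star_1-z^\star_2\|_2=(1-\tau^2)\|s\|_2\ge\rho/4 .
\]
Minimizing $\max\{\tfrac\tau2\|w-a\|^2,\tfrac12\|w-b\|^2\}$ over $w$ is a one-dimensional computation along the segment $[a,b]$. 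It gives a lower bound of $\tfrac{\tau}{2(1+\sqrt\tau)^2}\|a-b\|^2\ge c\tau\rho^2$ for an absolute $c>0$. Any uniform guarantee therefore has $\delta\ge c\tau\rho^2$, hence $\Phi\le\sqrt{\rho^2/\delta}\le c^{-1/2}\sqrt\chi$. The perpendicular components, the constant $c_0$ and $h=Hz^\star$ play no role, because the decoder reads only $U^\top x$. Allowing randomized decoders changes nothing: the bound holds for each realization of the decoder's seed.

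The main obstacle is the choice of witness instances in the regime $\eta<\tau$. The natural first attempt pairs $H=\eta I$ with a second curvature, which only yields $\delta\gtrsim\eta\rho^2$. That bound is too weak, since $\eta^{-1/2}$ is then unbounded. The fix is to pair curvatures $\tau$ and $1$, which are still admissible because the family only requires $H\succeq\eta I$. This places the confusion at the coupling scale $\tau=1/\chi$, where the shrinkage $(1+\tau^2/\lambda^2)^{-1}$ changes by a constant factor. I would also check that the same argument holds with the compact radius $\rho\le2\sqrt{\mu_y/\mu_x}R_y$, since the saddle points produced stay inside the prescribed balls by the norm bounds on $s,t$ from \cref{thm:known-diagonal-coupling}.
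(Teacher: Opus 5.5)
Your proof is correct, but it takes a different and stronger route than the paper's.

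\emph{What the paper does.} It reads ``decodes only $U^\top x$'' as the fixed linear decoder of \cref{thm:known-diagonal-coupling}, whose source output is $U^\top x$ itself. It then uses a single instance $H=\lambda I$ with $\|z^\star\|_2=\rho$ and computes the bias of the exact saddle coordinate in closed form, $q(s)-q(z^\star)=\lambda\rho^2/[2(1+\chi^2\lambda^2)^2]$. It maximizes this at $\lambda=1/(\sqrt3\chi)$, splitting on $\eta\le1/(\sqrt3\chi)$ just as you split on $\eta<\tau$.

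\emph{What you do.} You allow an arbitrary Borel map of $U^\top x$ and argue by indistinguishability. The curvatures $\tau$ and $1$, with minimizers $2s$ and $(1+\tau^2)s$, yield the same exact saddle coordinate. Hence no post-processing of the primal block can undo the $H$-dependent shrinkage $(I+\tau^2H^{-2})^{-1}$.

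\emph{What each buys.} The paper's calculation is shorter and has smaller explicit constants, $\sqrt{32/(3\sqrt3)}$ and $3^{1/4}$. However, it only shows that the identity decoder is biased. Your two-point argument excludes every decoder that reads only the primal block, so it shows the obstruction is informational. The price is a minimax computation and a larger constant ($c=1/72$ suffices).

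\emph{One justification needs repair.} The localization of your first instance does not follow from the bound $\|t\|_2\le\frac14\sqrt{\mu_x/\mu_y}\,\rho$ in \cref{thm:known-diagonal-coupling}. That bound relies on $\chi\eta\ge4$, which fails for $H_1=\tau I$. Compute it directly instead: $\|t_1\|_2=\mu_x\|s\|_2/(C\tau)=\frac12\sqrt{\mu_x/\mu_y}\,\rho\le R_y$. This step needs both your choice $\|s\|_2=\rho/2$ and the factor $2$ in the hypothesis on $\rho$.
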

\begin{proof}
For $H=\lambda I$ and $\|z^\star\|_2=\rho$, the exact saddle coordinate
$s=(I+\chi^{-2}H^{-2})^{-1}z^\star$ has source error
\[
 q(s)-q(z^\star)=\frac{\lambda\rho^2}{2(1+\chi^2\lambda^2)^2}.
\]
If $\eta\le1/(\sqrt3\chi)$, choose $\lambda=1/(\sqrt3\chi)$. The error
then equals $3\sqrt3\rho^2/(32\chi)$, so any uniform source tolerance
$\delta$ for this decoder satisfies
$\sqrt{\rho^2/\delta}\le\sqrt{32/(3\sqrt3)}\sqrt\chi$.
If $\eta>1/(\sqrt3\chi)$, the source condition factor already satisfies
$\eta^{-1/2}<3^{1/4}\sqrt\chi$.
In either case, the minimum of the two factors is $O(\sqrt\chi)$.
The test instances satisfy the localization: the exact formulas give
$\|s\|_2\le\rho$ and
$\|t\|_2\le\tfrac12\sqrt{\mu_x/\mu_y}\rho$, using
$u/(1+u^2)\le1/2$ for $u\ge0$.
\end{proof}

The proposition identifies the limit of the primal-coordinate decoder
in this reduction. Decoders using both output blocks, further oracle
values, or the observed history may yield stronger lower bounds.

\section{Saddle-point complexity: comparisons and remaining gaps}
\label{sp:app-comparison}

Our saddle-point results distinguish the dimensions $d_x$ and $d_y$, the
information returned by each query, and the set over which each best
response is taken. We compare upper and lower bounds with these three specifications
fixed. In this section, $N_0$ counts
scalar values and $N_x$ counts partial gradients $\nabla_xF$; a value
returned together with a gradient contributes to both counts. For
$\mu_x,\mu_y>0$, write
\[
 \kappa_x=\frac{L_x}{\mu_x},\qquad
 \kappa_y=\frac{L_y}{\mu_y},\qquad
 \chi=\frac{L_{xy}}{\sqrt{\mu_x\mu_y}}.
\]
Here $L_x,L_y$ are the within-block constants denoted by $L_{xx},L_{yy}$
in several of the cited papers. The notation $\widetilde O$ suppresses
logarithmic factors in the accuracy, the confidence, and the geometric and
conditioning parameters of the cited result. Polynomial dependence on
dimensions and condition numbers is displayed explicitly.

\begin{table}[t]
\caption{Selected saddle-point bounds in their original information models.
Each row specifies its query responses, error criterion, and parameter
conditions.}
\label{sp:tab-comparison}
\small
\setlength{\tabcolsep}{4pt}
\renewcommand{\arraystretch}{1.12}
\begin{tabularx}{\textwidth}{@{}>{\raggedright\arraybackslash}p{.22\textwidth}
 >{\raggedright\arraybackslash}p{.28\textwidth}
 >{\raggedright\arraybackslash}X@{}}
\toprule
Work / result & Information and complexity & Accuracy and assumptions \\
\midrule
\citet{SadievEtAl2021,BeznosikovNovitskiiGasnikov2021} &
Values: $O((d_x+d_y)LD^2/\eps)$ &
Expected compact-domain gap; noiseless specialization of the coordinate
extragradient bound. One-point feedback changes the sampling model again
\citep{BeznosikovNovitskiiGasnikov2021}. \\
\citet{GladinEtAl2021}, small outer block &
$N_x=\widetilde O(d_x)$;
$N_0=\widetilde O(d_xd_y\sqrt{\kappa_y})$ &
Outer objective error; compact $X$ with interior, unrestricted $y$,
strong concavity in $y$. \\
\citet{GladinEtAl2021}, accelerated regime &
$N_x=\widetilde O(\sqrt{\kappa_x+2\chi^2})$;
$N_0=\widetilde O\bigl(d_y\sqrt{\kappa_x\kappa_y+2\chi^2}\bigr)$ &
Strong convexity--concavity; high-probability nested objective accuracy,
pair-gap conditions are given below. \\
\citet{AlkousaEtAl2023} &
Partial gradients and regularizer proximal maps; low-dimensional outer
cutting planes &
Distance to the saddle point; separates the oracle costs of the composite
terms. \\
\citet{OuyangXu2021,ZhangHongZhang2022} &
First-order and proximal lower bounds &
Conditioning and coupling dependence; deterministic algorithms and
sufficiently large hard-instance dimensions. \Citet{OuyangXu2021} give the
bilinear first-order benchmark. \\
\citet{BorodichGasnikovKovalev2025} &
Separate gradients and matrix--vector products &
Matching bounds in the stated first-order model; weighted distance to
the solution set of a bilinearly coupled problem. \\
\Cref{sp:thm-blocks,sp:thm-mixed} &
$T\ge\max_i\mathcal B_i(\eps)$, see \eqref{sp:eq-block-budget};
$N_0\ge\mathcal B_y(\eps)$ with $\nabla_xF$ &
Gap accuracy with probability $2/3$; independently prescribed dimensions;
the mixed lower bound allows arbitrary pure partial-gradient calls. \\
\Cref{thm:bilinear-compact,sp:prop-bilinear-upper} &
Values: lower $\Omega(r\min\{r,\sqrt{L_{xy}R_xR_y/\eps}\})$;
recovery in $d_y(d_x+1)$ &
Compact bilinear gap; balanced blocks have a matching $\Theta(d^2)$
plateau in the stated accuracy window. \\
\Cref{thm:known-diagonal-coupling,sp:prop-known-diagonal-upper} &
Values: $\Omega(r\min\{r,\sqrt\chi\})$,
$r=\min\{d_x,d_y\}$ &
Known isotropic diagonal Hessians; global gap in a stated accuracy window;
balanced-block recovery matches when $\chi\ge4d^2$. \\
\Cref{sp:thm-mixed-coupled,sp:thm-mixed-full} &
Scalar-value lower bound with arbitrary $\nabla_xF$ calls and known coupling &
Pair gap, or outer error for the shared-rank construction; the rectangular
variant retains the full inner dimension under its curvature condition. \\
\bottomrule
\end{tabularx}
\end{table}

\paragraph{Scalar-value and mixed extragradient bounds.}
In the exact deterministic specialization of \citet[Theorem~2 and
Corollary~2]{SadievEtAl2021}, coordinate differences give the first row of
\cref{sp:tab-comparison}; here $D$ is the Euclidean diameter parameter and
$L$ bounds the full saddle-point operator. Their strongly convex--strongly
concave bound is $O((d_x+d_y)(L/\mu)\log(1/\eps))$ scalar calls, with the
initial-distance factors of the cited result inside the logarithm; its
criterion is the expected squared distance to a saddle point. Their
mixed result \citep[Theorem~3 and Corollary~3]{SadievEtAl2021} uses values
in $x$ and gradients in $y$, with $O(d_xLD^2/\eps)$ scalar calls and
$O(LD^2/\eps)$ partial-gradient calls for the expected gap. These
estimates allow unequal block dimensions. Finite differences require
function values in a neighborhood of the feasible sets, which is
consistent with our unrestricted query model, but the compact-gap and
global-gap output criteria remain distinct. A bound on the expectation of
a nonnegative error yields a fixed-confidence guarantee by Markov's
inequality at a constant-factor loss in the target accuracy; the
controlled quantity is unchanged.

\paragraph{Accelerated mixed methods and their accuracy criteria.}
\citet[Theorem~5]{GladinEtAl2021} obtain the small-outer-block rates in
\cref{sp:tab-comparison} for $\Phi(x)=\max_yF(x,y)$. Their error criterion is $\Phi(\widehat x)-\min\Phi$. Theorem~7 supplies the accelerated rates
together with an explicit target failure probability. Its Catalyst
construction gives the scalar-value budget
$\widetilde O(d_y\sqrt{\kappa_x\kappa_y+2\chi^2})$ in
\cref{sp:tab-comparison}. Its analysis controls the nested objectives and
converts their errors into the dual objective error by means of Lemma~4.
A pair-gap comparison would in addition require control of the primal
objective error: writing $X$ for the outer feasible set of the cited
problem,
\[
 \operatorname{Gap}_{X,\R^{d_y}}(\widehat x,\widehat y)
 =\Phi(\widehat x)-\Phi(x^\star)+\Psi(y^\star)-\Psi(\widehat y),
 \qquad \Psi(y)=\min_{x\in X}F(x,y).
\]
The separable family in \cref{sp:thm-mixed} gives a pair-gap lower
bound with a known outer minimizer (\cref{sp:rem-models}). For the outer
objective error, \cref{sp:thm-mixed-coupled} uses a coupled construction,
with the parameter dependence detailed below.

\paragraph{Small blocks and composite information.}
\citet[Definition~1 and Theorem~2]{AlkousaEtAl2023} study
$r(x)+F(x,y)-h(y)$ and use the distance to the saddle point. For a
low-dimensional $x$ block, their Vaidya-based construction uses
$\widetilde O(d_x)$ evaluations of $\nabla_xF$ and $\nabla r$, and
$\widetilde O(d_x\sqrt{L_y/\mu_y})$ evaluations of $\nabla_yF$ and of the
proximal map of $h$. When $h$ is smooth instead,
the budget of its own gradients is separated using its smoothness
constant. The roles of the two blocks can be interchanged. These results
motivate retaining both dimensions and separate oracle budgets, but
proximal and vector responses provide more information than a scalar
value. Our known-regularizer transfer (\cref{sp:cor-geometry}) therefore
addresses a different information model for the unknown smooth or
Lipschitz term.

\paragraph{First-order lower bounds.}
For smooth strongly convex--strongly concave problems,
\citet{ZhangHongZhang2022} establish an iteration lower bound of order
\[
 \sqrt{\kappa_x+\chi^2+\kappa_y}\,\log(1/\eps)
\]
in their first-order model, together with a proximal bilinear analogue.
Their orthogonal-invariance argument extends beyond the initial
linear-span restriction for deterministic algorithms. Their dimension requirement grows with the target iteration count;
our scalar-value bounds keep $(d_x,d_y)$ explicit.
For bilinearly coupled objectives,
\citet[Theorem~3.3 and Corollary~4.8]{BorodichGasnikovKovalev2025}
characterize gradient evaluations and matrix--vector products separately
in a first-order memory model. In the strongly convex--strongly concave
specialization, the respective orders are
$\sqrt{\kappa_x}\log(1/\eps)$,
$\sqrt{\kappa_y}\log(1/\eps)$, and
$\chi\log(1/\eps)$, and the error criterion is a weighted squared
distance. Finite differences turn these gradient-based algorithms into
scalar-value upper bounds. The corresponding scalar lower bounds require
an information argument for each fixed pair of block dimensions.

\paragraph{Coupling dependence proved here.}
\Cref{thm:bilinear-compact} gives a scalar-value lower bound even when both
within-block Hessians vanish:
\[
\Omega\!\left(r\min\left\{r,
       \sqrt{\frac{L_{xy}R_xR_y}{\eps}}\right\}\right),
 \qquad r=\min\{d_x,d_y\}.
\]
For the entire bilinear-affine class,
\cref{sp:prop-bilinear-upper} supplies the upper bound
\[
 O\!\left(\min\left\{d_y(d_x+1),\;
 (d_x+d_y)\left(1+\frac{L_{xy}R_xR_y}{\eps}\right)\right\}\right).
\]
The first branch identifies the payoff coefficients, and the second uses
exact coordinate differences and Mirror-Prox. Consequently, the
compact-gap minimax complexity is $\Theta(d^2)$ when $d_x=d_y=d$ and
$0<\eps\le L_{xy}R_xR_y/(32d^2)$.
For shared-curvature quadratics, \cref{thm:shared-curvature} instead
introduces the effective scale $a+c^2/b$ in the global gap, together with
the curvature parameter $\theta=\max\{\mu_x/a,\mu_y/b\}$.
For arbitrary radii, \cref{cor:shared-truncated} uses the feasible
truncated response and the scale $S_x=a+2c\beta-b\beta^2$ with
$\beta=\min\{c/b,R_y/(2R_x)\}$; this scale equals $E_x$ when
$R_y\ge2cR_x/b$, and \cref{cor:shared-compact-local} also recovers $E_x$
within a local accuracy window. These results give interaction-dependent lower bounds for exact
scalar values. Outside the matching regime, a gap remains between the
square-root accuracy dependence of the compact bilinear lower bound and
the inverse-linear dependence of the extragradient upper bound. The sharp
dimension dependence for highly asymmetric blocks also remains open. The distinction between compact existence and a
global-saddle promise also matters: on the latter subclass,
\cref{sp:prop-bilinear-compressed} gives almost-sure exact recovery from
$d_x(d_x+1)$ values, independently of $d_y$.

The known-diagonal construction of \cref{thm:known-diagonal-coupling}
keeps $L_x/\mu_x=L_y/\mu_y=1$ and establishes a lower bound of
$\Omega(r\min\{r,\sqrt\chi\})$ scalar calls for $\chi\ge4$ and
$\eps\le\mu_x\rho^2/8$, where
$\rho=\min\{R_x,4\sqrt{\mu_y/\mu_x}R_y\}$.
The theorem also states a wider global accuracy window, and
\cref{cor:known-diagonal-compact} gives the compact-gap conditions.
The reconstruction in \cref{sp:prop-known-diagonal-upper} matches the
lower bound at $\Theta(d^2)$ for equal blocks, $\chi\ge4d^2$, and
$0<\eps\le\mu_x\rho^2/8$. Outside this plateau, the dependence on
conditioning and accuracy remains open. \Cref{sp:prop-decoder-limit} identifies the accuracy limitation of
the primal-coordinate decoder used in this construction.

\paragraph{Mixed information with known coupling.}
Unlike the separable mixed family, \cref{sp:thm-mixed-coupled} makes the
outer solution depend on an unknown inner quadratic. Its partial-gradient
responses are nevertheless known, so the pair-gap and outer-error lower
bounds \eqref{sp:eq-mixed-coupled-gap}--\eqref{sp:eq-mixed-coupled-outer}
allow arbitrary pure $\nabla_xF$ calls. The source condition number
$(L_y+c^2/\alpha)/(\mu_y+c^2/\alpha)$ decreases as the coupling grows.
Thus stronger coupling improves the conditioning of this inner problem.
The complementary \cref{sp:thm-mixed-full} retains the full dimension
$d_y$ for a fixed rectangular coupling when $\mu_y+c^2/\alpha\le L_y$,
for the pair-gap criterion. This criterion controls the inner
components in the kernel of the coupling, which the outer solution alone
leaves undetermined.
\Cref{sp:cor-mixed-compact} supplies a common sufficient condition for
compact pair gaps. Matching the nested upper bounds in \cref{sp:tab-comparison} under
each accuracy criterion, and determining the joint trade-off between
scalar-value and partial-gradient budgets, remain open.

\paragraph{Accuracy saturation and unresolved comparisons.}
\Cref{cor:quadratic} characterizes scalar \emph{minimization} on
quadratics, and the saddle-point comparisons above give matching bounds
in two plateau regimes with equal block dimensions. The necessity of the logarithmic accuracy factors in the cited
general-class upper bounds remains open in the unmatched regimes. A quadratic in $d_x+d_y$ variables can be interpolated from
$O(d_x^2+d_xd_y+d_y^2)$ values, after which its saddle-point problem can
be solved without further oracle information; this explains why
finite-dimensional quadratic lower bounds saturate. Obtaining the optimal
coupling dependence, determining the role of the two dimensions beyond
the shared subspace, and proving sharp mixed-budget trade-offs require
further arguments.

\section{Independent regularity norms, localization sets, and known regularizers}\label{g:sec}
A non-Euclidean formulation has three ingredients that may be chosen
\emph{independently}. We first fix the norm or set attached to each
regularity constant and radius.

\begin{itemize}\itemsep1pt
\item \textbf{The regularity norm} $\|\cdot\|$, with unit ball $B$ and dual norm
$\|\cdot\|_*$. It measures the regularity of the unknown function:
$\|\nabla f(x)-\nabla f(y)\|_*\le L\|x-y\|$ for all $x,y$ in the smooth case,
and $|g(x)-g(y)|\le M\|x-y\|$ in the nonsmooth case.
\item \textbf{The localization set} $K\subset\R^d$, a nonempty compact convex set.
In the smooth model, a global minimizer lies in $K$ and queries are
unrestricted; in the nonsmooth model, the minimum is taken over $K$ and the
returned point must lie in $K$.
\item \textbf{The known regularizer} $\psi:K\to\R$, continuous and convex with
$\min_K\psi=0$, which enters the objective as $\lambda\psi$ with $\lambda\ge0$.
Its values and proximal operations are free. Only the unknown summand is
accessed through exact values, and its regularity is bounded by $L$ or $M$.
Setting $\lambda=0$ recovers the unregularized models. Throughout, the symbol
$\mu$ is reserved for a strong-convexity modulus.
\end{itemize}
The accuracy $\eps$ is measured in objective \emph{values}. The
regularity norm $B$ and localization set $K$ can vary independently: the example below combines a feasible $\ell_1$ ball with a
gradient that is Lipschitz with respect to $\ell_2$.

\paragraph{The common transfer framework.}
The definition of $\mathcal R_m(B,C)$ and the three transfer theorems are
stated in the main text, \cref{gp:eq-radius,gp:thm-norm,gp:thm-composite,gp:thm-nonsmooth}.
The smooth proofs are in \cref{gp:app}, and the nonsmooth regularized proof
is in \cref{gp:app-nonsmooth}. We specialize these results to
independently chosen regularity and localization norms.

\paragraph{Balls: an exact full-dimensional radius.}
Take $B=\mathbb B_\beta^d$ and $K=\mathbb B_\alpha^d(R_\alpha)$ with
$1\le\alpha,\beta\le\infty$ chosen independently. The subscripts below
identify the norm attached to each constant:
\begin{equation}\label{g:eq-constants}
 \|\nabla f(x)-\nabla f(y)\|_{\beta^*}\le L_\beta\|x-y\|_\beta,\quad
 |g(x)-g(y)|\le M_\beta\|x-y\|_\beta,\quad
 \|x^\star\|_\alpha\le R_\alpha .
\end{equation}
For the comparison that follows, we use the minimal valid smoothness moduli
of a fixed objective and take each radius to be the corresponding norm of one
chosen minimizer. The complexity theorems allow any independently
specified admissible upper bounds on these quantities. The constants can
differ for the same objective: $L_\beta$ is the largest curvature
over the unit $\ell_\beta$ ball, so $L_1\le L_2\le L_\infty$ with each successive
ratio at most $d$, whereas $R_1\ge R_2\ge R_\infty$ with $R_1\le\sqrt d\,R_2$
(\cref{g:rem-constants}). The accuracy $\eps$ remains independent of the norm.

\begin{proposition}[Image radius of an $\ell_\alpha$ ball]\label{g:prop-lp-radius}
Let $1\le\alpha,\beta\le\infty$, $1\le m\le d$, and put
\begin{equation}\label{g:eq-theta}
 \theta=\theta(\alpha,\beta)=\left(\frac12-\frac1\beta\right)_+
  +\left(\frac1\alpha-\frac12\right)_+,
 \qquad
 \gamma=\gamma(\alpha,\beta)=\left(\frac1\beta-\frac1\alpha\right)_+ .
\end{equation}
Then $\theta\in[0,1]$, $\theta+\gamma\le1$, and
\begin{equation}\label{g:eq-lp-radius}
 \mathcal R_m\!\left(\mathbb B_\beta^d,\mathbb B_\alpha^d(R_\alpha)\right)
 \;\ge\; R_\alpha\,\Big\lfloor \frac dm\Big\rfloor^{\gamma} m^{-\theta}.
\end{equation}
An explicit block map attains this value. At $m=d$ the bound is exact for
every $\alpha,\beta$:
$\mathcal R_d(\mathbb B_\beta^d,\mathbb B_\alpha^d(R_\alpha))
=R_\alpha d^{-\theta}$ (\cref{gp:prop-full-lp}).
For $\alpha=1$ and every $\beta$,
$\mathcal R_m(\mathbb B_\beta^d,\mathbb B_1^d(R_1))\le R_1\min\{1,\sqrt{2\log(2ed)/m}\}$;
hence for $\alpha=1$, $\beta\le2$, where $\theta=1/2$, the block certificate
is optimal up to $\sqrt{2\log(2ed)}$ at every image dimension.
For general $m<d$, the displayed certificate need not maximize the image
radius.
\end{proposition}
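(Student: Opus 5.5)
The plan has four parts: the two elementary inequalities on the exponents, an explicit block map for the lower bound \eqref{g:eq-lp-radius}, a trace-duality argument for exactness at $m=d$, and a covering argument for the $\alpha=1$ upper bound. Since $\mathbb B_\alpha^d(R_\alpha)$ is symmetric, \cref{gp:lem-attainment} lets us take the center $z=0$ throughout.

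\emph{Exponents and the block map.} The claims $\theta\in[0,1]$ and $\theta+\gamma\le1$ follow by checking the sign cases of $1/\beta-1/2$, $1/\alpha-1/2$ and $1/\beta-1/\alpha$; each positive part is at most $1/2$ or the sum telescopes.
\begin{itemize}
\item \emph{Construction.} Put $k=\lfloor d/m\rfloor$ when $1/\beta>1/\alpha$ and $k=1$ otherwise. Partition $mk$ coordinates into $m$ blocks $I_j$ of size $k$, and set $(Sx)_j=\sum_{i\in I_j}x_i$.
\item \emph{Norm bound.} Hölder on each block gives $|(Sx)_j|\le k^{1-1/\beta}\|x_{I_j}\|_\beta$. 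Comparing $\ell_2$ with $\ell_\beta$ on $\R^m$ then gives $\|S\|_{\beta\to2}\le k^{1-1/\beta}m^{(1/2-1/\beta)_+}$.
\item \emph{Image radius.} Given $w\in\R^m$, the preimage $x_i=w_j/k$ on $I_j$ has $\|x\|_\alpha=k^{1/\alpha-1}\|w\|_\alpha\le k^{1/\alpha-1}m^{(1/\alpha-1/2)_+}\|w\|_2$.
\item \emph{Normalization.} Dividing $S$ by its norm bound yields $Q$ with $\|Q\|_{\beta\to2}\le1$ whose image of $\mathbb B_\alpha^d(R_\alpha)$ contains a Euclidean ball of radius $R_\alpha k^{1/\beta-1/\alpha}m^{-\theta}$. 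By the choice of $k$ this radius equals $R_\alpha\lfloor d/m\rfloor^\gamma m^{-\theta}$.
\end{itemize}

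\emph{Exactness at $m=d$.} Suppose $\|Q\|_{\beta\to2}\le1$ and $r\mathbb B_2^d\subseteq Q(\mathbb B_\alpha^d(R_\alpha))$ with $r>0$. Then $Q$ is invertible and $B=Q^{-1}$ satisfies $\|B\|_{2\to\alpha}\le R_\alpha/r$.
\begin{itemize}
\item \emph{Trace identity.} Write $d=\tr(BQ)=\sum_i\langle Qe_i,B^\top e_i\rangle$. Cauchy--Schwarz gives $d\le(\sum_i\|Qe_i\|_2^2)^{1/2}(\sum_i\|B^\top e_i\|_2^2)^{1/2}$.
\item \emph{Bounding each factor.} Average $\|Q\varepsilon\|_2^2$ over Rademacher signs $\varepsilon$ to get $\sum_i\|Qe_i\|_2^2\le\|\varepsilon\|_\beta^2=d^{2/\beta}$. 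The columnwise bound $\|Qe_i\|_2\le1$ gives $\sum_i\|Qe_i\|_2^2\le d$. The same two arguments for $B^\top$, using $\|B^\top\|_{\alpha^*\to2}=\|B\|_{2\to\alpha}$, give $\sum_i\|B^\top e_i\|_2^2\le(R_\alpha/r)^2\min\{d^{2-2/\alpha},d\}$.
\item \emph{Conclusion.} Combining, $d\le(R_\alpha/r)\,d^{\min\{1/\beta,1/2\}+\min\{1-1/\alpha,1/2\}}$, which rearranges to $r\le R_\alpha d^{-\theta}$. This matches the certificate, since $\gamma$ contributes nothing at $m=d$.
\end{itemize}
I expect this two-sided averaging to be the main step. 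The naive bound $\|I\|_{\beta\to\alpha}$ misses the factorization through $\ell_2$, and only the Hilbert-space Rademacher identity captures it.

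\emph{The $\ell_1$ upper bound.} $Q(\mathbb B_1^d(R_1))$ is the convex hull of the $2d$ points $\pm R_1Qe_i$, each of Euclidean norm at most $R_1$. Containing $r\mathbb B_2^m$ requires $\max_i|\langle v,Qe_i\rangle|\ge r/R_1$ for every unit $v$. For $v$ uniform on $S^{m-1}$, spherical concentration gives $\Prob\{|\langle v,p\rangle|\ge t\}\le2e^{-mt^2/2}$ for each $\|p\|_2\le1$. A union bound over the $d$ points fails to force the maximum above $t$ once $t>\sqrt{2\log(2d)/m}$. Hence $r\le R_1\sqrt{2\log(2ed)/m}$. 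The trivial bound $r\le R_1$ follows from the column norms.

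For $\beta\le2$ and $\alpha=1$, the exponents are $\theta=1/2$ and $\gamma=0$, so the certificate is $R_1m^{-1/2}$. The ratio of the upper bound to the certificate is therefore at most $\sqrt{2\log(2ed)}$. The remark that the certificate need not be optimal for general $m<d$ requires no proof beyond noting that this is not claimed.
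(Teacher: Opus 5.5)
Your proposal is correct, and its decomposition matches the paper's. Both arguments handle the exponents by cases, use the same block map, prove exactness at $m=d$ separately, and bound the $\ell_1$ endpoint probabilistically. In the block map, your explicit preimage $x_i=w_j/k$ is just the primal form of the paper's support-function computation $h(u)=c\,b^{1-1/\alpha}\|u\|_{\alpha^*}$.

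The genuinely different step is exactness at $m=d$. The paper (\cref{gp:prop-full-lp}) averages the sandwich $(r/R_\alpha)^2\|x\|_\alpha^2\le\|Qx\|_2^2\le\|x\|_\beta^2$ over signed coordinate permutations. This replaces the middle form by $(\|Q\|_F^2/d)\|x\|_2^2$, after which the paper inserts the extremal ratios of $\ell_\alpha$ and $\ell_\beta$ to $\ell_2$. You instead use $d=\tr(Q^{-1}Q)\le\|Q^{-1}\|_F\|Q\|_F$ and bound both Frobenius norms by Rademacher averages and column norms.

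Both routes produce the same two estimates, $\|Q\|_F^2\le d^{\min\{1,2/\beta\}}$ and $\|Q\|_F^2\ge(r/R_\alpha)^2d^{\max\{1,2/\alpha\}}$. The paper's route needs both norms to be invariant under signed permutations. Yours uses only sign averages in one orthonormal basis, so the same computation bounds $\mathcal R_d$ from above for an arbitrary pair of norms, and it happens to be exact for the $\ell_\alpha/\ell_\beta$ pair.

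At the $\ell_1$ endpoint the difference is minor. The paper takes a Gaussian second moment, $m(r/R_1)^2\le\E\max_i\langle G,Qe_i\rangle^2\le2\log(2ed)$. Your spherical-cap bound plus union bound exhibits a bad direction directly and gives the slightly smaller constant $2\log(2d)$.

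One correction: the last sentence of the statement is not vacuous. ``Need not maximize'' asserts that non-optimal cases exist, so it needs a witness.
\begin{itemize}
\item The paper's witness is $\alpha=\beta=\infty$. There \eqref{gp:eq-linfty-image} gives radius $1/2$ at $m\asymp d$, against the block value $m^{-1/2}$.
\item An elementary witness is $\alpha=\beta=\infty$ with $d=4$, $m=2$. Take the four columns $Qe_j$ in directions at angles $j\pi/4$, scaled so that $\|Q\|_{\infty\to2}=1$. The image is an octagon with inradius $\cos(\pi/8)>2^{-1/2}$, which beats the block value $2^{-1/2}$.
\end{itemize}
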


The exponent vanishes exactly when $\beta\le2\le\alpha$, that is, when the
feasible unit ball contains the Euclidean unit ball and the regularity unit
ball is contained in the Euclidean unit ball; otherwise, an increase of
$1/\alpha$ above $1/2$ and a decrease of $1/\beta$ below $1/2$ contribute
equally to the exponent. The block factor $\lfloor d/m\rfloor^\gamma$ is
nontrivial only for $\beta<\alpha$, and it acts differently on the smooth and
on the nonsmooth bound. Write $\Psi=\gamma+\theta\le1$. In the smooth case
the product term is largest at $m=d$ (\cref{g:cor-lp-blocks}): a block map
can widen the accuracy window but cannot improve the bound when the
full-dimensional certificate is admissible. The nonsmooth profile changes
behavior at $\Psi=1/2$, so for $\Psi>1/2$ and
$(M_\beta R_\alpha/\eps)^2<d^{1+2\theta}$ an interior block dimension can
improve the bound relative to $m=d$ (\cref{g:cor-lp}). No single certificate
is optimal in every parameter regime. Euclidean sections give radius
$\asymp R_\alpha$ at a smaller dimension, which at $\alpha=\beta=\infty$
improves \eqref{g:eq-lp-radius} by $\asymp\sqrt d$ (\cref{gp:app}). The next
result records the bounds obtained from a \emph{single} certificate,
uniformly in $(\alpha,\beta)$.

\begin{corollary}[Explicit bounds on $\ell_\alpha$ balls]\label{g:cor-lp}
Let $\theta=\theta(\alpha,\beta)$ and let the query and success conventions be
those of \cref{gp:thm-norm,gp:thm-nonsmooth} with $\lambda=0$.
\emph{(a) Smooth.} For $0<\eps\le L_\beta R_\alpha^2/(32d^{2\theta})$,
\begin{equation}\label{g:eq-lp-smooth}
 T\ge\max\left\{\frac{5d}{16},\;\frac d{\Cstar}
 \min\left\{d,\sqrt{\frac{L_\beta R_\alpha^2}{d^{2\theta}\eps}}\right\}\right\},
\end{equation}
and, for every $\theta>0$, on the accuracy window
$0<\eps\le L_\beta R_\alpha^2/32$, which does not involve $d$,
\begin{equation}\label{g:eq-lp-lowacc}
 T\ge\frac5{16}\min\left\{d,\;
 \left\lfloor\left(\frac{L_\beta R_\alpha^2}{32\eps}\right)^{1/(2\theta)}
 \right\rfloor\right\}.
\end{equation}
\emph{(b) Nonsmooth.} If $d\ge m_Z^\circ$ and
$0<\eps\le\eps_ZM_\beta R_\alpha d^{-\theta}$,
\begin{equation}\label{g:eq-lp-nonsmooth}
 T\ge c_Z\,\mathcal P\!\left(d,\;
 \min\left\{d,\ \frac{M_\beta^2R_\alpha^2}{d^{2\theta}\eps^{2}}\right\}\right).
\end{equation}
The dimension $m=d$ need not give the strongest bound. Put $N=(M_\beta R_\alpha/\eps)^2$,
$\Psi=\gamma+\theta$ and $c=(Nd^{2\gamma})^{1/(1+2\Psi)}$. If $\Psi>1/2$ and
$m^\dagger=\min\{d,\lfloor(Nd^{2\gamma}/4)^{1/(1+2\Psi)}\rfloor\}\ge m_Z^\circ$,
then
\begin{equation}\label{g:eq-lp-peak}
 T\ge c_Z\,\frac{(m^\dagger)^2}{\log(e\,m^\dagger)} ,
\end{equation}
which can improve on \eqref{g:eq-lp-nonsmooth} only when $c<d$, that is, when
$\eps>M_\beta R_\alpha d^{-(1/2+\theta)}$; for $c\ge d$ the dimension $m=d$
already attains the cap stated below. Over the block family, the quantity
$m\sigma_m$, where
$\sigma_m=\min\{m,N\lfloor d/m\rfloor^{2\gamma}m^{-2\theta}\}$,
determines the right-hand side of \eqref{gp:eq-nonsmooth} up to the factor
$\log(ed)$ and satisfies
\begin{equation}\label{g:eq-ns-cap}
 \max_{1\le m\le d}m\sigma_m\le
 \begin{cases}
  d\min\{d,Nd^{-2\theta}\},&\Psi\le1/2,\\
  \min\{d,c\}^2,&\Psi>1/2.
 \end{cases}
\end{equation}
The first bound is attained at $m=d$. Here $\Psi=\theta$ exactly when
$\beta\ge\alpha$, and in particular whenever $\theta>1/2$
(\cref{g:lem-exponents}).
\end{corollary}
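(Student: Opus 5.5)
The plan is to derive every part from the transfer theorems \cref{gp:thm-norm,gp:thm-nonsmooth} (with $\lambda=0$) together with the image-radius certificate of \cref{g:prop-lp-radius}. The only remaining work is choosing the image dimension $m$ and optimizing elementary one-variable expressions.

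\emph{Smooth part.} For \eqref{g:eq-lp-smooth} take $m=d$ and use the exact radius $r_d=R_\alpha d^{-\theta}$. Then $Lr_d^2=L_\beta R_\alpha^2d^{-2\theta}$, so the window of \cref{gp:thm-norm} is exactly the stated one, and \eqref{gp:eq-norm} becomes \eqref{g:eq-lp-smooth}.

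For \eqref{g:eq-lp-lowacc} use the block certificate with $\gamma$ discarded (since $\lfloor d/m\rfloor^\gamma\ge1$), which gives $r_m\ge R_\alpha m^{-\theta}$. Set
\[
 m=\min\{d,\lfloor (L_\beta R_\alpha^2/(32\eps))^{1/(2\theta)}\rfloor\}.
\]
This choice satisfies $m\ge1$, because $\eps\le L_\beta R_\alpha^2/32$. It also guarantees $\eps\le L_\beta r_m^2/32$. Retaining only the baseline term $5m/16$ of \eqref{gp:eq-norm} then gives \eqref{g:eq-lp-lowacc}.

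\emph{Nonsmooth part.} For \eqref{g:eq-lp-nonsmooth}, apply \cref{gp:thm-nonsmooth} at $m=d$ and $\tau=0$. The window $\eps\le\eps_ZM_\beta r_d$ is exactly the hypothesis.

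For \eqref{g:eq-lp-peak}, apply the theorem at $m=m^\dagger$ with the certificate $r_m\ge R_\alpha\lfloor d/m\rfloor^\gamma m^{-\theta}$. Use $\lfloor d/m\rfloor\ge d/(2m)$; I expect this is why a factor $4$ appears in $m^\dagger$. The choice of $m^\dagger$ then gives
\[
 N\lfloor d/m\rfloor^{2\gamma}m^{-2\theta}\ge Nd^{2\gamma}2^{-2\gamma}m^{-2\Psi}\ge m,
\]
so $\sigma=m$ and $\mathcal P(m,m)=m^2/\log(em)$.

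\emph{Main obstacle.} The step I expect to require care is the accuracy window $\eps\le\eps_ZM_\beta r_m$ at $m^\dagger$. The plan is to deduce it from $\sigma_m\ge m\ge m_Z^\circ$, which amounts to $(M r_m/\eps)^2\ge m$. If $\eps_Z<1$ this does not suffice directly, and I would instead fold the factor $\eps_Z^{-2}$ into $N$ (shrinking $m^\dagger$ accordingly) or into $c_Z$.

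\emph{Cap \eqref{g:eq-ns-cap} and remaining remarks.} For the cap, bound $\lfloor d/m\rfloor^{2\gamma}\le (d/m)^{2\gamma}$, so that
\[
 m\sigma_m\le\min\{m^2,\,Nd^{2\gamma}m^{1-2\Psi}\}.
\]
If $\Psi\le1/2$, the second term is nondecreasing in $m$. Hence the maximum over $m$ is at most its value at $m=d$, namely $d\min\{d,Nd^{-2\theta}\}$, and this value is attained there.

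If $\Psi>1/2$, the first term increases and the second decreases in $m$. They cross at $m=c$, which yields $\min\{d,c\}^2$. The inequality $c<d$ is equivalent to $N<d^{1+2\theta}$, i.e.\ $\eps>M_\beta R_\alpha d^{-(1/2+\theta)}$, which gives the stated condition for improvement.

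Finally, the identification $\Psi=\theta$ iff $\beta\ge\alpha$ follows from the definition of $\gamma$. The claim that this holds whenever $\theta>1/2$ is cited from \cref{g:lem-exponents}.
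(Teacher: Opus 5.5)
You follow the paper's proof almost line by line. For \eqref{g:eq-lp-smooth} and \eqref{g:eq-lp-nonsmooth} you take $m=d$ with $r_d=R_\alpha d^{-\theta}$. For \eqref{g:eq-lp-lowacc} you take the largest admissible $m\le m_{\max}$ and keep only the baseline. For the cap you use $\lfloor d/m\rfloor\le d/m$ to get $m\sigma_m\le\min\{m^2,Nd^{2\gamma}m^{1-2\Psi}\}$. For $\sigma_{m^\dagger}=m^\dagger$ you use $\lfloor d/m\rfloor\ge d/(2m)$; state $\gamma\le1$ explicitly, since that is what turns $2^{-2\gamma}$ into the factor $1/4$.

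\textbf{The gap.} The one step you leave open is the accuracy window at $m^\dagger$, and neither remedy you suggest proves the statement as written. Replacing $N$ by $\eps_Z^2N$ changes the definition of $m^\dagger$, so you would prove a different, weaker inequality. Shrinking $c_Z$ cannot help either. The condition $\eps\le\eps_ZM_\beta r_{m^\dagger}$ is a hypothesis of \cref{gp:thm-nonsmooth}, not a constant in its conclusion. In that theorem's proof it is exactly what guarantees $a_Z\sigma\ge2$, hence $k\ge1$.

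\textbf{The fix.} The universal constants are normalized so that your direct route works. Since $\eps_Z=\sqrt{a_Z/2}$, you have $\eps_Z^{-2}=2/a_Z$. By definition $m_Z^\circ=\max\{m_Z,\lceil c_{\rm prob}^{-1}\rceil,\lceil 2/a_Z\rceil\}$, so $m_Z^\circ\ge\eps_Z^{-2}$. Therefore $N\lfloor d/m^\dagger\rfloor^{2\gamma}(m^\dagger)^{-2\theta}\ge m^\dagger\ge m_Z^\circ\ge\eps_Z^{-2}$. This is precisely $\eps\le\eps_ZM_\beta R_\alpha\lfloor d/m^\dagger\rfloor^{\gamma}(m^\dagger)^{-\theta}$, so \cref{gp:thm-nonsmooth} applies at $m^\dagger$ with no change to $m^\dagger$ or $c_Z$. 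This is how the paper closes the step.

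The same observation checks the $m=d$ window in your remark about $c\ge d$: then $Nd^{-2\theta}\ge d\ge m_Z^\circ\ge\eps_Z^{-2}$. Finally, add the one-line remark that $1\le\log(e\sigma_m)\le\log(ed)$ for admissible $m$. That is what lets $m\sigma_m$ determine the right-hand side of \eqref{gp:eq-nonsmooth} up to the factor $\log(ed)$, as the statement asserts.
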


\begin{table}[tb]
\centering\small
\caption{Independent choices of the feasible ball and the regularity norm,
with constants indexed by their associated norms as in
\eqref{g:eq-constants}: $R_\alpha$ is an $\ell_\alpha$ radius, and
$L_\beta,M_\beta$ use $\ell_\beta$ and its dual. These constants can differ
for one objective, so rows sharing $\theta$ need not agree
(\cref{g:rem-constants}). Entries are the product branches of
\eqref{g:eq-lp-smooth} and \eqref{g:eq-lp-nonsmooth}, active for
$L_\beta R_\alpha^2d^{-2\theta-2}\le\eps\le L_\beta R_\alpha^2d^{-2\theta}/1280^2$
and for
$M_\beta R_\alpha d^{-\theta-1/2}\le\eps\le\eps_ZM_\beta R_\alpha d^{-\theta}$,
$d\ge m_Z^\circ$; outside these regimes the relevant branches are $d^2/\Cstar$, $5d/16$, or
\eqref{g:eq-lp-lowacc}. The nonsmooth column omits the denominator
$\log(e\sigma)\le\log(ed)$ of \eqref{g:eq-lp-nonsmooth}, as its header indicates;
$c_Z,\eps_Z,m_Z^\circ$ are universal (\cref{g:app}). All rows use the
certificate \eqref{g:eq-lp-radius}; in the $p$ row, $q$ is the conjugate
exponent of $p$. The full-dimensional radii are exact. The matching complexity
regimes are specified in \cref{sec:matching}.}
\label{tab:lp}
\begin{tabularx}{\textwidth}{@{}l l c >{\raggedright\arraybackslash}X
 >{\raggedright\arraybackslash}X@{}}
\toprule
Minimizer in & Regularity constant & $\theta$
 & Smooth: $T\ge$ & Nonsmooth: $\log(ed)\cdot T\gtrsim$\\
\midrule
$\mathbb B_\alpha^d(R_\alpha)$, $\alpha\ge2$ & $L_2,M_2:\ \ell_2\!\to\!\ell_2$ & $0$
 & $dR_\alpha\sqrt{L_2/\eps}/\Cstar$ & $c_Z\,dM_2^2R_\alpha^2/\eps^2$\\
$\mathbb B_1^d(R_1)$ & $L_2,M_2:\ \ell_2\!\to\!\ell_2$ & $1/2$
 & $\sqrt d\,R_1\sqrt{L_2/\eps}/\Cstar$ & $c_Z\,M_2^2R_1^2/\eps^2$\\
$\mathbb B_1^d(R_1)$ & $L_1,M_1:\ \ell_1\!\to\!\ell_\infty$ & $1/2$
 & $\sqrt d\,R_1\sqrt{L_1/\eps}/\Cstar$ & $c_Z\,M_1^2R_1^2/\eps^2$\\
$\mathbb B_1^d(R_1)$ & $L_\infty,M_\infty:\ \ell_\infty\!\to\!\ell_1$ & $1$
 & $R_1\sqrt{L_\infty/\eps}/\Cstar$ & $c_Z\,M_\infty^2R_1^2/(d\eps^2)$\\
$\mathbb B_p^d(R_p)$, $p\le2$ & $L_\beta,M_\beta$, $\beta\in\{p,2\}$ & $\tfrac1p-\tfrac12$
 & $d^{\frac12+\frac1q}R_p\sqrt{L_\beta/\eps}/\Cstar$
 & $c_Z\,d^{2/q}M_\beta^2R_p^2/\eps^2$\\
Any $\alpha,\beta$ & $L_\beta,M_\beta:\ \ell_\beta\!\to\!\ell_{\beta^*}$ & \eqref{g:eq-theta}
 & $d^{1-\theta}R_\alpha\sqrt{L_\beta/\eps}/\Cstar$
 & $c_Z\,d^{1-2\theta}M_\beta^2R_\alpha^2/\eps^2$\\
\bottomrule
\end{tabularx}
\end{table}

\begin{figure}[tb]
\centering
\includegraphics[width=0.97\linewidth]{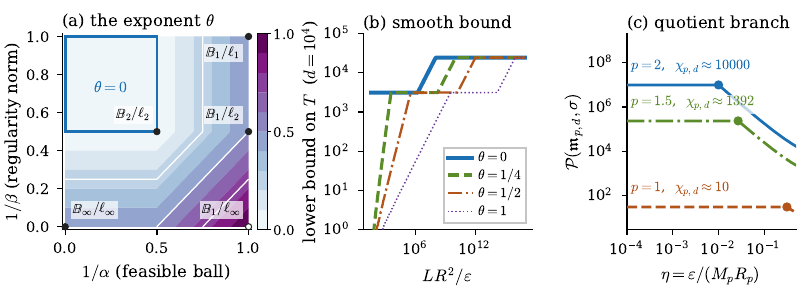}
\caption{\textbf{(a)} The exponent $\theta(\alpha,\beta)$ of \eqref{g:eq-theta},
with the rows of \cref{tab:lp} marked: it is zero on $\beta\le2\le\alpha$ and
rises to $1$ when the feasible set is an $\ell_1$ ball and regularity is
measured in $\ell_\infty$. \textbf{(b)} The proved smooth bound at
$d=10^{4}$: for $\theta>0$ the branch \eqref{g:eq-lp-lowacc} has log--log
slope $1/(2\theta)$, which delays the product branch and the $d^2/\Cstar$
plateau by $d^{2\theta}$. \textbf{(c)} The quotient branch
\eqref{g:eq-quotient-ns} at $d=10^4$: its plateau is
$\mathfrak m_{p,d}^2/\log(e\mathfrak m_{p,d})$ up to the marked regime change
$\eta=\mathfrak m_{p,d}^{-1/2}$; the other branch is
$\mathfrak m_{p,d}\eta^{-2}/\log(e\eta^{-2})$, while the effective dimension
decreases from $d$ at $p=2$ to $\log(ed)$ at $p=1$. The panel plots
$\chi_{p,d}$ and sets $r_G=1$; $\mathfrak m_{p,d}=\Theta(\chi_{p,d})$.
All three panels plot analytical rate expressions with universal
constants suppressed.}
\label{fig:geometry}
\end{figure}

\paragraph{Example: an $\ell_1$ ball with gradient regularity in $\ell_2$.}
Here $\alpha=1$, $\beta=2$, $\gamma=0$ and $\theta=1/2$: the minimizer
satisfies $\|x^\star\|_1\le R_1$, while the gradient is Lipschitz in $\ell_2$
with constant $L_2$. The certificate is the identity at $m=d$: it does not
expand $\ell_2$, and $\mathbb B_1^d(R_1)\supseteq(R_1/\sqrt d)\mathbb B_2^d$,
so in fact $r_d=R_1/\sqrt d$ exactly (\cref{gp:prop-full-lp}); at image
dimensions $m<d$, every $Q$ has image radius at most
$R_1\sqrt{2\log(2ed)/m}$. Substitution gives
$T\ge\max\{5d/16,\ (d/\Cstar)\min\{d,\sqrt{L_2R_1^2/(d\eps)}\}\}$ for
$\eps\le L_2R_1^2/(32d)$, with the $d^2$ plateau for
$\eps\le L_2R_1^2/d^{3}$. For every $\eps\le L_2R_1^2/32$, the bound with a
dimension-free window in \eqref{g:eq-lp-lowacc} gives
$T\ge\frac5{16}\min\{d,\lfloor L_2R_1^2/(32\eps)\rfloor\}$. On the subrange
$L_2R_1^2/(32d)\le\eps\le L_2R_1^2/32$, where this minimum is not capped, the
bound has the same accuracy dependence as the $O(L_2R_1^2/\eps)$ iteration
bound of conditional-gradient methods in this setting, and it is smaller
than their $O(dL_2R_1^2/\eps)$ value bound by a factor of order $d$.
For an $M_2$-Lipschitz $g$ on the same ball, \eqref{g:eq-lp-nonsmooth} gives
$T\ge c_Z\mathcal P(d,\min\{d,M_2^2R_1^2/(d\eps^2)\})$, provided
$d\ge m_Z^\circ$ and $\eps\le\eps_ZM_2R_1d^{-1/2}$.

\paragraph{The nonsmooth case on a non-Euclidean ball: a second certificate.}
At full dimension, the block map of \eqref{g:eq-lp-radius} incurs a
reduction in radius. On the diagonal $\alpha=\beta=p\le2$, where
$\theta=1/p-1/2$ and $\gamma=0$, one can instead reduce the dimension and
obtain an \emph{almost Euclidean} image whose inradius is a constant fraction
of $R_p$. With $q=p/(p-1)$, interpreted as $q=\infty$ at $p=1$,
$1/\infty=0$, and $\bar q=\min\{q,\log(ed)\}$, put
\begin{equation}\label{g:eq-eff-dim}
 \chi_{p,d}=\min\{d,\ \bar q\,d^{2/q}\},\qquad
 \mathfrak m_{p,d}=\max\{1,\lfloor c_D\chi_{p,d}\rfloor\}\quad(c_D\text{ universal}),
\end{equation}
the \emph{effective Euclidean dimension} of $\mathbb B_p^d$: it is of order $d$
at $p=2$ but only of order $\log(ed)$ at $p=1$. The next proposition constructs a constant-radius image of this
dimension.

\begin{proposition}[Almost-Euclidean quotient]\label{g:prop-quotient}
There are universal $c_D,r_G\in(0,1)$ such that
$\mathcal R_m(\mathbb B_p^d,\mathbb B_p^d(R_p))\ge r_GR_p$ at
$m=\mathfrak m_{p,d}$, for all $1\le p\le2$ and $d\ge2$; a Gaussian realization
also has Euclidean spectral scale
$\rho_{p,d}=\|Q^*\|_{2\to2}\asymp d^{1/2-1/q}/\sqrt{\bar q}$
(\cref{g:prop-flm,g:prop-gauss-rho}).
\end{proposition}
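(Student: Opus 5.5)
The plan is to dualize the image-radius definition and then use a random Gaussian quotient map, reducing everything to Dvoretzky-type (Figiel--Lindenstrauss--Milman) estimates for the dual ball $\mathbb B_q^d$. For a symmetric body, $z=0$ is optimal (\cref{gp:lem-attainment}), so $r\mathbb B_2^m\subseteq Q(\mathbb B_p^d(R_p))$ holds iff $\|Q^*v\|_q\ge (r/R_p)\|v\|_2$ for all $v\in\R^m$, by support-function duality ($h_{Q(K)}(v)=R_p\|Q^*v\|_q$). The operator constraint is dual in the same way: $\|Q\|_{B\to2}\le1$ with $B=\mathbb B_p^d$ means $\max_{\|v\|_2=1}\|Q^*v\|_q\le 1$. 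The target is therefore an $m$-dimensional subspace $E=Q^*\R^m\subseteq(\R^d,\|\cdot\|_q)$ on which $\|\cdot\|_q$ is $C$-equivalent to a multiple of $\|\cdot\|_2$, with $m=\mathfrak m_{p,d}$. After normalizing $Q^*$ so that the upper constant is $1$, the image radius is at least $R_p/C$, and one sets $r_G=1/C$.

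To realize $E$, take $Q^*=\lambda G$ with $G$ a $d\times m$ standard Gaussian matrix. For fixed unit $v$, $Gv$ is a standard Gaussian vector in $\R^d$. By Gaussian concentration for the $1$-Lipschitz-in-$\ell_2$ (since $q\ge2$) function $\|\cdot\|_q$, $\|Gv\|_q$ concentrates around $\E\|g\|_q$ with fluctuations of order $1$ (more precisely, governed by $\sigma=\max_{\|w\|_{p}\le1}\|w\|_2=1$). The Dvoretzky dimension of $\ell_q^d$ is $k_q\asymp(\E\|g\|_q)^2$. One uses $\E\|g\|_q\asymp \sqrt q\,d^{1/q}$ for $q\le\log d$ and $\asymp\sqrt{\log d}$ for $q\ge\log d$, i.e.\ $(\E\|g\|_q)^2\asymp\bar q\,d^{2/q}$, capped at $d$. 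An $\varepsilon$-net argument on $S^{m-1}$ of size $(3/\varepsilon)^m$, combined with the concentration tail $e^{-ct^2}$ at deviation $t\asymp(\E\|g\|_q)/4$, then shows that with positive probability $\tfrac12\E\|g\|_q\le\|Gv\|_q\le\tfrac32\E\|g\|_q$ for all unit $v$, provided $m\le c_D\chi_{p,d}$. Choosing $\lambda=(\tfrac32\E\|g\|_q)^{-1}$ gives $\|Q\|_{B\to2}\le1$ and radius $\ge R_p/3$.

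The spectral scale $\rho_{p,d}=\|Q^*\|_{2\to2}=\lambda\|G\|_{\mathrm{op}}\asymp\lambda\sqrt d$ follows from the Gaussian operator-norm bound \eqref{r12:eq-gaussian-norm}, which holds simultaneously with high probability because $m\le d$. Combined with $\lambda\asymp1/(\sqrt{\bar q}\,d^{1/q})$, this yields $d^{1/2-1/q}/\sqrt{\bar q}$. The trivial cases ($\mathfrak m_{p,d}=1$, or $\chi_{p,d}=d$ near $p=2$, where the identity map or a single coordinate functional already works) are handled directly.

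The main obstacle is establishing uniform constants across the whole range $1\le p\le2$, in particular the transition at $q\approx\log d$ and the endpoint $q=\infty$. There the Lipschitz constant of $\|\cdot\|_\infty$ relative to the mean is poorest, and net arguments must avoid losing more than a universal factor. I would first try the Schechtman/Paouris-style bound $k(\ell_q^d)\gtrsim\min\{d,q\,d^{2/q}\}$ for $q\le c\log d$, deduce the range $q\ge c\log d$ by comparison with $\ell_\infty$ via $\|x\|_\infty\le\|x\|_q\le d^{1/q}\|x\|_\infty$ (where $d^{1/q}\le e^{c}$), and check that the resulting constants are monotone and absorbable into $c_D$ and $r_G$.
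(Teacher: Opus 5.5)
Your proposal is correct and is essentially the paper's Gaussian-realization proof (\cref{g:prop-gauss-rho}): a $d\times m$ Gaussian matrix, concentration of the $1$-Lipschitz map $A\mapsto\|Au\|_q$, an exponential net, the uniform mean estimate $\E\|G\|_q\asymp\sqrt{\bar q}\,d^{1/q}$, and normalization by that mean. The paper proves the mean estimate self-containedly in \cref{g:lem-sphere}, via Paley--Zygmund for $q\le\log(ed)$ and your $\ell_\infty$ comparison beyond, and the resulting certificate also covers the radius-$\tfrac12$ FLM certificate of \cref{g:prop-flm}.

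One step needs fixing. \eqref{r12:eq-gaussian-norm} gives only the upper bound $\|G\|_{\mathrm{op}}\lesssim\sqrt d$, while $\rho_{p,d}\asymp d^{1/2-1/q}/\sqrt{\bar q}$ also needs the lower bound. That lower bound follows from a single column, $\|Ge_1\|_2^2\sim\chi^2_d$. You should also quantify the net event, for example probability at least $3/4$, so that it can be intersected with both operator-norm events.
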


Write $t_{p,d}=\max\{p,1+1/\log(ed)\}$ and
$a_{p,d}=d^{2(1/p-1/t_{p,d})}/(t_{p,d}-1)$ for the Lipschitz budget of the prox
function $\omega=\tfrac{a_{p,d}}2\|x\|_{t_{p,d}}^2$, which is one-strongly convex
relative to $\ell_p$; one has $a_{p,d}\asymp\bar q$ (\cref{g:lem-prox}).

\begin{corollary}[Nonsmooth $\ell_p$ bounds in a dimension-free window]
\label{g:cor-quotient-ns}
Let $m=\mathfrak m_{p,d}\ge m_Z^\circ$ and $N=(M_pR_p/\eps)^2$. For every
$0<\eps\le\eps_ZM_pR_pr_G$, a condition involving \emph{no} power of $d$,
\begin{equation}\label{g:eq-quotient-ns}
 T\ \ge\ c_Zr_G^2\,\mathcal P\big(m,\ \min\{m,N\}\big),
\end{equation}
so that, up to universal constants and the displayed logarithm,
\begin{equation}\label{g:eq-endpoints}
 p=2:\ \ T\gtrsim\frac{d\min\{d,N\}}{\log(e\min\{d,N\})},
 \qquad
 p=1:\ \ T\gtrsim\frac{\ell\min\{\ell,N\}}{\log(e\min\{\ell,N\})},
 \quad \ell=\log(ed).
\end{equation}
If the function must also be $M_2$-Lipschitz in the Euclidean norm, the same
bound holds with $M_pR_p$ replaced by $\Lambda=R_p\min\{M_p,M_2/\rho_{p,d}\}$,
for the Gaussian realization of \cref{g:prop-gauss-rho}
(\cref{g:cor-two-scale}). If instead the objective is $\mu$-strongly convex
relative to $\ell_p$ and $\mu a_{p,d}R_p\le\theta_0M_p$ for a fixed
$\theta_0<1$, then for $\eps\le c_{\theta_0}\min\{M_pR_p,M_p^2/(\mu a_{p,d})\}$,
\begin{equation}\label{g:eq-quotient-sc}
 T\ \gtrsim_{\theta_0}\ \mathcal P(m,\Xi),\qquad
 \Xi=\min\left\{m,\ N,\ \frac{M_p^2}{\mu a_{p,d}\eps}\right\},
\end{equation}
(\cref{g:lem-prox,g:cor-sc-readable}).
When both Lipschitz bounds and strong convexity are required simultaneously,
the regularizer contribution must be included in both budgets, as in
\cref{g:cor-two-scale}; substituting $\Lambda$ alone is insufficient.
\end{corollary}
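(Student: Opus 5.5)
The plan is to specialize \cref{gp:thm-nonsmooth} to $B=\mathbb B_p^d$, $K=\mathbb B_p^d(R_p)$ and the single image dimension $m=\mathfrak m_{p,d}$, and to use \cref{g:prop-quotient} as the radius certificate. With $\lambda=0$ (any $\tau\ge0$, since $\psi\equiv0$), \cref{g:prop-quotient} gives $r_m\ge r_GR_p$. The window $\eps\le\eps_ZM_pr_m$ of \cref{gp:thm-nonsmooth} then follows from $\eps\le\eps_ZM_pR_pr_G$, which contains no power of $d$. The theorem yields $T\ge c_Z\mathcal P(m,\min\{m,\sigma\})$ with $\sigma=(M_pr_m/\eps)^2\ge r_G^2N$.

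Next I would compare $\mathcal P$ at the two arguments. Since $\mathcal P(m,\cdot)$ is increasing on $[1,\infty)$, we have $\min\{m,\sigma\}\ge\min\{m,r_G^2N\}\ge r_G^2\min\{m,N\}$, using $r_G<1$. For $0<c\le1$ and $c\sigma'\ge1$, the inequality $\log(ec\sigma')\le\log(e\sigma')$ gives $\mathcal P(m,c\sigma')\ge c\,\mathcal P(m,\sigma')$. Together these give \eqref{g:eq-quotient-ns}. The degenerate case $r_G^2\min\{m,N\}<1$ needs separate handling. There the right-hand side of \eqref{g:eq-quotient-ns} is $O(c_Zm)$, and I would cover it either by the window, which forces $N\ge(\eps_Zr_G)^{-2}$, or by shrinking $c_Z$ and invoking the trivial $\Omega(m)$ dimension cost. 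The endpoint cases \eqref{g:eq-endpoints} are direct substitutions. At $p=2$ we have $q=2$ and $\chi_{2,d}=d$, so $m\asymp d$. At $p=1$ we have $q=\infty$, $\bar q=\log(ed)$ and $d^{2/q}=1$, so $m\asymp\log(ed)$. In both cases $\mathcal P(m,\min\{m,N\})\asymp m\min\{m,N\}/\log(e\min\{m,N\})$.

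For the two-scale Euclidean variant, I would rerun the pullback in \cref{gp:thm-nonsmooth} with the Gaussian map $Q$ of \cref{g:prop-gauss-rho}. A source function $h$ on $\R^m$ that is $M_{\rm src}$-Lipschitz in $\ell_2$ pulls back to $h\circ Q$. This pullback is $M_{\rm src}\|Q\|_{B\to2}\le M_{\rm src}$ Lipschitz in $\ell_p$, and $M_{\rm src}\|Q\|_{2\to2}=M_{\rm src}\rho_{p,d}$ Lipschitz in $\ell_2$. Taking $M_{\rm src}=\min\{M_p,M_2/\rho_{p,d}\}$ therefore satisfies both constraints, and the argument above goes through with $M_pR_p$ replaced by $\Lambda$.

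The strongly convex statement is the step I expect to be the main obstacle. I would treat it as a composite instance of \cref{gp:thm-nonsmooth} with known penalty $\psi=\omega-\min_K\omega$ and $\lambda=\mu$. By \cref{g:lem-prox}, $\mu\omega$ supplies $\mu$-strong convexity relative to $\ell_p$. Its Lipschitz constant on $K$ is of order $\mu a_{p,d}R_p\le\theta_0M_p$, so assigning the unknown part the budget $(1-\theta_0)M_p$ keeps the total within $M_p$. This splitting is where the constant $\gtrsim_{\theta_0}$ comes from.

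The sublevel $K_\tau=\{\psi\le\tau\}\cap K$ contains an $\ell_t$-ball of radius $\sqrt{2\tau/a_{p,d}}$, and hence a comparable $\ell_p$-ball, with $t=t_{p,d}$ and $d^{1/p-1/t}$ bounded by a constant in this regime. Applying \cref{g:prop-quotient} inside this smaller ball gives $r_m(\tau)\gtrsim r_G\min\{R_p,\sqrt{\tau/a_{p,d}}\}$. I would choose $\tau\asymp\eps/\mu$, so that the source accuracy is $\eps+\mu\tau\asymp\eps$ and
\[
(M_pr_m(\tau)/(\eps+\mu\tau))^2\gtrsim\min\{N,\ M_p^2/(\mu a_{p,d}\eps)\}.
\]
This produces $\Xi$, and the window of \cref{gp:thm-nonsmooth} reduces to $\eps\le c_{\theta_0}\min\{M_pR_p,M_p^2/(\mu a_{p,d})\}$. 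The delicate points are verifying the sublevel containment uniformly in $p\in[1,2]$, including the $t=1+1/\log(ed)$ regime, and checking that the two Lipschitz budgets interact as claimed. The final remark about needing both budgets simultaneously follows by combining this splitting with the two-scale pullback.
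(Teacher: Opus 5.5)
Your proposal is correct and is essentially the paper's route. The convex bound is the pullback transfer (\cref{g:thm-nonsmooth}, equivalently \cref{gp:thm-nonsmooth} with $\lambda=0$) applied to the Gaussian quotient certificate, the $\Lambda$ version is \cref{g:cor-two-scale}, and your composite choice $\psi=\omega$, $\tau=\eps/(2\mu)$ reproduces exactly the rescaled construction $\tfrac{M_0R_\eps}{2}\Psi_{r,h}(Q(x/R_\eps))$ behind \cref{g:cor-sc-readable}; the containment $\mathbb B_p^d(\sqrt{2\tau/a_{p,d}})\subseteq\{\omega\le\tau\}$ is immediate from $\|\cdot\|_t\le\|\cdot\|_p$, so no uniformity in $p$ is needed.

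Your ``degenerate case'' disappears if you compare $\mathcal P(m,\min\{m,r_G^2N\})$ with $r_G^2\mathcal P(m,\min\{m,N\})$ directly, as the paper does after \cref{g:prop-flm}: the numerator loses at most a factor $r_G^2$, the logarithm cannot grow, and $\min\{m,r_G^2N\}\ge1$ because $m\ge m_Z^\circ$ and the window gives $r_G^2N\ge\eps_Z^{-2}$. By contrast, your intermediate quantity $r_G^2\min\{m,N\}$ can fall below $1$ when $r_G^2m<1$, which the window does not prevent, and shrinking $c_Z$ would change the stated constant.
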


The two certificates trade dimension against radius differently, and
neither dominates the other, so the bound we state is the envelope of the
two. Write $\eta=\eps/(M_pR_p)$. For $p<2$, a coordinate certificate of
dimension $m$ has radius $R_pm^{-\theta}$, so at accuracy $\eta$ it is
admissible only for $m\le(\eps_Z/\eta)^{1/\theta}$, whereas the quotient
certificate keeps a constant radius at dimension $\mathfrak m_{p,d}$. At the
coarse end $\eta=\eps_Zr_G$, put
$t=r_G^{-1/\theta}$, $J=\min\{d,\lfloor t\rfloor\}$,
$s=\eps_Z^{-2}$, and $m_Q=\mathfrak m_{p,d}\ge m_Z^\circ$.
The exact certificate expressions of \cref{gp:thm-nonsmooth}, before the
constants are simplified in \eqref{g:eq-quotient-ns}, have the capped
parameters
\[
 \sigma_j=\min\{j,s(t/j)^{2\theta}\}\quad\text{for coordinates},\qquad
 \sigma_Q=\min\{m_Q,s\}\quad\text{for the quotient}.
\]
Only coordinate dimensions $m_Z^\circ\le j\le J$ are admissible.
The quotient certificate is strictly larger than the envelope of the
coordinate certificates exactly when $m_Q>J$. To prove this comparison, write $a=2\theta\in(0,1]$ and consider
$F(v)=\mathcal P(v,\min\{v,s(t/v)^a\})$ for real $1\le v\le t$.
On its capped branch, $F(v)=v^2/\log(ev)$ is increasing. On its other branch,
\[
 F(v)=\frac{st^av^{1-a}}{\log(e\,st^av^{-a})}
\]
is increasing as well, since the numerator is nondecreasing and the positive
denominator decreases. Hence $F(J)$ is the largest coordinate expression
whenever $J\ge m_Z^\circ$. If $m_Q>J$, then $J=\lfloor t\rfloor$ and $m_Q>t$,
so
\[
 F(J)\le F(t)=\mathcal P(t,\min\{t,s\})
 <\mathcal P(m_Q,\min\{m_Q,s\}),
\]
where the last inequality uses the strict increase in the displayed
dimension. If $J<m_Z^\circ$, there is no admissible coordinate certificate to
compare with. Conversely, when $m_Q\le J$, the coordinate certificate of
dimension $m_Q$ has radius at least $r_G$ and therefore gives at least the
quotient bound. For each fixed $p<2$, the inequality $m_Q>J$ holds for all
sufficiently large $d$, although the threshold grows as $p\uparrow2$.
At sufficiently high accuracy the comparison reverses: when
$\eta\le\eps_Zd^{-\theta}$, the identity is admissible and
\eqref{g:eq-lp-nonsmooth} can reach $d^2/\log(ed)$ as the accuracy increases,
which the quotient certificate cannot attain when $\mathfrak m_{p,d}<d$ ---
that is, for fixed $p<2$ at large $d$, and at every $d\ge2$ when $p=1$. The
quotient profile has the plateau $\mathfrak m_{p,d}^2/\log(e\mathfrak m_{p,d})$
until $\eta\asymp\mathfrak m_{p,d}^{-1/2}$; its other branch is proportional
to $\mathfrak m_{p,d}\eta^{-2}/\log(e\eta^{-2})$ (\cref{fig:geometry}c). The
Euclidean scale also depends on the geometry: at $p=1$, the quotient has
logarithmic dimension and spectral scale $\rho_{1,d}\asymp\sqrt{d/\log(ed)}$.
The Euclidean subgradient bound enters through this spectral scale
(\cref{g:rem-normalization}).

\paragraph{A regularizer with nonuniform curvature.}
For $B=\mathbb B_2^d$, $K=R\mathbb B_2^d$ and $\psi(x)=x^\top Hx/2$ with eigenvalues
$0<\nu_1\le\cdots\le\nu_d$, the actual sublevel sets give
$r_m(\tau)=\min\{R,\sqrt{2\tau/\nu_m}\}$ on the $m$ eigendirections of lowest
curvature, and \eqref{gp:eq-composite} is maximized at $\tau=\nu_mR^2/2$. For
$H=\diag(I_n,AI_n)$, with the remaining parameters chosen as in
\cref{gp:prop-gain,gp:prop-ns-gain}, this improves the bound certified by the
scalar majorant $A\|x\|_2^2/2$ by a factor $\sqrt{A+1}/(2\sqrt2)$ when $A>7$,
and by a factor $A/4$ in the nonsmooth case when $A>4$; the two families use
different dimensions and regularity constants
(\cref{gp:cor-spectral,gp:prop-gain,gp:prop-ns-gain}). Schatten and endpoint
image radii are computed in \cref{gp:app}. Taking the maximum over the image profiles combines their lower
bounds on the same complexity.

\section{Euclidean images and sublevels of known regularizers}\label{gp:app}

This appendix proves the norm and known-regularizer transfers of
\cref{gp:thm-norm,gp:thm-composite}, the spectral corollary in
\cref{gp:cor-spectral}, and the geometric examples. All smooth information
bounds below rest on \cref{thm:main} alone, with $\Cstar=4096$. The geometric statements compute or bound the image radius
$\mathcal R_m(B,C)$ of \eqref{gp:eq-radius}. \Cref{sec:matching}
identifies the resulting matching complexity regimes.

\subsection{Attainment, centers, and smooth oracle transfers}

Let $B$ be the unit ball of a norm $\|\cdot\|$ on $\R^d$, and let
$\|\cdot\|_*$ be its dual norm. For $Q:\R^d\to\R^m$, write
$\|Q\|_{B\to2}=\sup_{x\in B}\|Qx\|_2$. Its Euclidean adjoint satisfies
\begin{equation}\label{gp:eq-adjoint}
 \|Q^*u\|_*\le\|Q\|_{B\to2}\|u\|_2.
\end{equation}
Indeed, the left-hand side equals $\sup_{x\in B}\langle u,Qx\rangle$.

\begin{lemma}[Attainment and elementary invariances]\label{gp:lem-attainment}
Let $C\subset\R^d$ be nonempty, compact, and convex, and let $1\le m\le d$.
The maximum defining $\mathcal R_m(B,C)$ is attained. For $c\in\R^d$ and
$t>0$,
\[
 \mathcal R_m(B,c+tC)=t\mathcal R_m(B,C).
\]
The radius is monotone in $C$ under inclusion. If $C=-C$, optimizing over
translated inscribed balls gives the same result as requiring their centers
to be zero. A positive radius requires $\operatorname{rank}Q=m$ for every
certificate attaining the maximum, and $\mathcal R_m(B,C)=0$ when
$m>\dim\operatorname{aff}C$.
\end{lemma}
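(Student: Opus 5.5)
The plan is to treat each assertion separately. For attainment, we first reduce to compact parameter sets. The set $\{Q:\|Q\|_{B\to2}\le1\}$ is compact in the finite-dimensional space of linear maps $\R^d\to\R^m$, since $\|\cdot\|_{B\to2}$ is a norm on that space. For fixed $Q$, the set $Q(C)$ is compact and convex, and every admissible center $z$ lies in $Q(C)$, hence in the bounded set $\{\|z\|_2\le\max_{x\in C}\|x\|_2\cdot\max_{\|Q\|\le1}\|Q\|_{\mathrm{op}}\}$. The feasible triples $(Q,z,r)$ with $z+r\mathbb B_2^m\subseteq Q(C)$ form a closed set. To check closedness, take limits $(Q_k,z_k,r_k)\to(Q,z,r)$ and a point $z+rv$ with $\|v\|_2\le1$. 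Write $z_k+r_kv=Q_kx_k$ with $x_k\in C$, pass to a subsequence $x_k\to x\in C$, and obtain $z+rv=Qx$. The function $r$ is continuous on this compact feasible set, so the maximum is attained. This closedness step is the only one that needs care.

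For the invariances, note that $Q(c+tC)=Qc+tQ(C)$. Hence $z+r\mathbb B_2^m\subseteq Q(C)$ if and only if $Qc+tz+tr\mathbb B_2^m\subseteq Q(c+tC)$, with the same admissible $Q$, and this gives the scaling law. Monotonicity is immediate: if $C\subseteq C'$, then $Q(C)\subseteq Q(C')$ for every $Q$, so every certificate for $C$ is one for $C'$.

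For symmetric $C$, the set $Q(C)$ is symmetric and convex. If $z+r\mathbb B_2^m\subseteq Q(C)$, then by symmetry $-z+r\mathbb B_2^m\subseteq Q(C)$ as well. Averaging the two inclusions by convexity, using $\tfrac12(z+rv)+\tfrac12(-z+rv)=rv$, gives $r\mathbb B_2^m\subseteq Q(C)$. So centering at zero loses nothing.

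For the rank claim, suppose a certificate attains a positive radius. Then $Q(C)$ contains a Euclidean ball in $\R^m$ and so has nonempty interior. The image $Q(\R^d)$ then contains an open set, so $\operatorname{rank}Q=m$. Similarly, $Q(C)$ lies in the affine set $Q(\operatorname{aff}C)$, whose dimension is at most $\dim\operatorname{aff}C$. If $m>\dim\operatorname{aff}C$, this affine set is a proper subset of $\R^m$ and contains no ball of positive radius, so $\mathcal R_m(B,C)=0$.
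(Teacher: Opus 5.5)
Your proof is correct and follows the paper's proof assertion by assertion. The one real difference is the closedness step: you lift limit points through the compactness of $C$ sequentially, whereas the paper rewrites the containment as the family of closed conditions $h_C(Q^*u)-\langle u,z\rangle\ge r\|u\|_2$, $u\in\R^m$, using continuity of the support function $h_C$. There is one small omission. For the feasible set to be compact you also need a bound on $r$, which follows at once because $z\pm rv\in Q(C)$ lies in the same Euclidean ball you used to bound $z$. The paper bounds both $z$ and $r$ by $\max_{x\in C}\|x\|$ via $\|Qx\|_2\le\|x\|$.
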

\begin{proof}
First, the contractions form a compact subset of the finite-dimensional
space of linear maps: the operator norm is continuous, and by norm
equivalence the set is bounded in any matrix norm. Set
$M=\max_{x\in C}\|x\|$. Whenever $z+r\mathbb B_2^m\subseteq Q(C)$ and
$\|Q\|_{B\to2}\le1$, we have $\|z\|_2\le M$, because $z\in Q(C)$, and
$0\le r\le M$, because the diameter of $Q(C)$ is at most $2M$. Hence the
feasible triples $(Q,z,r)$ lie in a fixed compact set. They also form a
closed set: with $h_C(v)=\max_{x\in C}\langle v,x\rangle$, the containment
is equivalent to
\[
 h_C(Q^*u)-\langle u,z\rangle\ge r\|u\|_2
 \quad\text{for every }u\in\R^m.
\]
Each inequality defines a closed set because $h_C$ is continuous. The
feasible set is nonempty, since it contains $Q=0,z=0,r=0$, so the maximum
of $r$ is attained.

Next, translating or positively scaling $C$ translates or scales every
image accordingly, and inclusion gives monotonicity. If $C=-C$ and
$z+r\mathbb B_2^m\subseteq Q(C)$, then by reflection also
$-z+r\mathbb B_2^m\subseteq Q(C)$, and taking midpoints, by convexity,
gives $r\mathbb B_2^m\subseteq Q(C)$. Finally, a positive radius forces
$Q(C)$ to be full-dimensional in $\R^m$, whereas its affine dimension is at
most both $\operatorname{rank}Q$ and $\dim\operatorname{aff}C$.
\end{proof}

\begin{proof}[Proof of \cref{gp:thm-norm}]
Fix an admissible $m$ and set $r=\mathcal R_m(B,K)>0$.
By \cref{gp:lem-attainment}, choose a fixed contraction $Q$ and a fixed
$z\in\R^m$ with $z+r\mathbb B_2^m\subseteq Q(K)$. For every
convex, Euclidean $L$-smooth source objective $q:\R^m\to\R$ with a
global minimizer $u_*$ in $r\mathbb B_2^m$, define
\[
 f_q(x)=q(Qx-z),\qquad x\in\R^d.
\]
This function is convex and differentiable, and by \eqref{gp:eq-adjoint},
\begin{align*}
 \|\nabla f_q(x)-\nabla f_q(y)\|_*
 &\le\|\nabla q(Qx-z)-\nabla q(Qy-z)\|_2\\
 &\le L\|Q(x-y)\|_2\le L\|x-y\|.
\end{align*}
The image inclusion provides $x_*\in K$ with $Qx_*-z=u_*$, so $x_*$ is a
global minimizer of $f_q$ and $\min f_q=\min q$.

Now suppose a target algorithm with budget $T$ satisfies the required
guarantee for every such $f_q$. A source algorithm runs it with the same
random seed. Whenever the target requests $f_q(x)$, the simulator requests
the single source value $q(Qx-z)$ and returns it unchanged; given the
target output $\widehat x$, it returns $Q\widehat x-z$. The objective gaps
agree exactly. The fixed affine query and output maps are continuous, so
composing them with the target's total Borel policies produces total Borel
source policies with the same independent seed, budget, and success
probability. All source queries and the decoded output are permitted, since
the source model is unrestricted. The lift establishes membership of $f_q$ in the target class; the
query simulator and output decoder use the fixed affine map $x\mapsto Qx-z$.

Finally, apply \cref{thm:main} in dimension $m$, at radius $r$ and accuracy
$\eps$, under the stated accuracy condition $0<\eps\le Lr^2/32$. This gives
\eqref{gp:eq-norm}. Since every admissible dimension bounds the same target
complexity, their maximum is also a lower bound. Positive-definite source
quadratics suffice; their pullbacks are convex quadratics, which can have a
kernel when $m<d$.
\end{proof}

\begin{proof}[Proof of \cref{gp:thm-composite}]
Fix admissible $m,\tau$, put $r=\mathcal R_m(B,K_\tau)>0$ and
$\delta=\eps+\lambda\tau$, and choose a fixed certificate
$z+r\mathbb B_2^m\subseteq Q(K_\tau)$. For a Euclidean source objective
$q$ as in the preceding proof, define
\[
 g_q(x)=q(Qx-z),\qquad F_q(x)=g_q(x)+\lambda\psi(x),\quad x\in K.
\]
The same calculation as before shows that $g_q$ is globally convex and
smooth in the specified norm. A source minimizer $u_*\in r\mathbb B_2^m$ has
a lift $x_0\in K_\tau$, so $g_q(x_0)=\min q$ and $0\le\psi(x_0)\le\tau$.
For every feasible $\eps$-optimal output, the nonnegativity of $\psi$ on $K$
gives
\begin{align*}
 q(Q\widehat x-z)
 &\le F_q(\widehat x)
 \le\min_{x\in K}F_q(x)+\eps\\
 &\le F_q(x_0)+\eps
 \le\min q+\lambda\tau+\eps.
\end{align*}
Hence the same affine decoder is $\delta$-accurate for the source.

Each query to the globally defined unknown term $g_q$ is exactly one
unrestricted source query. If the interface returns $F_q$ on its query
domain, the simulator adds the known value $\lambda\psi(x)$; this requires a
specified finite extension of the penalty wherever queries outside $K$ are
allowed. Conversely, subtracting that known value recovers $g_q(x)$ on this
domain. Any free operations that depend only on the known penalty, set, and
parameters are independent of $q$ and can be built into the simulator;
within the stated Borel algorithm model they do not affect the reduction.
The measurability and fixed-budget argument of the preceding proof applies.
Finally, $\delta>0$ and the assumed condition $\delta\le Lr^2/32$ give
exactly the source accuracy window, and applying \cref{thm:main} at
accuracy $\delta$ proves \eqref{gp:eq-composite}. Taking the supremum over
all admissible $m,\tau$ preserves this bound.
\end{proof}

\begin{proposition}[Affine normalization of a known penalty]\label{gp:prop-affine}
For a known affine function $\ell:\R^d\to\R$, define
\[
 c_\ell=\min_{x\in K}(\psi(x)-\ell(x)),\qquad
 \phi_\ell(x)=\psi(x)-\ell(x)-c_\ell.
\]
The smooth composite class with known penalty $\psi$ is equivalent, up to
known additive constants and one-for-one value simulation, to that with
penalty $\phi_\ell$. Consequently, \eqref{gp:eq-composite} may also be
optimized over these normalized penalties, subject to the admissibility
condition for each sublevel.
\end{proposition}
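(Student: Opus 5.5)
The plan is to build two one-for-one simulators between the two composite models and check that they carry over the class, the oracle and the error criterion. Fix the known affine function $\ell(x)=\langle w,x\rangle+b$. The minimum $c_\ell$ exists because $K$ is compact and $\psi-\ell$ is continuous. Hence $\phi_\ell$ is continuous and convex on $K$, satisfies $\min_K\phi_\ell=0$, and is an admissible penalty. The central identity is
\[
 g+\lambda\psi=(g+\lambda\ell)+\lambda\phi_\ell+\lambda c_\ell \quad\text{on }K .
\]
So a composite instance $(g,\psi)$ corresponds to the instance $(\tilde g,\phi_\ell)$ with $\tilde g=g+\lambda\ell$, up to the known constant $\lambda c_\ell$. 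Conversely, $g=\tilde g-\lambda\ell$.

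Next, verify that the unknown-part class is preserved in both directions. Adding a known affine function keeps global convexity and differentiability. It leaves every gradient difference unchanged, since $\nabla\tilde g(x)-\nabla\tilde g(y)=\nabla g(x)-\nabla g(y)$. Therefore the $L$-smoothness bound in $\|\cdot\|_*$ versus $\|\cdot\|$ holds for $\tilde g$ exactly when it holds for $g$.

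The oracle simulation is then immediate. A query of $\tilde g$ at any unrestricted point $x$ is answered by one query $g(x)$ plus the known value $\lambda\ell(x)$, and the reverse direction subtracts it. The output is left unchanged and stays in $K$. Because both objectives differ by the constant $\lambda c_\ell$ on $K$, their minima over $K$ differ by the same constant. The suboptimality of any feasible output is therefore identical in both models.

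Composing the target policies with these affine value maps gives total Borel policies with the same seed, budget and success probability, exactly as in the proof of \cref{gp:thm-composite}. Free operations involving $\psi$, such as values, proximal maps, or a specified finite extension outside $K$, translate into free operations involving $\phi_\ell$, and conversely. This holds because $\ell$ and $c_\ell$ are known.

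For the consequence, apply \cref{gp:thm-composite} to the model with penalty $\phi_\ell$. This uses the sublevels $\{x\in K:\phi_\ell(x)\le\tau\}$ and the window $\eps+\lambda\tau\le L\,\mathcal R_m(B,\cdot)^2/32$ for each sublevel. Any lower bound so obtained transfers back to the original class via the equivalence, and then the supremum is taken over $\ell$, $m$ and $\tau$.

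The only delicate point I expect concerns whether $\tilde g$ stays inside a class whose unknown part may need other properties, for example a Lipschitz bound in the nonsmooth variant. For the smooth class stated here the affine shift is harmless. I would explicitly remark that the proposition is restricted to the smooth composite class for this reason, since adding $\lambda\ell$ changes a Lipschitz constant by $\lambda\|w\|_*$.
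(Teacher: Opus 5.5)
Your proposal is correct and follows the paper's proof essentially step by step. The shared steps are:
\begin{enumerate}
\item the identity $g+\lambda\psi=(g+\lambda\ell)+\lambda\phi_\ell+\lambda c_\ell$ on $K$;
\item the bijection $g\leftrightarrow g+\lambda\ell$, which preserves global convexity and the gradient Lipschitz bound;
\item one-for-one value simulation by adding or subtracting the known term $\lambda\ell(x)$, with equal suboptimality gaps on $K$;
\item an application of \cref{gp:thm-composite} to the reparameterized model.
\end{enumerate}
Your closing caveat, that adding an affine term can change a fixed Lipschitz budget in the nonsmooth variant, is the same restriction the paper records.
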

\begin{proof}
By compactness and continuity, $c_\ell$ is finite, and $\phi_\ell$ is
convex, continuous, and nonnegative on $K$ with minimum zero. Set
$\widetilde g=g+\lambda\ell$. Adding an affine function preserves convexity
and the gradient Lipschitz constant in any norm, and
\[
 g+\lambda\psi=\widetilde g+\lambda\phi_\ell+\lambda c_\ell.
\]
The correspondence is a bijection on the unknown smooth class, with
inverse $g=\widetilde g-\lambda\ell$. Oracle values differ by the known
affine term, while the two composite objectives have identical
suboptimality gaps on $K$. It remains to apply \cref{gp:thm-composite}
after this reparameterization. The same conclusion does not follow for a
fixed global Lipschitz budget, since adding a nonconstant affine function
can change that budget.
\end{proof}

\subsection{Exact ellipsoidal and quadratic-sublevel profiles}

\begin{proposition}[Ellipsoidal geometries]\label{gp:prop-ellipsoid}
Let $P,S$ be positive-definite $d\times d$ matrices, let
$B_P=\{x:x^\top Px\le1\}$, and let $C=c+S^{1/2}\mathbb B_2^d$.
If $\nu_m^\downarrow$ denotes the $m$th largest eigenvalue, then
\begin{equation}\label{gp:eq-ellipsoid}
 \mathcal R_m(B_P,C)^2
 =\nu_m^\downarrow(P^{1/2}SP^{1/2}),\qquad 1\le m\le d.
\end{equation}
In particular, Euclidean smoothness and a feasible ellipsoid with semiaxes
$a_1\ge\cdots\ge a_d>0$ give $\mathcal R_m=a_m$; matching geometries
$C=c+RB_P$ give $\mathcal R_m=R$ for every $m$.
\end{proposition}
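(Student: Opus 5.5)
The plan is to reduce to a Euclidean problem by a linear change of variables and then determine exactly which Euclidean balls fit inside linear images of an ellipsoid.

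\emph{Step 1: normalize.} By the translation and scaling invariance in \cref{gp:lem-attainment}, take $c=0$, so $C=S^{1/2}\mathbb B_2^d$. Then $\|Q\|_{B_P\to2}=\|QP^{-1/2}\|_{\mathrm{op}}$, since $B_P=P^{-1/2}\mathbb B_2^d$. Substitute $\widetilde Q=QP^{-1/2}$, which ranges over all Euclidean contractions $\R^d\to\R^m$. The image becomes $Q(C)=\widetilde Q P^{1/2}S^{1/2}\mathbb B_2^d$. Write $G=P^{1/2}S^{1/2}$ and note that $GG^\top=P^{1/2}SP^{1/2}$. Since $C$ is symmetric, the inscribed ball may be centered at zero. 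The containment $r\mathbb B_2^m\subseteq\widetilde QG\mathbb B_2^d$ holds iff the support functions compare, i.e.\ $\|G^\top\widetilde Q^\top u\|_2\ge r\|u\|_2$ for all $u$. Equivalently,
\[
 \lambda_{\min}\bigl(\widetilde QGG^\top\widetilde Q^\top\bigr)\ge r^2 .
\]
So $\mathcal R_m^2$ is the maximum of $\lambda_{\min}(\widetilde Q A\widetilde Q^\top)$ over contractions $\widetilde Q$, where $A=P^{1/2}SP^{1/2}$.

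\emph{Step 2: lower bound.} Take $\widetilde Q=V^\top$, where $V$ has as orthonormal columns the top $m$ eigenvectors of $A$. Then $\widetilde QA\widetilde Q^\top=\diag(\nu_1^\downarrow,\ldots,\nu_m^\downarrow)$, whose minimum eigenvalue is $\nu_m^\downarrow(A)$.

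\emph{Step 3: upper bound.} This is the only step with real content. Fix a contraction $\widetilde Q$ and let $N=\widetilde QA\widetilde Q^\top$, an $m\times m$ matrix. Its $m$th largest eigenvalue equals $\lambda_{\min}(N)$. By Courant--Fischer,
\[
 \lambda_{\min}(N)=\min_{\|u\|_2=1}u^\top N u .
\]
Restrict $u$ to the subspace $\{u:\widetilde Q^\top u\perp \text{top } (m-1) \text{ eigenvectors of } A\}$. This subspace has dimension at least $1$, so it contains a unit vector $u$. For that $u$,
\[
 u^\top Nu\le \nu_m^\downarrow(A)\,\|\widetilde Q^\top u\|_2^2\le\nu_m^\downarrow(A),
\]
using $\|\widetilde Q^\top\|_{\mathrm{op}}\le1$. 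Hence $\lambda_{\min}(N)\le\nu_m^\downarrow(A)$. Alternatively, the same bound follows from Ostrowski/Cauchy interlacing for the compression, since $\widetilde Q\widetilde Q^\top\preceq I$. Steps 2 and 3 together give \eqref{gp:eq-ellipsoid}.

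\emph{Step 4: specializations.}
\begin{itemize}
\item Euclidean smoothness means $P=I$. The ellipsoid with semiaxes $a_i$ has $S$ with eigenvalues $a_i^2$, so $\mathcal R_m=a_m$.
\item Matching geometry means $S=R^2P^{-1}$. Then $A=R^2I$, so $\mathcal R_m=R$ for every $m$.
\end{itemize}
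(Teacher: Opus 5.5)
Your proof is correct and follows the paper's argument essentially step for step. You use the same substitution $Q=\widetilde QP^{1/2}$, which reduces the problem to maximizing $\lambda_{\min}(\widetilde QP^{1/2}SP^{1/2}\widetilde Q^\top)$ over Euclidean contractions, the same top-$m$ eigenvector certificate for the lower bound, and the same choice of a unit $u$ with $\widetilde Q^\top u$ orthogonal to the top $m-1$ eigenvectors for the upper bound (your support-function derivation of the inradius just makes explicit a step the paper states directly).
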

\begin{proof}
By translation invariance we may drop $c$. The contraction condition reads
$\|QP^{-1/2}\|_{2\to2}\le1$, so write $Q=AP^{1/2}$ with
$\|A\|_{2\to2}\le1$, and put $D=P^{1/2}SP^{1/2}$. The image ellipsoid
has shape matrix $ADA^*$ and Euclidean inradius
$\sqrt{\nu_{\min}(ADA^*)}$, which is zero for a singular image.
Choosing the rows of $A$ to be orthonormal eigenvectors associated with the
largest eigenvalues of $D$ gives the lower bound in \eqref{gp:eq-ellipsoid}.
For the reverse bound, take any such $A$ and choose a unit vector
$u\in\R^m$ such that $A^*u$ is orthogonal to the top $m-1$ eigenvectors of
$D$; such a vector exists because these are at most $m-1$ homogeneous
linear constraints. Then
\[
 \nu_{\min}(ADA^*)\le u^\top ADA^*u
 \le\nu_m^\downarrow(D)\|A^*u\|_2^2
 \le\nu_m^\downarrow(D).
\]
The stated examples follow by taking $P=I$ or $S=R^2P^{-1}$.
\end{proof}

\begin{proposition}[Quadratic sublevels, including affine tilts]\label{gp:prop-sublevel}
Let $K=R\mathbb B_2^d$, $R>0$, and
$\psi(x)=x^\top Hx/2$, where $H$ is positive definite with eigenvalues
$0<\nu_1\le\cdots\le\nu_d$. For every $\tau\ge0$,
\begin{equation}\label{gp:eq-sublevel-radius}
 \mathcal R_m(\mathbb B_2^d,K_\tau)
 =\min\left\{R,\sqrt{\frac{2\tau}{\nu_m}}\right\}.
\end{equation}
The right-hand side also equals the supremum of the image radii of
$\{x\in K:\phi_\ell(x)\le\tau\}$ over all affine $\ell$, with the
normalization in \cref{gp:prop-affine}.
\end{proposition}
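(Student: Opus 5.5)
The plan has three parts: a lower bound on the image radius from a coordinate projection, a matching upper bound from two containing sets, and a reduction of the affine-tilt claim to the untilted case via a shift of the center.

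\textbf{Lower bound.} The sublevel set is
\[
 K_\tau=R\mathbb B_2^d\cap E_\tau,\qquad E_\tau=\{x:x^\top Hx\le2\tau\}.
\]
Let $u_1,\dots,u_d$ be orthonormal eigenvectors of $H$ with eigenvalues $\nu_1\le\dots\le\nu_d$. Take $Q$ to be the orthogonal projection onto $\operatorname{span}\{u_1,\dots,u_m\}$, written in these coordinates. This $Q$ is a Euclidean contraction. Put $s=\min\{R,\sqrt{2\tau/\nu_m}\}$. Any vector $x=\sum_{i\le m}c_iu_i$ with $\|c\|_2\le s$ satisfies
\[
 x^\top Hx\le\nu_m\|c\|_2^2\le2\tau,\qquad \|x\|_2\le R .
\]
So $x\in K_\tau$ and $Qx=c$. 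Hence $s\mathbb B_2^m\subseteq Q(K_\tau)$ with $z=0$, which gives $\mathcal R_m\ge s$.

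\textbf{Upper bound.} Image radii are monotone under inclusion (\cref{gp:lem-attainment}) and $K_\tau$ lies in both containing sets. Therefore
\[
 \mathcal R_m(\mathbb B_2^d,K_\tau)\le\min\{\mathcal R_m(\mathbb B_2^d,R\mathbb B_2^d),\ \mathcal R_m(\mathbb B_2^d,E_\tau)\}.
\]
Apply \cref{gp:prop-ellipsoid} with $P=I$. For the ball, $S=R^2I$ gives radius $R$. For the ellipsoid, $S=2\tau H^{-1}$ gives $\mathcal R_m^2=\nu_m^\downarrow(2\tau H^{-1})=2\tau/\nu_m$, since the $m$th largest eigenvalue of $H^{-1}$ is $1/\nu_m$. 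If $\tau=0$, both sides are zero. This proves \eqref{gp:eq-sublevel-radius}.

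\textbf{Affine tilts.} I expect this to be the main obstacle. Write $\ell(x)=b^\top x+\beta$, so that
\[
 \phi_\ell(x)=\tfrac12x^\top Hx-b^\top x-c',\qquad c'=\min_K\bigl(\tfrac12x^\top Hx-b^\top x\bigr).
\]
The choice $\ell=0$ gives the value already computed, so that value is a lower bound for the supremum; it remains to show no tilt does better. Complete the square with $x_0=H^{-1}b$:
\[
 \phi_\ell(x)=\tfrac12(x-x_0)^\top H(x-x_0)-\kappa,
\]
where $\kappa\ge0$ because $\min_K\phi_\ell=0$. The tilted sublevel set is therefore contained in $R\mathbb B_2^d\cap(x_0+E_{\tau+\kappa})$. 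The key point is that $\kappa$ cannot simply be dropped, since it enlarges the ellipsoid, so the normalization must be used to control $\kappa$.

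For $\kappa>0$, the minimizer $x_1$ of $\phi_\ell$ over $K$ lies on the sphere $\|x\|_2=R$, and the ellipsoid is centered outside the ball. Its gradient $H(x_1-x_0)$ is an outward multiple of $x_1$, so $\phi_\ell$ is supported by that direction. I would use the first-order (KKT) condition at $x_1$ to show that the tilted sublevel set lies in $R\mathbb B_2^d\cap\{x:\tfrac12(x-x_1)^\top H(x-x_1)\le\tau\}$. This follows from convexity: along the sublevel set, the linear term $\langle\nabla\phi_\ell(x_1),x-x_1\rangle$ is nonnegative on $K$, so
\[
 \tfrac12(x-x_1)^\top H(x-x_1)\le\phi_\ell(x)-\phi_\ell(x_1)\le\tau .
\]
The result is a translated copy of $E_\tau$. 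Translation invariance (\cref{gp:lem-attainment}) and \cref{gp:prop-ellipsoid} then bound its radius by $\sqrt{2\tau/\nu_m}$, while the ball still gives $R$. This shows that the supremum equals the untilted value.
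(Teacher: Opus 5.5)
Your proposal is correct and follows the paper's proof essentially step for step: the projection onto the $m$ least-curved eigendirections for the lower bound, monotonicity plus \cref{gp:prop-ellipsoid} applied to the ball and to the ellipsoid with shape matrix $2\tau H^{-1}$ for the upper bound, and the first-order optimality inequality at the constrained minimizer to place each tilted sublevel set inside a translate of $E_\tau$. The paper does the last step directly, without your detour through $x_0$ and $\kappa$, since the variational inequality holds whether the minimizer is interior or on the sphere. Two small slips do not affect the argument. First, at a boundary minimizer the KKT condition gives $\nabla\phi_\ell(x_1)=-\lambda x_1$ with $\lambda\ge0$, an inward rather than outward multiple. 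Second, your displayed bound comes from the exact quadratic expansion $\phi_\ell(x)-\phi_\ell(x_1)=\tfrac12(x-x_1)^\top H(x-x_1)+\langle\nabla\phi_\ell(x_1),x-x_1\rangle$, not from convexity alone.
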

\begin{proof}
For $\tau>0$, the set $K_\tau$ is contained both in $R\mathbb B_2^d$
and in the ellipsoid with shape matrix $2\tau H^{-1}$. By monotonicity and
\cref{gp:prop-ellipsoid}, its image radius is therefore at most the
right-hand side of \eqref{gp:eq-sublevel-radius}. For the reverse
inequality, let $Q_m$ be the orthogonal projection, in eigenvector
coordinates, onto the $m$ directions of lowest curvature; it is a Euclidean
contraction. Every $u\in\R^m$ with norm at most the displayed radius has
the lift $x=Q_m^*u$, which satisfies
\[
 \|x\|_2=\|u\|_2\le R,\qquad
 \tfrac12x^\top Hx\le\tfrac12\nu_m\|u\|_2^2\le\tau.
\]
Thus this projection attains the upper bound. At $\tau=0$, the centered
sublevel set is $\{0\}$ and both sides of the identity vanish.

For an affine tilt, the constant term cancels in the normalization, so
write $\ell(x)=a^\top x$. Let $x_*\in K$ minimize
$h(x)=x^\top Hx/2-a^\top x$ over $K$. First-order optimality on the convex
set $K$ gives $\langle Hx_*-a,x-x_*\rangle\ge0$ for every $x\in K$, and
hence
\[
 \phi_\ell(x)=h(x)-h(x_*)
 =\tfrac12(x-x_*)^\top H(x-x_*)
   +\langle Hx_*-a,x-x_*\rangle
 \ge\tfrac12(x-x_*)^\top H(x-x_*).
\]
Consequently, every tilted sublevel set is contained in $K$ and in the
ellipsoid $x_*+\sqrt{2\tau}\,H^{-1/2}\mathbb B_2^d$, and translation
invariance, monotonicity, and \cref{gp:prop-ellipsoid} give the same upper
bound on its image radius. The argument also covers a minimizer $x_*$ on
the boundary of $K$, and at $\tau=0$ it yields a singleton. The zero tilt
attains the bound by the first part of the proof.
\end{proof}

\begin{proof}[Proof of \cref{gp:cor-spectral}]
Fix $m$ and write $t_m=\nu_mR^2/2$. By
\cref{gp:prop-sublevel}, the squared accuracy factor in
\eqref{gp:eq-composite} is
\[
 \eta_m(\tau)
 =\frac{L\min\{R^2,2\tau/\nu_m\}}
        {\eps+\lambda\tau}.
\]
On $0\le\tau\le t_m$ its derivative is
$2L\eps/[\nu_m(\eps+\lambda\tau)^2]>0$, and on $\tau\ge t_m$ it is
$-\lambda LR^2/(\eps+\lambda\tau)^2\le0$.
Thus $t_m$ is optimal; when $\lambda=0$, every $\tau\ge t_m$ is also
optimal. The source window is exactly $\eta_m(\tau)\ge32$, so an
admissible level exists if and only if
\[
 \eps+\lambda\nu_mR^2/2\le LR^2/32.
\]
Both the capped product term and the full maximum with the dimension
baseline are nondecreasing in $\eta_m$. Substituting $\tau=t_m$ into
\eqref{gp:eq-composite} therefore proves \eqref{gp:eq-spectral} with its
stated window, and it optimizes the choice of sublevel for this $m$.
By \cref{gp:prop-sublevel}, allowing the affine normalizations of
\cref{gp:prop-affine} cannot improve this conclusion.
\end{proof}

\subsection{An unbounded gain over the scalar curvature majorant}

We compare two lower-bound expressions for the same known-composite class.
A bound based on the scalar majorant $\psi(x)\le\nu_d\|x\|_2^2/2$, where
$\nu_d$ is the largest eigenvalue of $H$, certifies only the centered balls
\[
 \left\{x\in K:\nu_d\|x\|_2^2/2\le\tau\right\}
 =\rho(\tau)\mathbb B_2^d,\qquad
 \rho(\tau)=\min\{R,\sqrt{2\tau/\nu_d}\}.
\]
Their exact image radius is $\rho(\tau)$ in every dimension $m\le d$.
Optimizing over all linear images of these certified balls, all
dimensions, and all admissible levels therefore gives a precise
scalar-majorant benchmark. The calculation below keeps the dimension
baseline, the saturation cap, and the validity window in both expressions.

\begin{proposition}[A fully specified strict-gain family]\label{gp:prop-gain}
Let $A>7$, $\lambda>0$, and define
\[
\begin{gathered}
 D=2560,\quad b=D\sqrt{\frac{A+1}{2}},\quad n=\lceil b\rceil,
 \quad d=2n,\\
 R=1,\quad
 H=\operatorname{diag}(I_n,AI_n),\quad
 \eps=\frac\lambda2,\quad L=\lambda b^2.
\end{gathered}
\]
Let $\mathcal L_{\mathrm{profile}}$ be the largest right side of
\eqref{gp:eq-composite}, optimized over dimensions, admissible actual
sublevels, and affine normalizations. Let
$\mathcal L_{\mathrm{majorant}}$ be the corresponding optimized expression
using the certified balls from the scalar majorant $A\|x\|_2^2/2$.
Then
\begin{equation}\label{gp:eq-strict-gain}
 \mathcal L_{\mathrm{profile}}=\frac{nb}{4096},\qquad
 \mathcal L_{\mathrm{majorant}}=\frac{2nD}{4096},\qquad
 \frac{\mathcal L_{\mathrm{profile}}}
      {\mathcal L_{\mathrm{majorant}}}
 =\frac{\sqrt{A+1}}{2\sqrt2}.
\end{equation}
In particular, the improvement is unbounded as $A\to\infty$.
\end{proposition}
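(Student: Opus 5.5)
The plan is to evaluate both optimized expressions exactly. For each image dimension $m$, \cref{gp:cor-spectral} already optimizes the sublevel, and \cref{gp:prop-sublevel} shows that affine normalizations cannot enlarge any sublevel image radius. So $\mathcal L_{\mathrm{profile}}$ is the maximum over $1\le m\le d$ of the right side of \eqref{gp:eq-spectral}, restricted to admissible $m$. For the majorant, the certified balls $\rho(\tau)\mathbb B_2^d$ have image radius $\rho(\tau)$ in every dimension. The same derivative computation as in the proof of \cref{gp:cor-spectral}, with $\nu_m$ replaced by $A$, shows that the squared accuracy factor $L\rho(\tau)^2/(\eps+\lambda\tau)$ is maximized at $\tau=A/2$. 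Both lower-bound formulas are nondecreasing in this factor, so it suffices to evaluate them at the optimal level for each $m$ and then maximize over $m$.

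For the profile, split $m\le n$, where $\nu_m=1$, from $n<m\le 2n$, where $\nu_m=A$.
\begin{itemize}
\item For $m\le n$ the optimal level is $\tau=1/2$. Then $\eps+\lambda\tau=\lambda$, so the accuracy factor is $\sqrt{L/\lambda}=b$. The window $\lambda\le\lambda b^2/32$ holds because $b\ge 2D$ (using $A>7$).
\item For $m>n$ the level is $\tau=A/2$ and the factor is $b\sqrt{2/(A+1)}=D$. Its window $b^2\ge16(A+1)$ also holds.
\end{itemize}
Since $n=\lceil b\rceil\ge b$, the first branch is maximized at $m=n$, with value $n\min\{n,b\}/4096=nb/4096$. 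The second branch is at most $2n\min\{2n,D\}/4096=2nD/4096$. This is smaller than $nb/4096$ exactly because $b/(2D)=\sqrt{(A+1)/2}/2>1$ for $A>7$. Finally, the dimension baseline is at most $5(2n)/16=5n/8$, while $nb/4096\ge 5120n/4096=1.25n$. Hence $\mathcal L_{\mathrm{profile}}=nb/4096$.

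For the majorant, the factor equals $D$ at every $m$, with the same window as in the second profile branch. The product term $m\min\{m,D\}/4096$ is therefore maximized at $m=d=2n$, where it equals $2nD/4096=1.25n$ (using $D\le 2n$). This exceeds the baseline $5n/8$, so $\mathcal L_{\mathrm{majorant}}=2nD/4096$. The ratio is $b/(2D)=\sqrt{A+1}/(2\sqrt2)$, which is unbounded as $A\to\infty$.

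The main obstacle is bookkeeping rather than analysis. We must make sure that the saturation caps ($n\ge b$ and $2n\ge D$), the dimension baseline, and each validity window are checked in every branch. We must also make sure that no other level or normalization beats the per-$m$ optimum; this is what \cref{gp:cor-spectral,gp:prop-sublevel} supply.
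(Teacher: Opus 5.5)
Your proposal is correct and follows essentially the same route as the paper: you use \cref{gp:cor-spectral} and \cref{gp:prop-sublevel} to reduce to the optimal levels $\tau=1/2$ and $\tau=A/2$, with accuracy factors $b$ and $D$, and then maximize over $m$ in each branch while checking the windows, the caps $n\ge b$ and $2n>D$, and the dimension baseline. The only cosmetic difference is in the baseline: you dispose of it with the uniform bound $5(2n)/16<1.25n$, whereas the paper argues branch by branch from $n\ge b>D>1280$.
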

\begin{proof}
For a dimension $m$ and an accuracy factor $s$, denote the full source
expression by
\[
 \mathcal B(m,s)=\max\left\{\frac{5m}{16},
                         \frac{m\min\{m,s\}}{4096}\right\}.
\]
It is nondecreasing in $m$ for fixed $s$. The spectral optimization above
gives
\[
 s_m=\sqrt{\frac{L}{\eps+\lambda\nu_m/2}}
 =\begin{cases}b,&1\le m\le n,\\ D,&n<m\le2n.\end{cases}
\]
Both branches satisfy their source accuracy conditions, since
$s_m^2\ge D^2>32$. On the first branch, the maximum is attained at $m=n$:
because $n\ge b>D=2560>1280=5\cdot4096/16$, the cap is inactive at $b$ and
the product term dominates the baseline, so $\mathcal B(n,b)=nb/4096$. On
the second branch, the maximum is attained at $m=2n$, and since
$2n>D>1280$ its value is $\mathcal B(2n,D)=2nD/4096$. The assumption $A>7$
gives $b>2D$, so the first value is larger. The corresponding
optimal levels are $1/2$ and $A/2$, respectively. By
\cref{gp:prop-sublevel}, affine normalization cannot increase any of these
radii. This proves the first equality.

For the balls certified by the scalar majorant, the same one-variable
optimization gives the optimal level $\tau=A/2$ in every dimension. Its
accuracy factor is exactly
\[
 \sqrt{\frac{L}{\eps+\lambda A/2}}
 =\sqrt{\frac{\lambda D^2(A+1)/2}{\lambda(A+1)/2}}=D.
\]
This is admissible because $D^2>32$. Optimizing $\mathcal B(m,D)$ over all
$1\le m\le2n$ gives $m=2n$ and the value $2nD/4096$, by the cap and
baseline comparisons established above. No other level can increase the
accuracy factor, and the radius of each scalar-majorant ball is already
optimal over all contraction images. Dividing the two expressions yields
\eqref{gp:eq-strict-gain}, with no loss from the ceiling in $n$.
\end{proof}
These equalities compare the real-valued lower-bound expressions
before rounding the query budget to an integer. The family varies the
dimension and $L/\lambda$ together with $A$, and the ratio quantifies the
gain from using actual sublevels for the same class. Determining the
fine-accuracy complexity of the smooth-composite class beyond the plateau
in \eqref{gp:eq-spectral} remains open.

\subsection{Block norms, Schatten norms, and endpoint images}

\begin{proposition}[Euclidean blocks with an outer norm]\label{gp:prop-block}
Let $\R^d=\bigoplus_{i=1}^k\R^{d_i}$, with $d_i\ge1$, and equip it
with the norm
$\|x\|_{2,p}=(\sum_i\|x_i\|_2^p)^{1/p}$ for $1\le p<\infty$, or
$\|x\|_{2,\infty}=\max_i\|x_i\|_2$. For a set $I$ of $b\ge1$
blocks, put $m=\sum_{i\in I}d_i$ and $a=|1/p-1/2|$. There is a
contraction image of the unit ball $B$ in $\R^m$ whose Euclidean
inradius is exactly $b^{-a}$. Consequently,
$\mathcal R_m(B,B)\ge b^{-a}$ and
$\mathcal R_m(B,RB)\ge Rb^{-a}$.
\end{proposition}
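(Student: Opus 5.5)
The plan is to reduce the construction to a single block-diagonal certificate and then compute its image exactly. Relabel so that $I=\{1,\dots,b\}$, and identify $\R^m=\bigoplus_{i\in I}\R^{d_i}$. Take $Q=cP_I$, where $P_I$ is the coordinate projection onto the blocks in $I$ and $c>0$ is a scalar still to be chosen. The image $P_I(B)$ is exactly the unit ball of the same mixed norm $\|\cdot\|_{2,p}$ on $\R^m$, now with $b$ blocks. This holds because the norm is monotone in each block norm: it is at least the norm of the restriction, and zero extension gives equality.

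The problem therefore reduces to comparing the unit ball $B_{b}$ of $\|\cdot\|_{2,p}$ on $b$ blocks with the Euclidean ball of $\R^m$. Write $v=(\|x_i\|_2)_{i\in I}\in\R^b$. Then $\|x\|_2=\|v\|_2$ and $\|x\|_{2,p}=\|v\|_p$, so every comparison is the classical $\ell_p$ versus $\ell_2$ comparison in $\R^b$.

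\begin{itemize}
\item For $p\le2$ we have $\|v\|_2\le\|v\|_p$. Hence $\|P_I\|_{B\to2}\le1$, and we take $c=1$. The inradius of $B_b$ is $\min_{\|v\|_2=1}1/\|v\|_p=b^{-(1/p-1/2)}$, attained at equal block norms.
\item For $p\ge2$ we have $\|v\|_2\le b^{1/2-1/p}\|v\|_p$, with equality at equal block norms. So take $c=b^{-(1/2-1/p)}$, which makes $Q$ a contraction. Since $B_b\supseteq\mathbb B_2^m$ and the Euclidean ball touches the boundary of $B_b$ along a coordinate block, the inradius of $cB_b$ is exactly $c=b^{-a}$.
\end{itemize}

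These two cases give both the contraction property and the exact inradius $b^{-a}$, with center $0$ by the symmetry part of \cref{gp:lem-attainment}.

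The consequences then follow at once. The bound $\mathcal R_m(B,B)\ge b^{-a}$ comes directly from the definition \eqref{gp:eq-radius}, and scaling in \cref{gp:lem-attainment} gives $\mathcal R_m(B,RB)\ge Rb^{-a}$.

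The main point needing care is exactness of the inradius, not just the lower bound. For this I will exhibit the touching boundary point explicitly in each case: equal block norms $b^{-1/2}$ when $p\le2$, and a single unit block when $p\ge2$. In both cases the supporting halfspace is given by the dual $\ell_{p^*}$ vector in the block-norm coordinates, which confirms that no larger centered ball fits. The case $p=\infty$ is handled by the same formulas with $1/p=0$.
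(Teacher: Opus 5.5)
Your proposal is correct and follows essentially the same route as the paper. The paper uses the same map $Q=b^{-a_+}P_I$ with $a_+=(1/2-1/p)_+$, which is exactly your $c$ in both cases. It likewise reduces to the vector of block lengths and gets exactness from equal block lengths when $p\le2$ and from a single block when $p\ge2$; it only writes the two cases together through $a_\pm$ instead of splitting them.
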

\begin{proof}
Put $a_-=(1/p-1/2)_+$ and $a_+=(1/2-1/p)_+$, so that
$a=a_-+a_+$, and let $P_I$ retain the selected blocks. Applying the scalar
norm comparisons to the $b$ block lengths gives
\[
 \|P_Ix\|_2\le b^{a_+}\|P_Ix\|_{2,p}\le b^{a_+}\|x\|_{2,p},
 \qquad \|u\|_{2,p}\le b^{a_-}\|u\|_2.
\]
Thus $Q=b^{-a_+}P_I$ is a contraction. If
$\|u\|_2\le b^{-(a_-+a_+)}$, then placing $b^{a_+}u$ in the
selected blocks and zero elsewhere gives a lift in $B$, which establishes
the claimed lower bound on the inradius. In fact, the image is exactly
$b^{-a_+}$ times the unit ball of the selected blocks, and its inradius is
the reciprocal of
$b^{a_+}\max_{\|u\|_2=1}\|u\|_{2,p}=b^{a_-+a_+}$;
the maximum in this identity is attained by equal block lengths when
$p\le2$ and by a vector in a single block when $p\ge2$.
Finally, apply the scaling identity of \cref{gp:lem-attainment}.
\end{proof}

\begin{proposition}[Full-dimensional Schatten image]\label{gp:prop-schatten}
On the real matrix space $\R^{n\times k}$, write $\|X\|_{S_p}$ for the Schatten $p$ norm
and $B_{S_p}$ for its unit ball. Put $N=nk$, $h=\min(n,k)$, and
$a=|1/p-1/2|$, with $1/\infty=0$. Then
\begin{equation}\label{gp:eq-schatten}
 \mathcal R_N(B_{S_p},B_{S_p})=h^{-a},\qquad 1\le p\le\infty.
\end{equation}
For square $n\times n$ matrices, retaining any $j\le n$ columns also
gives a certificate of dimension $nj$ and radius $j^{-a}$.
\end{proposition}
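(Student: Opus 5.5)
We prove the identity $\mathcal R_N(B_{S_p},B_{S_p})=h^{-a}$ by establishing matching lower and upper bounds. The column-retaining certificate is handled by the same argument. By \cref{gp:lem-attainment}, symmetry lets us take centered balls throughout.

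\emph{Lower bound.} Use the identity-type certificate $Q=h^{-a_+}\,\mathrm{id}$ on $\R^{n\times k}\cong\R^N$, where $a_+=(1/2-1/p)_+$ and $a_-=(1/p-1/2)_+$. Identify the Euclidean norm on $\R^N$ with the Frobenius norm $\|\cdot\|_{S_2}$. The scalar comparisons of $\ell_p$ and $\ell_2$ applied to the vector of $h$ singular values give
\[
 \|X\|_{S_2}\le h^{a_+}\|X\|_{S_p},\qquad \|X\|_{S_p}\le h^{a_-}\|X\|_{S_2}.
\]
The first inequality shows that $Q$ is a contraction from $B_{S_p}$ to $\ell_2^N$. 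The second shows that if $\|U\|_{S_2}\le h^{-a_-}h^{-a_+}=h^{-a}$, then $h^{a_+}U\in B_{S_p}$ is a lift. This gives radius at least $h^{-a}$. For the column version, let $Q$ retain $j$ columns (scaled by $j^{-a_+}$). Retaining columns does not increase any Schatten norm, since the singular values of a column submatrix are interlaced and dominated. The retained matrices have rank at most $j$, so the same comparisons hold with $h$ replaced by $j$, and zero-padding gives lifts.

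\emph{Upper bound.} Fix any contraction $Q:\R^{n\times k}\to\R^N$ with $r\mathbb B_2^N\subseteq Q(B_{S_p})$; we must show $r\le h^{-a}$. Since $r>0$ forces $Q$ to be invertible, the inclusion says $\|Q^{-1}u\|_{S_p}\le\|u\|_2/r$. We split into two cases.

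\emph{Case $p\ge2$.} Then $a=1/2-1/p$. Write $\|\cdot\|_{S_p}^*=\|\cdot\|_{S_{p'}}$ for the dual. Dualize the two conditions: $\|Q\|_{B\to2}\le1$ is equivalent to $\|Q^*u\|_{S_{p'}}\le\|u\|_2$, and the inclusion is equivalent to $h_{B}(Q^*u)=\|Q^*u\|_{S_{p'}}\ge r\|u\|_2$. Average over a Gaussian or orthonormal family: take $u$ ranging over an orthonormal basis $\{u_\ell\}$ of $\R^N$ and set $Y_\ell=Q^*u_\ell$. Then $\sum_\ell\|Y_\ell\|_{S_2}^2=\|Q\|_{HS}^2$. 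I expect the cleaner route to be a volume or trace argument instead. The map $T=QQ^*$ satisfies $r^2 I\preceq$ ``something'' only in norm terms, so the natural tool is averaging over the orthogonal group $O(n)\times O(k)$ acting by $X\mapsto UXV^\top$. This action preserves $B_{S_p}$. The conditions on $Q$ are preserved by composing with these isometries, and the set of admissible $(Q,r)$ is convex in the appropriate sense, since containment is preserved under averaging of $QQ^*$. Hence we may symmetrize to an invariant operator, namely a scalar multiple of the identity, on which the radius is computed exactly as in the lower bound.

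\emph{The main obstacle.} The crux is justifying this symmetrization: the inradius is not obviously concave in $Q$. I would first try to replace $Q$ by its polar factor $|Q|=(Q^*Q)^{1/2}$. This preserves both the contraction property and the image shape up to a Euclidean rotation. The condition $r\mathbb B_2\subseteq Q(B)$ then becomes $\|X\|_{S_p}\le 1\Rightarrow$ a constraint on the ellipsoid $\{X:\langle Q^*QX,X\rangle\}$. Next, test the two conditions on the specific matrices $X$ given by partial isometries of rank $h$, averaged over the group, to get trace inequalities. The contraction gives $\mathbb E\langle GX,X\rangle\le\|X\|_{S_p}^2$, with $G=Q^*Q$ averaged to $\frac{\tr G}{N}I$. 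Testing the inclusion on the dual side with rank-one and full-rank extreme matrices and averaging then yields $r^2\le h^{-2a}$. Whether rank-one or full-rank test matrices are needed depends on the sign of $1/p-1/2$, which mirrors the $a_\pm$ split in the lower bound.
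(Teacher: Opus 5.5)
\paragraph{Verdict.}
Your lower bound and the column-retaining certificate are correct and agree with the paper's construction: its scaled Frobenius identification $\sqrt{q_{\min}}\,X$ is your $h^{-a_+}\mathrm{id}$, where $q_{\min},q_{\max}$ are the extrema of $\|X\|_{S_p}^2/\|X\|_F^2$. The upper bound, however, is not proved. You name the symmetrization as ``the crux'' and leave it open, and the polar-factor and dual-side route you sketch does not supply it.

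\paragraph{The missing step.}
Take $Q$ invertible and put $G=Q^*Q$. Substituting $X=Q^{-1}u$ in your own reformulation $\|Q^{-1}u\|_{S_p}\le\|u\|_2/r$ turns the inclusion into the primal quadratic inequality $r^2\|X\|_{S_p}^2\le\langle GX,X\rangle$. Together with the contraction condition this gives
\[
 r^2\|X\|_{S_p}^2\le\langle GX,X\rangle\le\|X\|_{S_p}^2\qquad\text{for every }X\in\R^{n\times k}.
\]
Both inequalities are linear in $G$. Their outer terms are unchanged under $X\mapsto UXV$ with $U,V$ orthogonal. So you may apply the sandwich to $UXV$ and average. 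No concavity of the inradius in $Q$ is needed, nor a polar factor, nor a case split on $p$.

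The averaged middle term is $c\|X\|_F^2$ with $c=\tr G/N$. The paper averages over signed permutations: sign changes kill every cross term, and permutations equalize the diagonal. With Haar measure on $O(n)\times O(k)$, note instead that every eigenspace of the averaged symmetric operator is invariant, and this group acts irreducibly on $\R^n\otimes\R^k$.

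Now evaluate at a rank-one matrix and at a matrix with $h$ equal singular values. This gives $r^2q_{\max}\le c\le q_{\min}$, hence $r^2\le q_{\min}/q_{\max}=h^{-2a}$.

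\paragraph{Two errors in your sketch.}
First, averaging $QQ^*$ achieves nothing, because $(Qg)(Qg)^*=QQ^*$ for every isometry $g$. Only $Q^*Q$ is symmetrized.

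Second, on the dual side the inclusion reads $\|Q^*u\|_{S_{p'}}\ge r\|u\|_2$, with $p'$ the conjugate exponent. With $Y=Q^*u$ this is $r^2\langle G^{-1}Y,Y\rangle\le\|Y\|_{S_{p'}}^2$, which is linear in $G^{-1}$, not in $G$. Averaging it produces $\tr G^{-1}/N$, so a single averaged $G$ cannot serve both of your tests.

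That route can be rescued with the extra inequality $\tr G\cdot\tr G^{-1}\ge N^2$. Writing $q_{\min}(s)=\min\{1,h^{2/s-1}\}$, it yields $r^2\le q_{\min}(p)\,q_{\min}(p')=h^{-2a}$. The primal sandwich above is the direct argument.
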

\begin{proof}
Write $\|X\|_F$ for the Frobenius norm. Comparing the $\ell_p$ and $\ell_2$
norms of the at most $h$ nonzero singular values shows that the extrema of
$\|X\|_{S_p}^2/\|X\|_F^2$ over nonzero matrices are
\[
 q_{\min}=\min\{1,h^{2/p-1}\},\qquad
 q_{\max}=\max\{1,h^{2/p-1}\}.
\]
Rank-one matrices and matrices with $h$ equal nonzero singular values
attain the two extrema. The scaled Frobenius identification
$QX=\sqrt{q_{\min}}\,X$ is a contraction with image inradius
$\sqrt{q_{\min}/q_{\max}}=h^{-a}$.

For the upper bound, suppose a contraction $Q$ has an image containing
$r\mathbb B_2^N$ for some $r>0$. By symmetry we may center the ball at
zero, and $Q$ is invertible. The two inclusions are equivalent to
\begin{equation}\label{gp:eq-matrix-sandwich}
 r^2\|X\|_{S_p}^2\le\|QX\|_2^2\le\|X\|_{S_p}^2
 \quad\text{for every }X.
\end{equation}
Average this inequality after replacing $X$ by $UXV$, where $U$ and
$V$ range independently over the signed permutation matrices of sizes
$n$ and $k$; these transformations preserve the Schatten norm. Row and
column sign changes cancel all cross terms of the averaged quadratic
form, and row and column permutations make all its diagonal
coefficients equal, so its value is $c\|X\|_F^2$ for some
$c>0$. The averaged version of \eqref{gp:eq-matrix-sandwich} gives
$r^2q_{\max}\le c\le q_{\min}$, which proves
$r\le h^{-a}$ uniformly over all $Q$.

For the final statement, let $J:\R^j\to\R^n$ be the coordinate
inclusion and compress $X$ to $XJ$. Since
$XJJ^\top X^\top \preceq XX^\top $, the ordered singular values do not increase,
and hence $\|XJ\|_{S_p}\le\|X\|_{S_p}$, also for $p=\infty$.
Zero-padding provides a norm-preserving lift of every $n\times j$
matrix. Composing the compression with the scaled Frobenius
identification for that space gives a map whose image is exactly the
corresponding scaled Schatten ball, of radius $j^{-a}$ by the first part.
\end{proof}

\begin{proposition}[Exact full-dimensional $\ell_p$ images]
\label{gp:prop-full-lp}
For every $1\le\alpha,\beta\le\infty$ and $R>0$,
\begin{equation}\label{gp:eq-full-lp}
 \mathcal R_d(\mathbb B_\beta^d,\mathbb B_\alpha^d(R))
 =R\,d^{-\theta(\alpha,\beta)},\qquad
 \theta(\alpha,\beta)=\left(\frac12-\frac1\beta\right)_+
                    +\left(\frac1\alpha-\frac12\right)_+.
\end{equation}
\end{proposition}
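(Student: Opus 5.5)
By the scaling identity of \cref{gp:lem-attainment} it suffices to take $R=1$. Since $\mathbb B_\alpha^d$ is symmetric, the same lemma allows us to consider only inscribed balls centred at zero. The plan follows the proof of \cref{gp:prop-schatten} exactly: a scaled identity gives the lower bound, and averaging over signed permutations gives the matching upper bound. The only input is the pair of elementary $\ell_s$--$\ell_2$ comparisons
\[
 \min_{x\ne0}\frac{\|x\|_s}{\|x\|_2}=d^{-(1/2-1/s)_+},\qquad
 \max_{x\ne0}\frac{\|x\|_s}{\|x\|_2}=d^{(1/s-1/2)_+},
\]
with extremizers $e_1$ and $(1,\dots,1)$. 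These follow from H\"older's inequality and monotonicity of $\ell_s$ norms in $s$.

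\emph{Lower bound.} Put $Q=d^{-(1/2-1/\beta)_+}I$. By the first comparison with $s=\beta$, we have $\|Qx\|_2\le\|x\|_\beta$, so $\|Q\|_{B\to2}\le1$. By the second comparison with $s=\alpha$, every $u$ with $\|u\|_2\le d^{-(1/\alpha-1/2)_+}$ satisfies $\|u\|_\alpha\le1$. Hence $\mathbb B_\alpha^d$ contains the Euclidean ball of that radius. Applying $Q$, the image $Q(\mathbb B_\alpha^d)$ contains $d^{-\theta}\mathbb B_2^d$.

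\emph{Upper bound.} Suppose $r>0$ and $r\mathbb B_2^d\subseteq Q(\mathbb B_\alpha^d)$ for some contraction $Q$. Then $Q$ is invertible, and the contraction property together with the inclusion are equivalent to
\[
 r^2\|x\|_\alpha^2\le\|Qx\|_2^2\le\|x\|_\beta^2\qquad\text{for all }x\in\R^d.
\]
Replace $x$ by $Px$ for $P$ in the group of signed permutation matrices; this preserves both $\ell_\alpha$ and $\ell_\beta$ norms. Averaging the quadratic form $x\mapsto\|QPx\|_2^2$ over the group, the coordinate sign changes kill all off-diagonal coefficients. The permutations then equalize the diagonal coefficients. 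The average is therefore $c\|x\|_2^2$ for some constant $c>0$, and the averaged sandwich reads
\[
 r^2\|x\|_\alpha^2\le c\|x\|_2^2\le\|x\|_\beta^2 .
\]
Test the left inequality at the maximizer of $\|x\|_\alpha/\|x\|_2$ and the right one at the minimizer of $\|x\|_\beta/\|x\|_2$. This gives $r^2d^{2(1/\alpha-1/2)_+}\le c\le d^{-2(1/2-1/\beta)_+}$, that is, $r\le d^{-\theta}$. By \cref{gp:lem-attainment} the maximum defining $\mathcal R_d$ is attained, so combining the two bounds proves \eqref{gp:eq-full-lp}. The endpoint cases $\alpha$ or $\beta=\infty$ use the convention $1/\infty=0$ and need no change.

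The only step needing care is the averaging argument. One must check that the averaged form is exactly a multiple of $\|x\|_2^2$, and that the two inequalities are invariant under the group, because both norms are. Both checks are the same as in the Schatten case, so I do not expect a real obstacle there.
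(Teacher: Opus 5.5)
Your proposal is correct and is essentially the paper's own proof. The lower bound uses the scaled identity $d^{-(1/2-1/\beta)_+}I$. For the upper bound, the sandwich $r^2\|x\|_\alpha^2\le\|Qx\|_2^2\le\|x\|_\beta^2$ is averaged over signed coordinate permutations to reduce $Q^\top Q$ to $cI$, and the result is then tested at the extremal $\ell_s/\ell_2$ norm ratios.
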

\begin{proof}
By scaling, it suffices to consider $R=1$. Put
$u=(1/2-1/\beta)_+$ and $v=(1/\alpha-1/2)_+$.
The map $Q=d^{-u}I$ is a contraction from $\ell_\beta^d$ to
$\ell_2^d$, and by the elementary norm comparisons its image of
$\mathbb B_\alpha^d$ has Euclidean inradius $d^{-(u+v)}$.

For the upper bound, suppose a contraction $Q$ has an image containing
a ball of positive radius $r$. By symmetry we may center that ball at
zero, and $Q$ must be invertible. The two inclusions imply
\[
 r^2\|x\|_\alpha^2\le\|Qx\|_2^2\le\|x\|_\beta^2
 \qquad(x\in\R^d).
\]
Average this inequality after replacing $x$ by every signed coordinate
permutation of $x$; both norms in the outer terms are invariant under these
transformations. Sign changes remove the cross terms of the middle
quadratic form and permutations equalize its diagonal coefficients, so its
average is $c\|x\|_2^2$ for some $c>0$. The extremal norm ratios therefore
give
\[
 r^2d^{(2/\alpha-1)_+}\le c
 \le d^{\min\{2/\beta-1,0\}}.
\]
Consequently $r\le d^{-(u+v)}$. Singular maps have zero
full-dimensional inradius, so the bound applies to every certificate, and
the equality follows. This establishes the full-dimensional radius. Smaller image
dimensions are treated by the certificates below.
\end{proof}

\begin{proposition}[The $\ell_1$ endpoint]\label{gp:prop-l1}
For $1\le m\le d$,
\begin{equation}\label{gp:eq-l1}
 \mathcal R_m(\mathbb B_1^d,\mathbb B_1^d)^2
 \le\min\left\{1,\frac{2\log(2ed)}{m}\right\}.
\end{equation}
\end{proposition}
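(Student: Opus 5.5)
The case $m=1$ and the trivial bound $\mathcal R_m\le1$ are immediate. Any contraction image of $\mathbb B_1^d$ lies in a Euclidean unit ball, since $\|Qx\|_2\le\|x\|_1$, so its inradius is at most one. The work is the logarithmic bound. The plan is to fix a contraction $Q:\ell_1^d\to\ell_2^m$ and a ball $z+r\mathbb B_2^m\subseteq Q(\mathbb B_1^d)$. By the symmetry part of \cref{gp:lem-attainment} we may take $z=0$. The image $Q(\mathbb B_1^d)$ is the absolute convex hull of the columns $q_1,\dots,q_d$, and the contraction condition is exactly $\|q_i\|_2\le1$ for every $i$. So the claim reduces to a statement about polytopes: a symmetric polytope in $\R^m$ with $2d$ vertices in the unit ball cannot contain $r\mathbb B_2^m$ unless $r^2\le2\log(2ed)/m$.

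The containment $r\mathbb B_2^m\subseteq\operatorname{absconv}\{q_i\}$ is equivalent, via support functions, to
\[
 \max_{i}|\langle q_i,u\rangle|\ge r\|u\|_2\qquad\text{for all }u\in\R^m.
\]
Average this over $u$ uniform on the sphere $S^{m-1}$. This gives $r\le\E\max_i|\langle q_i,u\rangle|$, a maximum of $2d$ functions $\pm\langle q_i,u\rangle$ with $\|q_i\|_2\le1$.

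Each $\langle q_i,u\rangle$ is sub-Gaussian with variance proxy $1/m$. Concretely, $\E\exp(\lambda\langle v,u\rangle)\le\exp(\lambda^2/(2m))$ for any unit vector $v$. One route is to write $u=g/\|g\|_2$ with $g$ Gaussian and compare moments. A cleaner route is to use the fact that $\langle v,u\rangle^2\sim\mathrm{Beta}(1/2,(m-1)/2)$, whose even moments are dominated by those of $N(0,1/m)$. The standard Jensen/log-sum-exp argument then gives
\[
 \E\max_{i,\pm}(\pm\langle q_i,u\rangle)\le\frac{\log(2d)}{\lambda}+\frac{\lambda}{2m},
\]
and optimizing over $\lambda$ yields $r\le\sqrt{2\log(2d)/m}$. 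That is slightly stronger than the claim, and the extra factor $e$ inside the logarithm gives slack.

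The main obstacle is the sub-Gaussian moment comparison for spherical marginals with exactly the constant $1/m$. If the direct comparison gets messy, I would instead average over Gaussian $g\sim N(0,I_m/m)$. That route needs more care, because $\|g\|_2$ is not one, but homogeneity handles it: the inequality $\max_i|\langle q_i,g\rangle|\ge r\|g\|_2$ gives $r\,\E\|g\|_2\le\sqrt{2\log(2d)/m}$. Combined with $\E\|g\|_2\ge\sqrt{(m-1)/m}$, or a bound like $\sqrt{1-1/(2m)}$ from the Gamma Cauchy--Schwarz estimate used in \cref{r12:moments}, this yields $r^2\le2\log(2d)/(m-1)$ or similar. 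That version is absorbed into $2\log(2ed)/m$ whenever $m\ge2$, because $\log(2ed)=\log(2d)+1$ provides room. Checking that small-$m$ absorption is the only delicate arithmetic.
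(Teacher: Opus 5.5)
Your primary argument is correct. It differs from the paper's in how the support-function inequality $\max_i|\langle q_i,u\rangle|\ge r\|u\|_2$ is averaged.

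The paper squares this inequality first and averages it against a standard Gaussian $G$ in $\R^m$. Then $\E\|G\|_2^2=m$ holds exactly. The quantity $\E\max_i\langle G,q_i\rangle^2$ is bounded by integrating the union-bound tail $\Prob\{\max_i\langle G,q_i\rangle^2>t\}\le\min\{1,2de^{-t/2}\}$. This gives $mr^2\le2\log(2d)+2=2\log(2ed)$, so the factor $e$ is the cost of the tail integral, not slack.

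You instead average the unsquared inequality over the uniform sphere. The moment comparison $\E u_1^{2k}=\prod_{j<k}(2j+1)/(m+2j)\le(2k-1)!!/m^k$ gives $\E e^{\lambda\langle v,u\rangle}\le e^{\lambda^2/(2m)}$. Log-sum-exp over the $2d$ signed functionals then yields $r\le\sqrt{2\log(2d)/m}$. The comparison is valid: both laws are symmetric, so the MGF series compare termwise. Your route buys the slightly sharper constant $2\log(2d)$. The paper's route buys simplicity: squaring removes the normalization issue you flagged as the main obstacle, and it needs only the one-dimensional Gaussian tail.

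The same squaring trick is also the clean repair for your Gaussian fallback. As written, the fallback's absorption claim is inaccurate. The inequality $2\log(2d)/(m-1)\le2\log(2ed)/m$ holds only when $m\ge\log(2ed)$, not for all $m\ge2$. For smaller $m$ you must invoke the trivial bound $r\le1$ instead, which suffices because then $2\log(2ed)/m>1$.
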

\begin{proof}
Let $Q$ be a contraction from $\ell_1^d$ to $\ell_2^m$. Its columns
$v_i=Qe_i$ have norm at most one, and
$Q\mathbb B_1^d=\operatorname{conv}\{\pm v_1,\ldots,\pm v_d\}$.
If this image contains a ball of radius $r$, by symmetry we may center it
at zero, and comparing support functions gives
\[
 r\|u\|_2\le\max_{1\le i\le d}|\langle u,v_i\rangle|
 \quad\text{for every }u\in\R^m.
\]
Let $G$ be a standard Gaussian vector in $\R^m$ and put
$Z=\max_i|\langle G,v_i\rangle|^2$. The Gaussian tail bound and a
union bound give $\Prob\{Z>t\}\le\min\{1,2d e^{-t/2}\}$ for
$t\ge0$, and integrating the tail bound, split at $t_0=2\log(2d)$, yields
\[
 mr^2\le\E Z
 \le 2\log(2d)+\int_{2\log(2d)}^\infty2d e^{-t/2}\,dt
 =2\log(2ed).
\]
Moreover, $r\le1$, since the entire image lies in $\mathbb B_2^m$.
Taking the maximum over contractions proves \eqref{gp:eq-l1}.
\end{proof}

For comparison, the standard Euclidean-section theorem recalled in
\cref{g:app} gives a universal $c>0$ such that, for
$m=\max\{1,\lfloor cd\rfloor\}$,
\begin{equation}\label{gp:eq-linfty-image}
 \mathcal R_m(\mathbb B_\infty^d,\mathbb B_\infty^d)\ge\tfrac12.
\end{equation}
To prove this specialization, observe that on $\ell_1^d$ one has
$\|x\|_2\le\|x\|_1\le\sqrt d\|x\|_2$, and the spherical mean
of $\|\vartheta\|_1$ is comparable to $\sqrt d$: write a standard
Gaussian vector as $G=\|G\|_2\vartheta$ and use
$\E\|G\|_1=d\sqrt{2/\pi}$ and $\E\|G\|_2\asymp\sqrt d$.
The mean--median comparison and the Euclidean-section theorem of
\citet{FigielLindenstraussMilman1977}, in the form stated in
\cref{g:app}, therefore give an injection $U:\R^m\to\R^d$ with
$\|u\|_2\le\|Uu\|_1\le2\|u\|_2$, after scaling and decreasing
$c$. Set $Q=U^*/2$. Duality gives
$\|Q\|_{\infty\to2}\le1$, while
$h_{Q\mathbb B_\infty^d}(u)=\|Q^*u\|_1\ge\|u\|_2/2$ establishes the
inner inclusion. If the section dimension rounds to zero, a coordinate
projection supplies the displayed one-dimensional conclusion instead.
Thus the two endpoints have different lower-dimensional image profiles
despite their equal full-dimensional radii $d^{-1/2}$ from
\cref{gp:prop-full-lp}: \eqref{gp:eq-linfty-image} keeps radius
$1/2$ at $m\asymp d$, whereas \eqref{gp:eq-l1} forces $m=O(\log d)$ for any
fixed positive image radius of $\mathbb B_1^d$. Thus a constant-radius image of $\mathbb B_1^d$ has at most
logarithmic dimension. Other hard-instance constructions may yield
stronger optimization lower bounds.

\section{Geometric transfers and nonsmooth strong convexity}\label{g:app}
\subsection{Specializations to \texorpdfstring{$\ell_p$}{ell-p} balls}
Fix $1\le p\le2$, let $q$ be its conjugate exponent, and put
$\Gamma=d^{2/p-1}$ and $\mathbb B_p^d(R)=\{x:\|x\|_p\le R\}$.
We distinguish global smooth optimization, with unrestricted queries and a
global minimizer in this ball, from nonsmooth optimization, with queries and
output restricted to the ball. All complexities use the fixed-budget and
success-probability conventions of \cref{thm:main}. Both transfers are
query-preserving: one source value answers one target value. The smooth
transfer is a direct corollary of \cref{thm:main}; the nonsmooth transfer
carries an \emph{external} Euclidean bound, stated explicitly as
\cref{g:ext-zzql}, over to non-Euclidean geometry. In the notation of
\eqref{g:eq-constants}, this appendix treats the diagonal case
$\alpha=\beta=p$: the constants $L$, $M$ and $R$ below stand for $L_p$,
$M_p$ and $R_p$, respectively, where $L_p$ acts from $\ell_p$ to $\ell_q$,
$M_p$ is the Lipschitz modulus with respect to $\ell_p$, and $R_p$ is an
$\ell_p$ radius (\cref{g:rem-constants}).

\paragraph{Smooth geometry and strong convexity.}
Let $\mathcal G_p(d,L,R)$ be the class of globally convex functions that
satisfy $\|\nabla f(x)-\nabla f(y)\|_q\le L\|x-y\|_p$ and have a global
minimizer in $\mathbb B_p^d(R)$. Its $\mu$-strongly convex subclass, with
strong convexity relative to $\ell_p$, is denoted by
$\mathcal G_{p,\mu}(d,L,R)$. Write $\mathsf S_\eps$ for the unrestricted
exact-value complexity. Without strong convexity, $\mathsf S_\eps(\mathcal G_p)$
is bounded below by \eqref{g:eq-lp-smooth} at $\alpha=\beta=p$, where
$d^{2\theta}=\Gamma$. Adding a prescribed modulus gives the following.
\begin{corollary}[Smooth norm transfer under strong convexity]\label{g:thm-smooth}
For $L\ge\mu\Gamma>0$ and $0<\eps\le LR^2/(32\Gamma)$,
\begin{equation}\label{g:eq-smooth-sc}
 \mathsf S_\eps(\mathcal G_{p,\mu})
 \ge\max\left\{\frac{5d}{16},\,
 \frac d{\Cstar}\min\left\{d,\sqrt{\frac L{\mu\Gamma}},
                  \sqrt{\frac{LR^2}{\Gamma\eps}}\right\}\right\}.
\end{equation}
\end{corollary}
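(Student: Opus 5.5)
The plan is to reduce to \cref{cor:strong} by the same pullback used in the proof of \cref{gp:thm-norm}, with $B=\mathbb B_p^d$, $K=\mathbb B_p^d(R)$, $m=d$, and the scaled identity certificate of \cref{gp:prop-full-lp}. Here $\theta=1/p-1/2$, so $d^{2\theta}=\Gamma$. Concretely, I will set $Q=I$ and work with the source radius $r=Rd^{-\theta}=R/\sqrt\Gamma$. For $p\le2$ we have $\|x\|_2\le\|x\|_p$, so $I$ is a contraction from $\ell_p$ to $\ell_2$. We also have $\|x\|_p\le d^{1/p-1/2}\|x\|_2$, hence $r\mathbb B_2^d\subseteq\mathbb B_p^d(R)$. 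Source objectives will be Euclidean $L$-smooth, $\mu'$-strongly convex quadratics on $\R^d$ with minimizer in $r\mathbb B_2^d$, and the target functions are these same functions. By \eqref{gp:eq-adjoint} with $Q=I$, Euclidean $L$-smoothness gives $\|\nabla f(x)-\nabla f(y)\|_q\le L\|x-y\|_p$. The minimizer lies in $\mathbb B_p^d(R)$. Queries and output are simulated one-for-one with identical gaps, exactly as in the proof of \cref{gp:thm-norm}.

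The only new ingredient is strong convexity relative to $\ell_p$. If the source is Euclidean $\mu'$-strongly convex, then
\[
 f(y)\ge f(x)+\langle\nabla f(x),y-x\rangle+\tfrac{\mu'}2\|y-x\|_2^2 .
\]
Combining this with $\|y-x\|_2^2\ge\Gamma^{-1}\|y-x\|_p^2$ shows that $f$ is $(\mu'/\Gamma)$-strongly convex relative to $\ell_p$. I therefore choose $\mu'=\mu\Gamma$, which is admissible as a source modulus because $\mu\Gamma\le L$ by hypothesis. Every member of the source class from \cref{cor:strong}, with parameters $(d,L,\mu\Gamma,r)$, then lies in $\mathcal G_{p,\mu}(d,L,R)$.

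Next I apply \cref{cor:strong} in dimension $d$ with smoothness $L$, modulus $\mu\Gamma$, radius $r$ and accuracy $\eps$. Its window $\eps\le Lr^2/32=LR^2/(32\Gamma)$ is exactly the stated one. Its three caps become $d$, $\sqrt{L/(\mu\Gamma)}$ and $\sqrt{Lr^2/\eps}=\sqrt{LR^2/(\Gamma\eps)}$. The baseline $5d/16$ comes from the spherical family, which has Hessian $LI\succeq\mu\Gamma I$ and so also lies in the class. This yields \eqref{g:eq-smooth-sc}.

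I do not expect a real obstacle. The one step needing care is the direction of the norm comparison in the strong-convexity transfer: the factor $\Gamma^{-1}$ in $\|\cdot\|_2^2\ge\Gamma^{-1}\|\cdot\|_p^2$ is what forces the source modulus up to $\mu\Gamma$ and produces the cap $\sqrt{L/(\mu\Gamma)}$. I also need to confirm that \cref{cor:strong}'s requirement $\mu'\le L$ is precisely the hypothesis $L\ge\mu\Gamma$.
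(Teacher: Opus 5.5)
Your proposal is correct and is essentially the paper's proof. The paper likewise uses the identity embedding with the norm comparisons $\|x\|_2\le\|x\|_p\le\sqrt\Gamma\|x\|_2$ and $\|u\|_q\le\|u\|_2$, takes Euclidean modulus $\mu\Gamma$ and radius $R/\sqrt\Gamma$, and applies \cref{cor:strong} with exactly the window $\eps\le LR^2/(32\Gamma)$; routing this through the certificate of \cref{gp:prop-full-lp} and the pullback of \cref{gp:thm-norm} is only a repackaging of the same one-for-one identity simulation.
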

This is an elementary consequence of \cref{cor:strong} with Euclidean radius
$R/\sqrt\Gamma$ and strong-convexity modulus $\mu\Gamma$. Sharp
dependence on $p$ in the strongly convex class remains open. The condition $L\ge\mu\Gamma$ is
exactly the threshold for the class to be nonempty. At equality, every
objective has the form $L\|x\|_2^2/2+b^\top x+c$, and $d+1$ values suffice
for exact optimization (\cref{g:thm-rigidity}).

\paragraph{Nonsmooth certificates, including strong convexity.}
Let $\mathcal F_p^0(d,R,M)$ be the class of convex $M$-Lipschitz functions on
$\mathbb B_p^d(R)$, and let $\mathcal F_{p,\mu}^0$ be its $\mu$-strongly
convex subclass. Write $\mathsf T_\eps$ for their exact-value complexity
with feasible queries and outputs. A certificate is a fixed linear map
$Q:\mathbb R^d\to\mathbb R^m$ such that
\begin{equation}\label{g:eq-certificate}
 r\mathbb B_2^m\subseteq Q(\mathbb B_p^d)\subseteq\mathbb B_2^m,
 \qquad 1\le m\le d,\quad 0<r\le1.
\end{equation}
Set $t=\max\{p,1+1/\log(ed)\}$ and $a_{p,d}=d^{2(1/p-1/t)}/(t-1)$.
\begin{theorem}[Nonsmooth dimension--radius transfer]\label{g:thm-nonsmooth}
There are universal $c_Z,\eps_Z>0$ and $m_Z^\circ\in\mathbb N$
with the following properties for $M,R,\eps>0$ and every certificate
with $m\ge m_Z^\circ$.
If $0<\eps\le\eps_ZMRr$, then
\begin{equation}\label{g:eq-ns}
 \mathsf T_\eps(\mathcal F_p^0)
 \ge c_Z\,\mathcal P\big(m,\min\{m,(MRr/\eps)^2\}\big).
\end{equation}
For $\mu>0$, put $a=a_{p,d}$ and
$R_\eps=\min\{R,\sqrt{\eps/(\mu a)}\}$, and assume the margin
condition $M_0:=M-\mu aR>0$, whose order is shown to be correct in
\cref{g:thm-budget}.
If $3\eps/2\le\eps_ZM_0R_\eps r$, then
\begin{equation}\label{g:eq-ns-sc}
 \mathsf T_\eps(\mathcal F_{p,\mu}^0)
 \ge c_Z\,\mathcal P(m,\sigma_{\rm sc}),\qquad
 \sigma_{\rm sc}=\min\left\{m,\frac{4r^2}{9}
 \min\left\{\frac{M_0^2R^2}{\eps^2},
                  \frac{M_0^2}{\mu a\eps}\right\}\right\}.
\end{equation}
\end{theorem}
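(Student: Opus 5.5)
The plan is a one-for-one pullback of the external Euclidean nonsmooth bound of \citet[Theorem~5, version~2]{ZhangZhangQiLin2026} through the certificate $Q$ in \eqref{g:eq-certificate}. I take that external theorem (\cref{g:ext-zzql}) in the form: there are universal $c_Z,\eps_Z,m_Z^\circ$ such that, for $m\ge m_Z^\circ$ and $0<\delta\le\eps_ZM'\rho$, finding a $\delta$-minimizer of a convex $M'$-Lipschitz function on $\rho\mathbb B_2^m$ with feasible queries requires $c_Z\mathcal P(m,\min\{m,(M'\rho/\delta)^2\})$ values.

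\emph{First claim.} Take a hard source $h$ on $Rr\mathbb B_2^m$ that is Euclidean $M$-Lipschitz. I extend it to all of $\R^m$ by the Lipschitz-preserving convex extension $\tilde h(u)=\inf_{v\in Rr\mathbb B_2^m}[h(v)+M\|u-v\|_2]$, and define $f(x)=\tilde h(Qx)$ on $\mathbb B_p^d(R)$. Since $Q(\mathbb B_p^d)\subseteq\mathbb B_2^m$, we get $|f(x)-f(y)|\le M\|Q(x-y)\|_2\le M\|x-y\|_p$. The inclusion $r\mathbb B_2^m\subseteq Q(\mathbb B_p^d)$, scaled by $R$, shows two things: every source minimizer lifts to a feasible target point, so $\min f\le\min h$; and, because $\tilde h$ attains its minimum on the ball, $\min f=\min h$. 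Queries are simulated by one source query at $Qx$.

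The main obstacle is that $Qx$ may lie outside $Rr\mathbb B_2^m$, while the source model restricts queries to that ball. I would first try to check that version~2 of the external theorem allows queries on a slightly larger ball or on all of $\R^m$ with the canonical extension. Failing that, I would simulate a value at $u$ outside the ball by querying the radial projection $\pi(u)$ and returning $h(\pi(u))+M(\|u\|_2-Rr)$. This is a valid convex $M$-Lipschitz extension, and it is implementable from one source value provided the hard family is built so that this formula agrees with $\tilde h$ (for instance, a max of affine pieces with gradients of norm $\le M$, made radially steep near the boundary). Outputs are decoded as $\pi(Q\widehat x)$. Because $\tilde h\ge h\circ\pi$ for this extension, the source accuracy is at most $\eps$, and the external bound with $\rho=Rr$ gives \eqref{g:eq-ns}.

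\emph{Second claim.} I add the known penalty $\mu\omega$, where $\omega=\tfrac{a}{2}\|x\|_t^2$ is $1$-strongly convex relative to $\ell_p$ with Lipschitz budget $aR$ on the ball (\cref{g:lem-prox}). The target is $f=\tilde h\circ Q+\mu\omega$ with source Lipschitz constant $M_0=M-\mu aR$, so the total constant is at most $M$. Since $\omega$ is known, value simulation is unchanged. To decode, I restrict the hard source to the sublevel ball of radius $R_\eps r$ and compare with a lift $x_0$ satisfying $\|x_0\|_p\le R_\eps$, so that $\mu\omega(x_0)\le\eps/2$. As in \cref{gp:thm-composite}, source accuracy becomes $\eps+\mu\omega(x_0)\le3\eps/2$. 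Applying the external bound at radius $R_\eps r$, constant $M_0$, and accuracy $3\eps/2$ yields $(M_0R_\eps r/(3\eps/2))^2=\tfrac{4r^2}{9}\min\{M_0^2R^2/\eps^2,M_0^2/(\mu a\eps)\}$, which is $\sigma_{\rm sc}$. The window condition is exactly the hypothesis $3\eps/2\le\eps_ZM_0R_\eps r$.
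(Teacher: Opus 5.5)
\textbf{Verdict: genuine gap.} The step you call the main obstacle, simulating target queries whose image lies outside the source ball, is a real gap, and none of your proposed repairs closes it. The rest matches the paper: the one-query pullback through the certificate, the radial decoder, and the strongly convex bookkeeping ($M_0=M-\mu aR$, a lift with $\|x_0\|_p\le R_\eps$ so that $\mu\omega(x_0)\le\eps/2$, source accuracy $3\eps/2$, and the computation of $\sigma_{\rm sc}$).

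\emph{Why the proposed repairs fail.}
The inf-convolution extension $\tilde h$ is convex and $M$-Lipschitz, and it satisfies $\tilde h\ge h\circ\pi$. However, $\tilde h(u)$ for $u\notin Rr\mathbb B_2^m$ is an infimum over the whole ball, so one source value does not determine it.

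The radial formula $G(u)=h(\pi(u))+M(\|u\|_2-\rho)_+$ is computable from one value, but it is not convex. Outside the ball it is affine along every outward ray, so any tangential variation of $h$ on the sphere makes its Hessian indefinite. For example, take $\rho=M=1$ and $h(v)=\eta v_1$ with any $\eta>0$. At $u=(0,2)$ the Hessian of $G$ is $\left(\begin{smallmatrix}1/2&-\eta/4\\-\eta/4&0\end{smallmatrix}\right)$, whose determinant is negative. In addition, its Lipschitz constant can reach $\sqrt2\,M$ just outside the sphere.

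Making the family ``radially steep'' does not remove the zero radial curvature. In any case the ZZQL family is an external black box that you cannot redesign. Using the globally defined $h$ with no penalty fails for a different reason: the target then minimizes over $Q(\mathbb B_p^d(R))\supseteq Rr\mathbb B_2^m$, and that minimum can lie strictly below the source minimum.

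\emph{The missing idea.}
You need the positive one-homogeneity of the hard functions in \cref{g:ext-zzql}, used through \cref{g:lem-radial}. Keep the source on the unit ball, take the global $h$ itself, and add the penalty to $h$ rather than to $h\circ\pi$.
\begin{itemize}
\item The function $\Psi_{r,h}(z)=h(z)+(\|z\|_2-r)_+$ is convex as a sum of convex functions and is $2$-Lipschitz. This is why the paper uses $F_h(x)=\frac{MR}{2}\Psi_{r,h}(Q(x/R))$; the factor $2$ is absorbed into $c_Z,\eps_Z$.
\item Since $h$ is $1$-Lipschitz, $\Psi_{r,h}\ge h\circ\pi_r$. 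Hence its minimum over any compact set containing $r\mathbb B_2^m$ equals $r\min_{\mathbb B_2^m}h$.
\item Homogeneity makes $\pi_r(z)/r$ a unit-ball output with source error at most $(\Psi_{r,h}(z)-\min\Psi_{r,h})/r\le 2\eps/(MRr)$.
\item Homogeneity also gives $h(z)=\|z\|_2h(z/\|z\|_2)$, so a target query at a point of any norm costs exactly one feasible source query.
\end{itemize}
The last property is indispensable in your second part. There $f_h(x)=\frac{M_0R_\eps}{2}\Psi_{r,h}(Q(x/R_\eps))$ is queried at images of norm up to $R/R_\eps$, and your sublevel-restricted source runs into the same gap.

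\emph{A smaller point.} The Euclidean input is not available in the profile form you assume. The paper derives it from the $k$-form of ZZQL by choosing $k=\lfloor a_Z\sigma\rfloor$, using $m\ge m_Z^\circ\ge 2/a_Z$, and averaging over the seed. That derivation is where the universal constants come from, so you should carry it out rather than assume it.
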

The Euclidean nonsmooth information bound is due to
\citet[Theorem~5, version~2]{ZhangZhangQiLin2026}; our contributions are the
geometric transfer with exact preservation of query counts, the envelope
over certificates, and the localization under strong convexity
(\cref{fig:dimradius}). The supremum is taken only over triples $(m,r,Q)$
that satisfy their respective radius-dependent accuracy conditions.
Coordinate certificates have $(m,r)=(j,j^{1/2-1/p})$; the identity gives
$\Omega(d^2/\log(ed))$ for every $p$ at high accuracy. Constant-distortion
certificates from \citet{FigielLindenstraussMilman1977} have dimension of
order $\min\{d,\min(q,\log(ed))d^{2/q}\}$ and complement this high-accuracy
branch. The regularizer coefficient satisfies
$a_{p,d}\asymp\min\{q,\log(ed)\}$, and nonemptiness of the class requires
$M\gtrsim\mu R a_{p,d}$ for $d\ge2$ (\cref{g:thm-budget}). All reductions and
geometric proofs are given in the remainder of this appendix; the
$\ell_\alpha/\ell_\beta$ profiles are in \cref{g:app-lp}.

\begin{figure}[t]
\centering
\includegraphics[width=0.70\linewidth]{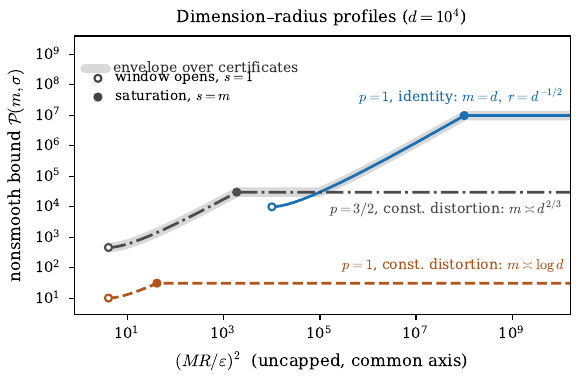}
\caption{Schematic nonsmooth dimension--radius profiles from \eqref{g:eq-ns}
against the common uncapped variable $(MR/\eps)^2$, for $d=10^4$. Each
certificate uses its own inscribed radius: the coordinate identity for $p=1$
has $(m,r)=(d,d^{-1/2})$; the constant-distortion certificates ($p=1$,
$m\asymp\log d$; $p=3/2$, $m\asymp d^{2/3}$) are drawn with a representative
$r=1/2$. Each curve begins where its capped variable reaches one,
$r^2(MR/\eps)^2\ge1$ (hollow markers); the window $\eps\le\eps_ZMRr$ of
\eqref{g:eq-ns} and its dimension requirement $m\ge m_Z^\circ$ are stricter by
universal constants, which are suppressed here. Each curve increases with
$s=r^2(MR/\eps)^2$ and saturates at $(MR/\eps)^2=m/r^2$ (filled markers) at
the level $m^2/\log(em)$; the gray band is the supremum over the three
certificates. The curves show the analytical rate expressions with universal
constants suppressed.}
\label{fig:dimradius}
\end{figure}

\subsection{Classes, oracle conventions, and elementary reductions}
Throughout this appendix, $1\le p\le2$, $q=p/(p-1)$ with $q=\infty$ at
$p=1$, and $1/\infty=0$. We write $\mathbb B_p^d(R)$ for the closed $\ell_p$
ball of radius $R$ and omit $R$ for the unit ball. A convex function on this
ball is $\mu$-strongly convex if, for all $x,y\in\mathbb B_p^d(R)$ and
$0\le u\le1$,
\begin{equation}\label{g:eq-strong-definition}
 f((1-u)x+uy)\le(1-u)f(x)+uf(y)
 -\frac\mu2u(1-u)\|x-y\|_p^2.
\end{equation}
The class $\mathcal F_p^0(d,R,M)$ consists of the convex functions satisfying
$|f(x)-f(y)|\le M\|x-y\|_p$ on the ball; $\mathcal F_{p,\mu}^0$ is the
subclass that also satisfies \eqref{g:eq-strong-definition}. The constants
$M,L$ are upper bounds, whereas $\mu$ is a lower bound. For a class
$\mathcal F$ on a known compact set $X$, define
\[
 \mathsf T_\eps(\mathcal F)
 =\inf\left\{T\in\mathbb N_0:\ \exists A,
 \sup_{f\in\mathcal F}\mathbb P_A\{f(\widehat x)-\min_X f>\eps\}
 \le\tfrac13\right\}.
\]
Here the algorithm $A$ makes at most $T$ exact-value queries on every
transcript, its queries and output lie in $X$, and its policies are total
Borel maps of an independent standard Borel seed and the previous replies.
The infimum over an empty set of admissible budgets is $+\infty$. Since
$f(0)-\min f\le MR$, the accuracy windows below are essential: when
$\eps\ge MR$, zero queries suffice.

For the global class $\mathcal G_p(d,L,R)$, the gradient condition of
\cref{g:thm-smooth} holds on all of $\mathbb R^d$, and at least one global
minimizer lies in $\mathbb B_p^d(R)$. For $\mathcal G_{p,\mu}$, strong
convexity is global as well. Its complexity $\mathsf S_\eps$ uses the same
policies and success criterion, but admits arbitrary points as queries and
outputs and uses the global minimum. This model differs from
$\mathsf T_\eps$.

\begin{lemma}[Exact online reduction]\label{g:lem-reduction}
Fix a map from source objectives to objectives in a target class. Suppose
that each target query can be answered using at most $b\ge1$ feasible source
queries, and that a Borel decoder maps every target output of error at most
$\eps$ to a source output of error at most $a\eps+c$, where $a,c\ge0$.
The simulator and the decoder are independent of the hidden source. Then
\[
 T^{\rm target}_\eps\ge
 \left\lceil T^{\rm source}_{a\eps+c}/b\right\rceil.
\]
\end{lemma}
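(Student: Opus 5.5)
The plan is the standard simulation argument: compose a good target algorithm with the simulator and the decoder to obtain a source algorithm whose budget is controlled, and then invert the budget inequality. Let $A$ be any target algorithm admissible for accuracy $\eps$ with budget $T$, meaning that $\sup_{f}\mathbb P_A\{f(\widehat x)-\min_X f>\eps\}\le1/3$ over the target class. If no such $A$ exists, then $T^{\rm target}_\eps=+\infty$ and the claim is trivial, so we assume one does.

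From $A$ we build a source algorithm $A'$ as follows.
\begin{itemize}
\item $A'$ draws the same independent seed as $A$ and runs $A$ step by step.
\item Whenever $A$ issues a target query, $A'$ invokes the fixed simulator. The simulator issues at most $b$ feasible source queries, chosen adaptively from the seed and the previous source replies, and computes the exact target reply.
\item $A'$ feeds that reply to $A$. When $A$ outputs $\widehat x$, $A'$ returns the decoded point.
\end{itemize}

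Because the simulator is independent of the hidden source, $A'$ is a legitimate source policy. Its transcript is obtained from the source replies by Borel maps, since compositions of total Borel maps are Borel. We pad with dummy feasible queries so that the budget is at most $bT$ on every transcript. For each source objective $g$, let $f_g$ be its image under the fixed map. The target run of $A$ inside $A'$ has exactly the law of $A$ on $f_g$, because the simulated replies equal the exact values of $f_g$. Therefore $A$ is $\eps$-accurate with probability at least $2/3$, and on that event the decoder yields source error at most $a\eps+c$. It follows that $A'$ witnesses $T^{\rm source}_{a\eps+c}\le bT$.

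Taking the infimum over admissible $T$ gives $T^{\rm source}_{a\eps+c}\le b\,T^{\rm target}_\eps$, that is, $T^{\rm target}_\eps\ge T^{\rm source}_{a\eps+c}/b$. Since $T^{\rm target}_\eps$ is an integer, the ceiling follows.

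The only delicate step is measurability of the simulator's variable-length sub-transcripts under the fixed-budget convention, which requires the padding and the choice of Borel encodings of the interleaved histories. I would handle this by fixing in advance exactly $b$ source slots per target query, padded at a fixed feasible point such as $0\in X$, so that $A'$ is a fixed-length composition of Borel maps.
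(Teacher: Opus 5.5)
Your proposal is correct and follows essentially the same route as the paper: run the target algorithm with its own seed inside a source algorithm, identify the simulated and true transcripts by induction over queries, let the decoder transfer the success event, and bound the source budget by $b$ times the target budget before passing to integers. Your explicit padding to exactly $b$ source slots per target query, at a fixed feasible point, spells out the fixed-budget and measurability bookkeeping that the paper leaves implicit.
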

\begin{proof}
Run the source algorithm with the seed of the target algorithm. Induction
over the queries identifies the simulated transcript with the true target
transcript. Decoding preserves the success event, and the simulation uses at
most $b$ times the target budget. The claim is the contrapositive of this
implication between integer budgets. The same argument applies without
change to a distributional source obstruction.
\end{proof}

We write $\operatorname{QC}_p(d;m,r;Q)$ for the certificate condition
\eqref{g:eq-certificate}. The image condition implies that $Q$ is onto and
that $\|Q\|_{p\to2}\le1$. A certificate is fixed before the source objective
is chosen.

\begin{lemma}[Coordinate certificates]\label{g:lem-coordinate}
For every integer $1\le j\le d$, the projection $P_j$ onto the first $j$
coordinates satisfies
$\operatorname{QC}_p(d;j,j^{1/2-1/p};P_j)$.
\end{lemma}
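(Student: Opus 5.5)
We must verify that $P_j$ satisfies $r\mathbb B_2^j\subseteq P_j(\mathbb B_p^d)\subseteq\mathbb B_2^j$ with $r=j^{1/2-1/p}$. The plan is two elementary norm comparisons, both following from the fact that for $1\le p\le2$ the $\ell_2$ norm is dominated by $\ell_p$, and conversely $\ell_p$ exceeds $\ell_2$ by at most a power of the dimension.

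For the outer inclusion, take $x\in\mathbb B_p^d$. Coordinate projection does not increase any $\ell_p$ norm, so $\|P_jx\|_p\le\|x\|_p\le1$. Since $p\le2$, the monotonicity $\|v\|_2\le\|v\|_p$ on $\R^j$ (normalize so $\|v\|_p=1$, then each $|v_i|\le1$ and $\sum|v_i|^2\le\sum|v_i|^p=1$) gives $\|P_jx\|_2\le1$. This yields $P_j(\mathbb B_p^d)\subseteq\mathbb B_2^j$ and also the contraction property $\|P_j\|_{p\to2}\le1$ noted after \eqref{g:eq-certificate}.

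For the inner inclusion, take $u\in\R^j$ with $\|u\|_2\le j^{1/2-1/p}$ and lift it by zero padding to $x=(u,0)\in\R^d$, so $P_jx=u$. Hölder's inequality with exponents $2/p$ and $2/(2-p)$ applied to $\sum_{i\le j}|u_i|^p\cdot1$ gives $\|u\|_p\le j^{1/p-1/2}\|u\|_2\le1$, hence $x\in\mathbb B_p^d$. At $p=1$ this is the Cauchy--Schwarz bound $\|u\|_1\le\sqrt j\|u\|_2$; at $p=2$ the radius is $1$ and both inclusions are equalities. Finally $0<r\le1$ because the exponent $1/2-1/p\le0$, and $1\le j\le d$ holds by assumption, so all requirements of $\operatorname{QC}_p(d;j,j^{1/2-1/p};P_j)$ are met.

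There is no genuine obstacle; the only point needing care is the direction of the exponents (the sign of $1/2-1/p$) and making sure the zero-padded lift, rather than an arbitrary preimage, is used, which is what makes the inner inclusion exact.
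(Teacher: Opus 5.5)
Your proof is correct and follows the same route as the paper. The outer inclusion comes from $\|P_jx\|_2\le\|x\|_p$ for $p\le2$, and the inner inclusion from the zero-padded lift with $\|u\|_p\le j^{1/p-1/2}\|u\|_2$; your explicit check that $0<r\le1$ is a detail the paper leaves implicit.
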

\begin{proof}
The outer inclusion follows from the inequality $\|P_jx\|_2\le\|x\|_p$.
For the inner inclusion, let $\|z\|_2\le j^{1/2-1/p}$; the zero-padded lift
of $z$ has $\ell_p$ norm at most $j^{1/p-1/2}\|z\|_2\le1$.
\end{proof}

\begin{lemma}[Radial forcing]\label{g:lem-radial}
Let $h:\mathbb R^m\to\mathbb R$ be convex, positively one-homogeneous,
and $H$-Lipschitz in the Euclidean norm. Let $K$ be a compact set containing
$r\mathbb B_2^m$, where $r>0$. Define
\[
 \Psi_{r,h}(z)=h(z)+H(\|z\|_2-r)_+,
 \qquad
 \pi_r(z)=\begin{cases}0,&z=0,\\
 z\min\{1,r/\|z\|_2\},&z\ne0.
 \end{cases}
\]
Then $\Psi_{r,h}$ is convex and $2H$-Lipschitz, and for every $z\in K$,
\begin{align*}
 \min_K\Psi_{r,h}&=r\min_{\mathbb B_2^m}h,\\
 h(\pi_r(z)/r)-\min_{\mathbb B_2^m}h
 &\le\frac{\Psi_{r,h}(z)-\min_K\Psi_{r,h}}r.
\end{align*}
Moreover, any value $h(z)$ can be obtained with at most one exact query in
$\mathbb B_2^m$.
\end{lemma}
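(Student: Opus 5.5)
I will verify the four claims of \cref{g:lem-radial} in order: convexity and the Lipschitz constant, the minimum value, the error transfer, and the oracle claim.

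\emph{Convexity and Lipschitz constant.} The map $z\mapsto(\|z\|_2-r)_+$ is a maximum of two convex functions, $0$ and $\|z\|_2-r$, and is $1$-Lipschitz. Adding it, scaled by $H$, to the convex $H$-Lipschitz $h$ gives a convex $2H$-Lipschitz function $\Psi_{r,h}$.

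\emph{The minimum value.} By one-homogeneity, $\min_{r\mathbb B_2^m}h=r\min_{\mathbb B_2^m}h$. Since $K\supseteq r\mathbb B_2^m$, this yields $\min_K\Psi_{r,h}\le r\min_{\mathbb B_2^m}h$. For the reverse inequality, and for the error transfer, the key comparison is
\[
\Psi_{r,h}(z)\ge h(\pi_r(z))\qquad\text{for every }z.
\]
If $\|z\|_2\le r$, then $\pi_r(z)=z$ and the penalty vanishes. If $\|z\|_2>r$, write $\pi_r(z)=sz$ with $s=r/\|z\|_2<1$. The Lipschitz bound gives
\[
h(z)\ge h(sz)-H\|z-sz\|_2=h(\pi_r(z))-H(\|z\|_2-r),
\]
and the penalty $H(\|z\|_2-r)$ exactly compensates this loss. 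Since $\pi_r(z)\in r\mathbb B_2^m$, the comparison gives $\Psi_{r,h}(z)\ge r\min_{\mathbb B_2^m}h$ on all of $K$, which proves the equality for $\min_K\Psi_{r,h}$.

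\emph{Error transfer.} By homogeneity, $h(\pi_r(z))=r\,h(\pi_r(z)/r)$. Subtracting $\min_K\Psi_{r,h}=r\min_{\mathbb B_2^m}h$ from the comparison and dividing by $r>0$ gives
\[
h(\pi_r(z)/r)-\min_{\mathbb B_2^m}h\le\frac{\Psi_{r,h}(z)-\min_K\Psi_{r,h}}r.
\]
The case $z=0$ is covered as well, since then $\pi_r(0)=0$ and the comparison holds trivially.

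\emph{Oracle claim.} For $z=0$, homogeneity gives $h(0)=0$ with no query. For $z\ne0$, query $h$ at the unit-norm point $z/\|z\|_2\in\mathbb B_2^m$ and set $h(z)=\|z\|_2\,h(z/\|z\|_2)$. The rescaling is a Borel map, so the simulation fits the oracle model.

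The only step with real content is the comparison in the case $\|z\|_2>r$, which I do not expect to be an obstacle. One point needs care: the Lipschitz bound must be applied along the radial segment from $z$ to $sz$, where the displacement has length exactly $\|z\|_2-r$, so that it matches the penalty term precisely.
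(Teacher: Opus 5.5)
Your proof is correct and follows essentially the same route as the paper. The paper also observes that the penalty equals $H$ times the distance to $r\mathbb B_2^m$, deduces $\Psi_{r,h}(z)\ge h(\pi_r(z))$ from Lipschitz continuity, and uses positive homogeneity to obtain the factor $r$, the error inequality, and the single query at $z/\|z\|_2$.
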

\begin{proof}
The penalty term is $H$ times the distance to the convex ball
$r\mathbb B_2^m$. Lipschitz continuity gives $h(\pi_r(z))\le\Psi_{r,h}(z)$.
Since the penalty vanishes on that ball, the two minima agree there.
Positive homogeneity yields the factor $r$ and the error inequality.
Finally, $h(0)=0$ and $h(z)=\|z\|_2h(z/\|z\|_2)$ for $z\ne0$. This identity
holds for every $z\ne0$, however large its norm, and each value is obtained
from a single query at the feasible unit vector $z/\|z\|_2$.
\end{proof}

\subsection{The external Euclidean nonsmooth lower bound}
The only external nonsmooth scalar-information result used here is the
following normalized form of
\citet[Theorem~5, version~2]{ZhangZhangQiLin2026}. Their hard functions
have Lipschitz constant $2$; dividing by $2$ gives the normalization used
here.

\begin{proposition}[External ZZQL input]\label{g:ext-zzql}
There exist universal constants $c_{\rm dim},c_{\rm prob},c_{\rm err}>0$
and $m_Z\in\mathbb N$ such that for all integers $m\ge m_Z$ and
$1\le k\le c_{\rm dim}m$ there is a fixed Borel distribution of globally
defined convex, positively one-homogeneous, one-Lipschitz functions $h$ with
the following property. Every deterministic adaptive algorithm making at
most $T$ exact queries in $\mathbb B_2^m$ and returning a point there satisfies
\[
 \mathbb P_h\left\{h(\widehat u)-\min_{\mathbb B_2^m}h>
                 \frac{c_{\rm err}}{2\sqrt k}\right\}\ge\frac34
 \quad\text{if}\quad
 1\le T\le c_{\rm prob}\frac{mk}{\log(ek)}.
\]
\end{proposition}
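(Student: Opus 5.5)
The proof is a direct transcription of \citet[Theorem~5, version~2]{ZhangZhangQiLin2026} into the normalization used here. No new information argument is needed. The plan is to restate their hard distribution and their conclusion in their own notation, then apply the fixed rescaling $h=h_0/2$, and check that each property in the statement survives.

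First, I would record exactly what their theorem provides. For all sufficiently large dimensions $m\ge m_Z$ and every sparsity or depth parameter $k$ with $1\le k\le c_{\rm dim}m$, it gives a fixed distribution over functions $h_0$ on $\R^m$. Each $h_0$ is a finite maximum of linear forms, or a comparable Nemirovski-type hidden-direction construction. Such functions are globally defined, convex, positively one-homogeneous, and $2$-Lipschitz in the Euclidean norm. The theorem states that any deterministic adaptive algorithm with at most $c_{\rm prob}mk/\log(ek)$ exact queries in $\mathbb B_2^m$, returning a point of $\mathbb B_2^m$, leaves an optimality gap larger than $c_{\rm err}/\sqrt k$ with probability at least $3/4$. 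Here I would verify four points:
\begin{enumerate}
\item their parameters can be renamed to $(m,k)$ with the stated ranges;
\item their query and output domain is the closed unit Euclidean ball;
\item the hard functions are defined on all of $\R^m$, not only on the ball;
\item the sampling map from the random parameters to $h_0$ is Borel, so that $\mathbb P_h$ is well defined for total Borel algorithms.
\end{enumerate}
Measurability should follow from continuity of the finitely many linear forms in the sampled parameters.

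Second, I would set $h=h_0/2$. Convexity, positive homogeneity and global definition are preserved, and the Lipschitz constant becomes $1$. The minimum over $\mathbb B_2^m$ scales by $1/2$, so the failure event $h_0(\widehat u)-\min h_0>c_{\rm err}/\sqrt k$ coincides with $h(\widehat u)-\min h>c_{\rm err}/(2\sqrt k)$. An algorithm querying $h$ is, query for query, an algorithm querying $h_0$: it multiplies every answer by $2$ before feeding it to its policy. The budget is therefore the same, and the probability bound $3/4$ transfers unchanged.

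The main obstacle is the bookkeeping in the first step rather than any analysis. Their theorem may be phrased with a different success criterion, such as an expected-error bound or a different confidence level, or with an explicit rather than universal budget constant. It may also require the lower-bound range $T\ge1$ to be handled separately. If the confidence is stated differently, I would shrink $c_{\rm prob}$ or $c_{\rm err}$ to absorb the discrepancy, since the statement asserts only the existence of universal constants. If any regime near $k=1$ is excluded there, I would shrink $c_{\rm dim}$ or enlarge $m_Z$ so that it is excluded here as well.
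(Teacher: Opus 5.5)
Your proposal matches the paper, which takes this proposition as the normalized form of \citet[Theorem~5, version~2]{ZhangZhangQiLin2026}: the hard functions there are $2$-Lipschitz and are divided by $2$, turning the gap threshold $c_{\rm err}/\sqrt k$ into $c_{\rm err}/(2\sqrt k)$ with the same budget and failure probability $3/4$, and $m_Z$ is enlarged so that $c_{\rm prob}m_Z\ge1$, with zero-query algorithms padded by one irrelevant query. Two minor points, neither of which matters here: the simulating $h_0$-algorithm should divide its answers by $2$ (not multiply) before passing them to the $h$-policy, and shrinking $c_{\rm prob}$ or $c_{\rm err}$ would not by itself turn a different confidence level or an expected-error statement into the $3/4$ bound.
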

If necessary, we increase $m_Z$ so that $c_{\rm prob}m_Z\ge1$; a zero-query
algorithm can be padded with one irrelevant feasible query. Conditioning on
the algorithmic seed and then integrating over it yields the same
distributional obstruction for randomized algorithms \citep{Yao1977}. The
restrictions of $h$ lie in $C(\mathbb B_2^m)$ equipped with the uniform norm;
evaluation is jointly continuous, and total Borel policies produce jointly
measurable transcripts and failure events. Consequently, an average failure
probability of at least $3/4$ contradicts a guarantee of failure probability
at most $1/3$ on every fixed objective.

For all nonsmooth statements we set
\begin{align*}
 a_Z&=\min\{c_{\rm dim}/2,c_{\rm err}^2/16,1/4\},&
 \eps_Z&=\sqrt{a_Z/2},& c_Z&=c_{\rm prob}a_Z/2,\\
 m_Z^\circ&=\max\{m_Z,\lceil c_{\rm prob}^{-1}\rceil,
                               \lceil2/a_Z\rceil\}.
\end{align*}
The dimension threshold $m_Z^\circ$ depends only on the constants in
\cref{g:ext-zzql}. The profile $\mathcal P$ in \eqref{g:eq-profile}
is nondecreasing in its second argument, because
the derivative of $\sigma/\log(e\sigma)$ equals
$\log(\sigma)/(1+\log\sigma)^2\ge0$ for $\sigma\ge1$.

\begin{proof}[Proof of the convex part of \cref{g:thm-nonsmooth}]
Let $\sigma=\min\{m,(MRr/\eps)^2\}$ and $k=\lfloor a_Z\sigma\rfloor$.
The dimension and accuracy hypotheses give $a_Z\sigma\ge2$, so that
\[
 a_Z\sigma/2\le k\le a_Z\sigma\le c_{\rm dim}m.
\]
For a normalized source $h$, define on the target ball
\[
 F_h(x)=\frac{MR}{2}\Psi_{r,h}(Q(x/R)).
\]
This function is convex and $M$-Lipschitz. Each target query is answered
with one feasible source query and known arithmetic. By \cref{g:lem-radial},
an output with target error at most $\eps$ yields source error at most
\[
 \frac{2\eps}{MRr}
 \le \frac{c_{\rm err}}{2\sqrt k},
\]
where the last inequality uses $k\le a_Z(MRr/\eps)^2$ and
$a_Z\le c_{\rm err}^2/16$.
The external input and \cref{g:lem-reduction} therefore give
\[
 \mathsf T_\eps(\mathcal F_p^0)
 \ge\left\lfloor c_{\rm prob}\frac{mk}{\log(ek)}\right\rfloor+1
 \ge c_Z\,\mathcal P(m,\sigma).
\]
The final inequality uses $k\ge a_Z\sigma/2$, $k\le\sigma$ and the
positivity of the logarithms. This proves \eqref{g:eq-ns} under the stated dimension and accuracy
conditions.
\end{proof}

\begin{corollary}[Admissible certificate envelope]\label{g:cor-envelope}
For any collection $\mathcal C$ of certificates with the same $p,d$, let
$\mathcal C_\eps$ consist of those with $m\ge m_Z^\circ$ and
$\eps\le\eps_ZMRr$. Then
\[
 \mathsf T_\eps(\mathcal F_p^0)
 \ge c_Z\sup\left(\{0\}\cup
 \left\{\mathcal P\!\left(m,\min\{m,(MRr/\eps)^2\}\right):
                   (m,r,Q)\in\mathcal C_\eps\right\}\right).
\]
In particular, if $d\ge m_Z^\circ$ and
$0<\eps\le\eps_ZMRd^{1/2-1/p}$, the identity gives
\begin{equation}\label{g:eq-identity-ns}
 \mathsf T_\eps(\mathcal F_p^0)
 \ge c_Z\frac{ds_d}{\log(es_d)},\qquad
 s_d=\min\{d,(MR/\eps)^2d^{2/q-1}\}.
\end{equation}
If also $\eps\le MRd^{-1/p}$, this equals
$c_Zd^2/\log(ed)$.
\end{corollary}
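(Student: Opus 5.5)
The envelope statement should follow directly from the convex part of \cref{g:thm-nonsmooth}, applied one certificate at a time. Fix $(m,r,Q)\in\mathcal C_\eps$. By definition $m\ge m_Z^\circ$ and $\eps\le\eps_ZMRr$, which are exactly the hypotheses of \eqref{g:eq-ns}. That result therefore gives $\mathsf T_\eps(\mathcal F_p^0)\ge c_Z\mathcal P(m,\min\{m,(MRr/\eps)^2\})$.

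The left-hand side does not depend on the certificate, so we may take the supremum over $\mathcal C_\eps$. The element $0$ is adjoined to cover an empty admissible collection, and it is harmless because $\mathsf T_\eps\ge0$. No uniformity issue arises: each certificate is fixed before the source objective is drawn, and each one separately bounds the same minimax quantity.

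For the identity specialization we invoke \cref{g:lem-coordinate} with $j=d$. This gives the certificate $(d,d^{1/2-1/p},I)$. Its admissibility conditions read $d\ge m_Z^\circ$ and $\eps\le\eps_ZMRd^{1/2-1/p}$, which are precisely the stated hypotheses. We then substitute $r^2=d^{1-2/p}$ and rewrite the exponent using $1-2/p=2/q-1$, since $1/q=1-1/p$. This turns $(MRr/\eps)^2$ into $(MR/\eps)^2d^{2/q-1}$, so the capped parameter is $s_d$. Expanding $\mathcal P(d,s_d)=ds_d/\log(es_d)$ yields \eqref{g:eq-identity-ns}.

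For the final claim, assume $\eps\le MRd^{-1/p}$. Then
\[
(MR/\eps)^2d^{2/q-1}\ge d^{2/p}d^{1-2/p}=d,
\]
so the cap is active and $s_d=d$. Substituting gives $c_Zd^2/\log(ed)$.

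The only point requiring care is the exponent bookkeeping $1/2-1/p$ versus $2/q-1$. I would also check that the two accuracy hypotheses can hold together: this needs $\eps_Z d^{1/2}\ge1$ or similar, but because the final statement is conditional, it does not have to be verified.
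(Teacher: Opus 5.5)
Your proposal is correct and follows the paper's proof: you apply \eqref{g:eq-ns} to each admissible certificate, take the supremum, and specialize to the coordinate certificate $(d,d^{1/2-1/p},I)$ of \cref{g:lem-coordinate}, where $1-2/p=2/q-1$ and $\eps\le MRd^{-1/p}$ is exactly the condition for $s_d=d$. Your closing compatibility worry is unnecessary, since both accuracy hypotheses are upper bounds on $\eps$ and therefore hold jointly for every sufficiently small $\eps$.
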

\begin{proof}
Each retained certificate bounds the same minimax quantity from below, so
taking the supremum gives the claim; all terms are bounded by $d^2$. For the
identity, substitute $m=d$ and $r=d^{1/2-1/p}=d^{1/q-1/2}$ from
\cref{g:lem-coordinate}. The additional accuracy condition is exactly
$(MR/\eps)^2d^{2/q-1}\ge d$.
\end{proof}
At $p=1$ the identity has radius $d^{-1/2}$, and the saturation scale is
$MR/d$. In the unsaturated branch the denominator remains $\log(es_d)$;
replacing it by $\log d$ loses information. Including all coordinate sizes
$j\in\{m_Z^\circ,\ldots,d\}$ is useful when the accuracy window of the
identity is unavailable. The supremum is taken over the branches whose accuracy conditions
hold at the prescribed error.

\subsection{Uniform constant-distortion certificates}
We use the following form of
\citet[Theorem~2.6, Lemma~2.7 and equation~(2.8)]{FigielLindenstraussMilman1977}:
let $a|x|_E\le\|x\|\le b|x|_E$ on an $n$-dimensional normed space, and
let $M_E$ be the median of the norm on the $E$-unit sphere. For a universal
$\eta>0$ there is a subspace of dimension $\lfloor\eta nM_E^2/b^2\rfloor$
whose Banach--Mazur distance from Euclidean space is at most $3/2$. When
$b/a\le\sqrt n$, the spherical mean and median are comparable up to
universal factors. These results are classical; see also
\citet{MilmanSchechtman1986}. The specialization below requires constants
uniform in $q$, which \cref{g:lem-sphere} provides.

\begin{lemma}[Spherical norm scale]\label{g:lem-sphere}
For $d\ge2$ and $2\le q\le\infty$,
\[
 \operatorname{med}_{\vartheta\in\mathbb S^{d-1}}\|\vartheta\|_q
 \asymp d^{1/q-1/2}\sqrt{\min\{q,\log(ed)\}},
\]
with constants independent of $d,q$.
\end{lemma}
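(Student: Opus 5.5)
The plan is to use the Gaussian representation $\vartheta=G/\|G\|_2$ with $G$ a standard Gaussian vector in $\R^d$. This gives $\|\vartheta\|_q=\|G\|_q/\|G\|_2$. The median of this ratio will be compared with the ratio of the medians, or means, of numerator and denominator.

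For the denominator, $\|G\|_2$ concentrates around $\sqrt d$ with $O(1)$ Gaussian fluctuations. Hence, with probability at least $0.9$, $\|G\|_2\in[\sqrt d/2,2\sqrt d]$ for $d\ge2$; small $d$ is absorbed into constants. The statement therefore reduces to showing that $\|G\|_q$ has median of order $d^{1/q}\sqrt{\min\{q,\log(ed)\}}$, and that it lies within constant factors of this value with probability at least $0.9$. Intersecting the two events then shows that the median of the ratio lies in the claimed range.

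For the numerator I will split into two regimes.

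\emph{Regime $q\le\log(ed)$.} Here $\E\|G\|_q^q=d\,\E|g|^q=d\,2^{q/2}\Gamma((q+1)/2)/\sqrt\pi$. Stirling's formula gives $(\E|g|^q)^{1/q}\asymp\sqrt q$ uniformly for $q\ge2$, so $(\E\|G\|_q^q)^{1/q}\asymp d^{1/q}\sqrt q$.
\begin{itemize}
\item The upper bound at the median follows from Markov's inequality applied to $\|G\|_q^q$.
\item For the lower bound I use that $x\mapsto\|x\|_q$ is $1$-Lipschitz from $\ell_2$, since $\|x\|_q\le\|x\|_2$ for $q\ge2$. Gaussian concentration then gives fluctuations of order $O(1)$ around the mean.
\item It remains to show $\E\|G\|_q\gtrsim d^{1/q}\sqrt q$. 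I will do this with a Paley--Zygmund argument on $S=\sum_i|g_i|^q$, using $\E S^2=d\,\E|g|^{2q}+d(d-1)(\E|g|^q)^2$. This requires $d\,\E|g|^{2q}\lesssim d^2(\E|g|^q)^2$, i.e. $(\E|g|^{2q})/(\E|g|^q)^2\approx 2^{q}$ up to polynomial factors being at most of order $d$. That holds when $q\le c\log(ed)$.
\item For $c\log(ed)<q\le\log(ed)$, I will instead compare with $q'=c\log(ed)$ via $\|x\|_q\ge d^{1/q-1/q'}\|x\|_{q'}$. The factors $d^{1/q}$ and $d^{1/q'}$ are both of order $1$ in this range, and $\sqrt{q}\asymp\sqrt{q'}$.
\end{itemize}
Since $d^{1/q}\sqrt q\gtrsim1$, the $O(1)$ fluctuation is negligible, and the median is of order $d^{1/q}\sqrt q$.

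\emph{Regime $q>\log(ed)$, including $q=\infty$.} Here $\|x\|_\infty\le\|x\|_q\le d^{1/q}\|x\|_\infty$ and $d^{1/q}\le e$. The problem reduces to $\|G\|_\infty$, whose median is of order $\sqrt{\log(ed)}$ for $d\ge2$. The upper bound comes from a union bound on the Gaussian tail. The lower bound comes from $\Prob\{\max_i|g_i|<t\}=(1-\Prob\{|g|\ge t\})^d$ together with the lower Gaussian tail at $t=\sqrt{\log d}$. The target $d^{1/q}\sqrt{\min\{q,\log(ed)\}}\asymp\sqrt{\log(ed)}$ then matches.

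\emph{Main obstacle.} The delicate step is the uniform lower bound on the median of $\|G\|_q$ near the crossover $q\approx\log d$. There the second-moment argument degrades, and I rely on the monotonicity comparison with $q'=c\log(ed)$. I will fix $c$ first and verify the Paley--Zygmund ratio explicitly using the Gamma-function formula for $\E|g|^{2q}/(\E|g|^q)^2$.
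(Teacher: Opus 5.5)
Your proof is correct, but it handles the median differently from the paper.

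\textbf{The paper's route.} The paper works entirely with means.
\begin{itemize}
\item Independence of $\|G\|_2$ and $G/\|G\|_2$ gives $\E\|\vartheta\|_q=\E\|G\|_q/\E\|G\|_2$.
\item The same Paley--Zygmund argument on $Z=\sum_i|G_i|^q$, together with Jensen, gives $\E\|G\|_q\asymp\sqrt q\,d^{1/q}$ for $q\le\log(ed)$.
\item The sandwich $\|G\|_\infty\le\|G\|_q\le e\|G\|_\infty$ and a union bound treat $q\ge\log(ed)$.
\item The step from spherical mean to median is delegated to the external Figiel--Lindenstrauss--Milman mean--median comparison. It applies since $b/a=d^{1/2-1/q}\le\sqrt d$.
\end{itemize}

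\textbf{Your route.} You prove two-sided bounds for $\|G\|_q$ and for $\|G\|_2$, each holding with probability at least $0.9$, and intersect them. This locates every median of the ratio directly. It needs neither the independence identity nor the external comparison, so your argument is self-contained. The paper's version is shorter, and it records the mean estimate $\E\|G\|_q\asymp\sqrt{\min\{q,\log(ed)\}}\,d^{1/q}$ that it reuses in the Gaussian realization of the certificate. Your proof yields this estimate as well.

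\textbf{Two small points.}
\begin{itemize}
\item The fallback comparison with $q'=c\log(ed)$ is unnecessary. Since $\E|g|^{2q}/(\E|g|^q)^2\lesssim 2^q$ and $2^q/d\le(ed)^{\log 2}/d\le 2$ for $q\le\log(ed)$, the Paley--Zygmund ratio is uniformly bounded on the whole range $2\le q\le\log(ed)$. This is exactly what the paper uses.
\item The $O(1)$ Gaussian fluctuation is negligible not merely because $d^{1/q}\sqrt q\gtrsim1$; the Paley--Zygmund constant may be small. The real reason is that $d^{1/q}\sqrt q\ge\sqrt{\log(ed)}$ on this range, which exceeds any fixed multiple of the concentration radius once $d$ is large. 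Bounded $d$, including your $d=2$ case for $\|G\|_2$, is handled by widening the constants, e.g.\ via $\|G\|_q\ge|g_1|$.
\end{itemize}
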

\begin{proof}
Let $G$ be a standard Gaussian vector in $\mathbb R^d$, write
$\vartheta=G/\|G\|_2$ almost surely, and put $\ell=\log(ed)$.
Since the direction and the radius are independent,
$\mathbb E\|\vartheta\|_q=\mathbb E\|G\|_q/\mathbb E\|G\|_2$, where
$\sqrt{2/\pi}\sqrt d\le\mathbb E\|G\|_2\le\sqrt d$.
For a scalar standard Gaussian, the moments
$\mathfrak g_s=2^{s/2}\Gamma((s+1)/2)/\sqrt\pi$ satisfy
$\mathfrak g_s^{1/s}\asymp\sqrt s$ and $\mathfrak g_{2s}/\mathfrak g_s^2\le C2^s$ for
$s\ge2$, by the standard two-sided Stirling bounds.
First let $2\le q\le\ell$ and $Z=\sum_i|G_i|^q$. Then
\[
 \frac{\mathbb E Z^2}{(\mathbb E Z)^2}
 \le1+C2^q/d\le C_0,
\]
because $2^{\log(ed)}/d$ is uniformly bounded. The Paley--Zygmund
inequality yields $\mathbb P\{Z\ge\mathbb E Z/2\}\ge1/(4C_0)$, so
$\mathbb E\|G\|_q\gtrsim\sqrt q\,d^{1/q}$; Jensen's inequality gives the
reverse bound. Next let $q\ge\ell$. Then
$\|G\|_\infty\le\|G\|_q\le d^{1/q}\|G\|_\infty\le e\|G\|_\infty$.
If $d\ge3$, the bound just proved at $q=\ell$ implies
$\mathbb E\|G\|_\infty\gtrsim\sqrt\ell$; the case $d=2$ follows from a
single coordinate. The exponential-moment union bound
$\mathbb E e^{\xi\|G\|_\infty}\le2d e^{\xi^2/2}$, valid for $\xi>0$, gives
$\mathbb E\|G\|_\infty\le\sqrt{2\log(2d)}$.
This proves the estimate for the spherical mean. For $\ell_q^d$ we have
$a=d^{1/q-1/2}$ and $b=1$, so $b/a\le\sqrt d$, and the external
mean--median comparison completes the proof.
\end{proof}

\begin{proposition}[FLM certificate]\label{g:prop-flm}
For a universal $0<c_D\le1$ and $d\ge2$, define
\[
 \chi_{p,d}=\min\{d,\min(q,\log(ed))d^{2/q}\},\qquad
 \mathfrak m_{p,d}=\max\{1,\lfloor c_D\chi_{p,d}\rfloor\}.
\]
Then there is a certificate
$\operatorname{QC}_p(d;\mathfrak m_{p,d},1/2;Q)$.
\end{proposition}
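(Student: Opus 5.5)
The plan is to dualize. A certificate $r\mathbb B_2^m\subseteq Q(\mathbb B_p^d)\subseteq\mathbb B_2^m$ with $r=1/2$ is equivalent, via support functions, to
\[
 \tfrac12\|u\|_2\le\|Q^*u\|_q\le\|u\|_2 \qquad (u\in\R^m).
\]
This holds because $h_{Q\mathbb B_p^d}(u)=\|Q^*u\|_q$. It therefore suffices to find an injection $U:\R^m\to\ell_q^d$, with $m=\mathfrak m_{p,d}$, satisfying $\|u\|_2\le\|Uu\|_q\le2\|u\|_2$ after scaling. We then set $Q=U^*/2$, exactly as in the $\ell_\infty$ specialization \eqref{gp:eq-linfty-image}.

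To obtain $U$, I will apply the Figiel--Lindenstrauss--Milman theorem, in the form quoted above, to $(\R^d,\|\cdot\|_q)$ with the Euclidean structure $|\cdot|_E=\|\cdot\|_2$. Here $a=d^{1/q-1/2}$, $b=1$ and $b/a\le\sqrt d$. By \cref{g:lem-sphere}, the median satisfies
\[
 M_E\asymp d^{1/q-1/2}\sqrt{\min\{q,\log(ed)\}},
\]
uniformly in $q\in[2,\infty]$. Hence the subspace dimension is
\[
 \lfloor\eta dM_E^2\rfloor\gtrsim \min\{q,\log(ed)\}\,d^{2/q}.
\]
Capping at $d$ and shrinking $c_D$ gives a subspace $E_0$ of dimension at least $\mathfrak m_{p,d}$ whose Banach--Mazur distance to $\ell_2$ is at most $3/2$. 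Passing to a further subspace of dimension exactly $\mathfrak m_{p,d}$ preserves this distance. This yields $T:\R^m\to E_0$ with $\|u\|_2\le\|Tu\|_q\le\tfrac32\|u\|_2$, and since $3/2\le2$ we may take $U=T$.

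The degenerate cases must be handled separately. If $\lfloor c_D\chi_{p,d}\rfloor=0$, so that $\mathfrak m_{p,d}=1$, I will use a coordinate projection: $Q=e_1^\top$ has $\|Q\|_{p\to2}\le1$ and image $[-1,1]\supseteq\tfrac12[-1,1]$. The case $p=2$, where $\chi=d$, is trivial with $Q$ the identity restricted to $m$ coordinates. For $d$ small, any conflict with the FLM dimension floor is absorbed into $c_D$.

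The main obstacle is uniformity of constants in $q$, in particular near $q=\infty$, meaning $p=1$, where $\min\{q,\log(ed)\}$ saturates. \Cref{g:lem-sphere} already supplies uniform median bounds, including the mean--median comparison under $b/a\le\sqrt d$. What remains is checking that the FLM constant $\eta$ is universal and that the distortion $3/2$ converts, through the duality above, into radius at least $1/2$ with no $q$-dependent loss. The duality step itself is exact, so I expect this to go through cleanly.
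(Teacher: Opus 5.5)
Your proposal is correct and follows the paper's proof essentially step for step. The paper likewise reduces $c_D$ and combines the Figiel--Lindenstrauss--Milman section theorem with \cref{g:lem-sphere} to get an injection $U$ with $\|u\|_2\le\|Uu\|_q\le2\|u\|_2$, sets $Q=U^*/2$, reads off both inclusions from the support function $\|Q^*u\|_q$, and uses a single-coordinate projection when the integer part vanishes; your extra steps (restricting to a subspace of exact dimension, the separate $p=2$ case) are harmless refinements of the same argument.
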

\begin{proof}
After reducing $c_D$, the external section theorem and \cref{g:lem-sphere}
provide an injection $U:\mathbb R^m\to\mathbb R^d$ with
$\|u\|_2\le\|Uu\|_q\le2\|u\|_2$, where
$m=\lfloor c_D\chi_{p,d}\rfloor\ge1$. Put $Q=U^*/2$.
By duality, $\|Q\|_{p\to2}\le1$, and the support function of
$Q(\mathbb B_p^d)$ in the direction $u$ equals
$\|Q^*u\|_q\ge\|u\|_2/2$. Separation of compact convex sets then yields the
inner inclusion. If the integer part vanishes, a single-coordinate
projection gives the same radius $1/2$.
\end{proof}
Consequently, for $m=\mathfrak m_{p,d}\ge m_Z^\circ$ and
$0<\eps\le\eps_ZMR/2$, \cref{g:thm-nonsmooth} gives
\[
 \mathsf T_\eps(\mathcal F_p^0)
 \gtrsim
 \frac{m\min\{m,(MR/\eps)^2\}}
      {\log(e\min\{m,(MR/\eps)^2\})}.
\]
Indeed, the factor $r^2=1/4$ reduces the numerator by at most a factor of
four and cannot increase the logarithmic denominator. At $p=2$ the identity
gives $m=d,r=1$ without any external geometric theorem. At $p=1$ this
constant-distortion branch has $m\asymp\log(ed)$, whereas
\eqref{g:eq-identity-ns} reaches the full dimension on a smaller accuracy
window. Geometric lower-bound reductions have a precedent in
\citet{GuzmanNemirovski2015}; the exact scalar-information obstruction used
here is the explicitly attributed result of ZZQL.

\begin{proposition}[Gaussian realization and its Euclidean spectral scale]
\label{g:prop-gauss-rho}
Let $1\le p\le2$, $d\ge2$, and $\bar q=\min\{q,\log(ed)\}$. After reducing
$c_D$, the certificate of \cref{g:prop-flm} can be realized as
$\operatorname{QC}_p(d;\mathfrak m_{p,d},r_G;Q)$ for a universal
$r_G\in(0,1/2]$, with the additional property
\begin{equation}\label{g:eq-rho}
 \rho_{p,d}:=\|Q^*\|_{2\to2}\asymp\frac{d^{1/2-1/q}}{\sqrt{\bar q}},
\end{equation}
where the implicit constants are universal. In particular
$\rho_{2,d}\asymp1$, whereas $\rho_{1,d}\asymp\sqrt{d/\log(ed)}$.
\end{proposition}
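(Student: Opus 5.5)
The plan is to take $Q=U^*/c$, where $U:\R^m\to\R^d$ is a suitably scaled Gaussian matrix; this is the standard random realization of the Figiel--Lindenstrauss--Milman section. Concretely, let $G\in\R^{d\times m}$ have independent standard Gaussian entries with $m=\mathfrak m_{p,d}$, and set $U=G/s$ with $s=\E\|g\|_q$, $g\sim N(0,I_d)$.

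The first step is an almost-isometric embedding. For each fixed unit $u$, the vector $Gu$ is standard Gaussian in $\R^d$. The map $x\mapsto\|x\|_q$ is $1$-Lipschitz from $\ell_2$ because $q\ge2$, so Gaussian concentration gives $\Prob\{|\|Gu\|_q-s|>ts\}\le 2e^{-t^2s^2/2}$. By \cref{g:lem-sphere} (mean version), $s\asymp d^{1/q}\sqrt{\bar q}$, so $s^2\asymp\chi_{p,d}$ when $\chi_{p,d}=\bar q d^{2/q}\le d$; when the minimum is $d$, one has $s^2\gtrsim d$ anyway. A $1/8$-net of $\mathbb S^{m-1}$ of size $17^m$ and the usual successive-approximation argument then give $\tfrac12\|u\|_2\le\|Uu\|_q\le2\|u\|_2$ for all $u$ with positive probability, provided $m\le c_D\chi_{p,d}$ with $c_D$ small. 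This is the reduction of $c_D$ mentioned in the statement.

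The second step bounds the spectral norm. Standard Gaussian operator-norm bounds, namely \eqref{r12:eq-gaussian-norm} applied to $G$, give $\|G\|_{2\to2}\le\sqrt d+\sqrt m+\sqrt d\le3\sqrt d$ with probability at least $1-e^{-d/2}$. A lower bound of the same order, $\|G\|_{2\to2}\ge\|Ge_1\|_2\ge\sqrt d/2$, holds with high probability by chi-square concentration. Intersecting these events with the event of the first step leaves a positive-probability event, so a deterministic realization exists. Setting $Q=U^*/2$, duality gives $\|Q\|_{p\to2}\le1$ and the support function $h_{Q\mathbb B_p^d}(u)=\|Uu\|_q/2\ge\|u\|_2/4$. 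Hence $r_G=1/4$ works, and separation gives the inner inclusion exactly as in \cref{g:prop-flm}. Then $\rho_{p,d}=\|Q^*\|_{2\to2}=\|G\|_{2\to2}/(2s)\asymp\sqrt d/(d^{1/q}\sqrt{\bar q})$, which is \eqref{g:eq-rho}.

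The main obstacle is the regime $\chi_{p,d}=d$, where $p$ is near $2$ and $\bar q d^{2/q}\ge d$. There $s\asymp d^{1/q}\sqrt{\bar q}$ still holds, but the claimed formula must read $d^{1/2-1/q}/\sqrt{\bar q}\asymp1$, and this requires $\bar q\,d^{2/q}\asymp d$ up to constants. That is false in general: for instance, at $q=4$ one has $\bar q d^{1/2}\ll d$, so $\chi=\bar qd^{2/q}$ and we are not in the capped regime. The cap is active only when $\bar q d^{2/q-1}\ge1$, and in that case $\rho\asymp(\bar qd^{2/q-1})^{-1/2}\lesssim1$. If the formula is meant literally there, one must check the lower bound $\rho\gtrsim1$ separately, using $\|Q^*\|_{2\to2}\ge\|Q^*u\|_q$ divided by $\|Q^*u\|_q/\|Q^*u\|_2\le1$, which gives $\rho\ge\|u\|_2/4$ directly. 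This cheap lower bound should be recorded, and the upper bound then comes from the spectral estimate. Finally, the endpoint $m=1$ (the case where the integer part vanishes) is handled by a single Gaussian column, where the same computations apply. The $p=1$ and $p=2$ special cases follow by substituting $\bar q=\log(ed)$ with $d^{1/q}=1$, and $q=2$ with $\bar q\asymp1$, respectively.
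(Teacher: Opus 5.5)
Your argument is correct and is essentially the paper's proof: a $d\times m$ Gaussian matrix normalized by $s=\E\|g\|_q\asymp\sqrt{\bar q}\,d^{1/q}$, Lipschitz concentration of $\|Gu\|_q$ plus a net (which needs $m\lesssim s^2$, hence $m\le c'\chi_{p,d}$ after shrinking $c_D$), two-sided bounds $\|G\|_{2\to2}\asymp\sqrt d$ from the Gaussian operator-norm estimate and a single $\chi^2_d$ column, and $\rho_{p,d}=\|G\|_{2\to2}/(2s)$.

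The ``main obstacle'' you raise does not exist. For every $q\ge2$ one has $\bar q\,d^{2/q}\lesssim d$, and your $q=4$ example lies outside the capped regime. So in the capped regime $\bar q\,d^{2/q}\asymp d$, and your uniform estimate $\rho\asymp\sqrt d/s$ already gives \eqref{g:eq-rho} literally, with no separate lower bound needed. Likewise, when $c_D\chi_{p,d}<1$, $d$ is bounded by a universal constant. A coordinate projection, as in the paper, is then cleaner than rerunning concentration for a single column, where $s$ is of constant order.
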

\begin{proof}
If $c_D\chi_{p,d}<1$, a single-coordinate projection gives
$\mathfrak m_{p,d}=1$, $r_G=1/2$ and $\rho=1\asymp d^{1/2-1/q}/\sqrt{\bar q}$,
since $\chi_{p,d}\asymp\bar q\,d^{2/q}$ is then bounded. We may therefore
assume that $m=\lfloor c_D\chi_{p,d}\rfloor\ge1$.

Let $A\in\mathbb R^{d\times m}$ have independent standard Gaussian entries,
and put $b_q=\mathbb E\|G\|_q$ for $G\sim N(0,I_d)$. The estimates in the
proof of \cref{g:lem-sphere} give $b_q\asymp\sqrt{\bar q}\,d^{1/q}$
uniformly in $q\in[2,\infty]$: for $q\le\log(ed)$ this is the spherical
mean bound, and for $q\ge\log(ed)$ one has $b_q\asymp\sqrt{\log(ed)}$ while
$1\le d^{1/q}\le e$. For a fixed unit vector $u\in\mathbb R^m$, the vector
$Au$ is standard Gaussian in $\mathbb R^d$, and $A\mapsto\|Au\|_q$ is
$1$-Lipschitz in the Frobenius norm, because $\|\cdot\|_q\le\|\cdot\|_2$
for $q\ge2$. Gaussian concentration at level $b_q/4$, combined with a
$1/4$-net of the Euclidean sphere of $\mathbb R^m$ of cardinality at most
$9^m$, therefore yields universal constants $0<c_1\le C_1$ such that
\[
 c_1\|u\|_2\le\frac{\|Au\|_q}{b_q}\le C_1\|u\|_2
 \qquad\text{for every }u\in\mathbb R^m
\]
holds with probability at least $3/4$, provided that $m\le cb_q^2$ for a
small universal $c$. Since $b_q^2\asymp\bar q\,d^{2/q}$ and $m\le d$, this
condition is exactly $m\le c'\chi_{p,d}$, so it holds after reducing $c_D$.
Separately, there are universal constants $0<c_2\le C_2$ such that
$c_2\sqrt d\le\|A\|_{2\to2}\le C_2\sqrt d$ with probability at least $7/8$:
the upper bound is the standard Gaussian operator-norm estimate for
$m\le d$, and the lower bound follows from the single column $\|Ae_1\|_2$,
whose square is $\chi^2_d$. The intersection of the two events has positive
probability, so a deterministic realization exists.

Put $Q=A^*/(C_1b_q)$. Then $\|Q^*u\|_q=\|Au\|_q/(C_1b_q)\le\|u\|_2$, hence
$\|Q\|_{p\to2}=\|Q^*\|_{2\to q}\le1$, which is the outer inclusion. The
support function of the symmetric compact convex set $Q(\mathbb B_p^d)$ in
the direction $u$ equals $\|Q^*u\|_q\ge(c_1/C_1)\|u\|_2$, so separation
gives the inner inclusion with $r_G=\min\{c_1/C_1,1/2\}$. Finally,
$\|Q^*\|_{2\to2}=\|A\|_{2\to2}/(C_1b_q)\asymp\sqrt d/b_q$, which is
\eqref{g:eq-rho}.
\end{proof}

The two scales of a certificate need not coincide, and a function class may
impose subgradient bounds in both norms simultaneously.

\begin{corollary}[Simultaneous Lipschitz bounds in two norms]\label{g:cor-two-scale}
Let $\mathcal F_p^0(d,R,M_p,M_2)$ be the subclass of $\mathcal F_p^0(d,R,M_p)$
whose functions are also $M_2$-Lipschitz in the Euclidean norm on the ball.
For a certificate
$\operatorname{QC}_p(d;m,r;Q)$ with spectral scale $\rho=\|Q^*\|_{2\to2}$, put
\begin{equation}\label{g:eq-lambda}
 \Lambda_Q=R\min\{M_p,\ M_2/\rho\}.
\end{equation}
Then the convex bound \eqref{g:eq-ns} holds for this subclass with $MR$
replaced by $\Lambda_Q$, on the corresponding accuracy window
$\eps\le\eps_Z\Lambda_Qr$.

For its $\mu$-strongly convex subclass with $\mu>0$, let $\omega$ and
$a=a_{p,d}$ be the prox function and the constant of \cref{g:lem-prox}, and
define
\[
 b_2=\max_{\|x\|_p\le R}\|\nabla\omega(x)\|_2,
 \qquad
 M_0=\min\left\{M_p-\mu aR,\frac{M_2-\mu b_2}{\rho}\right\}.
\]
The maximum is finite and satisfies
$b_2\le d^{1/p-1/2}aR$. Suppose $M_0>0$, put
$R_\eps=\min\{R,\sqrt{\eps/(\mu a)}\}$, and assume
$m\ge m_Z^\circ$ and
$3\eps/2\le\eps_ZM_0R_\eps r$.
Then \eqref{g:eq-ns-sc} holds with this value of $M_0$; explicitly,
\[
 T\ge c_Z\mathcal P(m,\sigma_{\rm sc}),\qquad
 \sigma_{\rm sc}=\min\left\{m,\frac{4r^2}{9}
 \min\left\{\frac{M_0^2R^2}{\eps^2},
            \frac{M_0^2}{\mu a\eps}\right\}\right\}.
\]
Thus both Lipschitz budgets account for the known strongly convex term.
\end{corollary}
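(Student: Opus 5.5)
The plan is to rerun the two proofs behind \cref{g:thm-nonsmooth}, the convex reduction through \cref{g:lem-radial} and the strongly convex reduction built on the prox function of \cref{g:lem-prox}. The only change is the scalar prefactor of the lifted source function. It will be chosen so that both Lipschitz budgets, in $\ell_p$ and in $\ell_2$, hold at once. The query simulation, the decoder, and the appeal to \cref{g:ext-zzql} through \cref{g:lem-reduction} stay the same.

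\emph{Convex part.} For a normalized ZZQL source $h$, I would set
\[
 F_h(x)=\frac{\Lambda_Q}{2}\Psi_{r,h}(Q(x/R)).
\]
The map $\Psi_{r,h}$ is $2$-Lipschitz in $\ell_2$ by \cref{g:lem-radial}. Since $\|Q\|_{p\to2}\le1$, the function $F_h$ is $\Lambda_Q/R\le M_p$ Lipschitz in $\ell_p$. Using $\|Qv\|_2\le\|Q\|_{2\to2}\|v\|_2=\rho\|v\|_2$, it is also $\Lambda_Q\rho/R\le M_2$ Lipschitz in the Euclidean norm. Both bounds hold because $\Lambda_Q=R\min\{M_p,M_2/\rho\}$. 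One target query still costs one feasible source query. Exactly as in the earlier proof, a target error $\eps$ decodes to a source error $2\eps/(\Lambda_Qr)$. The choice $k=\lfloor a_Z\sigma\rfloor$ with $\sigma=\min\{m,(\Lambda_Qr/\eps)^2\}$ and the window $\eps\le\eps_Z\Lambda_Qr$ then give \eqref{g:eq-ns} with $MR$ replaced by $\Lambda_Q$.

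\emph{Finiteness of $b_2$.} The function $\nabla\omega$ is continuous on the compact $\ell_p$ ball, so the maximum defining $b_2$ exists. By \cref{g:lem-prox}, $\|\nabla\omega(x)\|_q\le aR$ on that ball; this is its $\ell_p$-Lipschitz budget. Since $q\ge2$, the comparison $\|v\|_2\le d^{1/2-1/q}\|v\|_q=d^{1/p-1/2}\|v\|_q$ yields $b_2\le d^{1/p-1/2}aR$.

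\emph{Strongly convex part.} I would reuse the construction behind \eqref{g:eq-ns-sc}: the lifted radial source on the localized scale $R_\eps$, plus the known term $\mu\omega$. Its $\ell_p$ Lipschitz constant is at most $\mu aR$ and its Euclidean constant at most $\mu b_2$. Give the unknown lifted part the prefactor $M_0$ in place of $M-\mu aR$. By the convex computation above, that part is $M_0$-Lipschitz in $\ell_p$ and $M_0\rho$-Lipschitz in $\ell_2$. The sum is then at most $M_0+\mu aR\le M_p$ Lipschitz in $\ell_p$ and at most $M_0\rho+\mu b_2\le M_2$ Lipschitz in $\ell_2$, by the definition of $M_0$. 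Strong convexity relative to $\ell_p$ comes from $\mu\omega$ alone. The rest is copied verbatim: the localization at $R_\eps$, where the regularizer costs at most $\eps/2$ (this is where $3\eps/2$ enters), the decoding, and the parameter choice with $\sigma_{\rm sc}$. That proof used only these two budget facts and the margin $M_0>0$.

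The main obstacle is this last step. I must confirm that the earlier strongly convex proof uses $M-\mu aR$ only through these Lipschitz budgets, and that the localization error bound $\mu aR_\eps^2/2\le\eps/2$ does not depend on which norm defines the budget. If some other use appears, I would re-derive the localization estimate with $M_0$ directly.
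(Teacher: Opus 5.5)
Your proposal is correct and follows the paper's proof essentially step for step. You use the same lift $F_h=(\Lambda_Q/2)\Psi_{r,h}(Q(x/R))$, with both budgets checked through $\|Q\|_{p\to2}\le1$ and $\|Q\|_{2\to2}=\|Q^*\|_{2\to2}=\rho$; the paper phrases this via subgradients $(\Lambda_Q/(2R))Q^*v$, which is equivalent. You also use the same H\"older bound for $b_2$ and the same reuse of the strongly convex construction with prefactor $M_0$. Your concern is resolved: $M-\mu aR$ enters that construction only through the Lipschitz budgets, and the bound $\mu\omega(x_0)\le\mu aR_\eps^2/2\le\eps/2$ does not involve $M_0$.
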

\begin{proof}
For the convex statement, apply the proof of \cref{g:thm-nonsmooth} with
$F_h(x)=(\Lambda_Q/2)\Psi_{r,h}(Q(x/R))$.
For a normalized source, $\Psi_{r,h}$ is $2$-Lipschitz in $\ell_2$, so every
subgradient of the first term at $x$ has the form $(\Lambda_Q/(2R))Q^*v$
with $\|v\|_2\le2$. Hence
$\|\partial F_h\|_q\le(\Lambda_Q/R)\|Q^*\|_{2\to q}\le\Lambda_Q/R\le M_p$ and
$\|\partial F_h\|_2\le(\Lambda_Q/R)\|Q^*\|_{2\to2}=\rho\Lambda_Q/R\le M_2$, so
$F_h$ belongs to the subclass. The query simulator, the radial decoder and
the integer choice of $k$ are unchanged.

For the strongly convex statement, $b_2$ is finite because $\nabla\omega$
is continuous and the ball is compact. Since $q\ge2$, norm comparison and
\cref{g:lem-prox} give
$\|\nabla\omega(x)\|_2\le d^{1/2-1/q}\|\nabla\omega(x)\|_q
\le d^{1/p-1/2}aR$.
Now use the construction of the strongly convex proof of
\cref{g:thm-nonsmooth} with the present $M_0$. Its unknown term has
subgradients bounded by $M_0$ in $\ell_q$ and by $\rho M_0$ in
$\ell_2$. Adding $\mu\omega$ keeps the respective bounds below
$M_0+\mu aR\le M_p$ and $\rho M_0+\mu b_2\le M_2$, so the function lies in
the required subclass. The value of the regularizer at the comparison point
is still at most $\eps/2$, so the same decoder has source error at most
$3\eps/(M_0R_\eps r)$. The stated window and the original rounding argument
yield the displayed bound.
\end{proof}

\begin{remark}[The role of the Euclidean scale]\label{g:rem-normalization}
At $p=2$ the two scales agree: $\rho_{2,d}\asymp1$ and
$\Lambda_Q\asymp R\min\{M_p,M_2\}$. At
$p=1$ the constant-distortion certificate has only
$\mathfrak m_{1,d}\asymp\log(ed)$ dimensions and spectral scale
$\rho_{1,d}\asymp\sqrt{d/\log(ed)}$, so
$\Lambda_Q\asymp R\min\{M_1,M_2\sqrt{\log(ed)/d}\}$.
The coefficient of $M_2$ is smaller than that of $M_1$ by a factor
$\sqrt{d/\log(ed)}$. The coordinate certificate of \cref{g:lem-coordinate} has
$\rho=1$, but its radius $j^{1/2-1/p}$ is correspondingly smaller.
\end{remark}

\subsection{Regularizer geometry and localized strong convexity}
Let $t=t_{p,d}=\max\{p,1+1/\log(ed)\}$ and
\[
 a=a_{p,d}=\frac{d^{2(1/p-1/t)}}{t-1},\qquad
 \omega(x)=\frac a2\|x\|_t^2.
\]
The definition covers $d=1$, where $t=2$ and $a=1$.

\begin{lemma}[A prox function with an explicit Lipschitz budget]\label{g:lem-prox}
The function $\omega$ is one-strongly convex relative to $\ell_p$, and on
$\mathbb B_p^d(R)$,
\[
 0\le\omega(x)\le aR^2/2,\qquad
 \|\nabla\omega(x)\|_q\le aR,
 \qquad
 a\le e^2\min\{(p-1)^{-1},\log(ed)\}.
\]
At $p=1$ the reciprocal is $+\infty$, and $a_{2,d}=1$.
Moreover, $a\asymp\min\{q,\log(ed)\}$ uniformly in $p,d$.
\end{lemma}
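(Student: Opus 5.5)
The plan is to use the classical smoothness/strong-convexity duality for the squared $\ell_t$ norm with $1<t\le2$, and then to bound the coefficient $a$ by elementary calculus.

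\emph{Strong convexity.} The standard fact (Ball--Carlen--Lieb) is that $\tfrac12\|x\|_t^2$ is $(t-1)$-strongly convex with respect to $\|\cdot\|_t$. It is the dual of the statement that $\tfrac12\|y\|_{t^*}^2$ is $(t^*-1)$-smooth in $\ell_{t^*}$ for $t^*\ge2$. Since $t\ge p$, Hölder gives $\|x\|_t\ge d^{1/t-1/p}\|x\|_p$. Multiplying by $a=d^{2(1/p-1/t)}/(t-1)$ therefore yields one-strong convexity relative to $\ell_p$, in the sense of \eqref{g:eq-strong-definition}. At $p=2$ we have $t=2$ and $a=1$, so $\omega$ is simply the Euclidean square.

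\emph{Range and gradient bounds.} Since $\|x\|_t\le\|x\|_p\le R$, we get $0\le\omega\le aR^2/2$. Outside the origin the gradient is $\nabla\omega(x)=a\|x\|_t^{2-t}(|x_i|^{t-1}\operatorname{sign}x_i)_i$, and its $\ell_{t^*}$ norm equals $a\|x\|_t$. Because $t\ge p$ gives $t^*\le q$, we have $\|\cdot\|_q\le\|\cdot\|_{t^*}$, hence $\|\nabla\omega\|_q\le a\|x\|_t\le aR$. At $x=0$ the gradient vanishes, since $t>1$.

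\emph{Bounds on $a$.} This is the only step needing care, because of the uniformity in $p$ and $d$. There are two cases. If $p\ge1+1/\log(ed)$, then $t=p$ and $a=1/(p-1)$. Moreover $1/(p-1)\le\log(ed)$ in this case, so $a\le\min\{(p-1)^{-1},\log(ed)\}$. Otherwise $t=1+1/\log(ed)>p$ and $1/(t-1)=\log(ed)\le(p-1)^{-1}$. In this case $2(1/p-1/t)\le2(1-1/t)=2/(t\log(ed))\cdot$ (more simply $\le 2(t-1)$), so $d^{2(1/p-1/t)}\le d^{2/\log(ed)}\le e^2$. This gives $a\le e^2\log(ed)=e^2\min\{(p-1)^{-1},\log(ed)\}$.

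For the two-sided comparison $a\asymp\min\{q,\log(ed)\}$, use $q=p/(p-1)\in[(p-1)^{-1},2(p-1)^{-1}]$ for $p\le2$. The upper bound then follows from the previous display. For the lower bound: in the first case $a=1/(p-1)\ge q/2\ge\tfrac12\min\{q,\log(ed)\}$. In the second case $a\ge1/(t-1)=\log(ed)\ge\min\{q,\log(ed)\}$, since $d^{2(1/p-1/t)}\ge1$. The case $d=1$ is covered because $t=2$ there. The main obstacle is only quoting the sharp modulus $t-1$ in the strong-convexity fact; everything else is Hölder and case checking.
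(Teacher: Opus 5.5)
Your proof is correct and takes essentially the same route as the paper's: you use the $(t-1)$-strong convexity of $\tfrac12\|x\|_t^2$ (you cite it, whereas the paper sketches it from the Hessian bound for the dual $\tfrac12\|u\|_{t/(t-1)}^2$ and conjugacy), the H\"older comparison $\|h\|_p\le d^{1/p-1/t}\|h\|_t$ to produce the factor in $a$, the dual-norm identity $\|\nabla\tfrac12\|x\|_t^2\|_{t/(t-1)}=\|x\|_t$ together with $q\ge t/(t-1)$, and the same two cases $t=p$ versus $t=1+1/\log(ed)$ to bound $a$. The display ``$2/(t\log(ed))\cdot$'' in your second case is garbled, but the bound you intend is correct: $2(1/p-1/t)\le 2(t-1)=2/\log(ed)$, hence $d^{2(1/p-1/t)}\le e^2$.
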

\begin{proof}
For $1<t\le2$, the function $\|x\|_t^2/2$ is $(t-1)$-strongly convex
in $\ell_t$. We recall the underlying estimate. Put $v=t/(t-1)\ge2$ and
$F(u)=\|u\|_v^2/2$. Differentiating away from zero gives a Hessian whose
first, rank-one term has coefficient $2-v\le0$ and whose remaining term is
\[
 (v-1)\|u\|_v^{2-v}\sum_i|u_i|^{v-2}h_i^2
 \le(v-1)\|h\|_v^2
\]
by H\"older's inequality. Integrating along segments gives
$(v-1)$-smoothness, and segments through zero are covered by continuity.
The conjugate is $F^*(x)=\|x\|_t^2/2$, and the smoothness/conjugacy
inequality gives strong convexity with modulus $1/(v-1)=t-1$. This standard
duality follows directly by applying the smoothness inequality for $F$ at a
maximizing dual point and optimizing the resulting quadratic bound over the
dual increment.
Next, $\|h\|_p\le d^{1/p-1/t}\|h\|_t$, which produces the factor in $a$.
The gradient of $\|x\|_t^2/2$ has norm $\|x\|_t$ in the dual of $\ell_t$.
Since $p\le t$ implies $q\ge t/(t-1)$ and $\|x\|_t\le\|x\|_p$,
the estimates for the value and the gradient follow.
If $t=p$, then $a=(p-1)^{-1}\le\log(ed)$. Otherwise
$t=1+1/\log(ed)$ and
$1\le d^{2(1/p-1/t)}\le e^2$, so that $\log(ed)\le a\le e^2\log(ed)$.
Finally, for $p>1$, $(p-1)^{-1}=q-1$ lies between $q/2$ and $q$;
the case $p=1$ follows from the second branch.
\end{proof}

\begin{proof}[Proof of the strongly convex part of \cref{g:thm-nonsmooth}]
Put $M_0=M-\mu aR>0$ and
$R_\eps=\min\{R,\sqrt{\eps/(\mu a)}\}$.
For a normalized homogeneous source $h$, define on the original target ball
of radius $R$
\[
 f_h(x)=\frac{M_0R_\eps}{2}
        \Psi_{r,h}(Q(x/R_\eps)),
 \qquad F_h(x)=f_h(x)+\mu\omega(x).
\]
The first term is convex and $M_0$-Lipschitz; the second is
$\mu$-strongly convex and $\mu aR$-Lipschitz on the entire target ball.
Hence $F_h\in\mathcal F_{p,\mu}^0(d,R,M)$.
If $u_*$ minimizes $h$ on the Euclidean unit ball, the certificate provides
a lift $v\in\mathbb B_p^d$ with $Qv=ru_*$. By \cref{g:lem-radial}, the point
$x_0=R_\eps v$ is feasible and minimizes $f_h$ on the entire target ball.
Moreover, $\mu\omega(x_0)\le\mu aR_\eps^2/2\le\eps/2$.
If $F_h(\widehat x)-\min F_h\le\eps$, comparison with $x_0$ and the
nonnegativity of $\omega$ yield
\[
 f_h(\widehat x)-\min f_h\le3\eps/2.
\]
All target queries remain in the original ball. Their normalized images
$Q(x/R_\eps)$ may lie outside the source ball, but by positive homogeneity,
as in \cref{g:lem-radial}, each value is still simulated with at most one
feasible exact source query. The regularizer and the radial penalty are
known. The radial decoder returns a feasible source point with error at most
$3\eps/(M_0R_\eps r)$.
Apply the integer choice of the convex proof with accuracy $3\eps/2$,
scale $M_0R_\eps$ and radius $r$. The stated accuracy window is precisely
the condition that argument requires. It gives
\[
 \sigma_{\rm sc}=\min\{m,(2M_0R_\eps r/(3\eps))^2\},
\]
which is \eqref{g:eq-ns-sc} after expanding $R_\eps^2$.
\end{proof}

The strongly convex envelope is the supremum of \eqref{g:eq-ns-sc} over all
certificates satisfying the dimension and accuracy conditions of that bound;
it is zero if no certificate is admissible. In particular, the identity gives
\[
 \sigma_{{\rm sc},d}=\min\left\{d,\frac49d^{2/q-1}
       \min\{(M_0R/\eps)^2,M_0^2/(\mu a\eps)\}\right\}
\]
and the lower bound $c_Z\,\mathcal P(d,\sigma_{{\rm sc},d})$, provided that
$d\ge m_Z^\circ$ and
$3\eps/2\le\eps_ZM_0R_\eps d^{1/q-1/2}$.

\begin{corollary}[Uniform strong-convexity regimes]\label{g:cor-sc-readable}
Fix $0<\theta_0<1$. Assume $\mu a_{p,d}R\le\theta_0 M$ and let
$m=\mathfrak m_{p,d}\ge m_Z^\circ$. For a sufficiently small constant
$c_{\theta_0}>0$, if
\[
 0<\eps\le c_{\theta_0}\min\{MR,M^2/(\mu a_{p,d})\},
\]
then
\[
 \mathsf T_\eps(\mathcal F_{p,\mu}^0)
 \gtrsim_{\theta_0}\frac{m\Xi}{\log(e\Xi)},\qquad
 \Xi=\min\{m,(MR/\eps)^2,M^2/(\mu a_{p,d}\eps)\}.
\]
For $p=2$, use $m=d\ge m_Z^\circ$ and $a=1$ directly. If also
$\eps\le\mu R^2$, this becomes
\[
 \mathsf T_\eps(\mathcal F_{2,\mu}^0)
 \gtrsim_{\theta_0}
 \frac{d\min\{d,M^2/(\mu\eps)\}}
 {\log(e\min\{d,M^2/(\mu\eps)\})}.
\]
\end{corollary}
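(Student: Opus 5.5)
The plan is to apply the strongly convex part of \cref{g:thm-nonsmooth}, using the constant-distortion certificate of \cref{g:prop-flm} with $m=\mathfrak m_{p,d}$ and $r=1/2$, and then to simplify $\sigma_{\rm sc}$ under the stated margin and accuracy hypotheses. The assumption $\mu a_{p,d}R\le\theta_0M$ gives $M_0=M-\mu aR\ge(1-\theta_0)M>0$. Hence $M_0$ may be replaced by $M$ at the cost of constants depending only on $\theta_0$. This replacement is legitimate both in $\sigma_{\rm sc}$ and in the accuracy window, because $\mathcal P$ is nondecreasing in its second argument.

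The main work is the accuracy window $3\eps/2\le\eps_ZM_0R_\eps r$ with $R_\eps=\min\{R,\sqrt{\eps/(\mu a)}\}$. I would split into two cases according to which term attains $R_\eps$.
\begin{itemize}
\item If $R_\eps=R$, the window reads $\eps\le(\eps_Z/3)(1-\theta_0)MR$. This follows from $\eps\le c_{\theta_0}MR$ once $c_{\theta_0}$ is small.
\item If $R_\eps=\sqrt{\eps/(\mu a)}$, the window becomes $\sqrt\eps\le(\eps_Z/3)M_0/\sqrt{\mu a}$, that is, $\eps\le(\eps_Z^2/9)M_0^2/(\mu a)$. This follows from $\eps\le c_{\theta_0}M^2/(\mu a)$, again for small $c_{\theta_0}$.
\end{itemize}
Then $\sigma_{\rm sc}\ge\min\{m,\tfrac19(1-\theta_0)^2\min\{(MR/\eps)^2,M^2/(\mu a\eps)\}\}\ge c'_{\theta_0}\Xi$. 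Two facts finish this step. First, $\mathcal P(m,\sigma)=m\sigma/\log(e\sigma)$ satisfies $\mathcal P(m,c\sigma)\ge c\,\mathcal P(m,\sigma)$ for $c\le1$ whenever $c\sigma\ge1$. Second, $c\sigma\ge1$ can be ensured by shrinking $c_{\theta_0}$, since the window forces $(MR/\eps)^2$ and $M^2/(\mu a\eps)$ to be large, and $m\ge1$. This yields $\mathsf T_\eps\gtrsim_{\theta_0}m\Xi/\log(e\Xi)$.

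For $p=2$, I would use the identity certificate instead, with $m=d$ and $r=1$ (\cref{g:lem-coordinate}), and $a_{2,d}=1$ from \cref{g:lem-prox}. The same computation then applies verbatim. If additionally $\eps\le\mu R^2$, then $(MR/\eps)^2=M^2R^2/\eps^2\ge M^2/(\mu\eps)$, so the minimum in $\Xi$ reduces to $\min\{d,M^2/(\mu\eps)\}$, which gives the final display. The only delicate point I anticipate is the case split for $R_\eps$ in the window, together with keeping every constant dependent on $\theta_0$ alone. Both are routine once $M_0\ge(1-\theta_0)M$ is in hand.
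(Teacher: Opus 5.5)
Your proposal follows the paper's proof. It uses the $r=1/2$ certificate of \cref{g:prop-flm}, the slack $M_0\ge(1-\theta_0)M$, and the window for $R_\eps$; your case split is equivalent to the paper's check of both $3\eps/2\le\eps_ZM_0Rr$ and $3\eps/2\le\eps_ZM_0r\sqrt{\eps/(\mu a)}$. It then compares $\sigma_{\rm sc}$ with $\Xi$, and at $p=2$ uses the identity certificate together with $\eps\le\mu R^2$.

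One justification needs repair. Put $c'=(1-\theta_0)^2/9$. You cannot ensure $c'\Xi\ge1$ by shrinking $c_{\theta_0}$. When $\Xi=m$ this requires $m\ge9/(1-\theta_0)^2$, but $m$ is only assumed to be at least the universal constant $m_Z^\circ$, so it fails for $\theta_0$ near $1$.

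The detour through $\mathcal P(m,c'\Xi)$ is unnecessary, and the paper avoids it. Since $M_0\le M$ and $4r^2/9<1$, you also have $\sigma_{\rm sc}\le\Xi$. Using $\sigma_{\rm sc}\ge c'\Xi$ in the numerator and $\sigma_{\rm sc}\le\Xi$ in the denominator gives directly $\mathcal P(m,\sigma_{\rm sc})\ge c'm\Xi/\log(e\Xi)$. Here $\sigma_{\rm sc}\ge2/a_Z\ge1$ holds automatically under the window and dimension hypotheses of \cref{g:thm-nonsmooth}.
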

\begin{proof}
The assumed slack gives $M_0\ge(1-\theta_0)M$. With the fixed radius
$r=1/2$, the two accuracy restrictions ensure, after decreasing
$c_{\theta_0}$, that $3\eps/2\le\eps_ZM_0Rr$ and
$3\eps/2\le\eps_ZM_0r\sqrt{\eps/(\mu a)}$, respectively; together they
imply the window for $R_\eps$.
The exact profile in \eqref{g:eq-ns-sc} dominates a fixed positive multiple
of $\Xi$ and is at most $\Xi$, so its logarithmic denominator cannot be
larger. At $p=2$ we use the identity. The last restriction makes
$(MR/\eps)^2\ge M^2/(\mu\eps)$.
\end{proof}
At $p=1$ both quantities entering the statement are logarithmic,
$\mathfrak m_{1,d}\asymp\log(ed)\asymp a_{1,d}$, so under
$\mu R\log(ed)\lesssim M$ the bound becomes
\[
 \mathsf T_\eps(\mathcal F_{1,\mu}^0)
 \gtrsim_{\theta_0}\mathcal P\!\left(\log(ed),\ \min\left\{\log(ed),
 \left(\frac{MR}\eps\right)^2,
 \frac{M^2}{\mu\log(ed)\,\eps}\right\}\right),
\]
so that both the effective dimension and the contribution of the
regularizer are logarithmic, the latter explicitly in the strong-convexity
branch. Letting $\mu\downarrow0$ removes that branch and recovers the convex
bound. The radius term and the strong-convexity term cross at
$\eps=\mu a_{p,d}R^2$: above this threshold the radius term is smaller,
below it the strong-convexity term. The dimension cap must still be
imposed. In particular, the strong-convexity term is active only for
$M^2/(\mu a_{p,d}m)\le\eps\le\mu a_{p,d}R^2$,
within the source accuracy window of the corollary. This interval may be
empty; a parameter choice need not exhibit all three branches.

\begin{theorem}[Necessary Lipschitz budget]\label{g:thm-budget}
If $d\ge2$ and $\mathcal F_{p,\mu}^0(d,R,M)$ is nonempty, then with
$k=\lfloor\log_2 d\rfloor$,
\begin{equation}\label{g:eq-budget}
 M\ge\mu R\,2^{2/p-4}\sum_{j=0}^{k-1}2^{-2j/q}
 \gtrsim\mu R\min\{q,\log(ed)\}
 \asymp\mu R a_{p,d}.
\end{equation}
At $p=1$, the explicit bound is $M\ge\mu Rk/4$.
In every dimension $d\ge1$ one also has $M\ge\mu R$.
\end{theorem}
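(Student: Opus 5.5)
The plan is to show that any single function $f\in\mathcal F_{p,\mu}^0(d,R,M)$ already forces the budget, by exhibiting many pairs of points on which strong convexity forces a large function variation, and then comparing that variation with the Lipschitz bound. The basic tool is the midpoint form of \eqref{g:eq-strong-definition}: for $x,y$ in the ball, with $u=1/2$,
\[
 \tfrac12\bigl(f(x)+f(y)\bigr)-f\bigl(\tfrac{x+y}2\bigr)\ge\tfrac\mu8\|x-y\|_p^2,
\]
while the left-hand side is at most $\tfrac M2\|x-y\|_p$ by the Lipschitz bound applied to each of the two differences with the midpoint. This gives $M\ge\tfrac\mu4\|x-y\|_p$. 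Taking $x=-y=Re_1$ yields $M\ge\mu R/2$; to get the stated $M\ge\mu R$ in every dimension, I would instead iterate the midpoint inequality along a dyadic subdivision of a diameter, or use directly that $f-\tfrac\mu2\|\cdot\|^2$-type one-dimensional restrictions have derivative increments at least $\mu\cdot 2R$ across the diameter. The Lipschitz bound forces $|f'|\le M$ at both ends, so $2M\ge 2\mu R$.

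For the logarithmic gain I would use a multiscale dyadic-block argument. Split $2^k$ coordinates into nested dyadic groups. At level $j$, use the vectors $v_j=R\,2^{-(k-j)/p}\cdot$ (indicator of a block of size $2^{k-j}$), together with sign patterns, all of which lie in the $\ell_p$ ball. The plan is to sum second-difference inequalities across levels telescopically. I would consider the one-dimensional chain $0=z_0,z_1,\dots,z_k$ in which $z_{j+1}-z_j$ is a signed block vector at scale $j$ with $\ell_p$ norm $\asymp R\,2^{-j/q}$-weighted, and average over sign flips of each increment. By symmetrization, each level contributes a strong-convexity gain $\tfrac\mu2\|z_{j+1}-z_j\|_p^2$ to the averaged increase of a directional derivative, while the total directional-derivative change along a path of $\ell_p$-length at most $R$ is bounded by $2M$ in the dual norm. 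Concretely, strong convexity gives $\langle g(x+h)-g(x-h),h\rangle\ge 2\mu\|h\|_p^2$ for subgradients. Summing this over orthogonal block directions $h_j$ with disjoint supports, and bounding $\sum_j\langle g(\cdot),h_j\rangle\le M\|\sum_j\pm h_j\|_p$ after choosing signs, produces
\[
 M\cdot R\gtrsim\mu\sum_j\|h_j\|_p^2,\qquad \|h_j\|_p^2=R^2\,2^{2/p}\cdot 2^{-2j/q}\cdot\text{const},
\]
which is the sum in \eqref{g:eq-budget}. The constants $2^{2/p-4}$ would then be tracked from the block normalization.

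The main obstacle is making the disjoint-support summation legitimate. The subgradient differences at different base points must be chained so that the strong-convexity gains add rather than merely bound each term separately. For this I would first try an induction on $j$: at level $j$ I would choose the sign of the next block according to the sign of the current subgradient's pairing, as in a greedy walk, so that each step increases a running linear functional by $\mu\|h_j\|_p^2$ while keeping the walk inside the ball because the supports are disjoint. At $p=1$, $q=\infty$ each term equals a constant and the sum is $k$, which gives $\mu Rk/4$. The final asymptotics $\sum_j2^{-2j/q}\asymp\min\{q,\log(ed)\}$ and the comparison with $a_{p,d}$ from \cref{g:lem-prox} are routine.
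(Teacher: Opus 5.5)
Your dimension-free bound $M\ge\mu R$ is fine. Restricting to $s\mapsto f(se_1)$, subtracting $\mu s^2/2$ leaves a convex function whose one-sided derivatives at $\pm R$ are bounded by $M$, so $2M\ge2\mu R$; this is equivalent to the paper's symmetrized chord argument. The final asymptotics are routine. The gap is in the logarithmic part: increments with disjoint supports cannot produce the sum in \eqref{g:eq-budget}. For disjoint $h_j$ and $1\le p\le2$ one has $\|\sum_j\pm h_j\|_p^p=\sum_j\|h_j\|_p^p$, hence
\[
 \sum_j\|h_j\|_p^2\le\Bigl(\sum_j\|h_j\|_p^p\Bigr)^{2/p}=\Bigl\|\sum_j\pm h_j\Bigr\|_p^2 .
\]
So for any greedy or sign-averaged walk, the total strong-convexity gain $\frac\mu2\sum_j\|h_j\|_p^2$ is at most $\frac\mu2D^2$, where $D$ is the displacement that the Lipschitz bound charges as $MD$. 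This yields only $M\ge\frac\mu2D\le\mu R$, never a logarithm. At $p=1$ it is concrete: you need $\|h_j\|_1$ of order $R$ at every level, and $k$ such disjoint vectors give $\|\sum_j\pm h_j\|_1$ of order $kR$. Disjointness therefore does not keep the walk in the ball; it pushes it out, and the Lipschitz cost grows as fast as the gain. Separately, the step $\sum_j\langle g(z_j),h_j\rangle\le M\|\sum_j\pm h_j\|_p$ is not valid when the subgradients sit at different base points. Only telescoped function values, $f(z_k)-f(z_0)\le M\|z_k-z_0\|_p$, admit a single Lipschitz bound.

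The missing idea is that the logarithm comes from nested, mutually cancelling (Haar-type) increments, for which the $\ell_p$ triangle inequality is very slack. The paper takes $n=2^k$ coordinates and the $\ell_1$-normalized barycenters $u_S=(R/|S|)\sum_{i\in S}e_i$ on a balanced binary tree. These have four properties:
\begin{itemize}
\item all of them lie in $\mathbb B_p^d(R)$;
\item the midpoint of two sibling barycenters of size $2^j$ is exactly the parent barycenter;
\item sibling differences satisfy $\|u_S-u_T\|_p^2=R^22^{2/p}2^{-2j/q}$;
\item every leaf $Re_i$ lies within $2R$ of the root $u_{[n]}$, even though a root-to-leaf path has $\ell_1$-length $kR$.
\end{itemize}
Averaging the midpoint inequality over the pairs at each level and telescoping gives $\frac1n\sum_if(Re_i)-f(u_{[n]})\ge\mu R^22^{2/p-3}\sum_{j<k}2^{-2j/q}$, and the left side is at most $2MR$. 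Your greedy instinct does work on this tree. From the root, repeatedly move to the child with the larger value; the midpoint inequality guarantees a gain of $\frac\mu8\|u_S-u_T\|_p^2$ at each step, and the leaf reached gives the same constant $2^{2/p-4}$. Note also that your $\ell_p$-normalized block vectors $R2^{-(k-j)/p}\mathbf 1_{\mathrm{block}}$ lose the midpoint-equals-parent property for $p>1$. The $\ell_1$ normalization is what makes the telescoping close.
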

\begin{proof}
Let $n=2^k\le d$. For a nonempty subset $S$ of the first $n$ coordinates,
put $u_S=(R/|S|)\sum_{i\in S}e_i$; all these points are feasible.
For disjoint sets $S,T$ of size $2^j$,
\[
 \|u_S-u_T\|_p^2=R^2 2^{2/p}2^{-2j/q}.
\]
Apply \eqref{g:eq-strong-definition} at the midpoint, average over the pairs
at each level of a balanced binary tree, and telescope:
\[
 \frac1n\sum_{i=1}^n f(Re_i)
 -f\!\left(\frac Rn\sum_{i=1}^ne_i\right)
 \ge\mu R^2 2^{2/p-3}\sum_{j=0}^{k-1}2^{-2j/q}.
\]
Each endpoint is at distance at most $2R$ from the final barycenter, so the
left-hand side is at most $2MR$. This proves the explicit inequality.
For $q\in[2,\infty]$ the sum is uniformly comparable to $\min\{q,k\}$,
interpreting the latter as $k$ at infinity; for $d\ge2$,
$k\asymp\log(ed)$. Finally, apply \cref{g:lem-prox}.
For the dimension-free statement, restrict to a coordinate diameter and set
$g(s)=(f(se_1)+f(-se_1))/2$. This function is even, $M$-Lipschitz and
$\mu$-strongly convex. For $0<h<R$, the chord between $-R$ and $R$ gives
$g(R-h)\le g(R)-\mu Rh+\mu h^2/2$, while Lipschitz continuity gives
$Mh\ge\mu Rh-\mu h^2/2$. Divide by $h$ and let $h\downarrow0$.
\end{proof}
This condition determines the order of the necessary Lipschitz budget
on the full ball. The strict margin $M>\mu aR$ is sufficient for the
construction. Sharp dependence on $a$ in the complexity term
$M^2/(\mu a\eps)$ remains open.

\subsection{Smooth geometric transfer and its compatibility endpoint}
Applying \cref{gp:thm-norm} with $B=\mathbb B_p^d$, $K=\mathbb B_p^d(R)$ and any
certificate $\operatorname{QC}_p(d;m,r;Q)$, which supplies $r_m\ge Rr$, gives
\[
 \mathsf S_\eps(\mathcal G_p(d,L,R))
 \ge\max\left\{\frac{5m}{16},\,
 \frac m{\Cstar}\min\left\{m,\sqrt{\frac{LR^2r^2}{\eps}}\right\}\right\},
 \qquad 0<\eps\le LR^2r^2/32,
\]
and the supremum over all certificates satisfying their accuracy windows is
again a lower bound (\cref{g:cor-envelope}). A rank-deficient $Q$ makes the
pullback constant along $\ker Q$, so it does not preserve positive strong
convexity.

\begin{proof}[Proof of \cref{g:thm-smooth}]
Write $\Gamma=d^{2/p-1}$. Norm comparison gives
\[
 \|x\|_2\le\|x\|_p\le\sqrt\Gamma\|x\|_2,
 \qquad \|u\|_q\le\|u\|_2.
\]
Hence every Euclidean $L$-smooth function is $L$-smooth from $\ell_p$
to $\ell_q$, and a Euclidean minimizer in
$\mathbb B_2^d(R/\sqrt\Gamma)$ belongs to $\mathbb B_p^d(R)$.
Euclidean strong convexity with modulus $\mu\Gamma$ implies
\eqref{g:eq-strong-definition} with modulus $\mu$. Applying \cref{cor:strong}
with radius $R/\sqrt\Gamma$ and Euclidean modulus $\mu\Gamma$ gives
\eqref{g:eq-smooth-sc} together with its exact accuracy window. The hard
instances are globally defined quadratics, so the resulting lower bounds
apply also to the corresponding larger ball-restricted smooth classes with
feasible queries and output.
\end{proof}
If $L\ge\mu\Gamma$ and
$0<\eps\le\min\{LR^2/(32\Gamma),\mu R^2\}$,
then \eqref{g:eq-smooth-sc} gives
$\Omega(d\min\{d,\sqrt{L/(\mu\Gamma)}\})$. This estimate is uniform at fixed $\mu$ throughout the displayed
window. Sharp joint dependence on accuracy and conditioning for the
general normed class remains open.

\begin{theorem}[Compatibility and rigidity]\label{g:thm-rigidity}
Let $R>0$ and $L,\mu>0$. The class of convex functions differentiable on a
neighborhood of $\mathbb B_p^d(R)$, $L$-smooth from $\ell_p$ to $\ell_q$
on that ball, and $\mu$-strongly convex there is nonempty if and only if
\[
 L\ge\mu\Gamma,\qquad \Gamma=d^{2/p-1}.
\]
At equality, every member agrees on the ball with
$f(x)=L\|x\|_2^2/2+b^\top x+c$, so exact constrained optimization needs at
most $d+1$ feasible values. The global class $\mathcal G_{p,\mu}$ has the
same nonemptiness threshold and the same global quadratic form at equality;
$d+1$ unrestricted values suffice there as well.
\end{theorem}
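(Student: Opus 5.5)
The plan has three parts: necessity of $L\ge\mu\Gamma$ via a single test direction, sufficiency via an explicit quadratic, and the rigidity at equality via a spectral squeeze of the Hessian.

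\emph{Necessity.} Take any $f$ in the class. On the ball, $L$-smoothness from $\ell_p$ to $\ell_q$ gives the upper quadratic bound
\[
 f(x+h)-f(x)-\langle\nabla f(x),h\rangle\le\tfrac L2\|h\|_p^2 .
\]
Strong convexity relative to $\ell_p$, obtained from \eqref{g:eq-strong-definition} by letting $u\downarrow0$ with differentiability, gives the matching lower bound with $\tfrac\mu2\|h\|_p^2$. I would average both bounds over the signed coordinate permutations of $h$, as in the proof of \cref{gp:prop-full-lp}, applied at the center $x=0$ with small $h$. A cleaner route uses one fixed direction. Choose $h=s\mathbf 1$ and $h'=s e_1$. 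Then $\|s\mathbf 1\|_p^2=s^2d^{2/p}$, while the second-order increment along $s\mathbf 1$ is controlled by Euclidean-type averaging. Concretely, the second difference $\Delta_h=f(h)+f(-h)-2f(0)$ satisfies
\[
 \mu\|h\|_p^2\le\Delta_h\le L\|h\|_p^2 .
\]
Averaging over sign patterns $h=s\sum_i\epsilon_ie_i$, the convex second differences are at least the average over axis directions in a Jensen sense. I expect this step to be the main obstacle, and what I would try first is the following. Let $\phi(t)=f(t h)$ and use the mean over signs of the quadratic-like functional. Equivalently, if $f$ is $C^2$ (mollify, which preserves both bounds on a slightly smaller ball), set $A=\nabla^2f(0)$. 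The bounds then become
\[
 \mu\|h\|_p^2\le h^\top Ah\le L\|h\|_p^2 .
\]
Averaging over signs gives $\tr(A)s^2\ge\mu d^{2/p}s^2$. Meanwhile $A_{ii}=e_i^\top Ae_i\le L$, so $\tr A\le Ld$. This yields $L\ge\mu d^{2/p-1}=\mu\Gamma$. To make the mollification legitimate, convolve on the ball with a small kernel; both the smoothness and strong-convexity inequalities average through the convolution. Alternatively, avoid mollification by working with finite second differences $\Delta$ and using the sign-averaged identity with parallelogram-type sums.

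\emph{Sufficiency.} Once $L\ge\mu\Gamma$, the function $f=\tfrac L2\|x\|_2^2$ lies in the class. It is $L$-smooth because $\|Lh\|_q\le L\|h\|_2\le L\|h\|_p$ for $q\ge2$. It is $L/\Gamma\ge\mu$ strongly convex because $\|h\|_2^2\ge\Gamma^{-1}\|h\|_p^2$, by the norm comparison already used in the proof of \cref{g:thm-smooth}.

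\emph{Rigidity at equality.} Suppose $L=\mu\Gamma$. Every inequality in the necessity chain must then be tight at every point of the interior, after mollifying and passing to limits. This forces $A_{ii}=L$ for all $i$. Using the off-diagonal sign tests with $\|s(e_i\pm e_j)\|_p$ for $p<2$, together with the general-sign equality, one gets $h^\top Ah=L\|h\|_2^2$ on all sign vectors, hence $A=LI$. The case $p=2$ is immediate, since $\mu=L$ pins $A=LI$ directly. It follows that $\nabla^2 f\equiv LI$ on the interior, so $f=L\|x\|_2^2/2+b^\top x+c$ on the ball by continuity.

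\emph{Query counts.} The $d+1$ values at $0$ and at $te_i$, with $t=R$ (feasible points), determine $c$ and $b$. The minimizer over the ball is then the projection of $-b/L$ in the $\ell_p$ geometry, which is computed exactly. The global class is handled identically, with the differentiability neighborhood taken to be all of $\R^d$.
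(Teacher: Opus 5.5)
Your proof is correct, but it takes a different route from the paper's.

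\emph{Necessity.} The paper never uses second derivatives here. It takes a small sign cube $h\sigma$, with $\sigma\in\{-1,1\}^d$ and $hd^{1/p}<R$.
\begin{itemize}
\item Strong convexity at antipodal vertices gives the lower bound $\E f(h\sigma)\ge f(0)+\tfrac\mu2h^2d^{2/p}$.
\item Iterating the coordinatewise descent inequality $[f(z+he_i)+f(z-he_i)]/2\le f(z)+Lh^2/2$, revealing one sign at a time, gives the upper bound $\E f(h\sigma)\le f(0)+Ldh^2/2$.
\end{itemize}
This is the rigorous form of the ``parallelogram-type'' alternative you mention. It is also what repairs your first attempt: bounding each $\Delta_h$ by $L\|h\|_p^2$ only yields $Ld^{2/p}$.

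\emph{Rigidity.} The paper works with the almost-everywhere weak Hessian of the locally Lipschitz gradient, and only afterwards mollifies $\nabla f-Lx$. You instead mollify $f$ once, on a slightly smaller convex set where both inequalities survive averaging. A single pointwise computation with $A=\nabla^2f_\delta(x)$ then does the work: the trace sandwich $\mu d^{2/p}\le\tr A\le Ld$ plus sign averaging gives both the threshold and $\nabla^2f_\delta\equiv LI$. Letting $f_\delta\to f$ transfers the quadratic form. Your route unifies necessity and rigidity and avoids a.e.\ Hessians. The paper's necessity argument is more elementary and makes explicit that only values on the ball enter.

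Two small points.
\begin{itemize}
\item The pair tests with $e_i\pm e_j$ are unnecessary, and for $d>2$, $p<2$ they are insufficient on their own. The decisive step is that every $\sigma^\top A\sigma\ge Ld$ while their average is $\tr A=Ld$. Hence $\sigma^\top A\sigma=Ld$ for every $\sigma$, and multiplying by $\sigma_i\sigma_j$ and averaging gives $A_{ij}=0$. The diagonal $A_{ii}=L$ comes from the trace sandwich, not from the sign equality, which alone only fixes $\tr(A-LI)=0$.
\item The constrained output is the \emph{Euclidean} projection of $-b/L$ onto $\mathbb B_p^d(R)$, not a projection in $\ell_p$ distance. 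This is because the reconstructed objective equals $\tfrac L2\|x+b/L\|_2^2$ up to a constant.
\end{itemize}
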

\begin{proof}
Choose $h>0$ with $hd^{1/p}<R$ and let $\sigma$ be uniform on
$\{-1,1\}^d$. Strong convexity applied at opposite vertices gives
\[
 \mathbb E f(h\sigma)\ge f(0)+\frac\mu2h^2d^{2/p}.
\]
Smoothness implies the descent inequality along every feasible coordinate
segment; in particular,
$[f(z+he_i)+f(z-he_i)]/2\le f(z)+Lh^2/2$.
Reveal the signs one coordinate at a time. Partial vertices and connecting
segments remain inside the ball, so iteration yields
$\mathbb E f(h\sigma)\le f(0)+Ldh^2/2$.
Comparing the two bounds gives $L\ge\mu d^{2/p-1}$. Conversely,
$L\|x\|_2^2/2$ has the required smoothness and strong-convexity modulus
$L/\Gamma$, and its global minimizer is zero.

At equality, the gradient is locally Euclidean Lipschitz on the open ball.
Its weak derivative is represented almost everywhere by a symmetric Hessian
$H$. At common points of differentiability, the smoothness and
strong-convexity inequalities give, for all $v$,
\[
 H_{ii}\le L,\qquad v^\top Hv\ge\mu\|v\|_p^2.
\]
Averaging the second inequality over sign vectors gives
$\operatorname{tr}H\ge\mu d^{2/p}=Ld$, so every diagonal entry equals $L$.
Every quadratic form evaluated at a sign vector is at least $Ld$, and their
average is $Ld$; thus all of them equal $Ld$. For $i\ne j$, multiplying by
$\sigma_i\sigma_j$ and averaging gives $2H_{ij}=0$. Therefore $H=LI$ almost
everywhere. The locally Lipschitz vector field $\nabla f-Lx$ has zero weak
derivative. Mollifying on relatively compact interior balls makes its
derivative identically zero, and passing to the limit gives local constancy.
Connectedness then gives $\nabla f=Lx+b$ throughout the open ball.
Integrating this continuous identity and extending to the boundary proves
the formula. The identity is established on the open ball before
integration, so no almost-everywhere statement is restricted to a line,
which could be a null set. The same local argument on $\mathbb R^d$ proves
global rigidity.

Query $0$ and $se_i$ for $i=1,\ldots,d$ with a fixed $0<s\le R$.
Then $c=f(0)$ and
$b_i=[f(se_i)-f(0)-Ls^2/2]/s$ are known. Minimize the reconstructed
quadratic over the known compact ball, or output $-b/L$ in the global model.
These computations require no further values.
\end{proof}
The estimate \eqref{g:eq-smooth-sc} is consistent with this endpoint:
at $L=\mu\Gamma$ the condition-number branch equals $1$, so the bound reduces
to $5d/16$, which matches the upper bound $d+1$ in order. The rigidity
theorem also rules out a lower bound growing logarithmically in the inverse
accuracy uniformly up to the equality case.

\section{Independent norms: explicit profiles on \texorpdfstring{$\ell_\alpha$}{ell-alpha} balls}\label{g:app-lp}
This appendix proves \cref{g:prop-lp-radius,g:cor-lp} and their block
refinements. Throughout, $1\le\alpha,\beta\le\infty$ are chosen
independently, $\alpha^*$ and $\beta^*$ denote the conjugate exponents,
$\mathbb B_\alpha^d(R_\alpha)$ is the closed $\ell_\alpha$ ball of radius
$R_\alpha$, and $\theta,\gamma$ are defined in \eqref{g:eq-theta}. All
statements concern the image radius \eqref{gp:eq-radius} and the lower bounds
it yields. \Cref{sec:matching} identifies the matching complexity regimes.

\subsection{Two elementary norm constants}
\begin{remark}[Norm dependence of the regularity constants]\label{g:rem-constants}
In \eqref{g:eq-constants}, $L_\beta$ is the minimal Lipschitz modulus of
$\nabla f$ from $\ell_\beta$ to $\ell_{\beta^*}$; for twice differentiable
convex $f$, this modulus is
\[
 L_\beta=\sup_x\ \sup_{\|h\|_\beta\le1}h^\top\nabla^2f(x)h,
\]
the largest curvature over the unit $\ell_\beta$ ball, because for a
positive-semidefinite $A$ one has
$\sup_{\|g\|_\beta,\|h\|_\beta\le1}g^\top Ah=\sup_{\|h\|_\beta\le1}h^\top Ah$.
Changing $\beta$ changes the set over which the supremum is taken. From
$\mathbb B_1^d\subseteq\mathbb B_2^d\subseteq\mathbb B_\infty^d$ and
$\mathbb B_2^d\subseteq\sqrt d\,\mathbb B_1^d$,
$\mathbb B_\infty^d\subseteq\sqrt d\,\mathbb B_2^d$, we obtain
\[
 L_1\le L_2\le L_\infty,\qquad L_2\le dL_1,\qquad L_\infty\le dL_2,
\]
and both factors $d$ are attained: $f(x)=\|x\|_2^2/2$ has
$(L_1,L_2,L_\infty)=(1,1,d)$, while $f(x)=(\mathbf 1^\top x)^2/2$ has
$(L_1,L_2,L_\infty)=(1,d,d^2)$. For the radius comparisons in this remark,
each radius is the corresponding norm of the same chosen minimizer; the
theorem parameters remain independent admissible upper bounds. These radii
satisfy the reverse ordering,
$R_1\ge R_2\ge R_\infty$ with $R_1\le\sqrt d\,R_2$. The extremes are attained
by a $1$-sparse $x^\star$, for which $R_1=R_2$, and by
$x^\star=\mathbf 1R_2/\sqrt d$, for which $R_1=\sqrt d\,R_2$.

Two rows of \cref{tab:lp} that share $\theta$ can therefore be compared only
after the constants have been specified. The $(\mathbb B_1^d,\ell_2)$ and
$(\mathbb B_2^d,\ell_2)$ rows give $\sqrt d\,R_1\sqrt{L_2/\eps}/\Cstar$ and
$dR_2\sqrt{L_2/\eps}/\Cstar$: for a $1$-sparse minimizer the first is
smaller by $\sqrt d$, and for $x^\star=\mathbf 1R_2/\sqrt d$ the two
coincide. \citet{GorbunovDvurechenskyGasnikov2022} report the same factor
$\sqrt d$ as the improvement obtained from an $\ell_1$ prox geometry for
sparse solutions.
\end{remark}

\begin{lemma}[Extremal comparisons in $\R^k$]\label{g:lem-extremal}
For every integer $k\ge1$ and $1\le\beta,\alpha\le\infty$,
\begin{equation}\label{g:eq-extremal}
 \max_{\|y\|_\beta\le1}\|y\|_2=k^{(1/2-1/\beta)_+},
 \qquad
 \min_{\|u\|_2=1}\|u\|_{\alpha^*}=k^{-(1/\alpha-1/2)_+}.
\end{equation}
\end{lemma}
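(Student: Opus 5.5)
The two identities are separate, and both are elementary. The plan is to reduce each to the power-mean (H\"older) comparison between $\ell_s$ norms on $\R^k$ and to exhibit extremizers: either a coordinate vector or the flat vector $\mathbf 1/\|\mathbf 1\|$. Throughout I use the standard facts that $\|y\|_s$ is nonincreasing in $s$, and that for $s\le t$,
\[
 \|y\|_s\le k^{1/s-1/t}\|y\|_t .
\]
The latter is H\"older's inequality applied to $|y_i|^s\cdot1$.

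For the first identity I split at $\beta=2$. If $\beta\le2$, monotonicity gives $\|y\|_2\le\|y\|_\beta\le1$, and the vector $y=e_1$ attains the value $1=k^0$; this matches $(1/2-1/\beta)_+=0$. If $\beta\ge2$, the H\"older comparison with $s=2$ and $t=\beta$ gives $\|y\|_2\le k^{1/2-1/\beta}\|y\|_\beta$. Equality holds at $y=k^{-1/\beta}\mathbf 1$, which has $\|y\|_\beta=1$ and $\|y\|_2=k^{1/2-1/\beta}$. The case $\beta=\infty$ is covered with $1/\infty=0$.

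For the second identity I write $s=\alpha^*$, so that $1/s=1-1/\alpha$, and split at $\alpha=2$, equivalently at $s=2$.
\begin{itemize}
 \item If $\alpha\ge2$, then $s\le2$ and $\|u\|_s\ge\|u\|_2=1$, with equality at $u=e_1$; the exponent is $0$.
 \item If $\alpha\le2$, then $s\ge2$, and the H\"older comparison with the roles $(2,s)$ gives $1=\|u\|_2\le k^{1/2-1/s}\|u\|_s$. Hence $\|u\|_s\ge k^{-(1/2-1/s)}=k^{-(1/\alpha-1/2)}$, using $1/2-1/s=1/\alpha-1/2$. The flat unit vector $u=k^{-1/2}\mathbf 1$ attains this, since $\|u\|_s=k^{1/s-1/2}$.
\end{itemize}
The endpoints $\alpha=1$ (so $s=\infty$) and $\alpha=\infty$ (so $s=1$) fall into these cases with $1/\infty=0$.

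There is no real obstacle. The only step needing care is the exponent bookkeeping under conjugation, namely $1/\alpha^*-1/2=1/2-1/\alpha$, and checking that the positive-part conventions handle the endpoint cases. Attainment of each maximum and minimum is immediate from compactness, and in any case the explicit extremizers show it directly.
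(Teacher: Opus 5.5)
Your proposal is correct and follows essentially the same argument as the paper: you split each identity at exponent $2$, use monotonicity of $\ell_s$ norms on one side and H\"older's inequality on the other, take $e_1$ and the flat vector as extremizers, and use the conjugate-exponent identity $1/\alpha^*-1/2=1/2-1/\alpha$. The exponent bookkeeping and the endpoint conventions $\beta,\alpha\in\{1,\infty\}$ are also handled correctly.
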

\begin{proof}
If $\beta\le2$, then $\|y\|_2\le\|y\|_\beta$, with equality at $y=e_1$; if
$\beta\ge2$, then $\|y\|_2\le k^{1/2-1/\beta}\|y\|_\beta$ by H\"older's
inequality, with equality at the constant-coordinate vector
$k^{-1/\beta}\mathbf 1$. For the second constant, $\alpha\ge2$ gives
$\alpha^*\le2$ and hence $\|u\|_{\alpha^*}\ge\|u\|_2$, with equality at
$u=e_1$, while $\alpha\le2$ gives $\alpha^*\ge2$ and hence
$\|u\|_{\alpha^*}\ge k^{1/\alpha^*-1/2}\|u\|_2$, with equality at the
constant-coordinate vector; finally, $1/\alpha^*-1/2=1/2-1/\alpha$.
\end{proof}

\begin{lemma}[Range of the exponents]\label{g:lem-exponents}
We have $0\le\theta\le1$ and $\theta+\gamma\le1$. Equality $\theta=1$ holds
only for $(\alpha,\beta)=(1,\infty)$, and $\theta+\gamma=1$ only for
$(\alpha,\beta)\in\{(1,\infty),(\infty,1)\}$. Moreover, $\theta=0$ if and
only if $\beta\le2\le\alpha$, and $\theta>1/2$ forces $\gamma=0$.
\end{lemma}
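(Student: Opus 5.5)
The proof is a direct case analysis on the two piecewise-linear quantities
\[
u=\left(\tfrac12-\tfrac1\beta\right)_+,\qquad v=\left(\tfrac1\alpha-\tfrac12\right)_+,\qquad \theta=u+v,
\]
together with $\gamma=(1/\beta-1/\alpha)_+$. Since $1/\beta\in[0,1]$, we have $u\in[0,1/2]$, with $u=1/2$ only at $\beta=\infty$. Likewise $v\in[0,1/2]$, with $v=1/2$ only at $\alpha=1$. Summing gives $0\le\theta\le1$, and $\theta=1$ holds exactly when $(\alpha,\beta)=(1,\infty)$.

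Next comes $\theta=0$. This happens iff $u=v=0$, that is, iff $1/\beta\ge1/2$ and $1/\alpha\le1/2$, which is $\beta\le2\le\alpha$.

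For $\theta+\gamma\le1$, split on the sign of $1/\beta-1/\alpha$.
\begin{itemize}
\item If $\beta\ge\alpha$, then $\gamma=0$, and the claim reduces to $\theta\le1$. Equality forces $(1,\infty)$.
\item If $\beta<\alpha$, write $b=1/\beta>a=1/\alpha$, so $\gamma=b-a$. Enumerate the positions of $a,b$ relative to $1/2$.
\begin{itemize}
\item Both $a,b\ge1/2$: the sum is $(a-1/2)+(b-a)=b-1/2\le1/2$.
\item Both $a,b\le1/2$: the sum is $(1/2-b)+(b-a)=1/2-a\le1/2$.
\item $a<1/2<b$: then $u=v=0$, and the sum is $b-a\le1$. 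Equality requires $b=1$ and $a=0$, i.e.\ $(\alpha,\beta)=(\infty,1)$.
\end{itemize}
\end{itemize}
Collecting the equality cases gives exactly $\{(1,\infty),(\infty,1)\}$.

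Finally, suppose $\theta>1/2$. Since each of $u,v$ is at most $1/2$, both must be positive. Then $1/\beta<1/2<1/\alpha$, so $1/\beta-1/\alpha<0$ and $\gamma=0$.

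No step is a real obstacle. The only care needed is the bookkeeping of the equality cases, in particular confirming that the middle subcase with $a<1/2<b$ is the only one reaching value $1$ when $\gamma>0$.
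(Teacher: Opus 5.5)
Your proof is correct and follows the paper's own argument. Both bound each summand of $\theta$ by $1/2$, split on $\beta\ge\alpha$ versus $\beta<\alpha$, and use the same three subcases according to where $1/\alpha$ and $1/\beta$ sit relative to $1/2$. Your explicit bookkeeping of the equality cases $\theta=1$ and $\theta+\gamma=1$, which the paper leaves implicit, is a small improvement in completeness.
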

\begin{proof}
Each of the two summands of $\theta$ lies in $[0,1/2]$, and both vanish
exactly when $1/\beta\ge1/2\ge1/\alpha$. As for $\theta+\gamma$, if
$\beta\ge\alpha$, then $\gamma=0$; otherwise $\gamma=1/\beta-1/\alpha$, and
there are three cases. If $\beta\le2\le\alpha$, then $\theta=0$ and
$\gamma\le1$. If $\beta<\alpha\le2$, then $\theta=1/\alpha-1/2$ and
$\theta+\gamma=1/\beta-1/2\le1/2$. If $2\le\beta<\alpha$, then
$\theta=1/2-1/\beta$ and $\theta+\gamma=1/2-1/\alpha\le1/2$.
For the final claim, $\theta>1/2$ requires both summands to be positive,
that is, $\beta>2$ and $\alpha<2$; then $1/\beta<1/2<1/\alpha$, so
$\gamma=0$.
\end{proof}

\subsection{Block certificates}
\begin{proof}[Proof of \cref{g:prop-lp-radius}]
The claims about the exponents follow from \cref{g:lem-exponents}, and by
\cref{gp:lem-attainment} it suffices to treat $R_\alpha=1$.
Fix $1\le m\le d$. If $\gamma=0$, set $b=1$; otherwise set
$b=\lfloor d/m\rfloor\ge1$. Choose disjoint index sets
$B_1,\dots,B_m\subseteq\{1,\dots,d\}$ of cardinality $b$, which is possible
because $mb\le d$, and define $Q:\R^d\to\R^m$ and the scalar $c>0$ by
\[
 (Qx)_i=c\sum_{j\in B_i}x_j,\qquad
 c=b^{-(1-1/\beta)}m^{-(1/2-1/\beta)_+}.
\]

\emph{The map is a contraction from $\ell_\beta$ to $\ell_2$.} Write
$y_i=\sum_{j\in B_i}x_j$. H\"older's inequality on each block gives
$|y_i|\le b^{1-1/\beta}\|x_{B_i}\|_\beta$, where $x_{B_i}$ denotes the
restriction of $x$ to $B_i$. Applying \eqref{g:eq-extremal} in $\R^m$ to the
vector of block norms, we obtain
\begin{align*}
 \|y\|_2&\le b^{1-1/\beta}\big\|(\|x_{B_i}\|_\beta)_{i\le m}\big\|_2
 \le b^{1-1/\beta}m^{(1/2-1/\beta)_+}
   \big\|(\|x_{B_i}\|_\beta)_{i\le m}\big\|_\beta\\
 &\le b^{1-1/\beta}m^{(1/2-1/\beta)_+}\|x\|_\beta ,
\end{align*}
where the middle step is the first identity of \eqref{g:eq-extremal} for
$\beta\ge2$ and the norm comparison $\ell_\beta\subseteq\ell_2$ for
$\beta\le2$, and the last step uses
$\sum_i\|x_{B_i}\|_\beta^\beta\le\|x\|_\beta^\beta$ for $\beta<\infty$ and the
maximum for $\beta=\infty$. Hence $\|Q\|_{\beta\to2}\le cb^{1-1/\beta}
m^{(1/2-1/\beta)_+}=1$.

\emph{A lower bound on the Euclidean inradius.} The support function of the
symmetric convex body $Q(\mathbb B_\alpha^d)$ is
\[
 h(u)=\max_{\|x\|_\alpha\le1}\langle u,Qx\rangle
     =c\max_{\|x\|_\alpha\le1}\langle w(u),x\rangle=c\|w(u)\|_{\alpha^*},
\]
where $w(u)\in\R^d$ equals $u_i$ on $B_i$ and $0$ outside $\bigcup_iB_i$.
Therefore
$\|w(u)\|_{\alpha^*}=b^{1/\alpha^*}\|u\|_{\alpha^*}=b^{1-1/\alpha}\|u\|_{\alpha^*}$,
and the second identity of \eqref{g:eq-extremal} in $\R^m$ gives
$h(u)\ge cb^{1-1/\alpha}m^{-(1/\alpha-1/2)_+}\|u\|_2$. A symmetric compact
convex set whose support function dominates $r\|\cdot\|_2$ contains
$r\mathbb B_2^m$, so
\[
 \mathcal R_m(\mathbb B_\beta^d,\mathbb B_\alpha^d)
 \ \ge\ c\,b^{1-1/\alpha}m^{-(1/\alpha-1/2)_+}
 \ =\ b^{1/\beta-1/\alpha}\,m^{-\theta}.
\]
If $\gamma>0$, this equals $\lfloor d/m\rfloor^{\gamma}m^{-\theta}$; if
$\gamma=0$, then $b=1$, and the exponent $1/\beta-1/\alpha\le0$ is applied
to $b=1$, giving $m^{-\theta}=\lfloor d/m\rfloor^{\gamma}m^{-\theta}$.
This is \eqref{g:eq-lp-radius}.

\emph{Sharpness.} For $m=d$, \cref{gp:prop-full-lp} shows that the radius
$d^{-\theta(\alpha,\beta)}$ is exact for every $\alpha,\beta$.
For $\alpha=1$ and any $\beta$, every
admissible $Q$ has columns $v_j=Qe_j$ with $\|v_j\|_2\le\|e_j\|_\beta=1$, and
$Q(\mathbb B_1^d)=\operatorname{conv}\{\pm v_1,\dots,\pm v_d\}$; the Gaussian
estimate in the proof of \cref{gp:prop-l1} then gives
$mr^2\le2\log(2ed)$, which together with $r\le1$ is the stated upper bound.
This matches $m^{-\theta}$ up to the stated logarithmic factor when
$\theta=1/2$, that is, when $\beta\le2$. For $\beta>2$, this Gaussian
estimate need not be sharp at smaller image dimensions.
\end{proof}

\begin{lemma}[A general coarse upper bound]\label{g:lem-crude}
For all $\alpha,\beta$ and $1\le m\le d$,
$\mathcal R_m(\mathbb B_\beta^d,\mathbb B_\alpha^d(R_\alpha))\le R_\alpha\,d^{\gamma}$.
\end{lemma}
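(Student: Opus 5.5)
The plan is to bound the inradius of every admissible image by the same elementary mechanism used in \cref{gp:prop-full-lp}: combine a containment of the image in a Euclidean ball with an inscribed ball. Fix a contraction $Q:\R^d\to\R^m$ with $\|Q\|_{\beta\to2}\le1$, and suppose $z+r\mathbb B_2^m\subseteq Q(\mathbb B_\alpha^d(R_\alpha))$. By \cref{gp:lem-attainment} we may take $z=0$, since the ball is symmetric. It then suffices to show that the whole image lies in the Euclidean ball of radius $R_\alpha d^{\gamma}$, because an inscribed ball cannot have radius exceeding the circumradius of a set containing it.

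For the containment, take $x$ with $\|x\|_\alpha\le R_\alpha$. Then
\[
 \|Qx\|_2\le\|x\|_\beta\le d^{(1/\beta-1/\alpha)_+}\|x\|_\alpha\le R_\alpha d^{\gamma}.
\]
The middle inequality is the standard $\ell_\beta$--$\ell_\alpha$ comparison in $\R^d$. If $\beta\ge\alpha$, then $\|x\|_\beta\le\|x\|_\alpha$. If $\beta<\alpha$, H\"older's inequality gives the factor $d^{1/\beta-1/\alpha}$. This factor is $\max_{\|x\|_\alpha\le1}\|x\|_\beta$, obtained exactly as in \cref{g:lem-extremal}, with the pair of norms changed.

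Hence $r\mathbb B_2^m\subseteq R_\alpha d^{\gamma}\mathbb B_2^m$, so $r\le R_\alpha d^{\gamma}$. Taking the maximum over admissible $Q$ and $r$ proves the lemma. I do not expect a real obstacle here; the only point needing care is the sign convention in $\gamma=(1/\beta-1/\alpha)_+$, so that the comparison factor is oriented correctly in both cases.
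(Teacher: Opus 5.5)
Your proposal is correct and is essentially the paper's proof: the contraction property and the $\ell_\beta$--$\ell_\alpha$ comparison give $\|Qx\|_2\le\|x\|_\beta\le d^{(1/\beta-1/\alpha)_+}\|x\|_\alpha$, so the image lies in $R_\alpha d^{\gamma}\mathbb B_2^m$ and no inscribed ball can have a larger radius. The reduction to $z=0$ is harmless but unnecessary, since any inclusion $z+r\mathbb B_2^m\subseteq\rho\,\mathbb B_2^m$ already forces $r\le\rho$.
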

\begin{proof}
If $\|Q\|_{\beta\to2}\le1$, then
$Q(\mathbb B_\alpha^d(R_\alpha))\subseteq R_\alpha\max_{\|x\|_\alpha\le1}\|x\|_\beta\cdot
\mathbb B_2^m$, and $\max_{\|x\|_\alpha\le1}\|x\|_\beta=d^{(1/\beta-1/\alpha)_+}$
by \cref{g:lem-extremal} applied in $\R^d$ (for $\beta\ge\alpha$ the maximum
equals $1$).
\end{proof}

For the cube and $1\le\beta\le2$, \cref{gp:prop-full-lp} gives
$\mathcal R_d(\mathbb B_\beta^d,\mathbb B_\infty^d(R_\infty))=R_\infty$.
Here $\theta=0$ and $\gamma=1/\beta$, so at full dimension \cref{g:lem-crude}
is loose by the factor $d^{1/\beta}$.

\subsection{Optimizing the dimension}
\begin{proof}[Proof of \cref{g:cor-lp}, part (a)]
Put $\Lambda=R_\alpha\sqrt{L_\beta/\eps}$ and, for $1\le m\le d$, let
$\rho_m=m^{-\theta}$, which is admissible by \eqref{g:eq-lp-radius} with the
choice $b=1$. \Cref{gp:thm-norm} applies with $r_m=R_\alpha\rho_m$ whenever
$\eps\le L_\beta R_\alpha^2\rho_m^2/32$, that is, whenever
\begin{equation}\label{g:eq-window}
 m\le m_{\max}:=\left(\frac{L_\beta R_\alpha^2}{32\eps}\right)^{1/(2\theta)}
 \qquad(\theta>0),
\end{equation}
with no restriction on $m$ when $\theta=0$. For $m=d$ this is the hypothesis
$\eps\le L_\beta R_\alpha^2/(32d^{2\theta})$ of \eqref{g:eq-lp-smooth}, and
substituting $r_d=R_\alpha d^{-\theta}$ into \eqref{gp:eq-norm} gives
\eqref{g:eq-lp-smooth}.

For \eqref{g:eq-lp-lowacc}, let $m^\sharp=\min\{d,\lfloor m_{\max}\rfloor\}$;
this is at least $1$, because $\eps\le L_\beta R_\alpha^2/32$ forces
$m_{\max}\ge1$. The dimension $m^\sharp$ is admissible, so \eqref{gp:eq-norm}
gives $T\ge5m^\sharp/16$, which is \eqref{g:eq-lp-lowacc}.

Two further observations describe the dependence on the image dimension.
First, along this family the product term is
$\min\{m^2,\Lambda m^{1-\theta}\}$, the minimum of two nondecreasing
functions, since $\theta\le1$. Among admissible dimensions it is therefore
largest at $m^\sharp$: choosing the largest $m$ allowed by the accuracy
window maximizes the bound. Second, in the regime $m^\sharp<d$ the product
term cannot exceed the baseline. Indeed, $m_{\max}^{-\theta}\Lambda=\sqrt{32}$
by the definition \eqref{g:eq-window}, and $m^\sharp=\lfloor m_{\max}\rfloor\ge
m_{\max}/2$, so
$\rho_{m^\sharp}\Lambda\le2^{\theta}\sqrt{32}\le2\sqrt{32}$, and the product
term is at most $2\sqrt{32}\,m^\sharp/\Cstar<m^\sharp/360<5m^\sharp/16$. This
is why \eqref{g:eq-lp-lowacc} contains no product factor.
\end{proof}

\begin{corollary}[Blocks in the smooth bound]\label{g:cor-lp-blocks}
Write $\rho_m=\lfloor d/m\rfloor^\gamma m^{-\theta}$ and $\Psi=\gamma+\theta$.
For every $\Lambda>0$ and every $1\le m\le d$,
\begin{equation}\label{g:eq-block-smooth}
 m\min\{m,\Lambda\rho_m\}\ \le\ d\min\{d,\Lambda\rho_d\},\qquad \rho_d=d^{-\theta},
\end{equation}
so that along the block family the product term of \eqref{gp:eq-norm} is
largest at $m=d$, and block maps can improve the result only by enlarging
the accuracy window. Specifically, if $\gamma>0$ and
$0<\eps\le L_\beta R_\alpha^2d^{2\gamma}/(32\cdot4^{\gamma+\theta})$, then
\[
 T\ \ge\ \frac5{16}\,
 \min\left\{d,\ \left\lfloor\frac12
 \left(\frac{L_\beta R_\alpha^2d^{2\gamma}}{32\eps}\right)^{\frac1{2(\gamma+\theta)}}
 \right\rfloor\right\}.
\]
\end{corollary}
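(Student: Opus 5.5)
The plan has two parts: a direct elementwise comparison for \eqref{g:eq-block-smooth}, and then an application of \cref{gp:thm-norm} with a block certificate of \cref{g:prop-lp-radius}, using only the baseline term $5m/16$ of \eqref{gp:eq-norm}.

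\emph{The inequality \eqref{g:eq-block-smooth}.} The left-hand side equals $\min\{m^2,\Lambda m\rho_m\}$, so I will compare the two entries separately with $d^2$ and $\Lambda d\rho_d=\Lambda d^{1-\theta}$. The first comparison, $m^2\le d^2$, is immediate. For the second, I use $\lfloor d/m\rfloor\le d/m$ and $\gamma\ge0$ to obtain
\[
 m\rho_m\le d^\gamma m^{1-\gamma-\theta}=d^\gamma m^{1-\Psi}.
\]
By \cref{g:lem-exponents}, $1-\Psi\ge0$, so $m\le d$ gives $m^{1-\Psi}\le d^{1-\Psi}$. Hence $m\rho_m\le d^{1-\theta}=d\rho_d$. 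Since the minimum is monotone in each entry, \eqref{g:eq-block-smooth} follows. Consequently, along the block family the product term of \eqref{gp:eq-norm} is maximized at $m=d$. Block maps can only help by making a larger dimension admissible, which is what the second part exploits.

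\emph{The accuracy-window bound.} Fix $1\le m\le d$. \Cref{g:prop-lp-radius} gives $r_m\ge R_\alpha\rho_m$, so \cref{gp:thm-norm} yields $T\ge5m/16$ whenever $\eps\le L_\beta R_\alpha^2\rho_m^2/32$. For $m\le d$ I use $\lfloor d/m\rfloor\ge d/(2m)$, which gives
\[
 \rho_m\ge2^{-\gamma}d^\gamma m^{-\Psi}.
\]
So admissibility follows from
\[
 m^{2\Psi}\le \frac{L_\beta R_\alpha^2d^{2\gamma}}{32\cdot4^{\gamma}\eps}.
\]
Write $X=(L_\beta R_\alpha^2d^{2\gamma}/(32\eps))^{1/(2\Psi)}$ and set $m=\min\{d,\lfloor X/2\rfloor\}$. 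Then $m^{2\Psi}\le 4^{-\Psi}X^{2\Psi}\le4^{-\gamma}X^{2\Psi}$, because $\Psi\ge\gamma$, so this $m$ is admissible.

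It remains to check that $m\ge1$. The hypothesis $\eps\le L_\beta R_\alpha^2d^{2\gamma}/(32\cdot4^{\Psi})$ says exactly that $X\ge2$, so $\lfloor X/2\rfloor\ge1$. Applying the baseline term of \eqref{gp:eq-norm} at this $m$ gives the displayed bound.

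\emph{Expected obstacle.} The only delicate point is the bookkeeping of constants. One must use $\lfloor d/m\rfloor\ge d/(2m)$, which holds since $m\le d$, and then absorb the resulting $4^{\gamma}$ into the factor $\frac12$ raised to the power $2\Psi$. Both steps use only $\Psi\ge\gamma$ and $\Psi\le1$ from \cref{g:lem-exponents}.
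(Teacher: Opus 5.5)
Your proposal is correct and follows the paper's proof essentially step for step. For the inequality you use $\lfloor d/m\rfloor\le d/m$ together with $\Psi\le1$. For the window you use $\lfloor d/m\rfloor\ge d/(2m)$ with the baseline term at $m=\min\{d,\lfloor X/2\rfloor\}$, absorbing $4^{\gamma}$ via $\Psi\ge\gamma$; your explicit check that $X\ge2$ forces $m\ge1$ is a detail the paper leaves implicit.
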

\begin{proof}
For \eqref{g:eq-block-smooth}, the bound $\lfloor d/m\rfloor\le d/m$ gives
$\rho_m\le d^\gamma m^{-\Psi}$, hence $m\rho_m\le d^\gamma m^{1-\Psi}\le
d^\gamma d^{1-\Psi}=d^{1-\theta}=d\rho_d$, using $\Psi\le1$
(\cref{g:lem-exponents}) and $m\le d$; together with $m^2\le d^2$, this is
\eqref{g:eq-block-smooth}. The map $m\mapsto m\min\{m,\Lambda\rho_m\}$ is
\emph{not} monotone, because $\lfloor d/m\rfloor$ has jumps. The preceding
monotonicity argument covers only $\rho_m=m^{-\theta}$; for the block
family, the claim follows from the endpoint comparison
\eqref{g:eq-block-smooth}.

For the window, write $A$ for the displayed power of
$L_\beta R_\alpha^2d^{2\gamma}/(32\eps)$ and let
$1\le m\le\min\{d,A/2\}$. Since $\lfloor d/m\rfloor\ge d/(2m)$ for $1\le m\le d$,
\eqref{g:eq-lp-radius} gives $\mathcal R_m\ge R_\alpha(d/2m)^{\gamma}m^{-\theta}$,
so that
\[
 \frac{32\eps}{L_\beta\mathcal R_m^2}
 \le\frac{32\eps}{L_\beta R_\alpha^2}\,2^{2\gamma}d^{-2\gamma}m^{2(\gamma+\theta)}
 =2^{2\gamma}\left(\frac mA\right)^{2(\gamma+\theta)}\le
 2^{2\gamma}2^{-2(\gamma+\theta)}\le1 ,
\]
so $m$ is admissible in \cref{gp:thm-norm} and $T\ge5m/16$. Choose the
largest admissible $m$.
\end{proof}

\begin{proof}[Proof of \cref{g:cor-lp}, part (b)]
Put $N=(M_\beta R_\alpha/\eps)^2$, $\Psi=\gamma+\theta$ and
$c=(Nd^{2\gamma})^{1/(1+2\Psi)}$. For $1\le m\le d$, let
$\rho_m=\lfloor d/m\rfloor^\gamma m^{-\theta}$ and
$\sigma_m=\min\{m,N\rho_m^2\}$. Substituting
$r_m=R_\alpha\rho_m$ and $\lambda=0$ into \eqref{gp:eq-nonsmooth}
gives $T\ge c_Z\mathcal P(m,\sigma_m)$ whenever
$m\ge m_Z^\circ$ and $N\rho_m^2\ge\eps_Z^{-2}$.
At $m=d$ we have $\rho_d=d^{-\theta}$, and the statement becomes
\eqref{g:eq-lp-nonsmooth}, with exactly its accuracy window.

For the cap \eqref{g:eq-ns-cap},
use $\lfloor d/m\rfloor\le d/m$ to obtain $\rho_m\le d^\gamma m^{-\Psi}$ and
therefore
$m\sigma_m\le\min\{m^2,\ Nd^{2\gamma}m^{1-2\Psi}\}$. The first term increases
with $m$; the second is nondecreasing exactly when $\Psi\le1/2$, and the two
agree at $m=c$. If $\Psi>1/2$, the minimum is at most $c^2$ on either side
of $c$, including when $c<1$; together with $m^2\le d^2$, this proves
the second cap. If $\Psi\le1/2$, the maximum is at $m=d$, where
$2\gamma+1-2\Psi=1-2\theta$ turns the second term into $Nd^{1-2\theta}$, so
the value is $\min\{d^2,Nd^{1-2\theta}\}=d\min\{d,Nd^{-2\theta}\}$; at $m=d$
the floor equals $1$ and $\rho_d=d^{-\theta}$, so this value is attained.

For the rounded certificate \eqref{g:eq-lp-peak}, the bounds
$\lfloor d/m\rfloor\ge d/(2m)$ and $\gamma\le1$ give
$\rho_m\ge d^\gamma m^{-\Psi}/2$, hence
$N\rho_m^2\ge Nd^{2\gamma}m^{-2\Psi}/4$. At $m=m^\dagger$, the definition of
$m^\dagger$ makes the right-hand side at least $m^\dagger$, so
$\sigma_{m^\dagger}=m^\dagger$ and
$\mathcal P(m^\dagger,\sigma_{m^\dagger})=(m^\dagger)^2/\log(em^\dagger)$. The
accuracy window of \cref{gp:thm-nonsmooth} is
$N\rho_{m^\dagger}^2\ge\eps_Z^{-2}$, which holds because
$N\rho_{m^\dagger}^2\ge m^\dagger\ge m_Z^\circ\ge\eps_Z^{-2}$.

For admissible dimensions, $1\le\log(e\sigma_m)\le\log(ed)$, so the cap
controls $\mathcal P(m,\sigma_m)$ up to a factor $\log(ed)$.
Finally, $\theta>1/2$ forces $\gamma=0$ (\cref{g:lem-exponents}), so
in that regime $\Psi=\theta$ and $c=N^{1/(1+2\theta)}$.
\end{proof}

\subsection{Comparison of the tabulated certificates}\label{g:app-lp-claims}
Every entry of \cref{tab:lp} is the value of \eqref{gp:eq-norm} or
\eqref{gp:eq-nonsmooth} at the single certificate of \cref{g:prop-lp-radius}
with $m=d$, within its accuracy window. Optimizing over image
dimensions and alternative maps can strengthen these full-dimensional
bounds in two ways.

First, $m=d$ need not be the optimal dimension even within the block family.
For the smooth bound it is optimal by \eqref{g:eq-block-smooth}, so there a
block map affects only the accuracy window (\cref{g:cor-lp-blocks}). For the
nonsmooth bound the transition occurs at $\Psi=\gamma+\theta=1/2$, so once $\Psi>1/2$ and $c=(Nd^{2\gamma})^{1/(1+2\Psi)}<d$, a
smaller block dimension can give a larger bound (\cref{g:cor-lp}). The
strict comparison for $c<d$ also depends on integer rounding and the
profile's logarithmic denominator.
For $c\ge d$ the cap $\min\{d,c\}^2=d^2$ is already attained at $m=d$. An
example is $\alpha=\infty$, $\beta=1$, where $\theta=0$ but $\Psi=1$. Take
$d=n^3$, $m=n^2$ and $\eps=M_\beta R_\alpha/\sqrt C$ with
$C\ge\max\{1,\eps_Z^{-2}\}$ fixed, so that $N=C$ and $c\asymp C^{1/3}n^2<d$
for large $n$. Both dimensions are admissible, $\rho_{n^2}=n$ by
\eqref{g:eq-lp-radius}, and $m\sigma_m$ equals $Cn^3$ at $m=d$ but $n^4$ at
$m=n^2$, so the ratio diverges as $n$ grows; the optimal
$m^\dagger\asymp C^{1/3}n^2$ gives $\asymp C^{2/3}n^4$.

Second, certificates at smaller image dimensions can improve on the
full-dimensional block certificate in the final lower bound. At $m=d$,
\cref{gp:prop-full-lp} shows that the block radius is exact for every
$\alpha,\beta$; the gap to the coarse estimate of \cref{g:lem-crude} reflects
slack in that upper estimate. The smaller-dimensional radii remain open. The
Euclidean-section certificates of \cref{g:prop-flm} and
\eqref{gp:eq-linfty-image} give radius $\asymp1$ at dimension
$m\asymp\min\{d,\min(\beta^*,\log ed)d^{2/\beta^*}\}$ for $\alpha=\beta\le2$,
and at $m\asymp d$ for $\alpha=\beta=\infty$. In the second case this
improves the corresponding row by a factor $\asymp\sqrt d$ in the
accuracy-limited branch and widens the window from
$\eps\le L_\beta R_\alpha^2/(32d)$ to $\eps\le L_\beta R_\alpha^2/128$. In the
first case it widens the window at a smaller dimension; whether it also
improves the bound at a fixed accuracy at which $m=d$ is already admissible
depends on the constant $c_D$ of \cref{g:prop-flm}. The monotonicity argument in the proof of
\cref{g:cor-lp}(a) compares dimensions only within the family
$r_m=Rm^{-\theta}$.

The exponent $\theta$ gives the exact full-dimensional radius for every
$\alpha,\beta$. At smaller dimensions the block certificate is optimal up to
the factor $\sqrt{2\log(2ed)}$ when $\alpha=1$, $\beta\le2$, while the other smaller-dimensional profiles remain open.
\Cref{sec:matching} identifies the classes and accuracy windows in which
the resulting lower bounds match known algorithms.

\section{Nonsmooth hardness from sublevels of a known regularizer}\label{gp:app-nonsmooth}
This appendix uses the external information theorem and the constants fixed
in \cref{g:ext-zzql}; otherwise it is independent of the smooth proofs in
\cref{gp:app}.
Let $B$ be the unit ball of a norm on $\R^d$, let $K$ be nonempty, compact
and convex, and let the known continuous convex penalty $\psi:K\to\R$
satisfy $\min_K\psi=0$. Fix $M>0$ and $\lambda\ge0$, and consider
\[
 \min_{x\in K}F(x)=g(x)+\lambda\psi(x),
 \qquad |g(x)-g(y)|\le M\|x-y\|\quad(x,y\in\R^d),
\]
where $g$ is globally convex. The oracle returns only exact values of the
unknown function. Queries to $g$ may be made at arbitrary points of $\R^d$,
whereas the output must lie in $K$. All policies are total Borel maps, the
success probability is at least $2/3$ on each objective, and a deterministic
query budget applies to every history. The penalty and its proximal
operations are known and may be used freely. As in \cref{gp:thm-composite},
define $K_\tau=\{x\in K:\psi(x)\le\tau\}$ and
$r_m(\tau)=\mathcal R_m(B,K_\tau)$.

We now prove \cref{gp:thm-nonsmooth} in this model. Throughout the proof,
write $r=r_m(\tau)$ and $\delta=\eps+\lambda\tau$ for the admissible pair
$(m,\tau)$ fixed in the statement, and recall that $c_Z,\eps_Z,m_Z^\circ$
are the constants fixed after \cref{g:ext-zzql}.

\begin{proof}[Proof of \cref{gp:thm-nonsmooth}]
Fix $z+r\mathbb B_2^m\subseteq Q(K_\tau)$ with $\|Q\|_{B\to2}\le1$.
For a normalized, convex, positively one-homogeneous, one-Lipschitz source
$h$ from \cref{g:ext-zzql}, put
\[
 g_h(x)=\frac M2\Psi_{r,h}(Qx-z),\qquad
 \Psi_{r,h}(y)=h(y)+(\|y\|_2-r)_+.
\]
The radial forcing lemma, \cref{g:lem-radial}, shows that $g_h$ is globally
convex and $M$-Lipschitz. If $u_*$ minimizes $h$ on $\mathbb B_2^m$, then
$ru_*$ minimizes $\Psi_{r,h}$ globally, and it has a lift at some
$x_0\in K_\tau$. For an $\eps$-optimal feasible output,
\[
 g_h(\widehat x)\le F_h(\widehat x)
 \le F_h(x_0)+\eps\le g_h(x_0)+\delta.
\]
Consequently, the decoder $\widehat u=\pi_r(Q\widehat x-z)/r$ lies in
$\mathbb B_2^m$ and has source error at most $2\delta/(Mr)$.
A query at an arbitrary point $x$ uses at most one feasible source value:
for $y=Qx-z\ne0$ use $h(y)=\|y\|_2h(y/\|y\|_2)$, while $h(0)=0$ is known.
The radial term is known as well. The piecewise simulator and the decoder
are Borel, and these fixed maps do not depend on the hidden $h$.

Let $a_Z$ be the constant fixed after \cref{g:ext-zzql}, put
$\sigma=\min\{m,(Mr/\delta)^2\}$ and set
$k=\lfloor a_Z\sigma\rfloor$. The dimension and source windows imply
$a_Z\sigma\ge2$, so that $a_Z\sigma/2\le k\le a_Z\sigma\le c_{\rm dim}m$ and
\[
 \frac{2\delta}{Mr}\le\frac{c_{\rm err}}{2\sqrt{k}}.
\]
Apply the external obstruction conditionally on the algorithm's seed, then
integrate over the seed. The rounding argument in the proof of
\eqref{g:eq-ns} gives
\[
 T\ge\left\lfloor c_{\rm prob}\frac{mk}{\log(ek)}\right\rfloor+1
 \ge c_Z\,\mathcal P(m,\sigma),\qquad \sigma=\min\{m,(Mr/\delta)^2\}.
\]
Every valid certificate yields a lower bound on the complexity of the same
class, so the supremum of these bounds is also valid.
\end{proof}

\paragraph{Spectral optimizer.}
For $K=R\mathbb B_2^d$ and $\psi(x)=x^\top Hx/2$, we use the exact radius of
\cref{gp:cor-spectral}. Before the dimension cap is applied, the quantity to
be maximized is
\[
 \frac{M^2\min\{R^2,2\tau/\nu_m\}}{(\eps+\lambda\tau)^2}.
\]
If $\lambda>0$, the derivative of $\tau/(\eps+\lambda\tau)^2$ has the sign of
$\eps-\lambda\tau$, and the radius is constant for
$\tau\ge\nu_mR^2/2$. Hence an optimizer is
\begin{equation}\label{gp:eq-ns-optimizer}
 \tau_m^*=\min\{\eps/\lambda,\nu_mR^2/2\}.
\end{equation}
If $\lambda=0$, every $\tau\ge\nu_mR^2/2$ is optimal. This level maximizes
the ratio $Mr_m(\tau)/(\eps+\lambda\tau)$ and hence also the profile;
consequently, if $\tau_m^*$ violates the accuracy window, then no level is
admissible at that $m$.
Since $\sigma\mapsto\sigma/\log(e\sigma)$ is nondecreasing for
$\sigma\ge1$, maximizing the uncapped quantity also maximizes the lower
bound whenever the window holds. In the localized branch
$\eps/\lambda\le\nu_mR^2/2$, substitution gives the capped parameter
$\min\{m,M^2/(2\lambda\nu_m\eps)\}$. The source window and the condition
$m\ge m_Z^\circ$ are still required.

\begin{proposition}[Unbounded nonsmooth spectral gain]\label{gp:prop-ns-gain}
For every $A>4$, there are instances with a fixed known penalty for which
the actual-sublevel lower bound exceeds, by a factor of at least $A/4$, the
best bound obtained from the scalar majorant optimized over both dimension
and sublevel. The dimension $d$ and the Lipschitz constant $M$ vary with
$A$, while $R=\lambda=1$ and $\eps=1/4$ remain fixed. The comparison is between two lower-bound expressions for the same
known-composite class.
\end{proposition}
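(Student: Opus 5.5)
The plan is to use a two-level spectrum and compare the two optimized expressions in closed form. Take $R=\lambda=1$, $\eps=1/4$, $K=\mathbb B_2^d$, $d=2n$ and $\psi(x)=x^\top Hx/2$ with $H=\diag(I_n,AI_n)$. Set $M=\sqrt{n/2}$, so that $2M^2=n$, and let $n$ be a large integer depending on $A$, to be fixed at the end. Throughout I use the spectral optimizer \eqref{gp:eq-ns-optimizer} together with the exact radius $r_m(\tau)=\min\{1,\sqrt{2\tau/\nu_m}\}$ from \cref{gp:prop-sublevel}.

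\emph{Actual-sublevel bound.} Use $m=n$, where $\nu_n=1$.
\begin{itemize}
\item The optimizer is $\tau^*=\min\{\eps/\lambda,\nu_n/2\}=1/4$, which lies in the localized branch.
\item This gives $r=r_n(1/4)=1/\sqrt2$, $\delta=\eps+\lambda\tau^*=1/2$ and $\sigma=\min\{n,M^2/(2\lambda\nu_n\eps)\}=\min\{n,2M^2\}=n$.
\item The hypotheses of \cref{gp:thm-nonsmooth} require $n\ge m_Z^\circ$ and $1/2\le\eps_Z M/\sqrt2=\eps_Z\sqrt n/2$. Both hold once $n\ge\max\{m_Z^\circ,\eps_Z^{-2}\}$.
\end{itemize}
Hence $\mathcal L_{\mathrm{profile}}\ge c_Z\,n^2/\log(en)$.

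\emph{Scalar-majorant bound.} The majorant certifies only the balls $\rho(\tau)\mathbb B_2^d$ with $\rho(\tau)=\min\{1,\sqrt{2\tau/A}\}$, and these radii are exact in every dimension. So in every dimension $m\le 2n$ the same one-variable optimization as in \eqref{gp:eq-ns-optimizer}, with $\nu_m$ replaced by $A$, applies.
\begin{itemize}
\item It gives the level $\tau=\min\{1/4,A/2\}=1/4$ and the uncapped accuracy parameter $s=M^2/(2A\eps)=2M^2/A=n/A$.
\item This level maximizes $M\rho(\tau)/(\eps+\lambda\tau)$. Since $\sigma\mapsto\sigma/\log(e\sigma)$ is nondecreasing, every admissible level yields at most $c_Z\mathcal P(m,\min\{m,s\})$.
\item Dimensions or levels violating the window contribute zero, so discarding the window can only enlarge the bound.
\end{itemize}
Next I maximize $m\mapsto\mathcal P(m,\min\{m,s\})$ over $1\le m\le 2n$.
\begin{itemize}
\item For $m\le s$ the expression equals $m^2/\log(em)$, which is increasing.
\item For $m\ge s$ it equals $ms/\log(es)$, also increasing.
\item The maximum is therefore at $m=2n$. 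Since $A>4$ gives $s<2n$, the value there is $2ns/\log(es)$.
\end{itemize}
Hence
\[
 \mathcal L_{\mathrm{majorant}}\le c_Z\,\frac{2n^2}{A\log(en/A)}.
\]

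\emph{Comparison.} The ratio of the two bounds is at least
\[
 \frac{A}{2}\cdot\frac{\log(en/A)}{\log(en)}.
\]
This is at least $A/4$ exactly when $\log(en/A)\ge\tfrac12\log(en)$, that is, when $en/A\ge\sqrt{en}$, or equivalently $n\ge A^2/e$.

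So I choose
\[
 n\ge\max\{m_Z^\circ,\eps_Z^{-2},A^2\}.
\]
Then both the admissibility of the profile certificate and the ratio bound hold. Here $d=2n$ and $M=\sqrt{n/2}$ grow with $A$, while $R=\lambda=1$ and $\eps=1/4$ stay fixed, as the statement requires.

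The main obstacle is the rigor of the majorant upper bound: it must be the optimum over all dimensions, all levels and all contraction images. I handle this with three facts: the exactness of the ball radii, the monotonicity of $\mathcal P$ in both arguments, and the fact that inadmissible pairs contribute zero. The logarithmic denominators are the only place where constants could fail, and taking $n\ge A^2$ absorbs them.
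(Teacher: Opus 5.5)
Your proposal is correct and follows essentially the same route as the paper. It uses the two-level spectrum $H=\diag(I_n,AI_n)$, the actual-sublevel certificate at $m=n$, $\tau=1/4$, the majorant level $\tau=1/4$ optimized at $m=2n$, and the same comparison of logarithmic denominators. Your normalization $M^2=n/2$ with $n\ge\max\{m_Z^\circ,\eps_Z^{-2},A^2\}$ is the paper's choice $M^2=As_0/2$, $n=\lceil As_0\rceil$, $s_0\ge A$ rewritten via $s_0=n/A$. You only upper-bound the majorant envelope, whereas the paper also checks that $m=2n$ is admissible so the envelope is attained; your upper bound is all the comparison needs.
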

\begin{proof}
Set
\[
\begin{gathered}
 R=\lambda=1,\quad \eps=\tfrac14,\quad
 s_0=\max\{A,m_Z^\circ,\eps_Z^{-2}\},\\
 n=\lceil As_0\rceil,\quad d=2n,\quad M^2=As_0/2,
 \quad H=\diag(I_n,AI_n).
\end{gathered}
\]
For the actual penalty, take $m=n$ and $\tau=1/4$. Then $r^2=1/2$,
$\delta=1/2$, and the capped parameter is $\sigma_0=As_0\le n$.
The dimension and source accuracy conditions hold because
$n\ge m_Z^\circ$ and $As_0\ge\eps_Z^{-2}$. The certified bound is therefore
$c_ZnAs_0/\log(eAs_0)$.

Using only $\psi(x)\le A\|x\|_2^2/2$ certifies the centered balls
of radius $\min\{1,\sqrt{2\tau/A}\}$ as comparison sets with bounded
regularizer values for the same objective class. By
\eqref{gp:eq-ns-optimizer}, their best uncapped parameter is $s_0$, attained
at $\tau=1/4$ independently of $m$. The envelope over all dimensions is
therefore exactly $c_Z(2n)s_0/\log(es_0)$: for fixed $m$ the bound is a
nondecreasing function of the capped parameter, and
$m\min\{m,s_0\}/\log(e\min\{m,s_0\})$ increases with $m\ge1$. The choice
$m=2n$ is admissible, and its cap is inactive. The ratio of the
actual-sublevel certificate to this optimized scalar-majorant bound is
\[
 \frac A2\,\frac{\log(es_0)}{\log(eAs_0)}\ge\frac A4,
\]
because $A\le s_0$ implies $\log(eAs_0)\le2\log(es_0)$.
Optimizing the actual-sublevel envelope can only increase the numerator.
\end{proof}

\paragraph{Oracle and class boundaries.}
If the interface returns $F$ at points outside $K$, the known penalty must
have a specified finite extension there, so that its value can be added or
subtracted. An interface that returns $F$ only at feasible points needs no
such extension. The result allows arbitrary known proximal operations for
$\psi$, but it supplies no proximal oracle for the unknown $g$ or for $F$.
The constant $M$ bounds the Lipschitz modulus of $g$; requiring the full
objective $F$ to be Lipschitz would also require accounting for the
variation of the regularizer, as in \cref{g:thm-nonsmooth}. An affine tilt absorbed into $g$ contributes to its Lipschitz
constant. In the nonsmooth model, any such normalization must therefore
preserve the prescribed budget $M$.

\section{Upper--lower comparisons and their precise regimes}\label{app:matching}
We compare the lower bounds with known gradient-free algorithms to
prove \cref{match:cor-smooth,match:cor-nonsmooth}. An exact
scalar evaluation is the unit of cost: a two-point observation costs two
queries. All lower bounds refer to success probability at least $2/3$ and a
deterministic worst-case query budget. Known randomized upper bounds stated
in expected suboptimality are run to expected error at most $\eps/3$;
Markov's inequality then supplies the same success probability without
changing the order of a predetermined budget.

\subsection{Euclidean minimization}\label{match:sec-euclidean}
On $\mathcal Q_{d,L,R}$, exact gradient differences and coefficient
interpolation give $O(d\min\{d,\sqrt{LR^2/\eps}\})$ values
(\cref{app:corollaries}; acceleration as in \citet{Bubeck2015}).
\Cref{thm:main} supplies the matching lower bound for every
$0<\eps\le LR^2/32$. The deterministic exact-recovery lower bound of
\citet{Yao1975} concerns the deterministic endpoint $\eps=0$. Our bound applies
to positive accuracy and randomized algorithms.

For the full smooth class, finite-difference acceleration gives
$O(d\sqrt{LR^2/\eps})$ values, and the cutting-plane construction in
\cref{app:general-upper} gives $O(d^2\log(LR^2/\eps))$. Thus the best of
these two documented upper bounds is
\[
 O\!\left(\min\left\{d\sqrt{LR^2/\eps},\ d^2\log(LR^2/\eps)\right\}\right)
 \quad(0<\eps\le LR^2/32).
\]
The polynomial lower bound matches the first branch when
$LR^2/d^2\le\eps\le LR^2/32$. At finer accuracy, \cref{ms:thm-depth} gives a dimension-times-logarithm
lower bound for the full smooth class. Determining its sharp joint
dependence on dimension and accuracy remains open.

For $0<\mu\le L$, put $\kappa=L/\mu$. On the globally $\mu$-strongly
convex subclass, \cref{cor:strong} gives
$\Omega(d\min\{d,\sqrt\kappa,\sqrt{LR^2/\eps}\})$ for
$0<\eps\le LR^2/32$. The accelerated method of
\citet{NesterovSpokoiny2017} gives
$O(d\sqrt\kappa\log(LR^2/\eps))$ exact scalar values.
A finite positive difference radius controls the approximation error;
running to expected error $\eps/3$ gives success probability $2/3$
with a fixed budget. When $\kappa\le d^2$ and $\eps\le\mu R^2$
within this window, these polynomial orders match up to the accuracy
logarithm. The smooth and nonsmooth $\ell_p$ strong-convexity bounds
are given in \cref{app:additional-geometry,g:thm-nonsmooth}.

The midpoint-curvature implementation in
\cref{ms:thm-upper} has the explicit budget
\[
 (d+1)\left\lceil
 \frac{\log(2LR^2/\eps)}{2\log((L+\mu)/(L-\mu))}
 \right\rceil\qquad(0<\mu<L).
\]
This is the upper bound attributed to this work in \cref{tab:bounds}.
Together with the multiscale lower bound it yields
\cref{thm:near-quadratic}, with absolute comparison constants uniformly over
$\mu/L\in[1/2,1]$ and $0<\eps\le LR^2/128^2$. At $\mu=L$, $d+1$ values
recover the affine term exactly and the complexity is $\Theta(d)$ in this
window.

\subsection{Euclidean smoothness and non-Euclidean localization}\label{match:sec-lp-smooth}
Fix $1\le p\le2$, $q=p/(p-1)$ with $q=\infty$ at $p=1$, and consider
\emph{globally} Euclidean $L_2$-smooth convex functions with a global
minimizer in $\mathbb B_p^d(R_p)$. Queries and outputs remain unrestricted.
Using $\alpha=p$, $\beta=2$ in \cref{g:cor-lp} gives
\begin{equation}\label{match:eq-smooth}
 T\ge\max\left\{\frac{5d}{16},\frac d{\Cstar}
 \min\left\{d,\sqrt{\frac{L_2R_p^2}{d^{2/p-1}\eps}}\right\}\right\},
 \qquad 0<\eps\le\frac{L_2R_p^2}{32d^{2/p-1}}.
\end{equation}
The deterministic exact-objective specialization of the randomized
accelerated method of \citet[arXiv version~3, Theorem~2.2]{GorbunovDvurechenskyGasnikov2022}
has upper order
\begin{equation}\label{match:eq-smooth-upper}
 \widetilde O\!\left(d^{1/2+1/q}\sqrt{L_2R_p^2/\eps}\right).
\end{equation}
The gradient regularity is Euclidean; the proximal setup uses $\ell_p$.
The prox in \cref{g:lem-prox} has Bregman radius at most a logarithmic
multiple of $R_p^2$; substituting this radius in the cited general
$p$-norm theorem gives \eqref{match:eq-smooth-upper}. Both stochastic and
additive value noise are set to zero. Its finite-difference radius is chosen
finite, positive, and small enough for the prescribed accuracy. Each two-point estimate is charged two scalar evaluations.

Consequently the upper and lower orders match up to logarithmic factors
whenever the following window is nonempty
\begin{equation}\label{match:eq-smooth-window}
 \frac{L_2R_p^2}{d^{1+2/p}}\le\eps
 \le\frac{L_2R_p^2}{32d^{2/p-1}}.
\end{equation}
At $p=1$ this reads
$L_2R_1^2/d^3\le\eps\le L_2R_1^2/(32d)$ and the polynomial order is
$\sqrt{dL_2R_1^2/\eps}$. At $p=2$, \cref{cor:general-moderate} removes the
logarithmic loss. Below the left endpoint, the lower bound saturates,
and sharp full-class complexity remains open.

\subsection{Euclidean Lipschitz continuity and non-Euclidean feasibility}\label{match:sec-lp-nonsmooth}
Now let $g:\R^d\to\R$ be globally convex and $M_2$-Lipschitz in the Euclidean
norm, and minimize it over $\mathbb B_p^d(R_p)$ with a feasible output.
Queries to $g$ can be unrestricted. The unregularized instance of
\cref{gp:thm-nonsmooth}, or \cref{g:cor-lp}, gives
\begin{equation}\label{match:eq-nonsmooth}
 T\ge c_Z\mathcal P\!\left(d,\min\left\{d,
 \frac{M_2^2R_p^2}{d^{2/p-1}\eps^2}\right\}\right),\qquad
 \eps\le\frac{\eps_ZM_2R_p}{d^{1/p-1/2}},\quad d\ge m_Z^\circ.
\end{equation}
At $p=2$, this recovers the Euclidean information theorem of
\citet{ZhangZhangQiLin2026}; the geometric transfer extends it to
$1\le p<2$.
The two-point mirror-descent guarantee of \citet[Theorem~1]{Shamir2017},
with the norm-matched prox, yields
\begin{equation}\label{match:eq-nonsmooth-upper}
 \widetilde O\!\left(d^{2/q}M_2^2R_p^2/\eps^2\right)
\end{equation}
scalar evaluations. The domain is the $\ell_p$ ball, while the function's
Lipschitz constant is $M_2$. The allowed global extension makes the
small two-point perturbations legitimate even at a boundary iterate.
The dimension dependence comes from the dual-norm moment of a
Euclidean random direction and the prox radius, with the Euclidean
Lipschitz constant $M_2$ unchanged.

Thus the bounds match up to logarithmic factors whenever
\begin{equation}\label{match:eq-nonsmooth-window}
 \frac{M_2R_p}{d^{1/p}}\le\eps
 \le\frac{\eps_ZM_2R_p}{d^{1/p-1/2}},\qquad d\ge m_Z^\circ.
\end{equation}
In particular, the polynomial rate at $p=1$ is $M_2^2R_1^2/\eps^2$.
Under $M_1$-Lipschitz continuity, norm conversion supplies the
corresponding Euclidean constant $M_2$. The upper bounds of the smoothing framework
\citep{GasnikovEtAl2022} also distinguish these two constants. We use the
mirror-descent bound here without invoking the extra parameter window
needed to suppress the iteration term in an accelerated smoothing bound.

\subsection{Known upper bounds for arbitrary regularity norms}\label{match:sec-arbitrary}
Upper bounds are available for both general-norm classes in
\cref{gp:thm-norm,gp:thm-nonsmooth} (with $\lambda=0$ in the latter).
We first apply norm conversion and then keep the proximal geometry
explicit. Sharp complexity in terms of the entire image profile
$\mathcal R_m(B,K)$ for general $B,K$ remains open.

\paragraph{Norm-converted bounds valid for every $B,K$.}
Let $B$ be the unit ball of the regularity norm $\|\cdot\|$ and let
$K\subset\R^d$ be nonempty, compact, and convex. Define
\[
 a_B=\max_{\|u\|_2=1}\|u\|,\qquad
 R_2(K)=\min_{z\in K}\max_{x\in K}\|x-z\|_2,\qquad
 \mathcal D_2=\mathcal D_2(B,K)=a_BR_2(K).
\]
All quantities are known geometric parameters; the minimum exists by
compactness. A singleton $K$ needs no oracle calls, so suppose
$R_2(K)>0$. Duality gives $\|v\|_2\le a_B\|v\|_*$, hence
\[
 \|\nabla f(x)-\nabla f(y)\|_2
 \le a_B L\|x-y\|\le a_B^2L\|x-y\|_2,
 \qquad |g(x)-g(y)|\le a_BM\|x-y\|_2.
\]
Thus $L_2=a_B^2L$ and $M_2=a_BM$ are valid Euclidean regularity bounds.

For a globally convex smooth $f$ with a global minimizer in $K$, start at
a center attaining $R_2(K)$. The Euclidean specialization of the
accelerated method of \citet{GorbunovDvurechenskyGasnikov2022}, or the
finite-difference acceleration in \cref{app:general-upper}, gives
\begin{equation}\label{match:eq-arbitrary-smooth-upper}
 O\!\left(d\mathcal D_2\sqrt{\frac L\eps}\right)
 \qquad(0<\eps\le L\mathcal D_2^2)
\end{equation}
exact scalar values. Queries and output are unrestricted, as required by
\cref{gp:thm-norm}. For globally convex $M$-Lipschitz $g$ minimized over
$K$, \citet[Corollary~2]{Shamir2017}, translated to the same center and
run on $K$, gives
\begin{equation}\label{match:eq-arbitrary-nonsmooth-upper}
 O\!\left(\frac{dM^2\mathcal D_2^2}{\eps^2}\right)
 \qquad(0<\eps\le M\mathcal D_2).
\end{equation}
Its averaged output is in $K$. The global extension of $g$ permits the
two perturbation queries outside $K$. Expected-error guarantees are
converted to the fixed-budget $2/3$ guarantee as at the start of
this appendix, and each pair costs two scalar values. These are the
baselines displayed in \cref{tab:bounds}; sharper choices are possible.
For example, the smooth cutting-plane benchmark in
\cref{app:general-upper} also gives
$O(d^2\log(L\mathcal D_2^2/\eps))$ on its small-accuracy window.

\paragraph{Nonsmooth bounds with an arbitrary proximal norm.}
\citet[Theorem~1]{Shamir2017} allows an arbitrary proximal norm.
Choose an auxiliary norm $\|\cdot\|_{\mathrm{pr}}$ and a known prox
$\omega$ that is $1$-strongly convex in that norm, normalized by
$\min_K\omega=0$. Write
\[
 \Theta_\omega=\sup_{x\in K}\omega(x),\qquad
 p_{\mathrm{pr}}=\bigl(\E\|U\|_{\mathrm{pr},*}^4\bigr)^{1/4},
 \qquad U\sim\operatorname{Unif}(\mathbb S^{d-1}).
\]
For finite $\Theta_\omega$, the theorem yields the scalar-query order
\begin{equation}\label{match:eq-arbitrary-prox-upper}
 O\!\left(1+\frac{d\,p_{\mathrm{pr}}^2M_2^2\Theta_\omega}{\eps^2}\right),
 \qquad M_2\le a_BM.
\end{equation}
When only $M$ is given, use the valid bound $M_2=a_BM$.
The bound depends separately on the objective's Euclidean Lipschitz
constant and the chosen proximal norm.
With the Euclidean prox this recovers
\eqref{match:eq-arbitrary-nonsmooth-upper}; with a suitable $\ell_p$ prox
and Euclidean regularity it gives \eqref{match:eq-nonsmooth-upper}.
On a compact $K$, finite prox range is available for any chosen norm:
a sufficiently scaled squared Euclidean norm is strongly convex in it.
All proximal operations involve the known geometry; the unknown
objective is accessed through scalar values.

\paragraph{Accelerated and non-accelerated non-Euclidean smooth bounds.}
\citet[Theorems~2.2 and~2.9, preprint version]{GorbunovDvurechenskyGasnikov2022}
analyze Euclidean gradient smoothness with a non-Euclidean proximal setup.
Their equations~(2.5) and~(2.31) give the ARDFDS and RDFDS guarantees,
respectively. With our notation,
for $d\ge8$, $1\le p\le2$, and $q=p/(p-1)$, put
\[
 \rho_{d,p}=\min\{q-1,16\log d-8\}\,d^{2/q-1},
 \qquad V[x_0](x^\star)\le\Theta_p.
\]
Here $V$ is the Bregman divergence of a $1$-strongly convex $\ell_p$
prox; $\Theta_p$ bounds its value at a global minimizer. With both source
noise terms set to zero and a positive finite-difference radius chosen
for the prescribed accuracy, the two results give, respectively,
\begin{equation}\label{match:eq-ardfds-rdfds}
 \begin{aligned}
 \mathrm{ARDFDS}:&\quad
 O\!\left(1+d\sqrt{\frac{\rho_{d,p}L_2\Theta_p}{\eps}}\right),\\
 \mathrm{RDFDS}:&\quad
 O\!\left(1+\frac{d\rho_{d,p}L_2\Theta_p}{\eps}\right).
 \end{aligned}
\end{equation}
A batch size of one suffices in this exact deterministic-objective
specialization; the algorithms remain randomized through their directions.
Their finite-difference error terms tend to zero with the positive radius
for each fixed iteration budget. No derivative oracle or limiting query
is used. The factor of two for scalar evaluations is absorbed in $O(\cdot)$.
The source's Section~2.1 allows $p\in[1,2]$; its numerical parameter tables
single out the endpoints $p=1,2$.

Substituting the prox radius from \cref{g:lem-prox} gives the familiar
orders
\[
 \widetilde O\!\left(d^{1/2+1/q}\sqrt{\frac{L_2R_p^2}{\eps}}\right),
 \qquad
 \widetilde O\!\left(d^{2/q}\frac{L_2R_p^2}{\eps}\right),
\]
respectively, with integer rounding understood. One may choose the
smaller budget in advance. In particular, the non-accelerated guarantee
can be useful at coarse accuracy; \cref{match:cor-smooth} asserts matching
only on its stated window. For a function smooth in a general norm with
unit ball $B$, these same algorithms remain applicable after the explicit
substitution $L_2\le a_B^2L$ and an appropriate known $R_p$ or $\Theta_p$.
The conversion can change the smoothness scale by a dimension
factor: $(\mathbf1^\top x)^2/2$ has $L_1=1$ and $L_2=d$
(\cref{g:rem-constants}).

These upper bounds depend on norm conversion, directional moments, and
prox range. Our lower bounds retain the image-dimension/radius tradeoff,
and, for a known penalty, its actual sublevels. The two descriptions yield matching rates in the regimes of
\cref{match:cor-smooth,match:cor-nonsmooth}. Their sharp relation for
general norms and localization sets remains open.

\subsection{Geometric formulas and matching complexity regimes}
\begin{table}[ht]
\centering\small
\caption{Geometric conclusions and the corresponding complexity regimes.}\label{tab:geometric-scope}
\begin{tabularx}{\linewidth}{@{}>{\raggedright\arraybackslash}p{.25\linewidth}
 >{\raggedright\arraybackslash}p{.40\linewidth}>{\raggedright\arraybackslash}X@{}}
\toprule
Setting&Established result&Matching status\\
\midrule
Matched ellipsoids&$\mathcal R_m(B,RB)=R$&Euclidean reduction without a condition-number loss; \ref{gp:prop-ellipsoid}\\
Independent $\ell_\alpha/\ell_\beta$&$\mathcal R_d=R_\alpha d^{-\theta}$; block certificates for $m<d$&Exact full-dimensional geometry; \ref{g:prop-lp-radius}\\
Known quadratic penalty&$r_m(\tau)=\min\{R,\sqrt{2\tau/\nu_m}\}$&Exact radius and optimizer; sharp composite complexity open; \ref{gp:prop-sublevel}\\
Block and Schatten norms&Block count and matrix rank determine radius loss&Explicit transfer certificates; \ref{gp:prop-block}, \ref{gp:prop-schatten}\\
Compact bilinear games&$r\min\{r,\sqrt{CR_xR_y/\eps}\}$ lower order&Matched $d^2$ plateau for equal blocks; \ref{sp:main-bilinear}\\
\bottomrule
\end{tabularx}
\end{table}
For a fixed known regularizer, the lower bound is optimized over its
\emph{actual} sublevels. The spectral gains of
\cref{gp:prop-gain,gp:prop-ns-gain} compare valid lower-bound expressions for
the same class. Sharp upper bounds for general known penalties remain
open. The information complexity is intrinsic to the objective class and
oracle; an algorithm's auxiliary prox enters its upper-bound analysis.

\section{Strong-convexity compatibility in every coordinate norm}\label{app:additional-geometry}
The diagonal norm model requires both smoothness and strong convexity in
$\ell_p$, and localizes a global minimizer in $\mathbb B_p^d(R)$.
\Cref{g:thm-smooth,g:thm-rigidity} treats $1\le p\le2$. The complementary
range gives the following unified statement. Here $\mu$ controls the strong convexity of the entire objective.

\begin{corollary}[All coordinate norms]\label{geom:cor-all-p}
Let $1\le p\le\infty$ and $\Gamma_p=d^{|2/p-1|}$. The class of globally
convex differentiable functions that are $L$-smooth and $\mu$-strongly convex
in $\ell_p$, $\mu>0$, is nonempty precisely when $L\ge\mu\Gamma_p$.
Under this condition, localization in $\mathbb B_p^d(R)$ implies the
exact-value lower bound
\begin{equation}\label{geom:eq-all-p}
 T\ge\max\left\{\frac{5d}{16},\frac d{\Cstar}
 \min\left\{d,\sqrt{\frac{L}{\mu\Gamma_p}},
                 \sqrt{\frac{LR^2}{\Gamma_p\eps}}\right\}\right\},
 \qquad 0<\eps\le\frac{LR^2}{32\Gamma_p}.
\end{equation}
At equality $L=\mu\Gamma_p$, every objective is a known quadratic curvature
plus an unknown affine term: its Hessian is $LI$ for $p\le2$, and $\mu I$
for $p\ge2$. Hence $d+1$ values recover its minimizer exactly.
\end{corollary}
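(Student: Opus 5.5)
The plan is to split according to $p\le2$ versus $p\ge2$. The case $p\le2$ is already covered: \cref{g:thm-rigidity} gives nonemptiness iff $L\ge\mu\Gamma$ with $\Gamma=d^{2/p-1}=\Gamma_p$, together with the rigidity $H=LI$ at equality, and \cref{g:thm-smooth} gives \eqref{geom:eq-all-p}. So the work is for $2\le p\le\infty$. There $\Gamma_p=d^{1-2/p}$, and the norm comparisons reverse: $\|x\|_p\le\|x\|_2\le\sqrt{\Gamma_p}\|x\|_p$ and $\|u\|_2\le\|u\|_q\le\sqrt{\Gamma_p}\|u\|_2$.

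For the lower bound when $p\ge2$, I will embed the Euclidean hard family of \cref{cor:strong}. Take Euclidean quadratics with $\mu_2 I\preceq\nabla^2 f\preceq L_2 I$. Then
\[
\|\nabla f(x)-\nabla f(y)\|_q\le\sqrt{\Gamma_p}\,L_2\|x-y\|_2\le\Gamma_pL_2\|x-y\|_p .
\]
Strong convexity transfers as $\mu_2\|x-y\|_2^2\ge\mu_2\|x-y\|_p^2$, and a minimizer in $\mathbb B_2^d(R)$ lies in $\mathbb B_p^d(R)$. I will choose $L_2=L/\Gamma_p$, $\mu_2=\mu$ and Euclidean radius $R$. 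Then $\kappa_2=L/(\mu\Gamma_p)\ge1$ and $L_2R^2/\eps=LR^2/(\Gamma_p\eps)$, with window $\eps\le LR^2/(32\Gamma_p)$. This reproduces \eqref{geom:eq-all-p} exactly, and nonemptiness when $L\ge\mu\Gamma_p$ is witnessed by $\mu\|x\|_2^2/2$. For $p\le2$ the same bookkeeping was done with the radius scaled instead of the smoothness. I will check that both routes give the same formula, since $\Gamma_p$ enters symmetrically.

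For necessity and rigidity when $p\ge2$, I will mimic the proof of \cref{g:thm-rigidity} with the roles of the two inequalities exchanged. The first ingredient uses smoothness at sign vectors: averaging $h\sigma$ over $\sigma\in\{-1,1\}^d$, with the quadratic upper bound $\tfrac L2\|h\sigma\|_p^2=\tfrac L2h^2d^{2/p}$ applied at the vertices, gives
\[
\E f(h\sigma)\le f(0)+\tfrac L2h^2d^{2/p}.
\]
The second ingredient uses strong convexity along coordinates, revealed one coordinate at a time, where $\|he_i\|_p=h$; this gives
\[
\E f(h\sigma)\ge f(0)+\tfrac\mu2dh^2 .
\]
Together these yield $L\ge\mu d^{1-2/p}=\mu\Gamma_p$, since $p\ge2$.

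At equality, work with an a.e. Hessian $H$. The coordinate bound gives $H_{ii}\ge\mu$. The upper bound gives $\sigma^\top H\sigma\le Ld^{2/p}=\mu d$ for every sign vector $\sigma$. Averaging over sign vectors gives $\operatorname{tr}H\le\mu d$, which forces $H_{ii}=\mu$ and makes every $\sigma^\top H\sigma$ equal to $\mu d$, hence $H_{ij}=0$ off the diagonal. So $H=\mu I$, and the same weak-derivative argument gives $f=\mu\|x\|_2^2/2+b^\top x+c$, recovered from $d+1$ values.

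The main obstacle is making the sign-vector averaging rigorous with only $C^{1,1}$ regularity, in the same way as before. I would work with $C^{1,1}$ functions (gradient Lipschitz) and derive the pointwise Hessian inequalities at differentiability points of $\nabla f$ from the global quadratic upper and lower bounds. The rest is the routine transfer.
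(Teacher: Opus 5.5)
Your proposal is correct and follows the paper's proof. For $p\le2$ you cite \cref{g:thm-smooth,g:thm-rigidity}; for $p\ge2$ you embed the Euclidean family of \cref{cor:strong} with smoothness $L/\Gamma_p$, modulus $\mu$ and radius $R$, and you use coordinate directions and sign vectors to get $\mu d\le\tr H\le Ld^{2/p}$ and $H=\mu I$ at equality.

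The one difference is how you derive the threshold $L\ge\mu\Gamma_p$. You average function values at the vertices $h\sigma$, mirroring \cref{g:thm-rigidity}, which needs no second derivatives. The paper instead argues pointwise on Hessians and removes the $C^2$ assumption by mollification. Both routes are valid.
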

\begin{proof}
For $p\le2$, this is \cref{g:thm-smooth,g:thm-rigidity}. Let $p\ge2$ and
$q=p/(p-1)$. Euclidean $\mu$-strongly convex quadratics of smoothness
$L/\Gamma_p$ are admissible, because
\[
 \|x\|_p\le\|x\|_2\le d^{1/2-1/p}\|x\|_p,
 \qquad \|v\|_q\le d^{1/2-1/p}\|v\|_2.
\]
Their gradient Lipschitz constant from $\ell_p$ to $\ell_q$ is at most $L$;
Euclidean strong convexity implies $\ell_p$ strong convexity with modulus
$\mu$. A minimizer in $\mathbb B_2^d(R)$ lies in $\mathbb B_p^d(R)$.
Apply \cref{cor:strong} with Euclidean smoothness $L/\Gamma_p$. The oracle
and decoder are unchanged, so the budget, randomization, and unrestricted
query convention are preserved.

For necessity, first take a smooth objective and write $H=\nabla^2f(x)$.
Strong convexity on coordinate directions gives $H_{ii}\ge\mu$.
For every sign vector $s\in\{-1,1\}^d$, smoothness gives
$s^\top Hs\le Ld^{2/p}$. Averaging over signs yields
\[
 \mu d\le\tr H\le Ld^{2/p},
\]
which is equivalent to $L\ge\mu\Gamma_p$. At equality all diagonal entries
are $\mu$, and every sign-vector quadratic form attains its common upper
bound. Averaging $s^\top Hs$ against $s_i s_j$ for $i\ne j$ forces
$H_{ij}=0$. Thus $H=\mu I$. For a globally $C^{1,1}$ objective, convolution
with a smooth compactly supported kernel preserves both norm inequalities.
The preceding argument applies to each convolution; locally uniform
convergence of gradients gives $\nabla f(x)=\mu x+b$ in the equality case.
This also justifies necessity without a $C^2$ assumption.

Conversely, $f(x)=\mu\|x\|_2^2/2$ has the required strong convexity and
smoothness $\mu\Gamma_p$ when $p\ge2$. The analogous quadratic in
\cref{g:thm-rigidity} supplies sufficiency for $p\le2$.
At equality write $f(x)=h\|x\|_2^2/2+b^\top x+c$ with known $h>0$.
Then $c=f(0)$ and $b_i=f(e_i)-f(0)-h/2$, which recovers $-b/h$ from $d+1$
values. The recovered minimizer satisfies the prescribed localization
assumption.
\end{proof}

The threshold characterizes compatibility of smoothness and strong
convexity in the objective class itself. At $p=2$ both endpoint formulas
coincide. Sharp complexity in the ill-conditioned regime remains open.

\section{A fixed reference function and relative smoothness}\label{app:relative-smoothness}
We consider a fixed non-Euclidean reference function as part of the
objective class. Fix an open convex domain
$\Omega\subset\R^d$, a point $c\in\Omega$, and a known convex $C^2$ function
$h:\Omega\to\R$. Write
\[
 D_h(x,c)=h(x)-h(c)-\langle\nabla h(c),x-c\rangle.
\]
Following the relative-smoothness convention of
\citet{LuFreundNesterov2016}, require $Lh-f$ to be convex on $\Omega$.
Also require a minimizer $x^\star\in\Omega$ with
$D_h(x^\star,c)\le\Delta$. Queries and the output may be arbitrary points
of $\Omega$; computations involving the fixed known $h$ are free.
The unknown function is accessed only through its exact values.

\begin{theorem}[A global metric certificate]\label{geom:thm-relative}
Suppose $H\succ0$ and $a,r>0$ satisfy
\begin{equation}\label{geom:eq-relative-certificate}
 \nabla^2h(x)\succeq aH\quad(x\in\Omega),\qquad
 E_r=c+H^{-1/2}\mathbb B_2^d(r)\subset\Omega,\qquad
 \sup_{x\in E_r}D_h(x,c)\le\Delta.
\end{equation}
Every adaptive randomized exact-value algorithm with success probability
at least $2/3$ on every objective in the stated relative-smooth class needs
\begin{equation}\label{geom:eq-relative}
 T\ge\max\left\{\frac{5d}{16},\frac d{\Cstar}
       \min\left\{d,\sqrt{\frac{La r^2}{\eps}}\right\}\right\},
 \qquad 0<\eps\le La r^2/32.
\end{equation}
The budget is a deterministic worst-case budget.
\end{theorem}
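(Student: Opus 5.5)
The plan is to reduce \cref{geom:thm-relative} to \cref{thm:main} through a fixed affine change of variables. The hard family lives on the ellipsoid $E_r$, and on it the certificate \eqref{geom:eq-relative-certificate} turns Euclidean smoothness into relative smoothness. Put $S=H^{1/2}$. For a Euclidean source objective $q:\R^d\to\R$ that is convex, globally $\ell$-smooth, and has a global minimizer $u_*\in\mathbb B_2^d(r)$, define the lift
\[
 f_q(x)=q(S(x-c)),\qquad x\in\Omega.
\]
The hard instances of \cref{thm:main} are positive-definite quadratics, so we may take $q(u)=\tfrac12u^\top Pu-h_0^\top u$ with $0\prec P\preceq\ell I$. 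Then $\nabla^2f_q=SPS\preceq\ell H$.

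\emph{Class membership.} Take $\ell=La$. The certificate gives $\nabla^2(Lh-f_q)\succeq LaH-\ell H=0$ on $\Omega$, so $Lh-f_q$ is convex there. The point $x^\star=c+S^{-1}u_*$ lies in $E_r\subset\Omega$, and it is the minimizer of $f_q$. By the third condition of the certificate, $D_h(x^\star,c)\le\Delta$. Hence every lift belongs to the stated class. If one wants nonquadratic sources, the same Hessian comparison holds almost everywhere after mollification, but quadratics already suffice.

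\emph{Simulation and decoding.} A target algorithm runs with its own seed. Whenever it queries $x\in\Omega$, we answer with the single source value $q(S(x-c))$. At the end, its output $\widehat x$ is decoded as $\widehat u=S(\widehat x-c)$. The objective gaps agree exactly, since $f_q(\widehat x)-\min f_q=q(\widehat u)-\min q$. The maps are fixed and continuous, so the induced source policies are total Borel, use the same budget, and have the same success probability.

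The source queries $S(x-c)$ range only over the image of $\Omega$, not over all of $\R^d$. This is the step to check most carefully. It is harmless, because \cref{thm:main} lower-bounds algorithms with \emph{unrestricted} queries, and any restricted algorithm is a special case. Applying \cref{thm:main} in dimension $d$ with smoothness $La$, radius $r$, and accuracy $\eps\le Lar^2/32$ gives \eqref{geom:eq-relative}.
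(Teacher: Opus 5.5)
Your proposal is correct and matches the paper's proof step for step. The paper does exactly this: it pulls back the positive-definite hard quadratics of \cref{thm:main}, with smoothness $La$ and radius $r$, through the fixed map $x\mapsto H^{1/2}(x-c)$; it gets relative smoothness from $\nabla^2 f\preceq LaH\preceq L\nabla^2h$, Bregman localization from the lift landing in $E_r$, and one-for-one value simulation with exact gap preservation, and it makes the same observation that restricting target queries to $\Omega$ only yields a special case of an unrestricted source algorithm.
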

\begin{proof}
Let $g$ be the Euclidean hard quadratic in \cref{thm:main}, with smoothness
$La$ and minimizer radius $r$, and put
$f(x)=g(H^{1/2}(x-c))$. Its Hessian obeys
\[
 \nabla^2f(x)\preceq LaH\preceq L\nabla^2h(x)\quad(x\in\Omega),
\]
so $Lh-f$ is convex. The source minimizer lifts into $E_r$ and therefore
has Bregman radius at most $\Delta$. The map
$x\mapsto H^{1/2}(x-c)$ preserves objective gaps and answers each target
query with exactly one source query. Its continuity preserves adaptive
Borel policies and their success probabilities. The source permits arbitrary
finite queries, so restricting target queries to $\Omega$ causes no
simulation difficulty. Apply \cref{thm:main}.
\end{proof}

\begin{corollary}[A fixed nonquadratic reference]\label{geom:cor-quartic}
Let $\Omega=\R^d$, $c=0$, and
$h(x)=\|x\|_2^2/2+\|x\|_2^4/4$. For $0<\Delta\le1$,
\begin{equation}\label{geom:eq-quartic}
 T\ge\max\left\{\frac{5d}{16},\frac d{\Cstar}
       \min\left\{d,\sqrt{\frac{L\Delta}{\eps}}\right\}\right\},
 \qquad 0<\eps\le L\Delta/32.
\end{equation}
Queries remain unrestricted throughout $\R^d$.
\end{corollary}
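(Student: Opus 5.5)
The plan is to apply \cref{geom:thm-relative} with $\Omega=\R^d$, $c=0$, and a Euclidean certificate $H=I$. Since $c=0$ we have $h(0)=0$ and $\nabla h(0)=0$, so $D_h(x,0)=h(x)=\|x\|_2^2/2+\|x\|_2^4/4$. The Hessian is
\[
 \nabla^2h(x)=(1+\|x\|_2^2)I+2xx^\top\succeq I,
\]
so the first condition in \eqref{geom:eq-relative-certificate} holds with $a=1$. The ellipsoid is $E_r=\mathbb B_2^d(r)$, which lies in $\Omega=\R^d$ trivially. The remaining task is to pick $r$ so that $\sup_{\|x\|_2\le r}h(x)=r^2/2+r^4/4\le\Delta$ while $r^2$ stays comparable to $\Delta$.

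Take $r^2=\Delta$. Then $r^2/2+r^4/4=\Delta/2+\Delta^2/4\le\Delta/2+\Delta/4<\Delta$ because $\Delta\le1$. This is the only place the hypothesis $0<\Delta\le1$ is used. Substituting $a=1$ and $r^2=\Delta$ into \eqref{geom:eq-relative} gives exactly \eqref{geom:eq-quartic}, with window $\eps\le L\Delta/32$. Queries are unrestricted because $\Omega=\R^d$. Convexity of $h$ is immediate from the Hessian bound.

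No step is a genuine obstacle. The one point needing care is the choice of $r$: it must satisfy the Bregman sublevel condition without losing a constant factor, so that the accuracy window and the bound appear exactly as stated rather than with $r^2=c\Delta$. The computation above shows $r^2=\Delta$ works with no loss for $\Delta\le1$. For larger $\Delta$ the quartic term would dominate and force $r^2\asymp\sqrt\Delta$, which is why the restriction is imposed.
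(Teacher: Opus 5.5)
Your proposal is correct and follows the paper's route: apply \cref{geom:thm-relative} with $H=I$ and $a=1$, using $\nabla^2h\succeq I$ and $D_h(x,0)=h(x)$. The only difference is cosmetic. The paper takes the maximal admissible radius $r^2=\sqrt{1+4\Delta}-1$, notes $r^2\ge\Delta$, and implicitly uses monotonicity of the bound and window in $r$. Your direct choice $r^2=\Delta$, with $\Delta/2+\Delta^2/4<\Delta$, yields \eqref{geom:eq-quartic} verbatim without that extra step.
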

\begin{proof}
Here $\nabla^2h(x)=(1+\|x\|_2^2)I+2xx^\top\succeq I$, and
$\sup_{\|x\|_2\le r}D_h(x,0)=r^2/2+r^4/4$.
The choice $r^2=\sqrt{1+4\Delta}-1$ is admissible and satisfies
$r^2\ge\Delta$ when $0<\Delta\le1$. Apply
\cref{geom:thm-relative} with $H=I$ and $a=1$.
\end{proof}

\paragraph{Global curvature and the fixed reference.}
The Hessian lower bound in \eqref{geom:eq-relative-certificate} holds
throughout the query domain, ensuring that every queried point satisfies
the relative-smoothness condition. The reference function is fixed and
known; the hard objectives vary within the resulting class. Sharp
complexity for general fixed references remains open.

A proximal oracle for the unknown objective returns an exact minimizer of
$f(x)+t^{-1}D_h(x,c)$. When this minimizer exists, comparison with $x^\star$
gives objective error at most $\Delta/t$, so one such stronger oracle call
with $t\ge\Delta/\eps$ is sufficient. This stronger information model
therefore has a one-call guarantee for the same accuracy criterion.

\section{Comparison of oracle models and proof dependencies}\label{app:literature}
We compare the function classes, query responses and regions,
randomization, error criteria, and dimension ranges of the related
oracle models in Table~\ref{tab:models}.

\begin{table}[ht]
\caption{Model distinctions relevant to the smooth product theorem. Each row specifies the information model and class used by the cited
result.}\label{tab:models}
\begin{tabularx}{\textwidth}{@{}>{\raggedright\arraybackslash}p{.24\textwidth}>{\raggedright\arraybackslash}p{.24\textwidth}>{\raggedright\arraybackslash}X@{}}
\toprule
Work & Information / class & Relation to the present theorem \\
\midrule
\citet{Yao1975} & Exact values; exact minimizer of a quadratic form &
Classical $\Omega(d^2)$ bound at the $\eps=0$ exact-recovery endpoint;
Corollary~\ref{cor:quadratic} treats positive accuracy.
The original model has no accuracy parameter and no randomization. \\
\citet{Simchowitz2018} & Gradient queries; random smooth quadratics &
Randomized matrix-based hardness for vector responses; our theorem
quantifies the cost of scalar responses. \\
\citet{ZhangZhangQiLin2026} & Exact values; randomized nonsmooth convex &
Supplies the Euclidean nonsmooth input to our geometric transfer;
Theorem~\ref{thm:main} is independent of this input. \\
\citet{Kerger2026} & Exact values; deterministic nonsmooth convex &
Deterministic high-accuracy endpoint: $\Omega(d^2/\log(d+1))$ at the single
level $\eps\asymp d^{-1/2}$, with a near-matching upper bound at that deterministic endpoint. \\
\citet{YuEtAl2023,YuEtAl2024} & Noisy values; quadratic / strongly convex &
Sample-complexity lower bounds with noise; noiseless hardness requires a
separate information argument. \\
Theorem~\ref{thm:main} & Exact values; randomized smooth convex &
Unrestricted adaptive queries; a product bound without logarithmic loss,
even for quadratic objectives. \\
\Cref{g:sec} & Exact values; smooth \emph{and} nonsmooth, any regularity norm
and any feasible ball & A common Euclidean-image radius determines both transfers;
explicit $\ell_\alpha/\ell_\beta$ rates (\cref{tab:lp}) and the
almost-Euclidean quotient branch on $\ell_p$ balls
(\cref{g:cor-quotient-ns}). \\
\bottomrule
\end{tabularx}
\end{table}

\paragraph{Exact quadratic recovery.}
\citet{Yao1975} addresses a question posed by C.~W.~Gear: how many adaptive
evaluations of an unknown quadratic form in $d$ variables are needed to
determine its minimizer? The resulting worst-case lower bound is of order
$d^2$, while $(d+1)(d+2)/2$ evaluations suffice by interpolation;
\citet{Protasov1996} later gave an $O(d^2\log^2d)$ value-only method for
approximate minimization on this class. That setting concerns exact recovery
by adaptive evaluation, with no accuracy parameter and no randomization.
Corollary~\ref{cor:quadratic} recovers the same order in the regime
$\sqrt{LR^2/\eps}\ge d$ and extends the analysis in two respects. First, it
treats positive accuracy $\eps$ through the profile
$d\min\{d,\sqrt{LR^2/\eps}\}$, which interpolates between the quadratic
dimension scale $d^2$ and the linear dimension scale $d$. Second, it covers
randomized algorithms with success probability $2/3$. The worst-case
guarantee is obtained from a fixed prior and its adaptive posterior, with the
explicit constant $1/4096$.

\paragraph{First-order lower bounds.}
The local-oracle theory of \citet{NemirovskiYudin1983}, the non-Euclidean
smooth reductions of \citet{GuzmanNemirovski2015}, and the randomized lower
bounds of \citet{DiakonikolasGuzman2020,WoodworthSrebro2017} motivate the
accuracy scales. Comparisons with scalar measurements must account for the
oracle responses and admissible dimension regimes of those results.
Simulating a gradient with $O(d)$ values transfers a first-order upper
bound to scalar access. To quantify the unavoidable scalar cost, our
argument instead controls the information gained from each exact value.

\paragraph{Noise and smoothing.}
Gradient-free algorithms for smooth problems \citep{NesterovSpokoiny2017} and
accelerated stochastic derivative-free methods
\citep{GorbunovDvurechenskyGasnikov2022} provide relevant upper-bound
context. Two-point feedback \citep{DuchiJordanWainwrightWibisono2015,Shamir2017}
and black-box smoothing \citep{GasnikovEtAl2022} have their own sampling and
domain conventions. Analytic smoothing \citep{Nesterov2005Smoothing} and
infimal convolution \citep{Moreau1965,RockafellarWets1998} can change what an
exact value reveals. We prove the smooth lower bound directly on quadratics, for which
one exact value corresponds to one rank-one matrix measurement.
Inexact first-order oracle analysis \citep{DevolderGlineurNesterov2014}
provides the approximation guarantees used in the upper-bound
implementations.

\paragraph{Non-Euclidean exact-value complexity.}
\citet{DuchiJordanWainwrightWibisono2015} establish norm-dependent minimax
lower bounds in a stochastic observation model. Their oracle returns
values of random sample functions, and their lower-bound constructions
compare the resulting observation laws. Here each query returns the exact
value of a fixed objective. In this model,
\citet{ZhangZhangQiLin2026} prove the Euclidean nonsmooth information
bound that we transfer to non-Euclidean feasible sets.
\Cref{match:cor-smooth,match:cor-nonsmooth} identify the matching
non-Euclidean dimension--accuracy rates under Euclidean regularity.

\paragraph{Geometric information.}
The Euclidean-image condition records the operator norm, image dimension, and
inscribed radius separately. \Cref{gp:thm-norm} lets the norm that
controls smoothness and the compact set that contains a minimizer vary
independently; translated image balls also accommodate asymmetric sets.
\Cref{gp:thm-composite} applies the same scalar information theorem to actual
sublevels of a fixed known regularizer, and its spectral gain compares two
lower-bound formulas for the same known-composite class. Geometric reductions
and Schatten examples have classical precedents \citep{GuzmanNemirovski2015}.
That useful images exist follows from classical finite-dimensional
Banach-space geometry
\citep{FigielLindenstraussMilman1977,MilmanSchechtman1986,Pisier1989}, and the
elementary identity image remains available when a constant-radius image has
too few dimensions. Information-theoretic local-oracle work
\citep{BraunGuzmanPokutta2017,BraunGuzmanPokutta2024Correction} provides
further context. The nonsmooth numerical bound used here is the explicitly
stated input from \citet{ZhangZhangQiLin2026}.

\paragraph{External inputs to the smooth proof.}
\citet{CorderoErausquin2019} supplies the whole-space matrix-weighted variance
framework; we prove the bounded-domain version with its boundary term.
We apply the Gaussian norm and pseudoinverse bounds of
\citet{HalkoMartinssonTropp2011}. The other inputs are standard: Neumann
existence and regularity, Gaussian regression, and Gamma integrals. These inputs suffice for the smooth proof. The nonsmooth transfer
additionally uses the theorem of \citet{ZhangZhangQiLin2026}.

\section{Open problems}\label{app:open}
The results lead to the following questions, extending the discussion
in \cref{sec:scope}.

\begin{itemize}\itemsep2pt
\item \textbf{High accuracy and strong convexity for general smooth functions.}
\Cref{cor:general-moderate} settles the range
$LR^2/d^2\le\eps\le LR^2/32$. Below this range the quadratic class has
complexity $\Theta(d^2)$ by exact reconstruction, whereas the general-class
upper bound $O(d^2\log(LR^2/\eps))$ of \cref{app:general-upper} grows with
accuracy. \Cref{ms:thm-depth} establishes dimension-times-logarithm growth
for each fixed $0\le\mu/L<1$, and \cref{thm:near-quadratic} gives a uniform
matching estimate for $\mu/L\in[1/2,1]$. Taking the maximum with \cref{cor:strong} yields a lower bound for
growing condition numbers. What is the sharp joint dependence on dimension,
conditioning, and accuracy in this regime? For general composite
objectives, the fine-accuracy behavior beyond the saturation of
\eqref{gp:eq-spectral} is likewise open.
\item \textbf{Smaller image dimensions and the certificate envelope.}
\Cref{gp:prop-full-lp} determines the full-dimensional radius exactly, as
$R_\alpha d^{-\theta(\alpha,\beta)}$ for every $\alpha,\beta$.
What is the complete profile
$\mathcal R_m(\mathbb B_\beta^d,\mathbb B_\alpha^d(R_\alpha))$ for $m<d$?
The block and Euclidean-section certificates give complementary lower
bounds, and the Gaussian estimate controls the $\ell_1$ endpoint.
The choice of dimension raises a further question: the nonsmooth profile
changes regime at $\gamma+\theta=1/2$ (\cref{g:cor-lp}), whereas the
quotient construction lowers the dimension to retain a constant radius and
a wider accuracy window. Beyond the Euclidean case, the resulting envelope
over dimensions and certificates, and its relation to minimax complexity,
are only partially understood.
\item \textbf{Coupling on compact domains and unequal block dimensions.}
The compact bilinear reduction yields a square-root dependence on the inverse
accuracy, whereas relevant upper bounds can have an inverse-linear
dependence. Settling the sharp accuracy dependence requires a stronger
construction or a better algorithm. Our coupled families use $r=\min\{d_x,d_y\}$. The sharp dependence
on both dimensions when the blocks differ greatly remains to be determined. The global-gap coupling
bounds and their versions for local accuracy windows also leave open whether
a logarithmic accuracy factor is necessary at fixed parameters, subject to
quadratic reconstruction.
\item \textbf{Mixed partial-gradient and scalar-value budgets.}
The mixed-oracle constructions bound scalar-value queries under
unrestricted access to the known block's gradient. What is the attainable
region of scalar-value and partial-gradient query budgets? Its dependence
on whether the coupling is known is especially important: partial
gradients can reveal a hidden coupling matrix directly, whereas a fixed known
coupling can make the outer solution depend on the unknown inner solution
without making the partial-gradient responses informative about that
solution.
\item \textbf{Sharp constants and alternative priors.} How much of
\cref{thm:affine} survives for non-Wishart priors, and what replaces $4096$?
\end{itemize}

\section{Experimental protocol and numerical diagnostics}\label{app:experiments}

We report two independent checks. The first compares the two value-only
procedures behind the quadratic upper bound; the second evaluates the analytic
inequality and the trace identity on which the lower bound rests. Both checks use synthetic instances generated by the procedures
below. The lower-bound hard prior is analyzed in \cref{app:euclidean}.

\subsection{Numerical checks}\label{sec:experiments}
We evaluate the quadratic upper-bound procedures on seeded test
instances and the matrix inequalities on prescribed affine slices.
The hard distribution and minimax lower bound are analyzed in
\cref{app:euclidean}.

\begin{figure}[htbp]
\centering
\includegraphics[width=\linewidth]{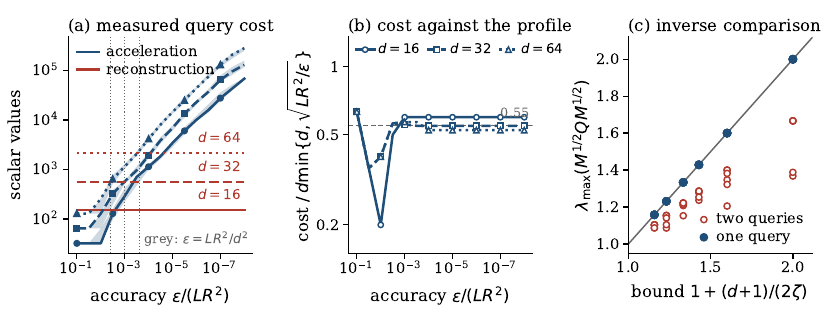}
\caption{Measured scalar query counts on 20 quadratics per dimension, and a
direct evaluation of \cref{thm:affine}. \textbf{(a)} Median and interquartile
range of the values spent by acceleration before the first iterate meeting
each accuracy, assessed after a fixed run, against the constant cost
$(d+1)(d+2)/2$ of reconstruction; accuracy increases to the right. The grey
verticals mark $\eps=LR^2/d^2$, where the two branches of the profile meet.
\textbf{(b)} The cheaper of the two procedures divided by
$d\min\{d,\sqrt{LR^2/\eps}\}$. The three coarsest levels are reached in one or
two iterations and are quantization-dominated; from $\eps=3\times10^{-3}$
downward the ratio stays in $[0.50,0.60]$ for every dimension. \textbf{(c)}
$\lambda_{\max}(M^{1/2}QM^{1/2})$ on affine slices of $\Sym_2$ against the
bound $1+(d+1)/(2\zeta)$, for six values of $\zeta$ and 36 slices of each
codimension.}
\label{fig:value-experiments}
\end{figure}
For $d\in\{16,32,64\}$, we compare finite-difference acceleration with exact
coefficient recovery on 20 seeded convex quadratics per dimension. Their
measured crossover follows the two branches in \cref{cor:quadratic}.
Reconstruction uses exactly $(d+1)(d+2)/2$ values at every requested
accuracy.

Numerical integration on affine slices of $\Sym_2$ checks the inverse
comparison and trace identity. Rank-one slices attain the inverse-comparison
bound analytically; the reported discrepancies measure numerical
agreement at the stated quadrature resolution. Checks on adaptive fibers and an exact rational
coefficient-recovery test are also included. The following subsections give the full protocols and supplementary checks.

\subsection{The two value-only procedures}

\paragraph{Instance generation.}
For each $d\in\{16,32,64\}$, each seed $s\in\{0,\ldots,19\}$ and each family, we
initialize NumPy's \texttt{default\_rng} with
\texttt{SeedSequence([20260921,$d$,$s$,$j$])}, where $j=0$ for the singular
family and $j=1$ for the strongly convex contrast. A square matrix with
independent standard Gaussian entries is orthogonalized by QR, with the signs
fixed so that the diagonal of the triangular factor is positive; the resulting
orthogonal matrix $U$ is Haar-distributed on $O(d)$. We set
$H=U\operatorname{diag}(\xi_1,\ldots,\xi_d)U^\top$ with
\[
 \text{singular family:}\quad \xi_1=0,\quad \xi_i=10^{-6+6(i-2)/(d-2)}\ \ (i\ge2),
\]
\[
 \text{strongly convex contrast:}\quad \xi_i=10^{-4+4(i-1)/(d-1)} .
\]
An independent Gaussian vector, projected off $\ker H$ in the singular family,
is normalized to give $x^\star$ with $\|x^\star\|_2=1$ and
$Hx^\star\in\operatorname{range}H$. The oracle returns
$q(x)=\tfrac12x^\top Hx-(Hx^\star)^\top x+\tfrac12(x^\star)^\top Hx^\star$.
Hence $L=R=1$ in both families, and the contrast has the prescribed condition
number $10^4$. The singular family has effective condition number $10^6$ on
$\operatorname{range}H$, the invariant subspace that contains its iterates.
The additive constant is kept so that the oracle does not silently reveal the
minimum value. All computations use double precision.

\paragraph{Scalar access and query accounting.}
Each procedure receives a callable exact-value oracle and the known
parameters $d,L$. The oracle may evaluate a batch of points and charges one
scalar query per row; it returns neither gradients nor coefficients.
Reconstruction acquires the values at $0$, $\pm e_i$, and $e_i+e_j$ for
$i<j$, exactly $(d+1)(d+2)/2$ queries, and then computes the minimum-norm
minimizer from the recovered system --- a least-squares solve, since $H$ is
singular here. Solving that system has no oracle cost in the model of the
paper.

Acceleration starts from $x_0=y_0=0$ and $t_0=1$ and, for $k=0,\ldots,3071$,
computes
\begin{align*}
 (g_k)_i&=\frac{q(y_k+e_i)-q(y_k-e_i)}2,\qquad
 x_{k+1}=y_k-g_k/L,\\
 t_{k+1}&=\frac{1+\sqrt{1+4t_k^2}}2,\qquad
 y_{k+1}=x_{k+1}+\frac{t_k-1}{t_{k+1}}(x_{k+1}-x_k).
\end{align*}
The central-difference step equals one. Over the reals, the identity
$g_k=Hy_k-Hx^\star$ is exact for quadratic objectives at any nonzero step, so
an arbitrarily small step would add cancellation without any gain.
Each iteration costs $2d$ scalar queries, and neither the condition number nor
the true minimizer is passed to the solver.

Every trajectory runs for the same fixed budget of 3072 iterations. A separate
evaluator then computes the objective gap and identifies the first iterate that
meets each of 15 targets, logarithmically spaced from $10^{-1}$ to $10^{-8}$.
Target crossings are evaluated after the fixed-budget run. All 120
acceleration trajectories reach every target within that budget. The
figure reports medians and interquartile ranges over the 20 seeds; the
bands show the middle 50\% of the observed instance-wise query counts.

\paragraph{Measured query counts.}
\Cref{tab:experiment-counts} lists selected medians for the singular family.
Fitting $\log_{10}$ of the median accelerated cost against $\log_{10}(1/\eps)$
over $\eps\le10^{-4}$ gives slopes $0.449$, $0.463$ and $0.466$ at
$d=16,32,64$, compared with the exponent $1/2$ of the accuracy-dependent
branch. The contrast-family slopes are $0.299$, $0.310$ and $0.322$.
The fitted slopes summarize the stated finite accuracy range for
the two positive spectra. Indeed, adding $10^{-6}vv^\top$ for a unit null
vector $v$ makes the first family globally strongly convex without changing
its iterates or gaps, since every iterate is orthogonal to $v$; the additional
terms also cancel in each central difference. Singularity therefore does not
explain the difference between the two families. The recurrence uses neither
a strong-convexity parameter nor a restart schedule.

\begin{table}[h]
\centering
\caption{Median scalar queries for value-only acceleration on the singular
family. The final column is the fixed reconstruction cost, which suffices for
every listed target.}
\label{tab:experiment-counts}
\begin{tabular}{rrrrrr}
\toprule
$d$ & $\eps=10^{-2}$ & $\eps=10^{-4}$ & $\eps=10^{-6}$ & $\eps=10^{-8}$ & Reconstruction\\
\midrule
16 & 32 & 1136 & 10096 & 68656 & 153\\
32 & 128 & 1920 & 23072 & 130656 & 561\\
64 & 256 & 4160 & 44480 & 284672 & 2145\\
\bottomrule
\end{tabular}
\end{table}

\paragraph{Cheaper updates and the dimension factor.}
A natural question is whether value-only methods that spend fewer than $2d$
values per update avoid the dimension factor. \Cref{fig:value-only-comparison}
reports a separate comparison on the singular and conditioned families
(30 new seeds per dimension, $d\in\{16,32,64,128\}$, $L=R=1$) and on the
isotropic family $H=I$ with a unit-norm minimizer. Five methods receive the
same instance and the same budget of $2d\cdot2048$ values: the accelerated
recurrence of the main experiment; gradient descent with the full
central-difference gradient ($2d$ values per update, step $1/L$);
random-coordinate descent (a uniformly random coordinate $e_i$, the central
difference at $\pm e_i$, two values per update, step $1/L$); random-direction
descent (a uniformly random unit direction $u$, the central difference at
$\pm u$, two values per update, step $1/L$ along $u$ without a dimension
factor); and coefficient reconstruction. All finite differences use step one,
which is exact for quadratics, and first hits are assessed after the run by
the hidden evaluator, as in the main experiment; missed targets are
right-censored at the budget and enter the medians as $+\infty$.

Three observations follow. First, on the singular and conditioned families, at
every accuracy $\eps\le10^{-3}$ at which the three non-accelerated methods
finish within the budget, the ratio of the median count of either two-value
method to that of full finite-difference gradient descent lies in
$[0.89,1.27]$: the cheaper update is paid for by a proportionally larger
number of updates, so the total number of values grows with $d$ in the same
way (\cref{fig:value-only-comparison}b). On the same cells the accelerated method
needs $2$ to $11$ times fewer values than gradient descent, and reconstruction
becomes the cheapest procedure at fine accuracy, as in
\cref{fig:value-experiments}. Second, on the isotropic family every
full-gradient method finishes in one update, that is, $2d$ values, whereas at
$\eps=10^{-6}$ random-coordinate descent needs between $6d$ and $11d$ values
and random-direction descent between $24d$ and $26d$ values
(\cref{fig:value-only-comparison}c). Third, within the common budget the
two-value methods and gradient descent reach $\eps=3\times10^{-5}$ on the
singular family at $d=64$ in every run but miss $10^{-5}$ in about a third of
the runs, whereas acceleration reaches $3\times10^{-8}$ in every run. These comparisons quantify the query costs of the five algorithms
on the specified finite instance families.

\begin{figure}[t]
\centering
\includegraphics[width=\linewidth]{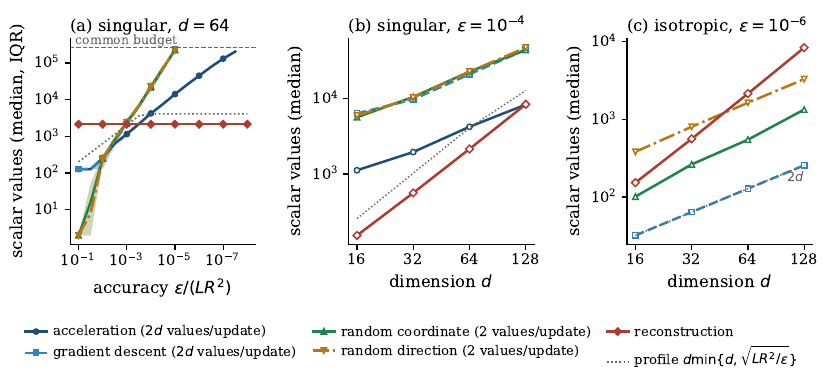}
\caption{Five value-only methods on the same quadratic instances, with
$L=R=1$ and a common budget of $2d\cdot2048$ values. \textbf{(a)} Singular
family at $d=64$: median and interquartile range of the values spent before
the first iterate meeting each accuracy; the medians of cells in which more
than half of the runs miss the target within the budget (dashed grey line) are
omitted. The dotted grey curve is the profile
$d\min\{d,\sqrt{LR^2/\eps}\}$. \textbf{(b)} Singular family at
$\eps=10^{-4}$: medians against the dimension. \textbf{(c)} Isotropic family
$H=I$ at $\eps=10^{-6}$: medians against the dimension; the accelerated
method coincides with gradient descent here (one update, $2d$ values) and is
not drawn separately. All counts are exact query totals for the stated methods and instances.}
\label{fig:value-only-comparison}
\end{figure}

\paragraph{Numerical error.}
Reconstruction attains objective error below $10^{-12}$ on every instance. The
largest entrywise error in the recovered Hessian is below $2\times10^{-16}$,
and in the recovered linear term below $3\times10^{-17}$, that is, a few units
in the last place. These magnitudes depend on the linear algebra library and
on the platform. Per-iteration and reconstruction query costs are exact
integers, whereas the first iterate that crosses a target can depend on
floating-point rounding. The tolerance-free statements are the
rational-arithmetic checks below.

\paragraph{Exact arithmetic checks.}
A separate test uses Python's rational \texttt{Fraction} arithmetic on 100
quadratics with rational coefficients: dimensions $1,2,3,5,8$, each with 20
seeds. It verifies 1220 recovered coefficients against their source values,
380 central-difference gradient coordinates at random rational points, and 100
query counts against $(d+1)(d+2)/2$, each by exact equality. Such finite
tests detect implementation mistakes; the general identities are proved in
\cref{app:corollaries}.

\subsection{The inequalities behind the lower bound}
The script \path{scripts/verify_affine_slice.py} evaluates \cref{thm:affine} and
the trace identity of \cref{r12:fiber} in dimension two, where an affine slice
of $\Sym_2$ of codimension $r$ is a $(3-r)$-dimensional domain and the
conditional moments are computed by quadrature. For
each $\zeta\in\{3/2,5/2,7/2,9/2,13/2,19/2\}$ it draws six slices of
codimension one and six of codimension two, each cut by rank-one measurements
$\langle x_ix_i^\top,W\rangle$ with Gaussian $x_i$ through $W=nI$, and
integrates $\rho(W)\propto\det(W)^\zeta e^{-\tr W/2}$ over the slice by a
composite midpoint rule on a fixed box of half-width $6n+40$ in tangent
coordinates, with $1400^2$ nodes in codimension one and $40000$ nodes in
codimension two. The outer-shell mass is recorded as a truncation diagnostic;
no adaptive widening or resolution refinement is performed.

Across the 72 slices, the ratio of $\lambda_{\max}(M^{1/2}QM^{1/2})$ to
$1+(d+1)/(2\zeta)$ lies in $[0.99989,1.00009]$ on every codimension-one slice
and in $[0.68,0.96]$ on every codimension-two slice. On the first family the
maximum absolute and relative discrepancies are below $2.2\times10^{-4}$ and
$1.1\times10^{-4}$, respectively. The identity $\tr(MJ)=nd-2r$ holds with
relative error at most $7\times10^{-5}$, attained at the smallest $\zeta$, and
below $10^{-9}$ for $\zeta\ge9/2$. These residuals measure agreement at the specified quadrature
resolution. The conditional-slice identity is proved in \cref{app:euclidean}.

\paragraph{Exact attainment on a rank-one fiber.}
For $d=2$, rotate a positive rank-one constraint to $W_{11}=a>0$ and write
\[
 W=\begin{pmatrix}a&b\\ b&t+b^2/a\end{pmatrix},\qquad t>0.
\]
The change of variables from the affine coordinates $(b,W_{22})$ has unit
Jacobian, and their constant slice-volume factor cancels in normalization.
Since $\det W=at$ and $\tr W=a+t+b^2/a$, the density makes
$b\sim N(0,a)$ and $t\sim\operatorname{Gamma}(\zeta+1,2)$ independent,
with shape $\zeta+1$ and scale $2$. For $\zeta>0$,
\[
 M=\diag(a,2\zeta+3),\qquad
 Q=\diag\!\left(\frac{1+1/(2\zeta)}a,\frac1{2\zeta}\right).
\]
Hence the larger eigenvalue of $M^{1/2}QM^{1/2}$ is $1+3/(2\zeta)$, exactly
the coefficient in \cref{thm:affine}. This proves attainment analytically on the tested rank-one fibers.

\paragraph{Higher-dimensional and adaptive fibers.}
A second run evaluates the same two identities on the realized fibers of
adaptive histories in $d\in\{2,3,5\}$ with $\zeta=9/2$, that is, with
$n=d+10$, under three query policies: independent random unit directions,
one unit direction repeated (so that the rank stays one), and a
history-dependent rule in which each new unit direction is computed from the
answers received so far. For each policy and each of 12 seeds, the
conditional moments are recorded after $q\in\{1,d,2d\}$ queries for
$d\in\{3,5\}$ and after $q\in\{1,3\}$ queries for $d=2$. On a fiber of rank
$r$ the conditional law is an integral over $d(d+1)/2-r$ dimensions. For
$d=2$ it is evaluated by quadrature on three nested grids (128, 256 and 512
nodes per axis on boxes of half-width $6n$, $10n$ and $16n$ in tangent
coordinates); for $d=3,5$ it is sampled by slice sampling with four chains,
2000 warm-up sweeps and 8192 retained draws thinned by two; a full-rank fiber
is a point mass and is evaluated exactly. \Cref{tab:posterior-diagnostics}
summarizes the results. On every rank-one fiber the ratio of
$\lambda_{\max}(M^{1/2}QM^{1/2})$ to the bound $1+(d+1)/(2\zeta)$ equals one
up to quadrature error in $d=2$ and lies in $[0.994,1.026]$ in $d=3,5$,
consistent with equality on rank-one fibers up to a Monte Carlo error of a
few percent; on fibers of higher rank the ratio decreases, down to the
point-mass value $2\zeta/(2\zeta+d+1)$ at full rank. The trace identity holds
with relative residual below $1.1\times10^{-6}$ under quadrature and below
$1.1\times10^{-2}$ under sampling. The operator norm of the posterior inverse
$J$, averaged over the twelve histories of a policy, lies between $1.09$ and
$1.98$, whereas the budget $\phi(2\E r/n)$ of \cref{r12:budget}, evaluated at
the empirical mean rank, lies between $1.67$ and $4.44$; the budget therefore
holds with a wide margin in all 24 policy--query cells. These are numerical residuals at the stated grid and sampler
settings. The exact disintegration and moment identities are established
in \cref{app:euclidean}.

\begin{table}[t]
\centering\small
\caption{Posterior diagnostics on realized fibers ($\zeta=9/2$, $n=d+10$,
12 seeds, three query policies). The ratio is
$\lambda_{\max}(M^{1/2}QM^{1/2})$ divided by the bound $1+(d+1)/(2\zeta)$
of \cref{thm:affine}; the last column is the largest relative residual of the
identity $\tr(MJ)=nd-2r$ over the listed fibers. Rank-one fibers arise from
the first query of every policy and from all queries of the repeated policy;
full-rank fibers are point masses.}
\label{tab:posterior-diagnostics}
\begin{tabular}{rrrlrrr}
\toprule
$d$ & bound & rank $r$ & evaluation & fibers & ratio: median [min, max] & trace residual\\
\midrule
2 & $4/3$ & 1 & quadrature & 48 & 1.0000 [1.0000, 1.0000] & $1.1\times10^{-6}$\\
2 & $4/3$ & 3 & point mass & 24 & 0.7500 & $<10^{-15}$\\
3 & $13/9$ & 1 & sampling & 60 & 1.003 [0.994, 1.014] & $7.5\times10^{-3}$\\
3 & $13/9$ & 3 & sampling & 24 & 0.920 [0.866, 0.951] & $2.8\times10^{-3}$\\
3 & $13/9$ & 6 & point mass & 24 & 0.6923 & $<10^{-15}$\\
5 & $5/3$ & 1 & sampling & 60 & 1.008 [0.997, 1.026] & $1.0\times10^{-2}$\\
5 & $5/3$ & 5 & sampling & 24 & 0.985 [0.926, 1.005] & $5.1\times10^{-3}$\\
5 & $5/3$ & 10 & sampling & 24 & 0.830 [0.791, 0.919] & $2.4\times10^{-3}$\\
\bottomrule
\end{tabular}
\end{table}

\paragraph{Saddle and geometry diagnostics.}
The script \path{scripts/verify_saddle_algebra.py} performs
4284 numerical checks over four seeds and the dimension pairs
$(1,8),(8,1),(3,7),(7,3),(5,5)$. They cover the two- and five-value payoff
identities, stationarity, rectangular padding, global and compact gap
transfers, compact response windows, the known-diagonal bias bound, the
algebraic identities behind the mixed-oracle reductions, and coefficient
reconstruction. The shared-curvature and known-diagonal gaps are compared with
independently computed best responses, and the compact bilinear gap is checked
against its closed form, derived in \cref{thm:bilinear-compact}. All checks pass at the
script's stated tolerance. The geometry script likewise passes its finite
scalar-grid and sampled Gaussian diagnostics. The random-direction
extrema are finite-sample estimates; the operator-norm and image-radius
bounds are proved in the geometric appendices.
The geometry figures in the appendix are deterministic illustrations of the
stated profiles, distinct from the measured data in
\cref{fig:value-experiments}.

\paragraph{Two value-only policies for known diagonal curvatures.}
\Cref{fig:saddle-value-queries} illustrates \cref{sp:prop-known-diagonal-upper}
on strongly convex--strongly concave quadratics with known diagonal Hessians
$\mu_x=\mu_y=1$, block pairs
$(d_x,d_y)\in\{(8,32),(32,8),(16,16),(16,64),(64,16),(32,32)\}$, coupling
operator norm $1$ or $10$, and ten seeds per pair. Two value-only policies
receive the same instances and the same budget of $4(d_x+d_y)$ values per
iteration for 512 iterations: an extragradient method whose two partial
gradients are recovered exactly by central differences at each of its two
points, and the reconstruction of \cref{sp:prop-known-diagonal-upper}, which
uses $1+d_x+d_y+d_xd_y$ values and then solves the stationarity equations. The
error is the global gap, evaluated in closed form from the two partial
gradients and checked against independently computed best responses (largest
discrepancy $8.9\times10^{-16}$); the evaluator never influences the updates.
Reconstruction returns the saddle point at its fixed cost, independently of
the coupling and of the accuracy, with gap at most $4.5\times10^{-29}$ and
coefficient errors at most $1.2\times10^{-15}$. The extragradient method
reaches every target down to $10^{-8}$ within the budget, but its cost grows
with the accuracy and with the coupling: raising the coupling norm from $1$ to
$10$ multiplies its median cost by a factor between $4.5$ and $5.3$ at every
pair and target, whereas the reconstruction cost is unchanged. The experiment compares the two procedures on the stated
strongly convex--strongly concave quadratic family.

A separate run checks the two recovery counts of
\cref{sp:prop-bilinear-upper,sp:prop-bilinear-compressed} on bilinear-affine
instances with a global saddle point $(x^\star,0)$, $\|x^\star\|_2\le1$, for
the block pairs $(d,1)$ and $(1,d)$ with $d\in\{8,16,32,64,128,256\}$ and for
$(8,8)$, $(8,32)$, $(32,8)$, with two instance families and twenty seeds per
pair. In all 600 runs, coefficient recovery used exactly $d_y(d_x+1)$ values
and compressed recovery exactly $d_x(d_x+1)$ values, and both returned pairs
with zero compact gap up to rounding; for $d_x=1$ the compressed rule needs
two values whatever $d_y$ is, as noted after \cref{sp:prop-bilinear-compressed}.

\begin{figure}[t]
\centering
\includegraphics[width=\linewidth]{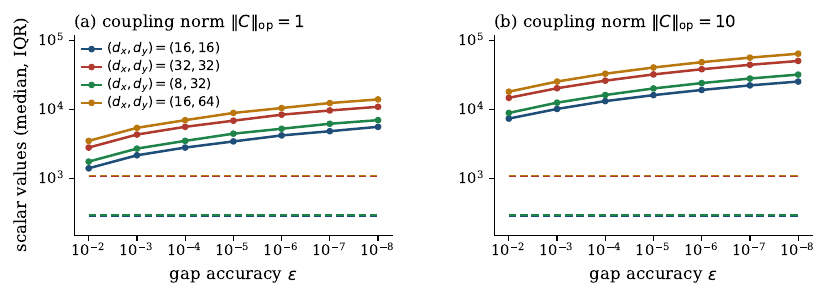}
\caption{Value-only policies on strongly convex--strongly concave quadratics
with known diagonal Hessians $\mu_x=\mu_y=1$, ten seeds per block pair, for
coupling operator norm $1$ \textbf{(a)} and $10$ \textbf{(b)}. Solid curves:
median and interquartile range of the values spent by the value-based
extragradient method before its first iterate with global gap at most $\eps$.
Dashed lines: the fixed cost $1+d_x+d_y+d_xd_y$ of the reconstruction of
\cref{sp:prop-known-diagonal-upper}, which returns the saddle point at that
cost; the lines for $(16,16)$ and $(8,32)$, and for $(32,32)$ and $(16,64)$,
almost coincide. Both policies use the same budget of $4(d_x+d_y)\cdot512$
values; no run was censored.}
\label{fig:saddle-value-queries}
\end{figure}

\section{Comparison with the concurrent work of Wu et al.}\label{app:wu}
We compare the smooth exact-value guarantees of
\citet{WuEtAl2026DeterministicExactValue} with ours. We
write $L$ for their Euclidean smoothness constant $\beta$ and express both
results in the notation of this paper.

\subsection{Oracle models and rate statements}
\begin{table}[ht]
\centering\small
\caption{Concurrent smooth exact-value results. Each column specifies
the role of $R$ in its query and localization assumptions.}
\label{tab:wu}
\begin{tabularx}{\linewidth}{@{}>{\raggedright\arraybackslash}p{.20\linewidth}
 >{\raggedright\arraybackslash}X>{\raggedright\arraybackslash}X@{}}
\toprule
& Wu et al., version 1 & This paper\\
\midrule
Policies & Deterministic adaptive & Adaptive randomized Borel; pointwise success at least $2/3$\\
Queries / output & Both in $\mathbb B_2^d(R)$ & Arbitrary finite vectors in $\R^d$\\
Localization & Unique minimizer in $\mathbb B_2^d(R/2)$ & A global minimizer in $\mathbb B_2^d(R)$\\
Lower-bound hard family & Smoothed max chain & Positive-definite quadratics for \cref{thm:main}\\
Lower polynomial order & $d\min\{\sqrt{LR^2/\eps},\allowbreak(d/\log(ed))^{1/3}\}$ & $d\min\{\sqrt{LR^2/\eps},d\}$\\
Upper order & $d\sqrt{LR^2/\eps}$, bounded queries & $d\sqrt{LR^2/\eps}$, unrestricted queries\\
Lower dimension range & Sufficiently large $d$ & Every $d\ge1$; explicit constant $1/4096$\\
\bottomrule
\end{tabularx}
\end{table}
Their lower bound assumes $d\ge d_0$ and $0<\eps\le c_\eps LR^2$ for
universal constants. Their Theorems 3.3--3.5 and Corollary 3.6 match the
upper and lower orders when
\[
 LR^2(\log(ed)/d)^{2/3}\le\eps\le c_\eps LR^2.
\]
Our full-class matching range is $LR^2/d^2\le\eps\le LR^2/32$
(\cref{cor:general-moderate}). Our quadratic matching result additionally
covers every smaller positive accuracy in the stated window
(\cref{cor:quadratic}). To compare the same localization promise, substitute
$R/2$ for our radius. This changes only absolute constants in these orders. The two models
retain their respective query regions.

\subsection{Hard instances and query access}
Their construction uses Moreau smoothing, exact prefix shielding, and delayed
rotations. The bounded-query norm enters its spherical-cap step. Our
Wishart argument instead controls the conditional law on every realized
affine slice, under a fixed prior independent of the algorithmic seed.
This conditional-law argument permits randomization and arbitrary
finite query norms, including points outside the localization ball.

Their upper guarantee keeps all queries in the prescribed ball.
Our finite-difference implementation in \cref{app:general-upper} allows
queries outside the localization set. For quadratics, coefficient
recovery matches the $d^2$ plateau. For the full smooth class, the
nonquadratic multiscale construction gives the additional accuracy
dependence in \cref{thm:near-quadratic}.

\subsection{Non-Euclidean and saddle-point extensions}
\Cref{gp:thm-norm,gp:thm-composite,gp:thm-nonsmooth} extend the
exact-value analysis to independent
regularity and localization geometries and actual sublevels of a known
regularizer. The nonsmooth transfer uses the Euclidean information
theorem of \citet{ZhangZhangQiLin2026}.
\Cref{thm:near-quadratic} gives a uniform matching estimate near $\mu=L$.
The saddle-point results cover unequal blocks and separate compact
and global gaps, as well as scalar and mixed access. \Cref{sec:matching}
identifies matching non-Euclidean regimes, and \cref{app:open} gives the
remaining questions for general geometries, coupling, and growing
condition numbers.

\end{document}